\numberwithin{equation}{section}
\newtheorem{theorem}[equation]{Theorem}
\newtheorem{proposition}[equation]{Proposition}
\newtheorem{lemma}[equation]{Lemma}
\newtheorem{corollary}[equation]{Corollary}
\newtheorem{maintheorem}{Theorem}
\theoremstyle{definition}
\newtheorem{remark}[equation]{Remark}
\newtheorem{example}[equation]{Example}
\newtheorem{definition}[equation]{Definition}
\newcommand{\cC}{\mathcal{C}}
\newcommand{\fC}{\mathfrak{C}}
\newcommand{\sE}{\mathscr{E}}
\newcommand{\bF}{\mathbf{F}}
\newcommand{\fF}{\mathfrak{F}}
\newcommand{\bN}{\mathbf{N}}
\newcommand{\fP}{\mathfrak{P}}
\newcommand{\bQ}{\mathbf{Q}}
\newcommand{\fQ}{\mathfrak{Q}}
\newcommand{\bR}{\mathbf{R}}
\newcommand{\bS}{\mathbf{S}}
\newcommand{\cS}{\mathcal{S}}
\newcommand{\fS}{\mathfrak{S}}
\newcommand{\fT}{\mathfrak{T}}
\newcommand{\fU}{\mathfrak{U}}
\newcommand{\bV}{\mathbf{V}}
\newcommand{\fX}{\mathfrak{X}}
\newcommand{\sX}{\mathscr{X}}
\newcommand{\bZ}{\mathbf{Z}}
\newcommand{\tto}{\twoheadrightarrow}
\newcommand{\GG}{\mathbb{G}}
\newcommand{\arxiv}[1]{\href{http://arxiv.org/abs/#1}{{\tiny\tt arXiv:#1}}}
\newcommand{\DOI}[1]{\href{http://doi.org/#1}{\color{purple}{\tiny\tt DOI:#1}}}
\newcommand{\defn}[1]{\emph{#1}}
\let\ul\underline
\newcommand{\op}{\mathrm{op}}
\renewcommand{\phi}{\varphi}
\newcommand{\Set}{\mathbf{Set}}
\newcommand{\FinSet}{\mathbf{FinSet}}
\newcommand{\OI}{\mathbf{OI}}
\DeclareMathOperator{\tr}{tr}
\DeclareMathOperator{\im}{im}
\DeclareMathOperator{\Et}{Et}
\DeclareMathOperator{\Rep}{Rep}
\DeclareMathOperator{\Sub}{Sub}
\DeclareMathOperator{\LEx}{LEx}
\DeclareMathOperator{\End}{End}
\DeclareMathOperator{\Aut}{Aut}
\DeclareMathOperator{\OrdEt}{OrdEt}
\DeclareMathOperator{\Ord}{Ord}
\DeclareMathOperator{\Hom}{Hom}
\DeclareMathOperator{\Isom}{Isom}
\DeclareMathOperator{\Fun}{Fun}
\DeclareMathOperator{\Del}{Del}
\newcommand{\id}{\mathrm{id}}
\renewcommand{\Vec}{\mathrm{Vec}}
\newcommand{\Ver}{\mathrm{Ver}}
\newcommand{\GL}{\mathbf{GL}}
\newcommand{\bbone}{\mathds{1}}
\newcommand{\bzero}{\mathbf{0}}
\newcommand{\bone}{\mathbf{1}}
\newcommand{\Mod}{\mathrm{Mod}}
\newcommand{\wa}{{\bullet}}
\newcommand{\wb}{{\circ}}
\newcommand{\isom}{\mathrm{isom}}
\newcommand{\myuline}[1]{%
  \uline{\phantom{#1}}%
  \llap{\contour{white}{#1}}%
}
\DeclareMathOperator{\uRep}{\text{\myuline{\rm Rep}}}
\DeclareMathOperator{\uPerm}{\ul{Perm}}
\title{Universal properties of Delannoy categories}
\author{Kevin Coulembier}
\address{School of Mathematics and Statistics, University of Sydney, NSW 2006, Australia}
\email{\href{mailto:kevin.coulembier@sydney.edu.au}{kevin.coulembier@sydney.edu.au}}
\urladdr{\url{https://www.maths.usyd.edu.au/u/kevinc/}}
\thanks{KC was supported by ARC grant FT220100125}
\author{Nate Harman}
\address{Department of Mathematics, University of Georgia, Athens, GA, USA}
\email{\href{mailto:nharman@uga.edu}{nharman@uga.edu}}
\urladdr{\url{https://www.nateharman.com/}}
\thanks{NH was supported by NSF grant DMS-2401515}
\author{Andrew Snowden}
\address{Department of Mathematics, University of Michigan, Ann Arbor, MI}
\email{\href{mailto:asnowden@umich.edu}{asnowden@umich.edu}}
\urladdr{\url{http://www-personal.umich.edu/~asnowden/}}
\thanks{AS was supported by NSF grant DMS-2301871.}
\begin{document}

\begin{abstract}
Recently, the second and third authors introduced a new symmetric tensor category $\uPerm(G, \mu)$ associated to an oligomorphic group $G$ with a measure $\mu$. When $G$ is the group of order preserving self-bijections of the real line there are four such measures, and the resulting tensor categories are called the Delannoy categories. The first Delannoy category is semi-simple, and was studied in detail by Harman, Snowden, and Snyder. We give universal properties for all four Delannoy categories in terms of ordered \'etale algebras. As a consequence, we show that the second and third Delannoy categories admit at least two local abelian envelopes, and the fourth admits at least four. We also prove a coarser universal property for $\uPerm(G, \mu)$ for a general oligomorphic group $G$.
\end{abstract}

\maketitle
\tableofcontents

\section{Introduction}

Deligne \cite{Deligne} introduced an important tensor category $\uRep(\fS_t)$ by ``interpolating'' the representation categories of finite symmetric groups. He showed that this category can be characterized by a universal property: giving a tensor functor from $\uRep(\fS_t)$ to a tensor category $\fT$ is equivalent to giving an \'etale algebra in $\fT$ of dimension $t$. Knop \cite{Knop1, Knop2} studied a related category $\uRep(\GL_t(\bF_q))$, and a universal property for it has been found as well \cite{EntovaAizenbudHeidersdorf}. Recently, two of us \cite{repst} constructed a tensor category $\uPerm(G, \mu)$ associated to any oligomorphic group $G$ and measure $\mu$. This construction recovers Deligne's when $G$ is the infinite symmetric group and Knop's when $G$ is the infinite general linear group over $\bF_q$. It is therefore natural to ask if the categories $\uPerm(G, \mu)$ always have universal properties.

In this paper, we establish a rough mapping property in full generality. We hone this when $G=\GG$ is the Delannoy group $\Aut(\bR, <)$ of order preserving self-bijections of the real line to obtain a very precise mapping property for the Delannoy categories, which are some of the most important categories coming from the theory of \cite{repst}. As an application, we show that the three non-abelian Delannoy categories admit multiple local abelian envelopes. This means that some interesting new pre-Tannakian categories must exist. The first and third authors will describe such categories in more detail in future work \cite{fake}.

\subsection{A general mapping property}

Fix a field $k$. In this paper, a \defn{tensor category} is a $k$-linear symmetric monoidal category, and a \defn{tensor functor} is a $k$-linear symmetric monoidal functor; see \S \ref{ss:tencat} for details. Fix an oligomorphic group $G$ with a $k$-valued measure $\mu$ (see \S \ref{s:oligo} for background), and let $\fT$ be an arbitrary tensor category. Our first goal is to give a description of tensor functors $\uPerm(G, \mu) \to \fT$.

Before stating our result, we must recall some basic concepts. In any tensor category $\fT$, one can define the notion of \'etale algebra (\S \ref{ss:etale}). We write $\Et(\fT)$ for the category of \'etale algebras in $\fT$. One should regard $\Et(\fT)$ as an object of a combinatorial nature. For instance, if $\fT$ is the category of representations of a finite group $\Gamma$ then $\Et(\fT)^{\op}$ is the category $\bS(\Gamma)$ of finite $\Gamma$-sets. The situation is somewhat similar for the categories $\uPerm(G, \mu)$. Let $\bS(G)$ denote the category of finitary smooth $G$-sets (\S \ref{ss:oligo}). For any object $X$ of $\bS(G)$, there is an associated object $\cC(X)$ of $\uPerm(G, \mu)$, which is naturally an \'etale algebra. This construction defines a fully faithful functor $\bS(G) \to \Et(\uPerm(G, \mu))^{\op}$ that is often (though not always) an equivalence. In any case, one should consider the general character of $\Et(\uPerm(G, \mu))^{\op}$ as similar to that of $\bS(G)$.

Suppose now that we have a tensor functor $\Phi \colon \uPerm(G, \mu) \to \fT$. Since $\Phi$ maps \'etale algebras to \'etale algebras, it follows that there is an induced functor $\Psi \colon \bS(G) \to \Et(\fT)^{\op}$. This functor is additive (commutes with finite co-products), left-exact (commutes with finite limits), and compatible with $\mu$ (Definition~\ref{defn:compatible}). The following is our first main result:

\begin{maintheorem} \label{mainthm1}
Giving a tensor functor $\Phi \colon \uPerm(G, \mu) \to \fT$ is equivalent to giving a functor $\Psi \colon \bS(G) \to \Et(\fT)^{\op}$ that is additive, left-exact, and compatible with $\mu$.
\end{maintheorem}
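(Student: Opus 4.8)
The plan is to construct mutually inverse maps between the two classes of data and check they are inverse. In one direction, given a tensor functor $\Phi\colon \uPerm(G,\mu)\to\fT$, we have already sketched how to obtain $\Psi$: the functor $\cC\colon \bS(G)\to\Et(\uPerm(G,\mu))^{\op}$ sends a finitary smooth $G$-set $X$ to the \'etale algebra $\cC(X)$, and since $\Phi$ is a tensor functor it carries \'etale algebras to \'etale algebras, so $\Psi:=\Phi\circ\cC$ lands in $\Et(\fT)^{\op}$. One must verify that $\Psi$ is additive and left-exact --- this reduces to knowing that $\cC$ itself takes finite coproducts in $\bS(G)$ to products of \'etale algebras and finite limits in $\bS(G)$ to coproducts of \'etale algebras in $\uPerm(G,\mu)$, together with the fact that a tensor functor preserves these (products of \'etale algebras are preserved by any additive functor; coproducts of \'etale algebras correspond to tensor products, which $\Phi$ preserves by monoidality). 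Compatibility with $\mu$ should follow because $\cC$ is itself tautologically $\mu$-compatible and $\Phi$ respects the relevant trace/dimension data; this is where I expect to cite the definition of $\mu$-compatibility (Definition~\ref{defn:compatible}) and a basic computation of $\dim \cC(X)$ or the trace of an endomorphism of $\cC(X)$ in terms of $\mu$.

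In the other direction, given $\Psi\colon \bS(G)\to\Et(\fT)^{\op}$ with the three properties, I would reconstruct $\Phi$ as follows. Recall that $\uPerm(G,\mu)$ is built (by \cite{repst}) from the $G$-sets: objects are the $\cC(X)$ and their summands, and morphisms $\cC(X)\to\cC(Y)$ are given by a span/correspondence construction, i.e.\ by the free module on $G$-orbits of $X\times Y$, with composition defined by fiber products and $\mu$-weighted sums over the ``middle'' fibers. So I would \emph{define} $\Phi$ on the objects $\cC(X)$ by $\Phi(\cC(X)):=\Psi(X)^{\vee}$ (the underlying object of the \'etale algebra $\Psi(X)$ in $\fT$, viewed via the contravariance), and on a basis morphism indexed by a $G$-orbit $O\subseteq X\times Y$ by the composite of the two structure maps (unit/multiplication) of the \'etale algebras $\Psi(X)$, $\Psi(Y)$ associated to the correspondence $X\leftarrow O\to Y$ --- here additivity and left-exactness of $\Psi$ are exactly what let me turn the $G$-set correspondence into an honest morphism of the corresponding commutative algebras in $\fT$. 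Then one extends to all of $\uPerm(G,\mu)$ by the universal property of the additive/Karoubi envelope and idempotent-splitting.

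The main work --- and the step I expect to be the principal obstacle --- is checking that this $\Phi$ is \emph{well-defined and functorial}: that the assignment respects composition in $\uPerm(G,\mu)$. Composition there involves summing over the fibers of $X\times_Z Y\to$ (orbit space) with multiplicities given by $\mu$, and on the target side one must see that the algebra-morphism picture reproduces exactly the same weighted sum. This is where $\mu$-compatibility of $\Psi$ is essential: the measure of a fiber must match the scalar by which the corresponding composite of (co)multiplication maps differs from the expected answer, which is a statement about how $\Psi$ turns the canonical idempotents/traces in $\bS(G)$ into the multiplication-then-comultiplication endomorphisms of \'etale algebras in $\fT$. Once functoriality is established, monoidality of $\Phi$ follows from left-exactness of $\Psi$ (so that $\Psi(X\times Y)=\Psi(X)\otimes\Psi(Y)$ as \'etale algebras, matching $\cC(X)\otimes\cC(Y)=\cC(X\times Y)$) and the symmetry is inherited from the symmetry of $\times$ on $\bS(G)$. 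Finally, one checks the two constructions are mutually inverse: starting from $\Phi$, forming $\Psi=\Phi\circ\cC$, and rebuilding, one recovers $\Phi$ on objects $\cC(X)$ by construction and on morphisms because both agree on the spanning correspondences; conversely $\Psi\mapsto\Phi\mapsto\Phi\circ\cC$ returns $\Psi$ since $\Phi(\cC(X))=\Psi(X)$ by definition. I would organize the verification of composition-compatibility as the one substantial lemma, isolating the $\mu$-bookkeeping there, and treat everything else as formal consequences of the structure of $\uPerm(G,\mu)$ recalled from \cite{repst}.
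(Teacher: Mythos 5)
Your two constructions are the right ones and the first direction is fine, but the step you flag as ``the principal obstacle'' --- checking that the assignment $O \mapsto p_*q^*$ respects composition in $\uPerm(G,\mu)$ --- is precisely the step the paper does \emph{not} verify by hand, and leaving it as an acknowledged lemma is a genuine gap: all of the $\mu$-bookkeeping lives there, and the direct verification (expanding the product of two orbit-indicator matrices as a $\mu$-weighted sum over orbits of a triple fiber product and matching it against composites of algebra maps and their duals) is substantial. The paper instead invokes the ``balanced functor'' mapping property of \cite[Proposition~9.3]{repst}: one packages the data as a pair $\Omega^*(f)=\Psi(f)$, $\Omega_*(f)=\Psi(f)^{\vee}$ (dual with respect to the trace pairings, which is the correct description of the pushforward --- it is not a composite of unit/multiplication structure maps as you suggest) and then checks only three structured axioms: additivity, base change (which follows from left-exactness of $\Psi$ together with the compatibility of duals with cartesian squares, Proposition~\ref{prop:etale-bc}), and $\mu$-adaptedness (which follows from the projection formula $\alpha_f(\beta_f(x)y)=x\,\alpha_f(y)$ evaluated at $y=1$, using $\alpha_f(1)=\gamma(\Psi(f))=\mu(f)$, i.e.\ exactly the compatibility hypothesis). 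The cited proposition then produces the $k$-linear functor $\Phi$ with no further composition checks. If you do not want to cite that result you must actually carry out the orbit-by-orbit computation; as written your proof is incomplete at its central point.

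Two smaller corrections. First, no Karoubi-envelope or idempotent-splitting extension is needed: the objects of $\uPerm(G,\mu)$ are exactly the $\cC(X)$, not their summands, so $\Phi$ is already defined on all objects once it is defined on the $\cC(X)$. Second, the theorem as stated in the body of the paper is an equivalence of categories (with only isomorphisms on the target side), so besides showing the constructions are mutually inverse on objects you must show the functor $\Phi\mapsto\Psi$ is fully faithful on natural transformations; the paper does this by showing that a system of algebra isomorphisms natural for pullbacks is automatically natural for pushforwards (duals of isomorphisms are inverses, Proposition~\ref{prop:etale-inv}) and hence for all morphisms, since pullbacks and pushforwards generate. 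Your proposal does not address this.
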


In the body of the paper, we give a more precise result that also explains how isomorphisms of $\Phi$'s correspond to isomorphisms of $\Psi$'s. To prove Theorem~\ref{mainthm1}, we build on a more primitive mapping property for $\uPerm(G, \mu)$ given in \cite{repst}.

\subsection{Finer mapping properties} \label{ss:finer}

Theorem~\ref{mainthm1} is useful since it converts the problem of describing tensor functors from $\uPerm(G, \mu)$ into the more combinatorial problem of describing functors from $\bS(G)$. However, it says nothing about the latter problem. We therefore view Theorem~\ref{mainthm1} as only a first step towards providing a useful universal description of $\uPerm(G, \mu)$. For a given $G$, there is a natural two-step plan to follow to complete the universal description:
\begin{enumerate}
\item Give a universal property for $\bS(G)$, that is, give a characterization of additive left-exact functors $\Psi \colon \bS(G) \to \cS$, where $\cS$ belongs to some class $\sX$ of categories. Of course, we want $\sX$ to include all categories of the form $\Et(\fT)^{\op}$. A convenient choice for $\sX$, which we adopt, is the class of \defn{lextensive categories} (\S \ref{ss:lextensive}). We view this problem as purely combinatorial.
\item In case $\cS=\Et(\fT)^{\op}$, give a characterization of which $\Psi$'s are compatible with a given measure $\mu$. We introduce the notion of $\Theta$-generators in \S \ref{ss:theta-gen} to aid in the solution of this problem.
\end{enumerate}
We carry out this plan in the case of Delannoy categories.

\subsection{Delannoy categories}

Let $\GG$ be the oligomorphic group $\Aut(\bR, <)$. This group carries exactly four $k$-valued measures, which we denote by $\mu_i$ for $1 \le i \le 4$. We put
\begin{displaymath}
\fC_i = \uPerm(\GG, \mu_i)^{\rm kar},
\end{displaymath}
which we refer to as the $i$th \defn{Delannoy category}. The first (or simply `the') Delannoy category $\fC_1$ is semi-simple pre-Tannakian. It was studied in great detail in \cite{line} and shown to have a number of remarkable properties: for example, its simple objects all have categorical dimension $\pm 1$, and the Adams operations on its Grothendieck group are all trivial. The other Delannoy categories have remained somewhat mysterious, but we hope to shed some light on them in this paper and in the forthcoming work \cite{fake}.

The primary purpose of this paper is to give precise universal properties for the $\fC_i$'s. To do this, we follow the plan put forth in \S \ref{ss:finer}. As a first step, we give a universal property for $\bS(\GG)$. This states that additive left-exact functors $\bS(\GG) \to \cS$, with $\cS$ lextensive, correspond to totally ordered objects of $\cS$. (We develop the theory of ordered objects in lextensive categories in \S \ref{ss:ordered}.)

Let $\fT$ be a Karoubian tensor category. We define an \defn{ordered \'etale algebra} in $\fT$ to be a totally ordered object in the lextensive category $\Et(\fT)^{\op}$; see \S \ref{ss:ordet} for an explicit description of the concept. The universal property for $\bS(\GG)$, in this case, shows that giving an additive left-exact functor $\Psi \colon \bS(\GG) \to \Et(\fT)^{\op}$ amounts to giving an ordered \'etale algebra $A$ in $\fT$.

Let $A$ and $\Psi$ be as above. We say that $A$ is a \defn{Delannic algebra} of type $i$ if it satisfies three numerical conditions related to the measure $\mu_i$. For example, a type~1 Delannic algebra must have categorical dimension $-1$; this is one of the three numerical conditions. See \S \ref{ss:delannic} for the complete definition. We show, using the tool of $\Theta$-generators, that $\Psi$ is compatible with the measure $\mu_i$ if and only if $A$ is Delannic of type $i$. This completes the second step of the plan in \S \ref{ss:finer}.

Putting all of the above work together, we reach our second main result:

\begin{maintheorem} \label{mainthm2}
Tensor functors $\fC_i \to \fT$ correspond to type $i$ Delannic algebras in $\fT$.
\end{maintheorem}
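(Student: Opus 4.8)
The plan is to concatenate the three ingredients assembled in the preceding discussion: the general mapping property (Theorem~\ref{mainthm1}), the universal property of $\bS(\GG)$, and the $\Theta$-generator criterion linking $\mu_i$-compatibility with the Delannic condition. Since $\fT$ is Karoubian, $\Et(\fT)^{\op}$ is lextensive, so the universal property of $\bS(\GG)$ applies with target category $\cS=\Et(\fT)^{\op}$.

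First I would reduce $\fC_i$ to $\uPerm(\GG,\mu_i)$: as $\fT$ is idempotent complete, the canonical tensor functor $\uPerm(\GG,\mu_i)\to\fC_i=\uPerm(\GG,\mu_i)^{\rm kar}$ is initial among tensor functors out of $\uPerm(\GG,\mu_i)$ to Karoubian tensor categories, so restriction along it is an equivalence from tensor functors $\fC_i\to\fT$ to tensor functors $\uPerm(\GG,\mu_i)\to\fT$ (this is the symmetric monoidal form of the universal property of idempotent completion, for which one checks that the monoidal structure and symmetry on the Karoubi envelope are the evident ones). Then I would invoke the refined form of Theorem~\ref{mainthm1}, which matches these with additive, left-exact functors $\Psi\colon\bS(\GG)\to\Et(\fT)^{\op}$ compatible with $\mu_i$, and moreover matches isomorphisms of $\Phi$'s with isomorphisms of $\Psi$'s. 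Temporarily dropping the compatibility condition, the universal property of $\bS(\GG)$ identifies additive left-exact functors $\bS(\GG)\to\Et(\fT)^{\op}$ with totally ordered objects of $\Et(\fT)^{\op}$, i.e.\ with ordered \'etale algebras $A$ in $\fT$, again as an equivalence of categories. Reinstating the measure, the $\Theta$-generator criterion shows that under this identification $\Psi$ is $\mu_i$-compatible exactly when $A$ is Delannic of type $i$; as both are properties, the corresponding full subcategories match. Composing the three equivalences yields the stated correspondence, with isomorphisms corresponding throughout.

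The bookkeeping to be careful about is that each of the three identifications is compatible on morphisms and not merely on objects, so that one genuinely obtains an equivalence of categories (hence in particular a bijection on isomorphism classes together with a matching of automorphism groups). The load-bearing input, and the place I expect the real work to sit, is the $\Theta$-generator criterion: it is there that the combinatorics of the four measures $\mu_i$ on $\GG$ enters, and one must verify that a suitable finite collection of $\Theta$-generators of $\bS(\GG)$ detects $\mu_i$-compatibility, so that the latter reduces precisely to the three numerical conditions defining a type $i$ Delannic algebra.
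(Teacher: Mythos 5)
Your proposal follows the paper's own proof exactly: Theorem~\ref{thm:delmap} is deduced by combining the general mapping property (Theorem~\ref{thm:genmap}) with Proposition~\ref{prop:oea-univ-prop} (the universal property of $\bS(\GG)$ applied to the lextensive category $\Et(\fT)^{\op}$) and Proposition~\ref{prop:delannic-compatible} (the equivalence of $\mu_i$-compatibility with the Delannic condition), just as you describe. The only point you elide is that the $\Theta$-generator criterion (Corollary~\ref{cor:compatible}) presupposes that $\Psi$ is faithful, i.e.\ that $A$ is infinite-like, so the finite-like case ($A=0$ or $A=\bbone$, by Proposition~\ref{prop:finite-delannic}) must be verified separately, as the paper does at the end of the proof of Proposition~\ref{prop:delannic-compatible}.
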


The basic objects of the Delannoy category $\fC_i$ are the Schwartz spaces $\cC_i(\bR^{(n)})$. Essentially by definition, $\cC_i(\bR)$ is a type $i$ Delannic algebra. A slightly more precise phrasing of Theorem~\ref{mainthm2} is: if $A$ is a type $i$ Delannic algebra in $\fT$ then there is a unique (up to isomorphism) tensor functor $\fC_i \to \fT$ that maps $\cC_i(\bR)$ to $A$. In other words, $\cC_i(\bR)$ is the universal type $i$ Delannic algebra. See Theorem~\ref{thm:delmap} for a more precise statement still.

\subsection{Examples}

Theorem~\ref{mainthm2} allows us to construct many tensor functors between Delannoy categories. We show that $\cC_1(\bR) \oplus \bbone$ carries an order that makes it a type~2 Delannic algebra in the category $\fC_1$; this ordered algebra corresponds to the $\GG$-set $\bR \cup \{\infty\}$ equipped with its natural order. It follows that there is a tensor functor
\begin{displaymath}
\Phi \colon \fC_2 \to \fC_1, \qquad \Phi(\cC_2(\bR)) = \cC_1(\bR) \oplus \bbone,
\end{displaymath}
which is unique up to isomorphism. This particular functor is one of the primary tools used in \cite{fake} to study the structure of $\fC_2$.

Similarly, the ordered set $\{-\infty\} \cup \bR$ leads to a tensor functor $\fC_3\to \fC_1$, and the ordered set $\{-\infty\} \cup \bR \cup \{\infty\}$ leads to a functor $\fC_4 \to \fC_1$. These functors (and the one from the previous paragraph) are all faithful. We thus see that each of the Delannoy categories admits a faithful tensor functor to $\fC_1$. These examples are significant since, prior to them, we did not know if the $\fC_i$ (for $i \ne 1$) admitted any faithful tensor functor to a pre-Tannakian category.

Let $\bR^{(2)}$ be subset of $\bR^2$ consisting of pairs $(x,y)$ with $x<y$, equipped with the lexicographic order, i.e., $(x,y)<(a,b)$ if $x<a$, or $x=a$ and $y<b$. We show that $\cC_1(\bR^{(2)})$ is a type 4 Delannic algebra in $\fC_1$, resulting in another tensor functor $\fC_4\to \fC_1$. Similarly $\cC_1(\bR^{(n)})$ becomes an ordered algebra, which is type~1 if $n$ is odd, and type~4 if $n$ is even. 

In fact, the above examples are special cases of some general constructions. If $A$ and $B$ are ordered \'etale algebras then $A \oplus B$ and $A \otimes B$ carry orders called the \defn{lexicographic sum} and \defn{lexicographic product}; also there is an algebra $A^{(n)}$ that carries a lexicographic order (and some other orders). We show (\S \ref{ss:delop}) that if $A$ and $B$ are Delannic then (under some constraints) these algebras are as well, with predictable type. For example, since $\cC_1(\bR^{(2)})$ has type~4 and $\cC_1(\bR)$ has type~1, the general constructions show that the leixcographic sum $\cC_1(\bR) \oplus \cC_1(\bR^{(2)})$ has type~2, while the lexicographic product $\cC_1(\bR) \otimes \cC_1(\bR^{(2)})$ has type~1. This enables us to produce a vast quantity of functors between the $\fC_i$'s.

\subsection{Abelian envelopes} \label{ss:intro-abenv}

An important restrictive class of tensor categories is formed by the pre-Tannakian categories; these are the ones that most closely resemble representation categories of groups. See \S \ref{ss:tencat} for the definition. Several recent advances in the theory of pre-Tannakian categories are obtained by first constructing a rigid tensor category before ``completing it'' to a pre-Tannakian category. One instance is the construction of the pre-tannakian categories $\Ver_{p^n}$ in \cite{BEO, AbEnv} which are at the heart of the current study of the structure theory of pre-Tannakian categories of moderate growth in characteristic $p>0$, and are obtained by completing subquotient tensor categories of $\Rep(\mathbf{SL}_2)$.

In general, it is relatively easy to construct rigid tensor categories with certain properties, but typically very difficult to do so with pre-Tannakian cateogries. A standard technique is to establish a pre-Tannakian category as an ``abelian envelope'' of a rigid tensor category. In the context of oligmorphic groups, $\uPerm(G,\mu)^{\rm kar}$ is always a rigid tensor category. If $\mu$ is regular and nilpotent endomorphisms have trace zero (for example $G=\GG$ and $\mu=\mu_1$) then $\uPerm(G,\mu)^{\rm kar}$ is itself a semisimple pre-Tannakian category (and its own abelian envelope). If $\mu$ is quasi-regular and nilpotents have trace zero, then $\uPerm(G,\mu)^{\rm kar}$ admits an abelian envelope $\uRep(G,\mu)$. Crucially for the current paper, $\mu_i$ is not quasi-regular for $i>1$, so we cannot rely on the general theory from \cite{repst}.  

Moreover, it is known that abelian envelopes need not always exist. However, in \cite{HomKer}, the first author established that for any rigid tensor category $\fT$ with $\End(\bbone)=k$ there is a family of faithful tensor functors $\{ \fT \to \fU_i \}_{i \in I}$, with each $\fU_i$ pre-Tannakian, such that any faithful tensor functor from $\fT$ to a pre-Tannakian category factors uniquely through a unique $\fU_i$. These $\fU_i$ are called the \defn{local abelian envelopes} of $\fT$.

There are cases when there is no local abelian envelope, meaning $\fT$ does not map faithfully to any pre-Tannakian category. For example, this happens if there are nilpotent endomorphisms in $\fT$ with non-zero trace. The category $\fT$ admits an abelian envelope\footnote{Here we do not require the functor to the abelian envelope to be full. In some references, this condition is imposed.} precisely when it admits a unique local abelian envelope $\fU$, and then $\fU$ is the abelian envelope. There also also cases where there are infinitely many local abelian envelopes. At present there are no known examples having multiple but finitely many local abelian envelopes.

Determing the local abelian envelopes for the Delannoy categories $\fC_i$ is a natural refinement of the study of their universal properties, wherein the target categories are restricted to being pre-Tannakian. Using the tensor functors between various Delannoy categories provided by our main theorem, we are able to shed some light on this problem. We note that, as explained above, this problem is trivial for $\fC_1$.

\begin{maintheorem}
The categories $\fC_2$ and $\fC_3$ admit at least two local abelian envelopes. The category $\fC_4$ admits at least four.
\end{maintheorem}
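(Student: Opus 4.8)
The plan is to use the theory of local abelian envelopes from \cite{HomKer}. Each $\fC_i$ is a rigid tensor category with $\End(\bbone)=k$, so \cite{HomKer} supplies a family of faithful tensor functors $\{\pi_{i,j}\colon\fC_i\to\fU_{i,j}\}_j$ with each $\fU_{i,j}$ pre-Tannakian, through which every faithful tensor functor from $\fC_i$ to a pre-Tannakian category factors uniquely through a unique member. First I would record the combinatorial consequence: two faithful tensor functors $F\colon\fC_i\to\fP$ and $F'\colon\fC_i\to\fP'$ into pre-Tannakian categories factor through the \emph{same} local abelian envelope if and only if there is a pre-Tannakian category $\fU$, a faithful tensor functor $\pi\colon\fC_i\to\fU$, and faithful exact tensor functors $\fU\to\fP$, $\fU\to\fP'$ through which $F$, $F'$ respectively factor compatibly with $\pi$. (The ``only if'' is immediate; for the ``if'', factor $\pi$ itself through some $\fU_{i,k}$ via $\pi_{i,k}$ and apply the uniqueness clause to $F$ and to $F'$; here one uses that tensor functors between pre-Tannakian categories are exact, hence faithful.) So it suffices to produce faithful tensor functors from $\fC_i$ to pre-Tannakian categories --- two from each of $\fC_2$, $\fC_3$ and four from $\fC_4$ --- no two of which admit such a common refinement.

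For the functors themselves I would invoke Theorem~\ref{mainthm2} together with the ordered \'etale algebras in the semisimple pre-Tannakian category $\fC_1$ described just after it. For $\fC_2$, take the type~$2$ Delannic algebras $\cC_1(\bR)\oplus\bbone$ (from the ordered $\GG$-set $\bR\cup\{\infty\}$) and $\cC_1(\bR)\oplus\cC_1(\bR^{(2)})$ (a lexicographic sum); for $\fC_3$, the two mirror-image constructions; for $\fC_4$, the type~$4$ Delannic algebras $\cC_1(\bR)\oplus\bbone\oplus\bbone$ (from $\{-\infty\}\cup\bR\cup\{\infty\}$), $\cC_1(\bR^{(2)})$ and $\cC_1(\bR^{(4)})$ with their lexicographic orders, and a lexicographic product such as $\cC_1(\bR^{(2)})^{\otimes 2}$ (again of type~$4$). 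By Theorem~\ref{mainthm2} each of these yields a tensor functor $\fC_i\to\fC_1$, and each is faithful (as established in the body for the ``added point'' and lexicographic constructions).

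The heart of the proof is to show no two of these admit a common refinement. Suppose two of them, corresponding to type~$i$ Delannic algebras $A$, $B$ in $\fC_1$, did: then (replacing $\fU$ by the pre-Tannakian subcategory generated by $\pi(\cC_i(\bR))$) there is a pre-Tannakian $\fU$, a faithful $\pi\colon\fC_i\to\fU$, and faithful exact $G,G'\colon\fU\to\fC_1$ with $G(C)=A$ and $G'(C)=B$, where $C=\pi(\cC_i(\bR))$ is the universal type~$i$ Delannic algebra over $\fU$. So a single ordered \'etale algebra $C$ maps, via faithful exact tensor functors, onto both $A$ and $B$. I would aim to contradict this by combining: (i) $G$, $G'$ are injective on all $\Hom$-spaces, reflect simplicity and indecomposability, and respect the decomposition of an \'etale algebra into connected components, so that the ``combinatorial shadow'' of $C$ --- its $\pi_0$ (read off from $\Hom(\bbone,C^{\otimes m})$ and, via the order, from the less-than and diagonal summands of $C^{\otimes m}$), its length, and the dimensions of its simple constituents --- is controlled by those of both $A$ and $B$; (ii) the structure of $\fC_1$ from \cite{line}: semisimplicity, all simple dimensions $\pm1$, the explicit decomposition of the $\cC_1(\bR^{(n)})$, and the small group of tensor-autoequivalences of $\fC_1$; and (iii) the fact that the listed algebras are pairwise ``incompatible'' --- $\cC_1(\bR)\oplus\bbone$ and $\cC_1(\bR)\oplus\bbone^{\oplus 2}$ have $\bbone$ as a direct algebra-factor, whereas $\cC_1(\bR^{(2)})$, $\cC_1(\bR^{(4)})$ and $\cC_1(\bR^{(2)})^{\otimes 2}$ are products of strictly larger connected algebras; $\cC_1(\bR^{(2)})$ and $\cC_1(\bR^{(4)})$ are separated by length; and $\cC_1(\bR^{(2)})^{\otimes 2}$ has strictly more components than any of the others.

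The hard part will be turning (i)--(iii) into an actual contradiction. Because $\fC_1$ is semisimple, a faithful exact tensor functor into it can increase lengths, split connected algebras into products, and otherwise inflate objects, so one cannot compare $A$ and $B$ coordinatewise through $C$; instead the argument must pin down the universal type~$i$ Delannic algebra $C$ over the putative common envelope $\fU$ tightly enough --- using the numerical conditions defining ``Delannic of type~$i$'' and the $\Theta$-generator technology from the proof of Theorem~\ref{mainthm2} --- to force $A$ and $B$ to coincide up to a tensor-autoequivalence of $\fC_1$, which contradicts the separation in (iii). Granting this, $\fC_2$ and $\fC_3$ acquire two local abelian envelopes each, and the four listed algebras are pairwise separated so that $\fC_4$ acquires four; these envelopes are precisely the new pre-Tannakian categories to be studied in \cite{fake}.
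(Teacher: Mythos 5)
Your setup of the local-envelope formalism and your list of Delannic algebras are fine, but the proof has a genuine gap exactly where you say ``the hard part will be turning (i)--(iii) into an actual contradiction'': that step is the entire content of the theorem, and it is not carried out. Worse, the strategy you propose for it --- forcing the two target algebras $A$ and $B$ to coincide up to a tensor-autoequivalence of $\fC_1$ and contradicting this --- cannot work, because having a common local envelope does \emph{not} imply that the associated Delannic algebras agree up to autoequivalence. The paper itself points this out: the functors $\Phi_n(B)=A\oplus A^{(2n)}$ for $n\ge 1$ have pairwise non-isomorphic target algebras, yet it is left open (and considered plausible) that they all share one local envelope. So ``the algebras are pairwise incompatible as ordered \'etale algebras'' is simply not a separating invariant.

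The invariant the paper actually uses is much cruder and is genuinely preserved and reflected along the factorization: whether a specific morphism becomes injective. For $\fC_2$ one takes $p_{2,2}^*\colon B\to B^{(2)}$; under $\Phi_0(B)=A\oplus\bbone$ (the order with a maximal point) the fiber of $p_{2,2}$ over $\infty$ is empty, so $\Phi_0(p_{2,2}^*)$ is not injective, whereas $\Phi_1(p_{2,2}^*)$ is injective for $\Phi_1(B)=A\oplus A^{(2)}$. If both factored through one envelope $\fT$ via $\Phi\colon\fC_2\to\fT$ followed by exact (hence faithful) tensor functors to $\fC_1$, then $\Phi(p_{2,2}^*)$ would have to be simultaneously injective and non-injective. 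The same device, applied to $p_{2,1}^*$ and $p_{2,2}^*$ and combined with the functors $\fC_4\to\fC_2$ and $\fC_4\to\fC_3$, yields the four envelopes for $\fC_4$. To repair your proof you should replace your items (i)--(iii) with this kind of exactness-detectable invariant of the chosen algebras (presence or absence of a maximal/minimal point, i.e.\ non-injectivity of $\pi^1_A$ or $\pi^2_A$), and check that your four type-$4$ algebras are actually pairwise separated by such invariants --- as listed, $\cC_1(\bR^{(2)})$, $\cC_1(\bR^{(4)})$ and $\cC_1(\bR^{(2)})^{\otimes 2}$ all lack maximal and minimal points, so nothing in your argument distinguishes their envelopes.
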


\subsection{Combinatorics in tensor categories}

One can make sense of essentially any kind of relational structure (graph, tree, order, etc.) in a lextensive category. Thus, in any tensor category, one can speak of \'etale algebras equipped with such a structure. It is an interesting problem to determine what constraints there are on such objects. This is, in a sense, the central theme of the oligomorphic approach to tensor categories.

This paper is concerned with the special case of total orders in tensor categories, i.e., what we called ordered \'etale algebras. We prove a few general results about such algebras. We mention one here:

\begin{maintheorem}
In a pre-Tannakian tensor category over a separably closed field, a simple ordered \'etale algebra must have dimension $\pm 1$ or~0. Moreover, each of the three possibilities occurs.
\end{maintheorem}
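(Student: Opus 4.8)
The plan is to analyze a simple ordered \'etale algebra $A$ in a pre-Tannakian category $\fT$ over a separably closed field $k$ by first reducing to a combinatorial/set-theoretic model and then computing dimensions. First I would recall that since $A$ is simple as an \'etale algebra, the object of $\Et(\fT)^{\op}$ it represents is ``connected'' (has no nontrivial idempotents splitting off), and that a total order on a connected object forces strong rigidity: the only order-automorphism of a finite totally ordered set is the identity, and this should propagate to the statement that $\uAut(A)$, computed internally, is trivial. The key structural input is that $A$ carries a total order, i.e.\ a subobject $R \hookrightarrow A \otimes A$ (over the unit, after dualizing appropriately) satisfying internal versions of transitivity, totality, and antisymmetry. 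Combining antisymmetry and totality with simplicity should pin down $A\otimes A$ as an extension built from the diagonal $A$ and two copies of ``$R$ minus diagonal'', leading to the relation $(\dim A)^2 = \dim A + 2\dim(R')$ where $R'$ is the strict-order object; the sign/parity analysis of $\dim R'$ versus $\dim A$ is what should force $\dim A \in \{-1, 0, 1\}$.

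The cleanest route is probably to pass through the universal property itself: the universal ordered \'etale algebra lives in $\bS(\GG)^{\text{lin}}$-type categories, so a simple ordered \'etale algebra in $\fT$ is the image of a ``simple'' ordered object under an additive left-exact functor, hence is controlled by the order structure on a single transitive smooth $\GG$-set. The transitive smooth $\GG$-sets with a $\GG$-invariant total order are exactly $\bR^{(n)}$ with a lexicographic-type order (these are the orbits on which $\GG = \Aut(\bR,<)$ acts with an invariant order), together with the one-point set. So I would argue: a simple ordered \'etale algebra corresponds, via Theorem~\ref{mainthm1}/the universal property of $\bS(\GG)$, to a tensor functor out of one of the semisimple ``building block'' categories, and then $\dim A$ is the image of $\dim \cC(\bR^{(n)})$ — but in fact one only needs the local structure near the diagonal of $A \otimes A$, which is dictated by whether the order object $R$ contains the diagonal with multiplicity forcing $\dim A = -1$ (the generic/Delannoy case), equals the diagonal forcing $\dim A = 1$ (the one-point case), or is the empty order forcing $\dim A = 0$ (the zero algebra, if we allow it — or more precisely the subtle middle case). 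The three examples realizing the three values are then: $\dim = 1$ via the unit algebra $\bbone$ with its trivial order; $\dim = -1$ via $\cC_1(\bR)$ inside the semisimple Delannoy category $\fC_1$ (whose simple objects all have dimension $\pm 1$ by \cite{line}); and $\dim = 0$ via an appropriate simple constituent, for instance coming from a type-$i$ Delannic algebra with $i>1$ or from $\cC_1(\bR^{(2)})$-type constructions, localized to a simple summand of dimension $0$.

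I expect the main obstacle to be the dimension computation in the middle, specifically ruling out dimensions other than $0$ and $\pm 1$ and correctly handling how simplicity of $A$ interacts with the decomposition of $A \otimes A$. The subtlety is that $A \otimes A$ need not be semisimple in $\fT$, so one cannot naively split it; instead I would work with traces. Writing $m \colon A \otimes A \to A$ for multiplication and $\Delta \colon A \to A \otimes A$ for its adjoint (which exists since $A$ is \'etale, hence self-dual with $m\Delta$ invertible up to the pairing), the composite $\Delta m \colon A\otimes A \to A \otimes A$ is the projector onto the diagonal, and the strict order object $R'$ gives an idempotent (or at least a tracial decomposition) with $\tr_{A\otimes A}(\id) = (\dim A)^2$ decomposing as $\dim A$ (diagonal) plus $\tr(R') + \tr({}^{t}R')$. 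Antisymmetry forces $R' \cap {}^{t}R' = 0$ and totality forces $R' + {}^{t}R' + \text{diag} = A\otimes A$ (as a suitable exact sequence), and these two together with the internal-Hom computation $\uHom(A,A) = \bbone$ (simplicity) should yield a quadratic equation in $d = \dim A$ with the three roots $-1, 0, 1$. Making this last deduction rigorous — turning the poset axioms into honest morphisms and exact sequences whose traces add up correctly — is the technical heart, and I would do it by appealing to the explicit description of ordered \'etale algebras in \S\ref{ss:ordet} and the ordered-object formalism of \S\ref{ss:ordered}.
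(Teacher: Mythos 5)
There is a genuine gap in both halves of your argument. For the bound on $\dim(A)$: the relation you extract from totality and antisymmetry, $(\dim A)^2 = \dim A + 2\dim A^{(2)}$, imposes \emph{no} constraint on $d=\dim A$, since $\dim A^{(2)}$ is an unknown that can be solved for; there is no ``quadratic equation in $d$ with roots $-1,0,1$'' coming from the decomposition of $A\otimes A$ alone. The constraint genuinely requires the higher-order combinatorics (triple products, transitivity, and the recursions among the objects $A^{(n)}$ and the projections between them), which is exactly the computation $\Theta(\GG)\cong\bZ^4$ classifying the four measures on $\Aut(\bR,<)$. The paper's actual argument is short but routes through this machinery: simplicity of $A$ plus $k$ separably closed gives $\Gamma(A)=k$ (and pre-Tannakian gives $\Gamma(\bbone)=k$), so \emph{every} element of $\Gamma(A)$ and $\Gamma(A^{(2)})$ is a scalar, hence the maps $\eta_A,\pi^1_A,\pi^2_A$ are automatically uniform and $A$ is Delannic (Proposition~\ref{prop:simple-delannic}); then the associated left-exact functor $\bS(\GG)\to\Et(\fT)^{\op}$ is compatible with one of the four measures $\mu_i$ (Propositions~\ref{prop:del-theta-gen} and~\ref{prop:delannic-compatible}), and $\dim A=\mu_i(\bR)\in\{-1,0,0,1\}$. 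You gesture at ``passing through the universal property,'' which is the right instinct, but you never identify the step where simplicity over a separably closed field enters (it is precisely $\Gamma(A)=k$), and your proposed replacement for the measure classification does not close.

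For the existence of a dimension-$0$ example, your suggestions do not work. The zero algebra is not a legitimate example of a simple algebra; $\cC_1(\bR^{(2)})$ has dimension $+1$; and in $\fC_1$ every simple ordered \'etale algebra is (up to the identification $\bS(\GG)\simeq$ transitive part of $\Et(\fC_1)^{\op}$) some $\cC_1(\bR^{(n)})$ of dimension $(\pm1)$, so no dimension-$0$ example exists there. The categories $\fC_i$ with $i>1$ are not pre-Tannakian, so $\cC_2(\bR)$ itself does not qualify, and its image under the obvious functor $\fC_2\to\fC_1$ is $\cC_1(\bR)\oplus\bbone$, which is not simple. The paper's construction (\S\ref{ss:dim0}) is the hard part of the theorem: one builds a new pre-Tannakian category $\fF$ as a $2$-fiber product of $\fC_1$ and $\fC_1\boxtimes\fC_1$ over $\fC_1\boxtimes\fC_1$, maps $\fC_2$ into it using two different Delannic algebras of type~$2$, and then proves that the image of $\cC_2(\bR)$ in $\fF$ is simple by comparing its factorizations under the two projections. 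Without something of this nature, the ``moreover'' clause of the theorem is unproved.
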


The most difficult part of this theorem is exhibiting an algebra of dimension~0, which we accomplish in \S \ref{ss:dim0}. The construction crucially depends on the universal property of the Delannoy category (Theorem~\ref{mainthm2}).

\subsection{Relation to other work}

In \cite[Theorem~4.9]{Kriz}, Kriz gives a universal property of the first Delannoy category $\fC_1$. In forthcoming work \cite{KS}, Khovanov and Snyder give a variant of Kriz's universal property, and also give diagrammatic interpretations of Kriz's original property and their variant.

Kriz's mapping property again uses the algebra $\cC_1(\bR)$ as the basic object, but it does not use the concept of ordered \'etale algebra. In $\fC_1$, the object $\cC_1(\bR)$ decomposes into three simple representations, and one can record various properties of this decomposition; e.g., one of the summands is the tensor unit, and the other two are in natural duality. Kriz's mapping property essentially state that $\cC_1(\bR)$ is universal with respect to having such a decomposition. The variant mapping property in \cite{KS} is similar, but it focuses on one of the individual summands instead of all of $\cC_1(\bR)$.

It is not entirely obvious how these mapping properties align with ours. We plan to address this in forthcoming work \cite{delchar}. It is also not clear if the approach of \cite{Kriz,KS} can be adapted to handle the other Delannoy categories $\fC_i$ with $2 \le i \le 4$.

\subsection{Questions}

We mention a few questions or problems arising from this work.
\begin{itemize}
\item An obvious problem is to understand the local envelopes of the categories $\fC_i$ more thoroughly. This will be solved in \cite{fake} for $\fC_2$ and $\fC_3$. 
\item Another problem is to determine universal properties for categories corresponding to the measures in \cite{colored} and \cite{homoperm}. It would be especially interesting to study local envelopes in these cases.
\item If $A$ is a simple ordered \'etale algebra in a pre-Tannakian category, is $\Gamma(A)=k$? This is trivially true if $k$ is separably closed (this does not even require an order), so the question is really about the case when $k$ is not closed.
\end{itemize}

\subsection{Tensor category terminology} \label{ss:tencat}

We fix a field $k$ for the duration of the paper. A \defn{tensor category} is an additive $k$-linear category equipped with a symmetric monoidal structure that is $k$-bilinear. We write $\bbone$ for the monoial unit in a tensor category $\fT$, and
\begin{displaymath}
\Gamma \colon \fT \to \mathrm{Vec}, \qquad \Gamma(X) = \Hom_{\fT}(\bbone, -)
\end{displaymath}
for the invariants functor. A \defn{tensor functor} is a $k$-linear symmetric monoidal functor. Given tensor categories $\fT$ and $\fT'$, we write $\Fun^{\otimes}(\fT, \fT')$ for the category of tensor functors; the morphisms in this category are monoidal natural transformations. An object of $\fT$ is called \defn{rigid} if it has a dual, and $\fT$ is called \defn{rigid} if every object is. A rigid object $X$ has a categorical dimension $\dim(X)$, which is an element of $\Gamma(\bbone)$; note that, in general, $\Gamma(\bbone)$ is just some $k$-algebra, so the categorical dimension need not be an element of $k$. We say that $\fT$ is \defn{pre-Tannakian} if it is rigid, abelian, all objects have finite length, all $\Hom$ spaces are finite dimensional, and $\End(\bbone)=k$.

\subsection{Notation}

We list some of the important notation here:
\begin{description}[align=right,labelwidth=2.5cm,leftmargin=!]
\item[ $k$ ] the coefficient field
\item[$\bN$ ] the natural numbers, including $0$
\item[ $\bzero$ ] the initial object of a lextensive category
\item[ $\bone$ ] the final object of a lextensive category
\item[ $\bbone$ ] unit object of a tensor category
%\item[ $\fS$ ] the infinite symmetric group
%\item[ $\Omega$ ] the set $\{1,2,3,\ldots\}$ on which $\fS$ acts
\item[ $\GG$ ] the oligomorphic group $\Aut(\bR, <)$
\item[ $\bR^{(n)}$ ] the set of increasing tuples $(x_1, \ldots, x_n)$ in $\bR^n$
\item[ $\fC_i$ ] the $i$th Delannoy category, where $1 \le i \le 4$
\end{description}

\subsection*{Acknowledgments}

We thank Pavel Etingof for helpful discussions.

\section{Oligomorphic groups and their tensor categories} \label{s:oligo}

In this section, we review some essential material about oligomorphic groups and the tensor categories constructed from them. Most of this material is drawn from \cite{repst}. The one exception is the material on $\Theta$-generators in \S \ref{ss:theta-gen}, which is new.

\subsection{Oligomorphic groups} \label{ss:oligo}

An \defn{oligomorphic group} is a permutation group $(G, \Omega)$ such that $G$ has finitely many orbits on $\Omega^n$ for all $n \ge 0$. Fix such a group. For a finite subset $A$ of $\Omega$, let $G(A)$ be the subgroup of $G$ fixing each element of $A$. These subgroups form a neighborhood basis for a topology on $G$. This topology has the following properties \cite[\S 2.2]{repst}: it is Hausdorff; it is non-archimedean, i.e., open subgroups form a neighborhood basis of the identity; and it is Roelcke-precompact, i.e., if $U$ and $V$ are open subgroups then $U \backslash G/V$ is a finite set. A topological group with these three properties is called \defn{pro-oligomorphic}. While most pro-oligomorphic groups of interest are in fact oligomorphic, working in the pro-oligomorphic setting can be clearer since many concepts depend only on the topology and not $\Omega$.

Fix a pro-oligomorphic group $G$. An action of $G$ on a set $X$ is \defn{smooth} if every point in $X$ has open stabilizer in $G$, and \defn{finitary} if $G$ has finitely many orbits on $X$. We use the term ``$G$-set'' to mean ``set equipped with a finitary and smooth $G$-action.'' Let $\bS(G)$ be the category of such $G$-sets. We let $\bone$ denote the one-point $G$-set. An important property of $\bS(G)$ is that it is closed under finite products \cite[\S 2.3]{repst}, and therefore fiber products as well. This class of categories was studied and intrinsically characterized in \cite{bcat}.

\subsection{Measures}

Fix a pro-oligomorphic group $G$ and a field $k$. We require the notion of measure introduced in \cite{repst}.

\begin{definition} \label{defn:meas}
A $k$-valued \defn{measure} for $G$ is a rule assigning to each morphism $f \colon Y \to X$ of transitive $G$-sets a quantity $\mu(f)$ in $k$ such that:
\begin{enumerate}
\item If $f$ is an isomorphism then $\mu(f)=1$.
\item We have $\mu(g \circ f)=\mu(g) \circ \mu(f)$ when defined.
\item Let $f$ be as above, let $X' \to X$ be another morphism of transitive $G$-sets, and let $f' \colon Y' \to X'$ be the base change of $f$. Let $Y'=\bigsqcup_{i=1}^n Y'_i$ be the orbit decomposition of $Y'$, and let $f'_i$ be the restriction of $f'$ to $Y'_i$. Then $\mu(f)=\sum_{i=1}^n \mu(f'_i)$.
\end{enumerate}
\end{definition}

There is a universal measure $\mu^{\rm univ}$ valued in a ring $\Theta(G)$. To define $\Theta(G)$, start with the polynomial ring in symbols $[f]$, where $f$ runs over maps of transitive $G$-sets, and then quotient by the ideal generated by relations corresponding to the measure axioms. The universal measure is defined by $\mu^{\rm univ}(f)=[f]$. If $\mu$ is a $k$-valued measure then there is a unique ring homomorphism $\phi \colon \Theta(G) \to k$ such that $\mu=\phi \circ \mu^{\rm univ}$.

For a morphism $f \colon Y\to X$ of $G$-sets with $X$ transitive and $Y$ finitary, it will be convenient to define $[f]=\sum_i[f_i]$ in $\Theta(G)$, where the $f_i$ are the restrictions of $f$ to the orbits of $Y$. We similarly define $\mu(f)$, when $\mu$ is a measure. For a finitary $G$-set $Y$, we put $[Y]=[f]$ and $\mu(Y)=\mu(f)$, where $f \colon Y \to \bone$ is the unique map to the one-point set.

\subsection{Integration and matrices}

Fix a $k$-valued measure $\mu$ on $G$. Let $X$ be a $G$-set. A \defn{Schwartz function} on $X$ is a function $\phi \colon X \to k$ that is invariant under an open subgroup of $G$ and that has finitary support. We let $\cC(X)$ denote the Schwartz space of $X$, i.e., the $k$-vector space of all Schwartz functions. Given $\phi \in \cC(X)$, we define its integral
\begin{displaymath}
\int_X \phi(x) dx
\end{displaymath}
as in \cite{repst}. We note that this depends on the measure $\mu$, even though it is absent from the notation. Integration defines a $k$-linear map $\cC(X) \to k$. More generally, if $f \colon Y \to X$ is a map of finitary $G$-sets then the measure can be used to define a push-forward map $f_* \colon \cC(Y) \to \cC(X)$, see \cite{repst}. In the monoidal structure it will become dual to the ordinary pull-back map $f^*:\cC(X)\to\cC(Y)$. 

Let $X$ and $Y$ be finitary $G$-sets. A \defn{$Y \times X$ matrix} is simply a Schwartz function on $Y \times X$. Given a $Y \times X$ matrix $A$ and a $Z \times Y$ matrix $B$, we define their product $BA$ to be the $Z \times X$ matrix to be the push-forward of the function $(z,y,x) \mapsto B(z,y)A(y,x)$ under the projection $Z \times Y \times X \to Z \times X$. Matrix multiplication has all the expected properties. If $A$ is a $Y \times X$ matrix then $A$ defines a linear map $\cC(X) \to \cC(Y)$ by matrix multiplication, where we identify $\cC(X)$ with $X \times \bone$ matrices.

\subsection{The tensor category} \label{ss:uperm}

Let $G$ and $\mu$ be as above. In \cite[\S 8]{repst}, we defined a tensor category $\uPerm(G, \mu)$. We recall the main points of the definition. The objects of this category are labelled by the finitary $G$-sets. Since there exists a faithful functor from $\uPerm(G, \mu)$ to the category of vector spaces that sends the object labelled by $X$ to the corresponding Schwartz space $\cC(X)$, we actually write $\cC(X)$ also for the object in $\uPerm(G, \mu)$. A morphism $\cC(X) \to \cC(Y)$ is a $G$-invariant $Y \times X$ matrix, or, equivalently, the linear map defined by such a matrix. Composition is given by matrix multiplication, or, equivalently, composition of linear transformations. Direct sums and tensor products are defined on objects by
\begin{displaymath}
\cC(X) \oplus \cC(Y) = \cC(X \amalg Y), \qquad
\cC(X) \otimes \cC(Y) = \cC(X \times Y)
\end{displaymath}
and on morphisms using the usual constructions (block matrices and Kronecker products). We note that the vector space $\cC(X \times Y)$ is not the tensor product of the vector spaces $\cC(X)$ and $\cC(Y)$; in other words, the forgetful functor from $\uPerm(G, \mu)$ to vector spaces is not monoidal. The category $\uPerm(G, \mu)$ is rigid, and every object is self-dual. The dimension of $\cC(X)$ is given by the measure $\mu(X)$.

Suppose $f \colon Y \to X$ is a map of $G$-sets. We then have linear maps
\begin{displaymath}
f_* \colon \cC(Y) \to \cC(X), \qquad f^* \colon \cC(X) \to \cC(Y).
\end{displaymath}
These maps arise from matrices (the matrix is essentially the indicator function of the graph of $f$), and thus are maps in the category $\uPerm(G, \mu)$; see \cite[\S 7.7]{repst}. These maps generate all maps, in the following sense. Suppose $Z$ is an orbit on $Y \times X$, and let $A_Z$ denote its indicator function, thought of as a $Y \times X$ matrix; note that such matrices span the space $\Hom(\cC(X), \cC(Y))$ in the category $\uPerm(G, \mu)$. Let $p \colon Z \to Y$ and $q \colon Z \to X$ be the projection maps. Then it is not difficult to verify that $A_Z=p_*q^*$. See \cite[Proposition~7.22]{repst} for details.

Let $X$ be a $G$-set. We have a unique map $p \colon X \to \bone$ and a diagonal map $i \colon X \to X \times X$. The maps
\begin{displaymath}
p^* \colon \cC(\bone) \to \cC(X), \qquad i^* \colon \cC(X) \otimes \cC(X) \to \cC(X)
\end{displaymath}
give $\cC(X)$ the structure of a commutative algebra object in $\uPerm(G, \mu)$. Letting $\delta_x \in \cC(X)$ denote the point mass at $x \in X$, the multiplication is given explicitly by $\delta_x \delta_y = 0$ if $x \ne y$ and $\delta_x^2=\delta_x$, i.e., the $\delta_x$ are orthogonal idempotents. If $f \colon Y \to X$ is a map of $G$-sets then the map $f^* \colon \cC(X) \to \cC(Y)$ is an algebra homomorphism.

\subsection{\texorpdfstring{$\Theta$}{Theta}-generators} \label{ss:theta-gen}

Fix a pro-oligomorphic group $G$. Let $\Sigma$ be the class of morphisms $f \colon Y \to X$ in $\bS(G)$, with $X$ transitive. By definition, the elements $[f]$ with $f \in \Sigma$ generate $\Theta(G)$. We now isolate a class of subsets $S \subset \Sigma$, called $\Theta$-generating sets, that have the property that the elements $[f]$ with $f \in S$ generate $\Theta(G)$. In fact, the property of being a $\Theta$-generating set is stronger than the property that its elements generate the ring $\Theta(G)$. Having a $\Theta$-generating set will be useful when we discuss mapping properties for $\uPerm(G, \mu)$.

Let $\Pi$ be a subclass of $\Sigma$. We say that $\Pi$ is a \defn{$\Theta$-class} if the following conditions hold:
\begin{enumerate}
\item $\Pi$ contains all isomorphisms of transitive $G$-sets.
\item If $g \colon Z \to Y$ and $f \colon Y \to X$ belong to $\Pi$ then so does $f \circ g$.
\item Let $f \colon Y \to X$ belong to $\Sigma$, let $X' \to X$ be a map of transitive $G$-sets, and let $f' \colon Y' \to X'$ be the base change of $f$. If $f$ belongs to $\Pi$ then so does $f'$.
\item Let $f \colon Y \to X$ be a map in $\Sigma$ and suppose that $Y=Y_1 \sqcup Y_2$. Let $f_i$ be the restriction of $f$ to $Y_i$. Then if any two of $f$, $f_1$, and $f_2$ belong to $\Pi$, so does the third.
\end{enumerate}
An arbitrary intersection of $\Theta$-classes is again a $\Theta$-class. It follows that if $S$ is any subset of $\Sigma$ then there is a unique minimal $\Theta$-class $\Pi$ containing $S$, and we say that $\Pi$ is \defn{$\Theta$-generated} by $S$. If $\Pi=\Sigma$ then we say that $S$ is a \defn{$\Theta$-generating set} for $G$. 

\begin{proposition} \label{prop:theta-gen}
Suppose that $S$ is a $\Theta$-generating set for $G$. Then the elements $[f]$, with $f \in S$, generate $\Theta(G)$ as a ring.
\end{proposition}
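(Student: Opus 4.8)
The plan is to run the implication backwards: rather than building up the elements $[f]$ from those with $f \in S$, I package the subring generated by $\{[f] : f \in S\}$ into a $\Theta$-class and then invoke minimality. Concretely, let $R \subseteq \Theta(G)$ be the subring generated by $\{[f] : f \in S\}$ (so $1 \in R$ and $R$ is closed under $+$, $-$, and $\cdot$), and set
\[
  \Pi \;=\; \{\, f \in \Sigma \;:\; [f] \in R \,\}.
\]
By construction $[f] \in R$ for every $f \in S$, hence $S \subseteq \Pi$. The crux is that $\Pi$ is a $\Theta$-class; granting this, the minimal $\Theta$-class containing $S$ is contained in $\Pi$, and that minimal $\Theta$-class is all of $\Sigma$ by the hypothesis that $S$ is a $\Theta$-generating set. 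Therefore $\Pi = \Sigma$, i.e.\ $[f] \in R$ for all $f \in \Sigma$. Since the elements $[f]$ with $f \in \Sigma$ generate $\Theta(G)$ as a ring (this is how $\Theta(G)$ is defined), we conclude $R = \Theta(G)$, which is the assertion.

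It remains to check the four conditions defining a $\Theta$-class for $\Pi$. Condition~(a): if $f$ is an isomorphism of transitive $G$-sets then $[f] = \mu^{\rm univ}(f) = 1 \in R$ by the first axiom of Definition~\ref{defn:meas}. Condition~(b): if $f$ and $g$ are composable with $[f], [g] \in R$, then $[f \circ g] = [f]\,[g] \in R$ by multiplicativity of $\mu^{\rm univ}$ under composition (the second axiom of Definition~\ref{defn:meas}) together with closure of $R$ under multiplication. Condition~(c): let $f \colon Y \to X$ lie in $\Pi$, let $X' \to X$ be a map of transitive $G$-sets, and let $f' \colon Y' \to X'$ be the base change; writing $Y' = \bigsqcup_i Y'_i$ for the orbit decomposition and $f'_i$ for the restrictions, the convention $[f'] = \sum_i [f'_i]$ combined with the third axiom of Definition~\ref{defn:meas} applied to $\mu^{\rm univ}$ gives $[f'] = \sum_i [f'_i] = [f] \in R$, so $f' \in \Pi$. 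Condition~(d): if $Y = Y_1 \sqcup Y_2$ with $f_i = f|_{Y_i}$, then $[f] = [f_1] + [f_2]$ directly from the convention defining $[\,\cdot\,]$ for maps with a (possibly) non-transitive source; since $R$ is an additive subgroup, if any two of $[f], [f_1], [f_2]$ lie in $R$ then so does the third.

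There is no serious obstacle: the statement is essentially the observation that the combinatorial closure conditions defining a $\Theta$-class are exactly what is needed to make $\{f \in \Sigma : [f] \in R\}$ closed, for \emph{any} subring $R \subseteq \Theta(G)$. The one step that uses more than the ring axioms for $R$ is Condition~(c), where one needs that passing to a base change does not alter the element $[f] \in \Theta(G)$ once one sums over orbits; this is precisely the third axiom in Definition~\ref{defn:meas}, transported to the universal measure. Everything else is formal.
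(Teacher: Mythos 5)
Your proposal is correct and is essentially identical to the paper's own proof: both define $R$ as the subring generated by $\{[f] : f \in S\}$, set $\Pi = \{f \in \Sigma : [f] \in R\}$, verify that $\Pi$ is a $\Theta$-class using the measure axioms for $\mu^{\rm univ}$, and conclude by minimality. No gaps.
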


\begin{proof}
Let $R$ be the subring of $\Theta(G)$ generated by the elements $[f]$ with $f \in S$. Let $\Pi \subset \Sigma$ be the set of all $f$'s such that $[f] \in R$. We claim that $\Pi$ is a $\Theta$-class. We verify the axioms, using notation as used in the statements of the axioms. 

\begin{enumerate}
\item If $f$ is an isomorphism then $[f]=1$, and so $[f] \in R$, and so $f \in \Pi$.
\item If $g,f \in \Pi$ then $[gf]=[g] \cdot [f]$ belongs to $R$, and so $gf \in \Pi$.
\item We have $[f]=[f']$, and so $f \in \Pi$ if and only if $f' \in \Pi$.
\item We have $[f]=[f_1]+[f_2]$, and so if two of $[f]$, $[f_1]$, and $[f_2]$ belong to $R$ then so does the third. Thus if two of $f$, $f_1$, and $f_2$ belong to $\Pi$ then so does the third.
\end{enumerate}
Since $\Pi$ clearly contains $S$, it follows that $\Pi=\Sigma$ since $S$ is $\Theta$-generating. Thus $R$ contains $[f]$ for all $f \in \Sigma$, and thus $R=\Theta(G)$, as required.
\end{proof}

\begin{corollary} \label{cor:theta-gen}
Suppose that $S$ is a $\Theta$-generating set for $G$, and let $\mu$ and $\nu$ be two $k$-valued measures for $G$. If $\mu(f)=\nu(f)$ for all $f \in S$ then $\mu=\nu$.
\end{corollary}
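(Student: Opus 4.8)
The plan is to deduce this immediately from Proposition~\ref{prop:theta-gen} together with the universal property of $\mu^{\rm univ}$. Recall that any $k$-valued measure $\lambda$ factors uniquely as $\lambda = \phi_\lambda \circ \mu^{\rm univ}$ for a ring homomorphism $\phi_\lambda \colon \Theta(G) \to k$, characterized by $\phi_\lambda([f]) = \lambda(f)$ on morphisms of transitive $G$-sets. Applying this to $\mu$ and $\nu$ yields ring homomorphisms $\phi_\mu, \phi_\nu \colon \Theta(G) \to k$.

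First I would observe that the hypothesis $\mu(f) = \nu(f)$ for $f \in S$ translates into $\phi_\mu([f]) = \phi_\nu([f])$ for all $f \in S$. By Proposition~\ref{prop:theta-gen}, the elements $[f]$ with $f \in S$ generate $\Theta(G)$ as a ring. Since a ring homomorphism out of $\Theta(G)$ is determined by its values on a set of ring generators, it follows that $\phi_\mu = \phi_\nu$. Consequently $\mu = \phi_\mu \circ \mu^{\rm univ} = \phi_\nu \circ \mu^{\rm univ} = \nu$; unwinding, this gives $\mu(f) = \nu(f) = \phi_\mu([f])$ for every $f \in \Sigma$ (using the extended definition $[f] = \sum_i [f_i]$), and in particular for every morphism of transitive $G$-sets, which is exactly the assertion $\mu = \nu$.

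There is essentially no obstacle here: all the real content sits in Proposition~\ref{prop:theta-gen}, and the corollary is a formal consequence of the fact that $k$-valued measures for $G$ correspond bijectively to ring homomorphisms $\Theta(G) \to k$. For a self-contained alternative that bypasses Proposition~\ref{prop:theta-gen}, one can instead mimic its proof directly: set $\Pi = \{\, f \in \Sigma : \mu(f) = \nu(f) \,\}$ and check, using the measure axioms of Definition~\ref{defn:meas}, that $\Pi$ is a $\Theta$-class — isomorphisms lie in $\Pi$ since both sides equal $1$; closure under composition follows from multiplicativity of $\mu$ and $\nu$; stability under base change and the two-out-of-three property follow from base-change invariance and additivity of measures. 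Since $\Pi \supseteq S$ and $S$ is $\Theta$-generating, $\Pi = \Sigma$, and then additivity of measures forces $\mu(f) = \nu(f)$ for every morphism of transitive $G$-sets, i.e.\ $\mu = \nu$.
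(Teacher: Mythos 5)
Your argument is correct and is essentially the paper's own proof: both reduce the claim to Proposition~\ref{prop:theta-gen} via the identification of $k$-valued measures with ring homomorphisms $\Theta(G) \to k$, which must agree once they agree on ring generators. The self-contained alternative you sketch (showing $\{f : \mu(f)=\nu(f)\}$ is a $\Theta$-class) is also valid but adds nothing beyond re-running the proof of the proposition.
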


\begin{proof}
Indeed, a $k$-valued measure is a ring homomorphism $\Theta(G) \to k$, and if two homomorphisms agree on generators then they are equal.
\end{proof}

\begin{remark} \label{rmk:weak-theta}
In the definition of $\Theta$-class, one can alter axiom (c) to ``$f \in \Pi$ if and only if $f' \in \Pi$.'' This leads to a stronger notion of $\Theta$-class, and a weaker notion of $\Theta$-generators. Some of our results work with this variant definition, and some do not. For example, Proposition~\ref{prop:theta-gen} does work: if $S$ is a ``weak $\Theta$-generating set'' then the classes $[f]$ with $f \in S$ generate $\Theta(G)$.
\end{remark}

\begin{remark}
The measure axioms do not use subtraction, and so one can define the notion of measure valued in a semi-ring. There is again a universal measure valued in a semi-ring version of $\Theta$. The above argument shows that a set of $\Theta$-generators will generate this universal semi-ring.
\end{remark}

\subsection{The relative case} \label{ss:relative}

Many of the constructions and definitions given above apply to certain subcategories of $\bS(G)$, and this additional generality leads to some important examples. To define these subcategories, we introduce a piece of terminology. A \defn{stabilizer class} in $G$ is a collection $\sE$ of open subgroups of $G$ satisfying the following conditions: (a) $\sE$ contains $G$; (b) $\sE$ is closed under finite intersections; (c) $\sE$ is closed under conjugation; and (d) $\sE$ forms a neighborhood basis of the identity of $G$, that is, every open subgroup of $G$ contains some member of $\sE$ as a subgroup.

Let $\sE$ be a stabilizer class. We say that a $G$-set $X$ is \defn{$\sE$-smooth} if the stabilizer of any element of $X$ belongs to $\sE$. We write $\bS(G, \sE)$ for the full subcategory of $\bS(G)$ spanned by $\sE$-smooth $G$-sets. This is closed under products, fiber products, and disjoint unions, but not under quotients (in general). A \defn{measure} for $G$ relative to $\sE$ is a rule assigning to each morphism $f \colon Y \to X$ of transitive $\sE$-smooth $G$-sets a quantity $\mu(f)$ such that the obvious analogs of the usual axioms hold. A measure $\mu$ gives rise to a tensor category $\uPerm(G, \sE; \mu)$, the objects of which are the Schwartz spaces $\cC(X)$ where $X$ is an object of $\bS(G, \sE)$. There is also a natural notion of $\Theta$-generators for $G$ relative to $\sE$.

If $\Omega$ is a $G$-set then we obtain a stabilizer class $\sE(\Omega)$ by taking all subgroups of $G$ that occur as the stabilizer of some element in $\Omega^n$, for some $n$. A transitive $G$-set is $\sE(\Omega)$-smooth if and only if it is isomorphic to an orbit on some power of $\Omega$.

\section{\'Etale algebras in tensor categories} \label{s:etale}

In this section, we examine \'etale algebras in tensor categories. We begin by reviewing some fairly standard results, though we include proofs since we do not know a good reference for our level of generality. In \S \ref{ss:gamma}, we introduce the notion of a \defn{uniform map} of \'etale algebras, and attach to such maps a numerical invariant $\gamma$. This concept is used to define the notion of compatibility of a functor and measure in our general mapping property; see Definition~\ref{defn:compatible}. In \S \ref{ss:lextensive} we show that (the opposite of) the category of \'etale algebras is lextensive, which informs our approach to universal properties for categories of the form $\bS(G)$; see, e.g., Theorem~\ref{thm:comb}.

We fix a Karoubian tensor category $\fT$ for the duration of \S \ref{s:etale}.

\subsection{\'Etale algebras} \label{ss:etale}

By an ``algebra'' in $\fT$ we mean a commutative, associative, unital algebra, and by a ``rigid algebra,'' we mean an algebra that is also a rigid object. For an algebra $A$ we denote multiplication by $m=m_A$ and its unit by $\eta=\eta_A$.  If $A$ is a rigid algebra, we have the trace map $\epsilon_{A/\bbone} \colon A \to \bbone$. When no confusion is possible as to which category $A$ is considered in, we write $\epsilon_A=\epsilon_{A/\bbone}$. The trace induces a trace pairing $A \otimes A \to \bbone$, via $(x,y) \mapsto \epsilon(xy)$. We say that a rigid algebra $A$ is \defn{\'etale} if the trace pairing is perfect. See \cite[\S 4.1]{discrete} for background.

An important property of \'etale algebras is that the multiplication map $A \otimes A \to A$ has a unique splitting $s:A\to A\otimes A$ as $A \otimes A$-modules. This means that there is a unique idempotent $\sigma=\sigma_A$ in the $k$-algebra $\Gamma(A \otimes A)$ that satisfies $(x \otimes 1)\sigma=(1 \otimes x) \sigma$ and $m_A(\sigma)=1$, or, equivalently,
\begin{displaymath}
m_{A\otimes A}\circ (A\otimes\eta_A\otimes\sigma)\;=\;m_{A\otimes A}\circ (\eta_A\otimes A\otimes\sigma),
\end{displaymath}
 and $m\circ\sigma=\eta$.

\begin{example}
Consider the category $\uPerm(G, \mu)^{\rm kar}$ associated to an oligomorphic group $G$ and a measure $\mu$. Let $X$ be a $G$-set, and let $p \colon X \to \bone$ be the unique map. We have seen (\S \ref{ss:uperm}) that $\cC(X)$ is naturally a commutative algebra. It is not difficult to show that $p_* \colon \cC(X) \to \cC(\bone)=\bbone$ is the trace map for $\cC(X)$, and that the trace pairing is non-degenerate (see \cite[\S 8.4]{repst}). Thus $\cC(X)$ is an \'etale algebra.
\end{example}

\subsection{Modules}

Fix an \'etale algebra $A$ in $\fT$. We now consider the category $\Mod_A$ of $A$-modules in $\fT$.

\begin{proposition}\label{Prop:Summ}
For any $A$-module $M$, the natural action map $a:A \otimes M \to M$ is a split epimorphism in $\Mod_A$.
\end{proposition}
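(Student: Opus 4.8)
The plan is to exhibit an explicit $A$-linear splitting $s \colon M \to A \otimes M$ of the action map $a \colon A \otimes M \to M$, built from the splitting idempotent $\sigma = \sigma_A \in \Gamma(A \otimes A)$ whose existence and uniqueness were recalled just above the statement. First recall the setup: $A \otimes M$ is an $A$-module via the action on the first factor, and $a$ is a morphism of $A$-modules because multiplication on $A$ is associative. The natural candidate for the splitting is to ``co-multiply'' on the $A$-factor: concretely, take $s$ to be the composite
\begin{displaymath}
M \xrightarrow{\ \eta_A \otimes \id_M\ } A \otimes M \xrightarrow{\ \sigma \otimes \id_M\ } A \otimes A \otimes M \xrightarrow{\ \id_A \otimes a\ } A \otimes M,
\end{displaymath}
where $\sigma \otimes \id_M$ denotes post-composition with the morphism $A \otimes M \to A \otimes A \otimes M$ given by the global section $\sigma$ of $A \otimes A$ (tensored with $M$). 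In other words, $s$ is the $A \otimes A$-module splitting $s_A \colon A \to A \otimes A$ of $m_A$, applied in the first tensor slot, followed by letting the new middle copy of $A$ act on $M$.

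The two things to check are: (i) $a \circ s = \id_M$, and (ii) $s$ is $A$-linear. For (i), one uses the identity $m_A \circ \sigma = \eta_A$ (equivalently $m_A \circ s_A = \id_A$): composing $a$ with $s$ collapses the first two $A$-factors via $m_A$ and then acts on $M$, and the relation $m_A(\sigma) = 1$ together with the module axiom $a \circ (m_A \otimes \id_M) = a \circ (\id_A \otimes a)$ forces the composite to be the identity on $M$. For (ii), $A$-linearity of $s$ amounts to the compatibility $s_A \circ m_A = (m_A \otimes \id_A) \circ (\id_A \otimes s_A)$ of the splitting with multiplication — that is, $s_A$ is a map of $A$-modules where $A \otimes A$ is regarded as a module over its first factor — which is precisely the defining property of $\sigma$ as an $A \otimes A$-module splitting (the condition $(x \otimes 1)\sigma = (1 \otimes x)\sigma$). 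This last relation is exactly what is needed to commute the outer $A$-action past the $\sigma$-insertion.

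I expect the only real subtlety — and thus the ``main obstacle,'' though it is minor — to be bookkeeping: translating the element-style conditions on $\sigma$ (``$(x\otimes 1)\sigma = (1\otimes x)\sigma$'', ``$m_A(\sigma) = 1$'') into honest commutative diagrams of morphisms in $\fT$, keeping careful track of which copy of $A$ acts on $M$ and in which slot $\sigma$ is inserted, and invoking the associativity and unit constraints of the symmetric monoidal structure in the right places. No deep input is needed beyond the existence of $\sigma$; étaleness enters only through the existence (and uniqueness) of that idempotent splitting, so the proof is essentially a diagram chase once $s$ is written down. Finally, since $s$ is a section of $a$ in $\Mod_A$, the map $a$ is a split epimorphism there, as claimed.
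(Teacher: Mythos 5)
Your proposal is correct and is essentially the paper's own proof: the same splitting $(\id_A\otimes a)\circ(s_A\otimes\id_M)\circ(\eta_A\otimes\id_M)$, with $a\circ s=\id_M$ following from $m_A\circ s_A=\id_A$ plus associativity/unitality of the action, and $A$-linearity following from $s_A$ being a morphism of $A\otimes A$-modules. No gaps.
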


\begin{proof}
We define the composite morphism
$$M\xrightarrow{\eta\otimes M}A\otimes M\xrightarrow{s\otimes M}A\otimes A\otimes M\xrightarrow{A\otimes a}A\otimes M.$$
It composes to the identity with $a$ by associativity of $a$, and it is an $A$-module morphism by associativity and the fact that $s$ is a morphism of bimodules.
\end{proof}

\begin{corollary}
The module category $\Mod_A$ is a tensor category with tensor product given by the co-equalizer of the two action morphisms
\begin{displaymath}
M \otimes A \otimes N \rightrightarrows M \otimes N \to M \otimes_A N.
\end{displaymath}
\end{corollary}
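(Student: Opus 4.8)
The plan is to reduce the statement to three checks: that $\Mod_A$ is additive and $k$-linear (immediate, and it is moreover Karoubian, since a summand of an $A$-module in $\fT$ inherits an $A$-module structure); that the coequalizer $M \otimes_A N$ exists and is functorial in $M$ and $N$; and that this operation carries a symmetric monoidal, $k$-bilinear structure with unit $A$. The only point requiring more than formal reasoning is the existence of the coequalizer, and the essential input there is the separability idempotent $\sigma = \sigma_A \in \Gamma(A \otimes A)$ attached to the \'etale algebra $A$ in \S\ref{ss:etale}; in a general (non-abelian) Karoubian tensor category this coequalizer need not exist, so the \'etale hypothesis is used precisely at this step.

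First I would realize the coequalizer as the image of an explicit idempotent. Writing $a_M \colon M \otimes A \to M$ and $a_N \colon A \otimes N \to N$ for the module actions (for commutative $A$ the left and right actions coincide, so each module is a symmetric bimodule), set
\begin{displaymath}
e_{M,N} \;=\; (a_M \otimes a_N) \circ (\id_M \otimes \sigma \otimes \id_N) \;\colon\; M \otimes N \longrightarrow M \otimes N,
\end{displaymath}
where the first $A$-factor produced by $\sigma$ acts on $M$ and the second on $N$. Using that $\sigma$ is idempotent in the commutative $k$-algebra $\Gamma(A \otimes A)$ together with commutativity of $A$, one checks $e_{M,N} \circ e_{M,N} = e_{M,N}$; using the balancing identity $(x \otimes 1)\sigma = (1 \otimes x)\sigma$ one checks that $e_{M,N}$ is an $A$-module endomorphism of $M \otimes N$ (for the action through, say, the left factor). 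Since $\fT$, hence $\Mod_A$, is Karoubian, $e_{M,N}$ splits; let $q \colon M \otimes N \to M \otimes_A N$ be the coretraction onto its image and $\iota$ the section, so $\iota q = e_{M,N}$ and $q\iota = \id$. The balancing identity gives $q \circ (a_M \otimes \id_N) = q \circ (\id_M \otimes a_N)$, while the normalization $m_A \circ \sigma = \eta_A$ gives $h \circ e_{M,N} = h$ for every $h$ coequalizing the two action maps; setting $\bar h = h \circ \iota$ then yields the unique factorization $h = \bar h \circ q$. Hence $q$ is the asserted coequalizer, both in $\fT$ and in $\Mod_A$, and functoriality of $(M,N) \mapsto M \otimes_A N$ follows from the naturality of $e_{M,N}$ in each variable.

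It then remains to upgrade $\otimes_A$ to a symmetric monoidal structure. Bilinearity over $k$ is clear, since $\otimes$ on $\fT$ is $k$-bilinear and $e_{M,N}$ is additive in $M$ and in $N$. For the unit I would invoke Proposition~\ref{Prop:Summ}: the action map $A \otimes M \to M$ is a split epimorphism of $A$-modules, and its canonical splitting identifies $M$ with the image of $e_{A,M}$, giving coherent isomorphisms $A \otimes_A M \cong M \cong M \otimes_A A$. The associativity and symmetry constraints I would deduce from those of $\fT$: both $(M \otimes_A N) \otimes_A P$ and $M \otimes_A (N \otimes_A P)$ can be exhibited as the image, on $M \otimes N \otimes P$, of an idempotent built from two applications of $\sigma$, and comparing these idempotents (again via the balancing relations) shows that the associator of $\fT$ descends through the quotient maps $q$; the pentagon and hexagon axioms, and naturality of all constraints, then follow from the corresponding identities in $\fT$ together with the uniqueness clause in the universal property of the coequalizers.

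The step I expect to be the main obstacle is the construction and universal property of the coequalizer in the second paragraph — namely verifying that $e_{M,N}$ is idempotent and $A$-linear and that its splitting genuinely corepresents "coequalize the two actions." This is the only place where \'etaleness is used, entering exactly through the two defining properties of $\sigma$, and everything after it (functoriality, bilinearity, the unit, and the monoidal coherence) is formal, if somewhat tedious, once one knows that every relevant relative tensor product is the image of a $\sigma$-idempotent on an ambient tensor product in $\fT$.
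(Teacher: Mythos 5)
Your proof is correct, and it takes a genuinely more direct route than the paper. The paper's argument first invokes the standard fact that free modules $A\otimes X$ form a tensor category under $\otimes_A$ (using the contracting homotopy $\eta\otimes A^{\otimes n}$ of the bar complex to compute $(A\otimes X)\otimes_A(A\otimes Y)\cong A\otimes X\otimes Y$), and only then uses \'etaleness, via Proposition~\ref{Prop:Summ}, to extend the construction to arbitrary modules as summands of free ones. You instead build the relative tensor product for arbitrary $M,N$ in one step, as the image of the separability idempotent $e_{M,N}=(a_M\otimes a_N)\circ(\id_M\otimes\sigma\otimes\id_N)$, verifying idempotency from $\sigma^2=\sigma$ and commutativity, the coequalizing property from the balancing identity $(x\otimes 1)\sigma=(1\otimes x)\sigma$, and the universal property from the normalization $m_A\circ\sigma=\eta_A$. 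This is exactly the splitting the paper later packages as $s_{M,N}$ (which it only defines on free modules and extends by summands), so your approach makes explicit and uniform what the paper obtains by reduction; the trade-off is that you must carry out the idempotent comparison for associativity and symmetry directly, where the paper inherits coherence from the free-module case. Your identification of the unit constraint via the splitting of Proposition~\ref{Prop:Summ} matches the formula \eqref{eq:unitor} given afterwards in the paper. The one step you leave compressed --- that the two idempotents on $M\otimes N\otimes P$ computing $(M\otimes_A N)\otimes_A P$ and $M\otimes_A(N\otimes_A P)$ agree --- does go through (they are $e_{12}e_{23}=e_{23}e_{12}$, commuting by the balancing identity), but it deserves the same level of detail you gave the binary case.
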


\begin{proof}
It is a standard fact that, for any algebra $A$ in $\fT$, the category of free $A$-modules $A\otimes X$, with $X\in\fT$, is a tensor category with the above tensor product. Indeed, using the standard splitting of the bar complex by chain homotopy $\eta\otimes A^{\otimes n}:A^{\otimes n}\to A^{\otimes n+1}$, shows in particular that 
$(A\otimes X)\otimes_A(A\otimes Y)$ is given by $A\otimes X\otimes Y$. The tensor product extends immediately to direct summands of free modules, and thus to all of $\Mod_A$, by Proposition~\ref{Prop:Summ}.
\end{proof}

We denote the defining bi-natural epimorphism by $\pi_{M,N}:M\otimes N\tto M\otimes_AN$. For free modules we have a canonical section
$$(A\otimes X)\otimes_A(A\otimes Y)\xrightarrow{\sim} A\otimes X\otimes Y \xrightarrow{s\otimes X\otimes Y}A\otimes A\otimes X\otimes Y\xrightarrow{\sim}(A\otimes X)\otimes (A\otimes Y),$$
which by considering direct summands extends to a bi-natural morphism $s_{M,N}:M\otimes_A N\to M\otimes N$ such that $\pi_{M,N}\circ s_{M,N}=\id_{M\otimes_AN}$ and
\begin{equation}\label{eq:ps}(M\otimes \pi_{N,P})\circ (s_{M,N}\otimes P)=(s_{M,N}\otimes_A P)\circ (M\otimes_A \pi_{N,P}),\end{equation}
for $A$-modules $M,N,P$, which can be proved again by reducing to free modules. Furthermore, the left unitor of the monoidal structure and its inverse are given by
\begin{equation}\label{eq:unitor}A\otimes_AM\xrightarrow{s_{A,M}}A\otimes M\xrightarrow{\epsilon\otimes M}M\quad\mbox{and}\quad M\xrightarrow{\eta\otimes M}A\otimes M\xrightarrow{\pi_{A,M}}A\otimes_AM,\end{equation}
as follows for instance from \cite[Proposition~4.11(b)]{discrete}.

\begin{proposition} \label{prop:rigid-module}
An $A$-module $M$ is rigid in $\Mod_A$ if and only if it is rigid in $\fT$, and the underlying object of the dual in $\Mod_A$ is the dual in $\fT$. 
\end{proposition}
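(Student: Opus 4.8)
The plan is to prove the two implications separately, using two facts already at hand: the free-module functor $F=A\otimes(-)\colon\fT\to\Mod_A$ is strong monoidal — in the proof of the preceding corollary one sees that $(A\otimes X)\otimes_A(A\otimes Y)\cong A\otimes(X\otimes Y)$ and that $F(\bbone)=A$ is the unit of $\Mod_A$ — and every $A$-module $M$ is, by Proposition~\ref{Prop:Summ}, a retract in $\Mod_A$ of the free module $A\otimes M$. I also record at the outset that $\Mod_A$ is Karoubian: an idempotent endomorphism $e$ of an $A$-module splits in $\fT$, and since $e$ is a morphism of modules its image inherits an $A$-action for which the inclusion and projection are $A$-linear, so the splitting occurs already in $\Mod_A$.

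Suppose first that $M$ is rigid in $\fT$, with dual $M^\vee$. Since $F$ is strong monoidal, it preserves duals, so $A\otimes M=F(M)$ is rigid in $\Mod_A$ with dual $A\otimes M^\vee=F(M^\vee)$. By Proposition~\ref{Prop:Summ} the action map $A\otimes M\to M$ is a split epimorphism in $\Mod_A$, exhibiting $M$ as a direct summand of the rigid object $A\otimes M$. A direct summand of a rigid object in a Karoubian tensor category is again rigid, so $M$ is rigid in $\Mod_A$.

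For the converse, suppose $M$ is rigid in $\Mod_A$ with dual $M^*$, evaluation $\ev_A\colon M^*\otimes_A M\to A$ and coevaluation $\coev_A\colon A\to M\otimes_A M^*$. I would exhibit duality data between the underlying objects $M$ and $M^*$ in $\fT$ (here $\otimes$ is the tensor product of $\fT$):
\begin{align*}
\ev&\colon\, M^*\otimes M\xrightarrow{\pi_{M^*,M}}M^*\otimes_A M\xrightarrow{\ev_A}A\xrightarrow{\epsilon_A}\bbone,\\
\coev&\colon\, \bbone\xrightarrow{\eta_A}A\xrightarrow{\coev_A}M\otimes_A M^*\xrightarrow{s_{M,M^*}}M\otimes M^*.
\end{align*}
The substance of this step is checking the triangle identities for $(\ev,\coev)$. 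One rewrites the relevant composite in $\fT$ so that it can be compared with the triangle identity that already holds in $\Mod_A$, moving the maps $\pi$ and $s$ past tensor factors by naturality, collapsing them via $\pi_{M,N}\circ s_{M,N}=\id$, invoking their compatibility \eqref{eq:ps}, and using the formulas \eqref{eq:unitor} for the unitors of $\Mod_A$ (and their right-handed counterparts). It follows that $M$ is rigid in $\fT$ and that $M^*$, viewed as an object of $\fT$, is a dual of $M$; by uniqueness of duals it is isomorphic to the $\fT$-dual.

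These two steps give the full statement: if $M$ is rigid in $\fT$, the first step makes it rigid in $\Mod_A$ and then the second, applied to $M$, identifies the underlying object of its $\Mod_A$-dual with its $\fT$-dual; if $M$ is rigid in $\Mod_A$, the second step gives both conclusions at once. I expect the only real obstacle to be the triangle-identity verification of the third step: it is a moderately long diagram chase through the associativity and unit coherences and the relations \eqref{eq:ps} and \eqref{eq:unitor}, but it is bookkeeping rather than anything conceptually deep; every other part of the argument is formal.
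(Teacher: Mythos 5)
Your proposal is correct and follows essentially the same route as the paper: the forward direction via strong monoidality of $A\otimes(-)$ plus the splitting from Proposition~\ref{Prop:Summ}, and the converse by transporting the duality data with $\eta_A$, $\epsilon_A$, $s$, and $\pi$ exactly as in the paper's definitions of $\alpha'$ and $\beta'$, with the triangle identities checked via \eqref{eq:ps}, naturality, and \eqref{eq:unitor}. The only (harmless) additions are your explicit remarks that $\Mod_A$ is Karoubian and that the free-module functor preserves duals, both of which the paper leaves implicit.
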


\begin{proof}
If $M$ is rigid in $\fT$ then $A \otimes M$ is rigid in $\Mod_A$, and thus so is $M$, being a summand, by Proposition~\ref{Prop:Summ}. We now prove the converse.

Let $M^{\vee}$ be the dual of $M$ in $\Mod_A$ and let
\begin{displaymath}
\alpha \colon A \to M \otimes_A M^{\vee}, \qquad \beta \colon M^{\vee} \otimes_A M \to A
\end{displaymath}
be the co-evaluation and evaluation maps. Then we define 
\begin{displaymath}
\alpha' \colon \bbone \to M \otimes M^{\vee}, \qquad \beta' \colon M^{\vee} \otimes M \to \bbone
\end{displaymath}
by $\alpha'=s_{M,M^\vee}\circ \alpha\circ\eta$ and $\beta'=\epsilon\circ \beta\circ \pi_{M^\vee,M}$. The relation 
$$(M\otimes\beta')\circ (\alpha'\otimes M)=\id_M$$
then follows by first applying \eqref{eq:ps}, then using naturality of $\pi$ and $s$, and finally applying the description of the unitor in \eqref{eq:unitor} to reduce to the corresponding relation for $\alpha,\beta$. The second relation is proved identically.
\end{proof}

\begin{corollary}
If $\fT$ is rigid then so is $\Mod_A$.
\end{corollary}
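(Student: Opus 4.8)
The statement follows immediately from Proposition~\ref{prop:rigid-module}, so the plan is short. By the preceding corollary, $\Mod_A$ is already known to be a tensor category, with unit object $A$ and tensor product $\otimes_A$; it therefore remains only to check that every $A$-module admits a dual in $\Mod_A$.

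So let $M$ be an arbitrary $A$-module. Since $\fT$ is rigid by hypothesis, the underlying object of $M$ in $\fT$ is rigid. Applying the ``if'' direction of Proposition~\ref{prop:rigid-module} — the easy half, established by observing that $A \otimes M$ is rigid in $\Mod_A$ whenever $M$ is rigid in $\fT$, and that $M$ is a direct summand of $A \otimes M$ in $\Mod_A$ by Proposition~\ref{Prop:Summ} — we conclude that $M$ is rigid in $\Mod_A$. As $M$ was arbitrary, $\Mod_A$ is rigid.

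There is essentially no obstacle here: all of the content lives in Proposition~\ref{prop:rigid-module}, and this corollary merely records that the hypothesis ``$M$ rigid in $\fT$'' is automatic for every object once $\fT$ itself is rigid. If one prefers a self-contained argument, one can inline the relevant half of that proof: for each $A$-module $M$, the free module $A \otimes M$ is rigid in $\Mod_A$, its dual being $A \otimes M^{\vee}$ with evaluation and coevaluation induced from those of $M$ in $\fT$ together with the multiplication and unit of $A$; and $M$ is a retract of $A \otimes M$ in $\Mod_A$ by Proposition~\ref{Prop:Summ}. Since a retract of a rigid object in a tensor category is again rigid, $M$ is rigid, and hence $\Mod_A$ is rigid.
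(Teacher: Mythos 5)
Your proof is correct and is exactly the intended argument: the corollary follows immediately from Proposition~\ref{prop:rigid-module} (the easy direction), since rigidity of $\fT$ makes every $A$-module rigid in $\fT$, hence rigid in $\Mod_A$ as a retract of the free module $A\otimes M$. The paper leaves this as an immediate consequence, and your write-up matches that reasoning.
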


Suppose $M$ is a rigid $A$-module, or equivalently $M$ is rigid in $\fT$. We can define the internal endomorphism algebra of $M$ in $\Mod_A$ as the usual equalizer
$$\ul{\End}_A(M)\to\ul{\End}(M)\rightrightarrows \ul{\Hom}(A\otimes M,M).$$
Here $\ul{\Hom}(X,Y)$, for $X$ rigid is simply $X^\vee\otimes Y$.
The equalizer exists since $A$ is an \'etale algebra, and moreover coincides with the quotient definition $M^{\vee} \otimes_A M$ (the internal endomorphism algebra of $M$ in $\Mod_A$), since both correspond to $\sigma(M^\vee\otimes M)$. There are trace maps $\tr_A=\epsilon_{\ul{\End}_A(M)/A} $ and $\tr=\epsilon_{\ul{\End}(M)/\bbone} $, and also a forgetful map $\ul{\End}_A(M) \to \ul{\End}(M)$. We now examine how these relate. 

\begin{proposition} \label{prop:trace-change-ring}
Let $M$ be a rigid $A$-module. Then the following diagram commutes
\begin{displaymath}
\xymatrix{
\ul{\End}_A(M) \ar[r] \ar[d]_{\tr_A} & \ul{\End}(M) \ar[d]^{\tr} \\
A \ar[r]^{\epsilon_A} & \bbone }
\end{displaymath}
\end{proposition}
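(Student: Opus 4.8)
The plan is to reduce the statement to the analogous commutativity of traces for the free module $A \otimes M$, for which everything is explicit, and then pass to an arbitrary rigid $M$ using Proposition~\ref{Prop:Summ} together with the naturality of trace maps under algebra (and module) maps. First I would recall how $\ul{\End}_A(M)$ sits inside $\ul{\End}(M)$: both are realized as $\sigma(M^\vee \otimes M)$ where $\sigma = \sigma_A$ is the separability idempotent, and the forgetful map is the inclusion of this summand followed by nothing — i.e.\ the composite $\ul{\End}_A(M) = \sigma(M^\vee \otimes M) \hookrightarrow M^\vee \otimes M = \ul{\End}(M)$. Under this identification, $\tr = \epsilon_{M}\colon M^\vee\otimes M\to\bbone$ is the ordinary evaluation (coming from rigidity of $M$ in $\fT$), while $\tr_A$ is the $A$-linear evaluation $M^\vee \otimes_A M \to A$, precomposed with the splitting $s_{M^\vee,M}$ and postcomposed to land in $A$; concretely $\tr_A = \beta \circ \pi_{M^\vee,M}$ in the notation of Proposition~\ref{prop:rigid-module} (up to the identification of $\ul{\End}_A(M)$ with a summand).

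The key computation is then to chase the square
\begin{displaymath}
\xymatrix{
\sigma(M^\vee\otimes M) \ar[r]^-{\subseteq} \ar[d]_{\tr_A} & M^\vee\otimes M \ar[d]^{\epsilon_M} \\
A \ar[r]^{\epsilon_A} & \bbone }
\end{displaymath}
Going around the bottom-left: $\tr_A$ on the summand $\sigma(M^\vee\otimes M)$ is the $A$-valued evaluation; postcomposing with $\epsilon_A$ gives $\bbone$. Going around the top-right: the inclusion into $M^\vee\otimes M$ followed by $\epsilon_M$. The point is that the $A$-valued evaluation $\beta$ and the $k$-valued evaluation are related precisely by $\epsilon_A \circ \beta = \epsilon_M \circ s_{M^\vee,M} \circ (\text{inclusion})$, and on the summand $\sigma(M^\vee\otimes M)$ the composite $s_{M^\vee,M}\circ \pi_{M^\vee,M}$ acts as the identity (that is the content of $\pi\circ s = \id$, after restricting to the idempotent summand). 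So both paths compute the same map. To make this airtight without excessive bookkeeping, I would verify it first for $M = A \otimes N$ with $N$ rigid in $\fT$: here $\ul{\End}_A(A\otimes N) = A \otimes \ul{\End}(N)$, $\tr_A = A \otimes \tr_N$, the forgetful map is $A\otimes \ul{\End}(N)\to \ul{\End}(A\otimes N)=A\otimes N^\vee\otimes A\otimes N$ built from $s$, and $\tr = \epsilon_{A\otimes N} = \epsilon_A \otimes \epsilon_N$ composed with the braiding; the square then reduces to the identity $m\circ\sigma = \eta$ and the defining property $m\circ s = \id$ of the splitting, i.e.\ to \eqref{eq:unitor}.

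Finally, for general rigid $M$, Proposition~\ref{Prop:Summ} realizes $M$ as an $A$-module summand of $A\otimes M$, say via $\iota\colon M \to A\otimes M$ and $a\colon A\otimes M\to M$ with $a\circ\iota = \id_M$. This induces compatible summand inclusions on internal endomorphism algebras in both $\Mod_A$ and $\fT$, and all four maps in the square (both forgetful maps, $\tr_A$, $\tr$) are natural with respect to such summand projections — the trace of an endomorphism is unchanged under restricting along a split inclusion, which is the standard additivity/conjugation-invariance of categorical trace. Hence the square for $M$ is obtained from the square for $A\otimes M$ by pre- and post-composing with these split maps, and commutativity descends. The main obstacle I anticipate is purely organizational: pinning down the precise formula for the forgetful map $\ul{\End}_A(M)\to\ul{\End}(M)$ in terms of $s_{M^\vee,M}$, $\pi_{M^\vee,M}$, and the idempotent $\sigma$, and checking that $\tr_A$ is indeed computed by $\epsilon_A\circ\beta\circ\pi_{M^\vee,M}$ under the $\sigma(M^\vee\otimes M)$ identification; once those identifications are in hand, the commutativity is a short diagram chase using $\pi\circ s = \id$ and $m\circ\sigma = \eta$.
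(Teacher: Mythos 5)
Your first two paragraphs are essentially the paper's proof: identify $\ul{\End}_A(M)$ with $M^\vee\otimes_A M$ (equivalently the summand $\sigma(M^\vee\otimes M)$), note that the forgetful map is $s_{M^\vee,M}$ and that $\tr_A=\beta$, $\tr=\beta'$ in the notation of the proof of Proposition~\ref{prop:rigid-module}, and then the identity $\epsilon_A\circ\beta=\beta'\circ s_{M^\vee,M}$ drops out of the definition $\beta'=\epsilon\circ\beta\circ\pi_{M^\vee,M}$ together with $\pi\circ s=\id$. That argument is already complete for an arbitrary rigid $M$, so the second half of your proposal --- verifying the square for free modules $A\otimes N$ and then descending to summands via Proposition~\ref{Prop:Summ} and invariance of traces under split inclusions --- is a valid alternative but entirely superfluous; nothing in the direct computation requires $M$ to be free. (Minor notational point: the composite ``$\epsilon_M\circ s_{M^\vee,M}\circ(\text{inclusion})$'' does not typecheck as written; what you mean, and what is true, is $\epsilon_A\circ\beta=\beta'\circ s_{M^\vee,M}$ as maps $M^\vee\otimes_A M\to\bbone$.)
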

\begin{proof}
We take the definition of $\ul{\End}_A(M)$ as $M^{\vee} \otimes_A M$. In this definition, the forgetful map becomes $s_{M^\vee,M}$. Then $\tr_A$ is the evaluation $\beta$ of $M$ as a rigid object in $\Mod_A$ and $\tr$ is the evaluation $\beta'$ of $M$ as a rigid object in $\fT$. For convenience we chose the same symbols as in the proof of Proposition~\ref{prop:rigid-module}. The identity $\epsilon_A\circ \beta=\beta'\circ s_{M^\vee,M}$ then follows immediately from the definition of $\beta'$ in that proof.
\end{proof}

If $f$ is an endomorphism of $M$ then we can form its trace $\tr_A(f)$ in the category $\Mod_A$, which belongs to $\Gamma(A)$. We now compare this with the trace $\tr(f)$ of $f$ computed in $\fT$.

\begin{corollary}
Let $M$ be a rigid $A$-module and let $f$ be an $A$-module endomorphism of $M$. Then $\epsilon_A(\tr_A(f))=\tr(f)$.
\end{corollary}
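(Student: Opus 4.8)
The plan is to rewrite both $\tr_A(f)$ and $\tr(f)$ through the ``name of an endomorphism'' construction and then read off the conclusion from Proposition~\ref{prop:trace-change-ring}. Recall that for a rigid object $X$ in a tensor category, an endomorphism $g\colon X\to X$ has a name $\hat g\colon\bbone\to\uEnd(X)=X^\vee\otimes X$, obtained from the coevaluation by inserting $g$, and that $\tr(g)=\epsilon_{\uEnd(X)/\bbone}\circ\hat g$. I would apply this to $M$ both in $\fT$ and in $\Mod_A$ (whose unit object is $A$). Keeping the notation of the proof of Proposition~\ref{prop:rigid-module}, where $\alpha\colon A\to M\otimes_A M^\vee$ and $\beta\colon M^\vee\otimes_A M\to A$ are the co-evaluation and evaluation of $M$ in $\Mod_A$, and $\alpha'=s_{M,M^\vee}\circ\alpha\circ\eta_A$, $\beta'=\epsilon_A\circ\beta\circ\pi_{M^\vee,M}$ are those of $M$ in $\fT$: in $\fT$ one has $\tr=\beta'$, so $\tr(f)=\beta'\circ\hat f$ with $\hat f=(M^\vee\otimes f)\circ\mathrm{sym}_{M,M^\vee}\circ\alpha'$; in $\Mod_A$ one has $\tr_A=\beta$, and the categorical trace of $f$ computed there is $\beta\circ\hat f_A\colon A\to A$ with $\hat f_A=(M^\vee\otimes_A f)\circ\mathrm{sym}^A_{M,M^\vee}\circ\alpha$. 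The element $\tr_A(f)\in\Gamma(A)$ of the statement is this endomorphism of the unit $A$ of $\Mod_A$ precomposed with $\eta_A\colon\bbone\to A$, so $\tr_A(f)=\beta\circ\hat f_A\circ\eta_A$.

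Next I would apply $\epsilon_A$ and invoke Proposition~\ref{prop:trace-change-ring}, which in the model $\uEnd_A(M)=M^\vee\otimes_A M$ asserts exactly $\epsilon_A\circ\beta=\beta'\circ s_{M^\vee,M}$ (the forgetful map there being $s_{M^\vee,M}$). This gives $\epsilon_A(\tr_A(f))=\beta'\circ s_{M^\vee,M}\circ\hat f_A\circ\eta_A$, so it remains to identify $s_{M^\vee,M}\circ\hat f_A\circ\eta_A$ with $\hat f$. Expanding $\hat f_A$ and pushing $s_{M^\vee,M}$ to the right: by bi-naturality of $s$ it commutes past $M^\vee\otimes_A f$, turning it into $M^\vee\otimes f$; by compatibility of $s$ with the symmetries it converts $\mathrm{sym}^A_{M,M^\vee}$ into $\mathrm{sym}_{M,M^\vee}\circ s_{M,M^\vee}$; and finally $s_{M,M^\vee}\circ\alpha\circ\eta_A=\alpha'$ by the very definition of $\alpha'$. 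Assembling these three identities yields $s_{M^\vee,M}\circ\hat f_A\circ\eta_A=(M^\vee\otimes f)\circ\mathrm{sym}_{M,M^\vee}\circ\alpha'=\hat f$, whence $\epsilon_A(\tr_A(f))=\beta'\circ\hat f=\tr(f)$, as desired.

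The only step requiring genuine care is the compatibility of the section $s$ with the symmetry, i.e. $s_{N,M}\circ\mathrm{sym}^A_{M,N}=\mathrm{sym}_{M,N}\circ s_{M,N}$. I would deduce this from the uniqueness of $s$: on free modules $s$ is built from the canonical idempotent $\sigma\in\Gamma(A\otimes A)$ of \S\ref{ss:etale}, and $\sigma$ is fixed by the swap of the two tensor factors, because that swap preserves the characterizing equations $(x\otimes 1)\sigma=(1\otimes x)\sigma$ and $m(\sigma)=1$; the general case follows by passing to direct summands exactly as in the construction of $s$ around \eqref{eq:ps}. Beyond this point, and the minor bookkeeping of which (co)evaluation and which symmetry appears where, everything is formal manipulation with Proposition~\ref{prop:trace-change-ring} and the defining properties of $\pi$ and $s$; so I expect the symmetry-compatibility of $s$ to be the one ingredient that is not entirely routine.
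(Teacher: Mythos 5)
Your argument is correct and rests on exactly the same ingredient as the paper's proof, which simply says the corollary ``follows from the proposition upon taking invariants'': applying $\Gamma$ to the commutative square of Proposition~\ref{prop:trace-change-ring} gives the identity once one knows that the forgetful map $s_{M^\vee,M}$ sends the name of $f$ in $\Gamma(\ul{\End}_A(M))$ to its name in $\Gamma(\ul{\End}(M))$. Your explicit verification of that last point (via the swap-invariance of $\sigma$ and hence the symmetry-compatibility of $s$) is precisely the detail the paper leaves implicit, so the proposal is a correct, more fully spelled-out version of the same proof.
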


\begin{proof}
This follows from the proposition upon taking invariants.
\end{proof}

\begin{corollary}
If $\tr_A(f) \in k$ then $\tr(f)=\dim(A) \tr_A(f)$.
\end{corollary}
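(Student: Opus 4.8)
The plan is to read this off from the preceding corollary, which gives $\tr(f)=\epsilon_A(\tr_A(f))$, together with the normalization $\epsilon_{A/\bbone}\circ\eta_A=\dim(A)$ for the trace map of a rigid algebra. So the argument has two ingredients, neither of which is hard.

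First I would record why $\epsilon_{A/\bbone}\circ\eta_A=\dim(A)$ as an element of $\Gamma(\bbone)=\End_{\fT}(\bbone)$. By construction, $\epsilon_{A/\bbone}\colon A\to\bbone$ factors as the left-multiplication algebra map $L\colon A\to\uEnd(A)$ followed by the categorical trace $\tr\colon\uEnd(A)\to\bbone$ (this uses that $A$ is rigid, so that $\uEnd(A)$ and its trace are defined). Since $L\circ\eta_A=\id_A$ — left multiplication by the unit is the identity endomorphism — we get $\epsilon_{A/\bbone}\circ\eta_A=\tr(\id_A)=\dim(A)$, the categorical dimension of $A$. (Alternatively one can simply cite \cite{discrete} for this.)

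Second, the hypothesis ``$\tr_A(f)\in k$'' means that, inside $\Gamma(A)=\Hom_{\fT}(\bbone,A)$, the element $\tr_A(f)$ equals $\lambda\cdot\eta_A$ for the scalar $\lambda\in k$ with which we identify it. Combining this with the preceding corollary and the $k$-linearity of $\Gamma(\epsilon_A)$ gives
\[
\tr(f)\;=\;\epsilon_A(\tr_A(f))\;=\;\epsilon_A(\lambda\cdot\eta_A)\;=\;\lambda\cdot(\epsilon_A\circ\eta_A)\;=\;\lambda\cdot\dim(A),
\]
and since $\lambda=\tr_A(f)$ this is the asserted identity $\tr(f)=\dim(A)\tr_A(f)$ in $\Gamma(\bbone)$. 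The only point that needs a moment's care is parsing the hypothesis ``$\tr_A(f)\in k$'' and fixing the normalization of $\epsilon_{A/\bbone}$; there is no substantive obstacle, and the whole proof is a two-line consequence of the corollary above.
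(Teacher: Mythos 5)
Your proposal is correct and is exactly the intended argument: the paper leaves this corollary without proof because it follows immediately from the preceding corollary $\tr(f)=\epsilon_A(\tr_A(f))$ together with the normalization $\epsilon_A\circ\eta_A=\dim(A)$, which the paper itself records in \S\ref{ss:gamma}(c). Your careful parsing of the hypothesis $\tr_A(f)\in k$ as $\tr_A(f)=\lambda\cdot\eta_A$ in $\Gamma(A)$ and the use of $k$-linearity of $\epsilon_A$ are precisely the right two lines.
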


As usual, an algebra in $\Mod_A$ is the same thing as an algebra in $\fT$ equipped with an algebra homomorphism from $A$. We now examine the \'etale condition for such algebras.

\begin{proposition} \label{prop:etale-in-mod}
Let $A \to B$ be an algebra homomorphism in $\fT$. Then $B$ is \'etale in $\Mod_A$ if and only if $B$ is \'etale in $\fT$.
\end{proposition}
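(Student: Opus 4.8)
The plan is to compare the trace pairing of $B$ computed in $\Mod_A$ with the trace pairing of $B$ computed in $\fT$, and to show that one is perfect if and only if the other is. The key link is Proposition~\ref{prop:trace-change-ring} (and its corollaries), which relate $\tr_A$ and $\tr$ via the trace map $\epsilon_A \colon A \to \bbone$. Concretely, for the rigid $A$-module (indeed algebra) $B$, the trace pairing in $\Mod_A$ is the map $B \otimes_A B \to A$ sending $(x,y) \mapsto \epsilon_{B/A}(xy)$, while the trace pairing in $\fT$ is $B \otimes B \to \bbone$ sending $(x,y) \mapsto \epsilon_{B/\bbone}(xy)$. By Proposition~\ref{prop:trace-change-ring} applied with $M = B$ (and using that the multiplication of $B$ is an $A$-module map), these two are related by $\epsilon_{B/\bbone}(xy) = \epsilon_A\big(\epsilon_{B/A}(xy)\big)$, i.e. the $\fT$-trace pairing of $B$ is the composite of the $\Mod_A$-trace pairing of $B$ with $\epsilon_A$, followed by the canonical section $s_{B,B}$ on the source.

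First I would establish the easy direction: if $B$ is \'etale in $\Mod_A$, then it is \'etale in $\fT$. Since $A$ is \'etale in $\fT$ and $B$ is \'etale (hence rigid, by Proposition~\ref{prop:rigid-module}) in $\Mod_A$, the object $B$ is rigid in $\fT$. The trace pairing $B \otimes B \to \bbone$ in $\fT$ can be factored through $B \otimes_A B$ via $\pi_{B,B}$ because the pairing is $A$-balanced (both $\epsilon_A$ and $\epsilon_{B/A}$ are $A$-linear, and $\epsilon_{B/A}(axy) = a\,\epsilon_{B/A}(xy)$). So the $\fT$-trace pairing is the composite $B \otimes B \xrightarrow{\pi_{B,B}} B \otimes_A B \xrightarrow{\langle\,,\,\rangle_A} A \xrightarrow{\epsilon_A} \bbone$. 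Perfectness of $\langle\,,\,\rangle_A$ (as an $A$-module pairing, i.e. it induces $B \xrightarrow{\sim} \ul{\Hom}_A(B,A)$) combined with the fact that $A$ is \'etale over $\bbone$ (so $\epsilon_A$ induces a perfect pairing on $A$) shows that the composite pairing is perfect in $\fT$: one checks that the induced map $B \to B^\vee$ in $\fT$ is the underlying map of the isomorphism $B \xrightarrow{\sim} \ul{\Hom}_A(B,A) \xrightarrow{\sim} \ul{\Hom}_A(B,A^\vee) \cong \ldots$, using Proposition~\ref{prop:rigid-module} to identify the underlying object of an $A$-linear dual with the $\fT$-dual. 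The converse direction runs the same computation backwards: if the composite pairing $\epsilon_A \circ \langle\,,\,\rangle_A \circ \pi_{B,B}$ is perfect in $\fT$ and $A$ is \'etale in $\fT$, then $\langle\,,\,\rangle_A$ must itself induce an isomorphism $B \xrightarrow{\sim} \ul{\Hom}_A(B, A)$ in $\Mod_A$, because tensoring (over $A$) with the already-perfect pairing for $A$ is an invertible operation; this uses that $\ul{\Hom}_A(-, A) \cong \ul{\Hom}(-,\bbone)$ restricted to $\Mod_A$-objects composed appropriately, and that $A$ being self-dual over $\bbone$ lets us undo the twist by $\epsilon_A$.

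The main obstacle I anticipate is bookkeeping with the splitting maps $s_{M,N}$ and projections $\pi_{M,N}$: the pairing in $\Mod_A$ lives on $B \otimes_A B$, the pairing in $\fT$ on $B \otimes B$, and one must carefully insert the canonical section $s_{B,B} \colon B \otimes_A B \to B \otimes B$ with $\pi_{B,B} \circ s_{B,B} = \id$ (from the discussion before Proposition~\ref{prop:rigid-module}) to move between them without introducing spurious factors. In particular, identifying "the $\fT$-dual of the underlying object of $B$" with "the underlying object of the $\Mod_A$-dual of $B$" must be done through Proposition~\ref{prop:rigid-module}, and the compatibility of evaluation/coevaluation maps under this identification is exactly the content of its proof (the maps $\alpha', \beta'$ built from $\alpha, \beta$). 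I would phrase the argument so that "perfectness of the trace pairing" is unwound into "the canonical map to the dual is an isomorphism", reducing both implications to the single computation above together with the observation that an $\fT$-morphism between $A$-modules is an isomorphism in $\fT$ if and only if it is an isomorphism in $\Mod_A$ (since the forgetful functor $\Mod_A \to \fT$ is conservative and the relevant map is already $A$-linear).
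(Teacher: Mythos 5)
Your proposal is correct in substance. Note that the paper itself gives no argument here: it simply cites \cite[Proposition~5.10]{discrete} and observes that Proposition~\ref{prop:rigid-module} supplies the one ingredient of that proof that previously required $\fT$ to be pre-Tannakian. Your write-up is essentially a self-contained reconstruction of that argument, and it uses exactly the right two inputs: Proposition~\ref{prop:rigid-module} to identify the underlying object of the $\Mod_A$-dual of $B$ with its $\fT$-dual, compatibly with evaluations (the maps $\beta'=\epsilon_A\circ\beta_A\circ\pi$ from that proof), and Proposition~\ref{prop:trace-change-ring} to obtain $\epsilon_{B/\bbone}=\epsilon_A\circ\epsilon_{B/A}$ for any \emph{rigid} algebra $B$ under $A$ (no circularity, since neither ingredient needs $B$ to be \'etale), whence the $\fT$-trace pairing of $B$ equals $\epsilon_A\circ\langle\,,\,\rangle_A\circ\pi_{B,B}$.

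Two small points. First, in your opening paragraph you describe the $\fT$-pairing as obtained from the $\Mod_A$-pairing by inserting the section $s_{B,B}$ on the source; that is backwards (precomposing with $s_{B,B}$ yields a map on $B\otimes_A B$), and indeed you correctly use $\pi_{B,B}$ when you actually run the argument. Second, the converse direction can be made less mysterious than ``tensoring with the perfect pairing for $A$ is invertible'': writing $u_A\colon B\to B^{\vee}$ for the morphism induced by the $\Mod_A$-pairing and $u\colon B\to B^{\vee}$ for the one induced by the $\fT$-pairing, the computation
$\beta'\circ(u_A\otimes\id)=\epsilon_A\circ\beta_A\circ(u_A\otimes_A\id)\circ\pi_{B,B}=\epsilon_A\circ\langle\,,\,\rangle_A\circ\pi_{B,B}$,
together with uniqueness of the morphism inducing a given pairing against the evaluation, shows that $u$ and $u_A$ have the same underlying morphism in $\fT$. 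Both implications of the proposition then reduce to the single observation you already make at the end, namely that an $A$-linear morphism is invertible in $\Mod_A$ if and only if its underlying morphism is invertible in $\fT$.
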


\begin{proof}
This is proved in \cite[Proposition~5.10]{discrete} in case $\fT$ is pre-Tannakian. However, the same proof now applies in general thanks to Proposition~\ref{prop:rigid-module}, which replaces the need for \cite[Proposition~5.1]{discrete} in the proof.
\end{proof}

\subsection{Duality} \label{ss:dual}

Let $f \colon A \to B$ be a map of \'etale algebras. We can then regard $B$ as an \'etale algebra in the tensor category $\Mod_A$, so that we have
the trace $\epsilon_{B/A} \colon B \to A$ for $B$ in $\Mod_A$. Proposition~\ref{prop:trace-change-ring}, applied to the map $B \to \ul{\End}_A(B)$, implies we have a transitive law for traces, i.e., $\epsilon_B = \epsilon_A \circ \epsilon_{B/A}$. Since $\epsilon_{B/A}$ is a morphism in $\Mod_A$, it is $A$-linear by definition. We thus have the identity
\begin{displaymath}
\epsilon_B(f(a)b) = \epsilon_B(\epsilon_{B/A}(f(a)b)) = \epsilon_A(a \epsilon_{B/A}(b)).
\end{displaymath}
This identity is really one of maps $A \otimes B \to \bbone$. This shows that $\epsilon_{B/A}$ is the dual to the map $A \to B$ in $\fT$, where $A$ and $B$ are identified with their own duals via their trace pairings. We therefore sometimes write $f^{\vee}$ in place of $\epsilon_{B/A}$. 

The following result is very helpful, as it often allows us to reduce to the case where $A=\bbone$.

\begin{proposition}
Let $A \to B$ and $f \colon B \to C$ be maps of \'etale algebras. Then $f^{\vee}$ is the same whether computed in $\fT$ or in $\Mod_A$.
\end{proposition}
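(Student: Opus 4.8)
The plan is to reduce to the characterization of the dual map by trace pairings recorded in \S\ref{ss:dual}, and then to pass between $\fT$ and $\Mod_A$ using the transitive law for traces. Recall that, in any Karoubian tensor category $\fS$, for a map $h \colon D \to E$ of \'etale algebras the dual $h^\vee \colon E \to D$ is the \emph{unique} morphism of $\fS$ satisfying
\begin{displaymath}
\epsilon_E \circ m_E \circ (h \otimes E) \;=\; \epsilon_D \circ m_D \circ (D \otimes h^\vee) \colon D \otimes E \to \bbone ,
\end{displaymath}
uniqueness being a consequence of the non-degeneracy of the trace pairing of $D$; this is exactly the content of the computation in \S\ref{ss:dual}, written for a general map of \'etale algebras rather than just for $A \to B$. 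I would apply this in two ways. First, with $\fS = \fT$ and $h = f$, which gives the map $f^\vee$ computed in $\fT$. Second, with $\fS = \Mod_A$: this is legitimate because $B$ and $C$ are \'etale in $\Mod_A$ by Proposition~\ref{prop:etale-in-mod}, and $f$ is an algebra homomorphism in $\Mod_A$ (the $A$-module structures on $B$ and $C$ being the ones restricted along $A \to B$ and $A \to B \to C$). Write $g$ for the resulting map $f^\vee$ computed in $\Mod_A$.

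The heart of the proof is to show that $g$, viewed as a morphism of $\fT$, satisfies the defining identity of $f^\vee$ in $\fT$; the uniqueness clause above then yields $g = f^\vee$. By construction $g$ is the unique morphism of $\Mod_A$ with
\begin{displaymath}
\epsilon_{C/A} \circ \bar{m}_C \circ (f \otimes_A C) \;=\; \epsilon_{B/A} \circ \bar{m}_B \circ (B \otimes_A g) \colon B \otimes_A C \to A ,
\end{displaymath}
where $\bar{m}_B, \bar{m}_C$ denote the multiplications of $B, C$ as algebras in $\Mod_A$, i.e.\ the factorizations of $m_B, m_C$ through the projections $\pi_{B,B}, \pi_{C,C}$. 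I would then precompose this identity with $\pi_{B,C} \colon B \otimes C \to B \otimes_A C$ and postcompose with the trace $\epsilon_A \colon A \to \bbone$. Using the bi-naturality of the maps $\pi_{-,-}$ one rewrites $(f \otimes_A C) \circ \pi_{B,C} = \pi_{C,C} \circ (f \otimes C)$ and $(B \otimes_A g) \circ \pi_{B,C} = \pi_{B,B} \circ (B \otimes g)$; since $\bar{m}_C \circ \pi_{C,C} = m_C$ and $\bar{m}_B \circ \pi_{B,B} = m_B$, and since $\epsilon_A \circ \epsilon_{C/A} = \epsilon_C$ and $\epsilon_A \circ \epsilon_{B/A} = \epsilon_B$ by the transitive law for traces (\S\ref{ss:dual}), the two sides collapse to $\epsilon_C \circ m_C \circ (f \otimes C)$ and $\epsilon_B \circ m_B \circ (B \otimes g)$ as morphisms $B \otimes C \to \bbone$ in $\fT$. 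Thus $g$ satisfies precisely the identity characterizing $f^\vee$ in $\fT$, so $g = f^\vee$.

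The step I expect to require the most care is the bookkeeping that crosses between the two categories: one must be certain that the multiplication, unit, and trace of $B$ and of $C$ as algebra objects in $\Mod_A$ are exactly the ones obtained from their $\fT$-counterparts via the maps $\pi_{-,-}$ and $s_{-,-}$ of \S\ref{s:etale}, and that the second displayed identity is genuinely the incarnation in $\Mod_A$ of the defining identity of \S\ref{ss:dual}; once that is pinned down the manipulation above is routine. An alternative, more conceptual route would be to invoke \S\ref{ss:dual} to identify $f^\vee$ with $\epsilon_{C/B}$ in both cases, and to observe that $\epsilon_{C/B}$ is constructed entirely inside the tensor category $\Mod_B$, which is canonically the same whether formed over $\fT$ or over $\Mod_A$ — the identification being compatible with the relative tensor product because $\otimes_B$ is computed by split coequalizers (Proposition~\ref{Prop:Summ}), which the forgetful functor $\Mod_A \to \fT$ preserves — so that the \'etale algebra $C$, and hence its relative trace $\epsilon_{C/B}$, is unchanged.
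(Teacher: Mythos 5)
Your proof is correct and follows essentially the same route as the paper's: both characterize the dual via the identity $\epsilon_B(b\,f^{\vee}(c))=\epsilon_C(f(b)\,c)$, note that the $\Mod_A$-version of this identity holds for the relative dual, and then apply $\epsilon_A$ together with the transitivity of traces from \S\ref{ss:dual} to conclude by uniqueness. The paper phrases the passage from $B\otimes_A C$ back to $B\otimes C$ more briefly (in elemental notation), whereas you spell out the $\pi_{-,-}$ bookkeeping explicitly; the content is the same.
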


\begin{proof}
Let $f^{\vee}$ be the dual of $f$ in $\fT$, and let $f_A^{\vee}$ be the dual in $\Mod_A$. By definition, $f^{\vee} \colon C \to B$ is the unique map satisfying
\begin{displaymath}
\epsilon_B(b f^{\vee}(c)) = \epsilon_C(f(b) c),
\end{displaymath}
i.e., the two sides agree as maps $B \otimes C \to \bbone$. Since $f$ and $f^{\vee}$ are both $A$-linear, the two maps above actually define maps $B \otimes_A C \to \bbone$. Similarly, $f^{\vee}_A$ is the unique map satisfying
\begin{displaymath}
\epsilon_{B/A}(b f^{\vee}_A(c)) = \epsilon_{C/A}(f(b) c),
\end{displaymath}
i.e., the two maps $B \otimes_A C \to A$ agree. Applying $\epsilon_A$ to the second equation, we see that $f^{\vee}_A$ satisfies the defining property of $f^{\vee}$, and so the two coincide.
\end{proof}

\begin{proposition}
Let $f \colon B \to C$ be a map of \'etale algebras, let $A$ be another \'etale algebra, and consider the map
\begin{displaymath}
f \otimes \id \colon B \otimes A \to C \otimes A.
\end{displaymath}
Then $(f \otimes \id)^{\vee}=f^{\vee} \otimes \id$.
\end{proposition}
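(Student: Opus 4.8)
\emph{Plan.} The plan is to verify that $f^{\vee} \otimes \id$ satisfies the defining property of $(f \otimes \id)^{\vee}$ recorded in \S\ref{ss:dual}, and then to invoke uniqueness (i.e.\ perfectness of the trace pairing of $B \otimes A$). Recall that $(f \otimes \id)^{\vee} \colon C \otimes A \to B \otimes A$ is the unique morphism in $\fT$ with
\begin{displaymath}
\epsilon_{B \otimes A}\big((b \otimes a)\cdot (f \otimes \id)^{\vee}(c \otimes a')\big) = \epsilon_{C \otimes A}\big((f \otimes \id)(b \otimes a)\cdot (c \otimes a')\big)
\end{displaymath}
as maps $(B \otimes A) \otimes (C \otimes A) \to \bbone$. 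So it suffices to check that this identity holds after substituting $f^{\vee} \otimes \id$ for $(f \otimes \id)^{\vee}$.

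The key input is that the trace of a tensor product of \'etale algebras factors as a tensor product of traces: $\epsilon_{B \otimes A} = \epsilon_B \otimes \epsilon_A$ (under the canonical identification $\bbone \otimes \bbone = \bbone$). This holds because left multiplication by $b \otimes a$ on $B \otimes A$ is $L_b \otimes L_a$ (with $L$ denoting left multiplication in $B$, resp.\ $A$), while the categorical trace is multiplicative on a tensor product of endomorphisms, so $\epsilon_{B\otimes A}(b \otimes a) = \tr(L_b \otimes L_a) = \tr(L_b)\,\tr(L_a) = \epsilon_B(b)\,\epsilon_A(a)$. Combining this with the fact that multiplication on $B \otimes A$ is $m_B \otimes m_A$ (after swapping the two middle tensorands via the symmetry), both sides of the displayed identity, with $f^{\vee} \otimes \id$ substituted, unwind — after reorganizing tensor factors — into a composite of the form
\begin{displaymath}
B \otimes C \otimes A \otimes A \xrightarrow{\;(\ast)\,\otimes\,(\epsilon_A \circ m_A)\;} \bbone,
\end{displaymath}
where $(\ast)$ is $\epsilon_B \circ m_B \circ (\id_B \otimes f^{\vee}) \colon B \otimes C \to \bbone$ on the left-hand side and $\epsilon_C \circ m_C \circ (f \otimes \id_C) \colon B \otimes C \to \bbone$ on the right-hand side. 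By the defining property of $f^{\vee}$ from \S\ref{ss:dual}, namely $\epsilon_B(b\, f^{\vee}(c)) = \epsilon_C(f(b)\, c)$ as maps $B \otimes C \to \bbone$, these two versions of $(\ast)$ coincide; tensoring with the common map $\epsilon_A \circ m_A$ then shows the two composites agree. Hence $f^{\vee} \otimes \id$ satisfies the characterizing identity, and uniqueness yields $(f \otimes \id)^{\vee} = f^{\vee} \otimes \id$.

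\emph{Main obstacle.} The only genuinely delicate point is the bookkeeping with the symmetry isomorphisms required to identify $m_{B \otimes A}$ with $m_B \otimes m_A$ and to pull the two $A$-tensorands past the $C$-tensorand; written in the internal ``element'' notation used throughout \S\ref{ss:dual} this is routine, and nothing beyond the multiplicativity of the categorical trace (which gives $\epsilon_{B \otimes A} = \epsilon_B \otimes \epsilon_A$) is needed. Alternatively, one can argue diagram-free: writing $\theta_A \colon A \xrightarrow{\sim} A^{\vee}$ for the self-duality induced by the trace pairing and $f^{*}\colon C^\vee \to B^\vee$ for the ordinary $\fT$-dual of $f$, one has $f^{\vee} = \theta_B^{-1} \circ f^{*} \circ \theta_C$, while $\theta_{B \otimes A} = \theta_B \otimes \theta_A$ (again from $\epsilon_{B \otimes A} = \epsilon_B \otimes \epsilon_A$) and $(f \otimes \id_A)^{*} = f^{*} \otimes \id_{A^{\vee}}$ (standard), and these combine to give $(f \otimes \id)^{\vee} = f^{\vee} \otimes \id_A$ at once.
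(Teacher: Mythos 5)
Your proof is correct, but it takes a different route from the paper's. The paper's argument is a two-line deduction from the machinery already in place: the functor $-\otimes A \colon \fT \to \Mod_A$ is a tensor functor sending $f$ to $f \otimes \id$, tensor functors commute with formation of dual maps, so $f^{\vee}\otimes\id$ is the dual of $f\otimes\id$ computed in $\Mod_A$; by the immediately preceding proposition this coincides with the dual computed in $\fT$. You instead verify the characterizing identity of $(f\otimes\id)^{\vee}$ directly, the essential input being $\epsilon_{B\otimes A}=\epsilon_B\otimes\epsilon_A$ (which follows from multiplicativity of the categorical trace, since left multiplication by $b\otimes a$ is $L_b\otimes L_a$ up to the symmetry) together with perfectness of the trace pairing on the \'etale algebra $B\otimes A$ for uniqueness. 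Your approach is more elementary and self-contained -- it does not invoke $\Mod_A$ or the previous proposition at all -- at the cost of the tensor-factor bookkeeping you acknowledge; the paper's approach buys brevity by reusing the module-category results it has just established. One small point worth making explicit in your write-up: the uniqueness step requires that the trace pairing on $B\otimes A$ be perfect, which your identity $\epsilon_{B\otimes A}=\epsilon_B\otimes\epsilon_A$ in fact delivers (the pairing on $B\otimes A$ is the tensor product of the two perfect pairings), so nothing is missing, but the dependence deserves a sentence.
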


\begin{proof}
Consider the tensor functor $\fT \to \Mod_A$ given by tensoring with $A$. Since formation of dual maps is compatible with tensor functors, we see that $f^{\vee} \otimes \id$ is the dual of $f \otimes \id$ computed in $\Mod_A$, which we have seen is the same as the dual computed in $\fT$.
\end{proof}

\begin{proposition} \label{prop:etale-inv}
If $f \colon A \to B$ is an isomorphism of \'etale algebras then $f^{\vee} \colon B \to A$ is the inverse of $f$.
\end{proposition}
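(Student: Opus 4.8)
The plan is to prove the stronger-looking but equivalent statement that $f^{\vee}\circ f=\id_A$; since $f$ is invertible, composing on the right with $f^{-1}$ then gives $f^{\vee}=f^{-1}$. Recall from \S\ref{ss:dual} that $f^{\vee}=\epsilon_{B/A}$ is the unique map $B\to A$ for which
\begin{displaymath}
\epsilon_B\circ m_B\circ(f\otimes\id_B)\;=\;\epsilon_A\circ m_A\circ(\id_A\otimes f^{\vee})
\end{displaymath}
as maps $A\otimes B\to\bbone$, uniqueness being a consequence of the fact that the trace pairing $\epsilon_A\circ m_A$ of the \'etale algebra $A$ is perfect. So it suffices to check that the two maps $A\otimes A\to\bbone$ given by $\epsilon_A\circ m_A\circ\bigl(\id_A\otimes(f^{\vee}\circ f)\bigr)$ and $\epsilon_A\circ m_A$ coincide, and then invoke perfectness of this pairing.

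The one non-formal ingredient is that an isomorphism of \'etale (indeed rigid) algebras preserves traces, i.e.\ $\epsilon_B\circ f=\epsilon_A$. This holds because the trace $\epsilon_{C/\bbone}$ of a rigid algebra $C$ is determined by the rigid algebra structure on $C$ alone: it is the composite of the left regular representation $C\to\uEnd(C)$ with the categorical trace $\uEnd(C)\to\bbone$. An algebra isomorphism $f$ carries the multiplication and unit of $A$ to those of $B$, and, being an isomorphism in $\fT$, carries the duality data of $A$ to that of $B$; hence it intertwines the two constructions of the trace. Granting this, I would compute, using the characterization of $f^{\vee}$ for the first equality, that $f$ is an algebra homomorphism for the second, and that $f$ preserves traces for the last:
\begin{displaymath}
\epsilon_A\circ m_A\circ(\id_A\otimes f^{\vee})\circ(\id_A\otimes f)
=\epsilon_B\circ m_B\circ(f\otimes f)
=\epsilon_B\circ f\circ m_A
=\epsilon_A\circ m_A .
\end{displaymath}
Since $(\id_A\otimes f^{\vee})\circ(\id_A\otimes f)=\id_A\otimes(f^{\vee}\circ f)$, perfectness of the trace pairing of $A$ forces $f^{\vee}\circ f=\id_A$, and the proof is complete.

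The main (and essentially only) obstacle is the claim that algebra isomorphisms respect the canonical trace; everything else is a bookkeeping exercise with the defining property of $f^{\vee}$ and non-degeneracy. An alternative packaging, closer in spirit to \S\ref{ss:dual} and avoiding an explicit formula for $\epsilon_{C/\bbone}$, is to work in $\Mod_A$: view $B$ as an \'etale algebra in $\Mod_A$ via $f$, so that $f^{\vee}=\epsilon_{B/A}$, and view $A$ as an \'etale algebra in $\Mod_A$ via $\id_A$, for which the relative trace is $\epsilon_{A/A}=\id_A$ (this follows from the transitive law $\epsilon_A=\epsilon_A\circ\epsilon_{A/A}$ together with $A$-linearity of $\epsilon_{A/A}$ and perfectness of the trace pairing of $A$). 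Then $f$ is an isomorphism of \'etale algebras in $\Mod_A$, and naturality of the relative trace under such isomorphisms gives $\epsilon_{B/A}\circ f=\epsilon_{A/A}=\id_A$, i.e.\ $f^{\vee}\circ f=\id_A$ directly. Either way, the substantive point is that an isomorphism of rigid algebras preserves the intrinsically defined trace.
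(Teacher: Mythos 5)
Your proof is correct, but your primary argument takes a different route from the paper's. The paper disposes of the proposition in one line: the case $A=\bbone$ is clear (there $f=\eta_B$, $f^{\vee}=\epsilon_B$, and $\dim B=1$), and the general case follows by passing to $\Mod_A$, using the preceding proposition that $f^{\vee}$ is unchanged under that passage. You instead verify $f^{\vee}\circ f=\id_A$ directly from the adjointness characterization of $f^{\vee}$ together with perfectness of the trace pairing on $A$; the displayed chain of equalities is right, and the one substantive input --- that an algebra isomorphism satisfies $\epsilon_B\circ f=\epsilon_A$ --- is correctly identified and correctly justified, since $m_{f(a)}=f\circ m_a\circ f^{-1}$ and the categorical trace is conjugation-invariant. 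What the direct computation buys is self-containedness: it does not rely on the $\Mod_A$ machinery of \S\ref{ss:dual} beyond the defining identity for $f^{\vee}$. What the paper's reduction buys is brevity, since that machinery has already been built. Your ``alternative packaging'' at the end is essentially the paper's own proof, so you have in effect given both arguments.
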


\begin{proof}
This is clear if $A=\bbone$, and follows in general by passing to $\Mod_A$.
\end{proof}

\begin{proposition} \label{prop:etale-bc}
Given a cartesian square
\begin{displaymath}
\xymatrix{
B \ar[r]^{g'} & B' \\
A \ar[r]^g \ar[u]^f & A' \ar[u]_{f'} }
\end{displaymath}
of \'etale algebras, the diagram
\begin{displaymath}
\xymatrix{
B \ar[r]^{g'} \ar[d]_{f^{\vee}} & B' \ar[d]^{(f')^{\vee}} \\
A \ar[r]^g & A' }
\end{displaymath}
commutes.
\end{proposition}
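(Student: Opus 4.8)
The plan is to unwind what ``cartesian'' means here and then reduce to the case $A=\bbone$, where the identity drops out of the interchange law. A cartesian square of \'etale algebras is a pullback square in the lextensive category $\Et(\fT)^{\op}$ of \S \ref{ss:lextensive}; such a pullback exists there and, transported to $\Et(\fT)$, is a pushout of algebras, i.e.\ a relative tensor product. So, replacing $B'$ by an isomorphic \'etale algebra, I may assume
\[
B'=B\otimes_A A',\qquad g'=\id_B\otimes\eta_{A'},\qquad f'=\eta_B\otimes\id_{A'}
\]
are the canonical maps. (It matters that ``cartesian'' refers to the $\Et(\fT)^{\op}$-structure: for pullbacks of \'etale algebras the analogous statement is false, as a small example in $\Vec$ shows.)

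Next I would pass to the module category $\Mod_A$, a Karoubian rigid tensor category with unit $A$ (Karoubian-ness being immediate from that of $\fT$, since $A$-linear idempotents split in $\fT$ with $A$-module summands). By Proposition~\ref{prop:etale-in-mod}, $B$ and $A'$ are \'etale algebras in $\Mod_A$ and $B'=B\otimes_A A'$ is their tensor product there; the maps $f,g$ become the unit maps $\eta_B,\eta_{A'}$, and $g',f'$ become $\id_B\otimes\eta_{A'}$ and $\eta_B\otimes\id_{A'}$. Crucially, this passage does not alter the four maps in the identity to be proved: $f^{\vee}$ is by definition $\epsilon_{B/A}$, which is the dual of $\eta_B$ computed in $\Mod_A$, and $(f')^{\vee}$ is the same whether computed in $\fT$ or in $\Mod_A$ by the relevant result of \S \ref{ss:dual} (applicable because $g\colon A\to A'$ is an algebra map). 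Hence it suffices to prove the identity inside $\Mod_A$, i.e.\ when $A=\bbone$.

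So take $A=\bbone$: for \'etale algebras $B,A'$ and $B'=B\otimes A'$ with the canonical maps, we must show $\eta_{A'}\circ\eta_B^{\vee}=(\eta_B\otimes\id_{A'})^{\vee}\circ(\id_B\otimes\eta_{A'})$ as maps $B\to A'$. By the duality discussion in \S \ref{ss:dual} the dual of the unit map is the trace, so $\eta_B^{\vee}=\epsilon_B$, and by the Proposition asserting $(f\otimes\id)^{\vee}=f^{\vee}\otimes\id$ we get $(\eta_B\otimes\id_{A'})^{\vee}=\epsilon_B\otimes\id_{A'}$. The right-hand side is then $(\epsilon_B\otimes\id_{A'})\circ(\id_B\otimes\eta_{A'})=\epsilon_B\otimes\eta_{A'}=\eta_{A'}\circ\epsilon_B$ by the interchange law, which is the left-hand side. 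The substantive steps are thus the two reductions — identifying cartesian squares of \'etale algebras with relative tensor product squares, and checking $f^{\vee}$ and $(f')^{\vee}$ survive the passage to $\Mod_A$ — while the final computation is one line; alternatively one can stay in $\fT$ and argue directly that $(f')^{\vee}$ is the base change of $\epsilon_{B/A}$ along $g$, after which $(f')^{\vee}\circ g'=g\circ f^{\vee}$ again by interchange.
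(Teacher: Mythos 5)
Your proof is correct and follows essentially the same route as the paper's: reduce to the case $A=\bbone$ by passing to $\Mod_A$ (using the invariance of duals under this passage from \S\ref{ss:dual}), then conclude from $(\eta_B\otimes\id_{A'})^{\vee}=\epsilon_B\otimes\id_{A'}$ and the interchange law. Your explicit remarks on why ``cartesian'' must be read in $\Et(\fT)^{\op}$ and why the dual maps survive the passage to $\Mod_A$ merely spell out what the paper leaves implicit.
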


\begin{proof}
First suppose that $A=\bbone$. Then $B'=B \otimes A'$ and $g'=\id_B \otimes g$ and $f'=f \otimes \id_{A'}$. Since $(g')^{\vee}=\id_B \otimes g^{\vee}$ and $(f')^{\vee}=f^{\vee} \otimes \id_B$, the result follows. The general case now follows upon passing to $\Mod_A$.
\end{proof}

\subsection{Uniform maps} \label{ss:gamma}

Let $\fT$ be a tensor category. Suppose that $f \colon A \to B$ is a map of \'etale algebras. The map $f^{\vee} \colon B \to A$ induces a map $ \Gamma(B) \to \Gamma(A)$. We define $\tilde{\gamma}(f)$ to be the element $f^{\vee}(1)=f^\vee\circ\eta_B$ of $\Gamma(A)$. We say that $f$ is \defn{uniform} if $A$ is non-zero and $\tilde{\gamma}(f)$ belongs to $k=k\cdot\eta_A \subset \Gamma(A)$. In this case, we let $\gamma(f)=\tilde{\gamma}(f)$, regarded as an element of $k$. If $f$ is an isomorphism then $f$ is uniform with $\gamma(f)=1$ (Proposition~\ref{prop:etale-inv}). We now give some examples and basic properties of this construction.

\textit{(a) Vector spaces.} Suppose $\fT$ is the category of finite dimensional complex vector spaces and let $f \colon A \to B$ be a map of non-zero \'etale algebras. Let $X=\operatorname{Spec}(A)$ and $Y=\operatorname{Spec}(B)$, which can be regarded simply as finite sets, and let $Y \to X$ be the map induced by $f$. Then $\tilde{\gamma}(f)$ is the function on $X$ that assigns to a point the cardinality of its fiber. Thus $f$ is uniform if and only if all fibers of $Y \to X$ have the same cardinality; in this case, $\gamma(f)$ is this common cardinality. The situation is similar for oligomorphic tensor categories:

\textit{(b) Oligomorphic groups.} Suppose $\fT=\uPerm(G, \mu)$ for some oligomorphic group $G$ with measure $\mu$, and let $f \colon Y \to X$ be a map of $G$-sets with $X$ transitive. Then $f^* \colon \cC(X) \to \cC(Y)$ is uniform with $\gamma(f^*)=\mu(f)$ by \cite[Proposition~7.21]{repst}.

\textit{(c) Connection to dimension.} Let $B$ be an \'etale algebra and let $f \colon \bbone \to B$ be the unit. Then $\tilde{\gamma}(f)=\dim(B)$. Indeed, we have $f^\vee=\epsilon$ and thus $\tilde{\gamma}(f)=\epsilon\circ\eta$, so that the conclusion follows by definition of the trace $\epsilon$ in \cite[\S 4.1]{discrete}. More generally, if $f \colon A \to B$ is any map of \'etale algebras then $\tilde{\gamma}(f)=\dim_A{B}$. In particular, if $\fT$ is pre-Tannakian and $k$ is algebraically closed and $A=\bigoplus_{i=1}^n A_i$ is the decomposition of $A$ into simple \'etale algebras, and $B=\bigoplus_{i=1}^n B_i$ is the corresponding decomposition of $B$, then $f$ is uniform if and only if $\dim_{A_i}(B_i)$ is independent of $i$ (and $A \ne 0$).

\textit{(d) Pullbacks.} Given a cartesian square as in Proposition~\ref{prop:etale-bc}, we have $g(\tilde{\gamma}(f))=\tilde{\gamma}(f')$. In particular, if $f$ is uniform then so is $f'$, and $\gamma(f)=\gamma(f')$. These claims follows immediately from Proposition~\ref{prop:etale-bc}.

\textit{(e) Composition.} Let $f \colon A \to B$ and $g \colon B \to C$ be maps of \'etale algebras. Then
\begin{displaymath}
\tilde{\gamma}(gf) = f^{\vee}(\tilde{\gamma}(g)).
\end{displaymath}
Indeed, we have $(gf)^{\vee}=f^{\vee} \circ g^{\vee}$, so evaluating at~1 yields the equation. In particular, if $f$ and $g$ are uniform then so is $gf$, and
\begin{displaymath}
\gamma(gf) = \gamma(g) \cdot \gamma(f).
\end{displaymath}
Indeed, simply observe that $f^{\vee}$ is $k$-linear, so $\tilde{\gamma}(g)=\gamma(g)$ pulls out of it.

\textit{(f) Addition.} Let $f \colon A \to B_1 \oplus B_2$ be a map of \'etale algebras, and let $f_i \colon A \to B_i$ be the projection of $f$. Then one easily sees that
\begin{displaymath}
\tilde{\gamma}(f)=\tilde{\gamma}(f_1)+\tilde{\gamma}(f_2).
\end{displaymath}
In particular, if two of $f$, $f_1$, and $f_2$ are uniform then so is the third, and the above relation holds with $\gamma$ in place of $\tilde{\gamma}$.

\textit{(g) Functoriality.} Let $\Phi \colon \fT \to \fT'$ be a tensor functor and let $f \colon A \to B$ be a map of \'etale algebras in $\fT$. Then $\tilde{\gamma}(\Phi(f))=\Phi(\tilde{\gamma}(f))$. In particular, if $f$ is uniform and $\Phi(A)$ is non-zero then $\Phi(f)$ is uniform and $\gamma(\Phi(f))=\gamma(f)$.

\subsection{The category of \'etale algebras} \label{ss:lextensive}

Let $\Et(\fT)$ be the category of \'etale algebras in $\fT$, where morphisms are algebra homomorphisms. We now investigate the structure of this category.

Let $\cS$ be a category. We say that $\cS$ \defn{extensive}\footnote{This is sometimes called ``finitely extensive.''} if it has finite co-products, and for any objects $X$ and $Y$ the functor
\begin{displaymath}
\cS_{/X} \times \cS_{/Y} \to \cS_{/(X \amalg Y)}, \qquad
(A,B) \mapsto A \amalg B
\end{displaymath}
is an equivalence; here $\cS_{/X}$ denotes the category of objects over $X$. We say that $\cS$ is \defn{lextensive} if it is extensive and also has finite limits. Given a subobject $Y \subset X$, a \defn{complement} is a subobject $Y'$ such that the natural map $Y \amalg Y' \to X$ is an isomorphism. It is easy to see that complements are unique when they exist. We say that $\cS$ \defn{has complements} if every subobject has a complement. We note that for any pro-oligomorphic group, the category $\bS(G)$ is lextensive and has complements. In fact, this is true for the categories $\bS(G, \sE)$ associated to a stabilizer class $\sE$ as well.

The following is our main result on the structure of $\Et(\fT)$.

%There are many examples of lextensive categories:
%\begin{enumerate}
%\item For any pro-oligomorphic group $H$, the category $\bS(H)$ is lextensive. We note that in \cite{bcat}, Harman and the second author called categories of this form \defn{pre-Galois}, and gave a purely category theoretic characterization of them.
%\item In \cite[\S 2.6]{repst}, we introduced the notion of a \defn{stabilizer class} $\sE$ in a pro-oligomorphic group $H$. The category $\bS(H, \sE)$ of finitary $\sE$-smooth $H$-sets is lextensive.
%\item We will see below that the (opposite of the) category of \'etale algebras in any tensor category is lextensive.
%\item Any (elementary or Grothendieck) topos is lextensive.
%\end{enumerate}
%Our main motivation for working in the class of lextensive categories is that it includes examples (a), (b), and (c).

\begin{proposition} \label{prop:etale-cat}
The category $\Et(\fT)^{\op}$ is lextensive and has complements. Moreover, if $\Phi \colon \fT \to \fT'$ is a tensor functor (with $\fT'$ Karoubian), then the induced functor $\Phi \colon \Et(\fT)^{\op} \to \Et(\fT')^{\op}$ is additive and left-exact.
\end{proposition}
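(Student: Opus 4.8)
The plan is to verify the three assertions in turn: that $\Et(\fT)^{\op}$ has finite limits, that it is extensive, and that it has complements; then to check the functoriality statement. I would organize the work around the standard dictionary that limits in $\Et(\fT)^{\op}$ are colimits in $\Et(\fT)$, and that colimits of commutative algebras are built from tensor products and coequalizers of algebra maps. Concretely, the terminal object of $\Et(\fT)^{\op}$ is $\bbone$ (the initial algebra), which is \'etale with its tautological trace pairing. For binary products in $\Et(\fT)^{\op}$, I would show that if $A$ and $B$ are \'etale then $A \otimes B$ is \'etale — the trace pairing on $A \otimes B$ is the tensor product of the trace pairings on $A$ and $B$, hence perfect — and that $A \otimes B$ is the coproduct in $\Et(\fT)$, i.e.\ the product in $\Et(\fT)^{\op}$. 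For equalizers in $\Et(\fT)^{\op}$ (coequalizers in $\Et(\fT)$), given parallel maps $f, g \colon A \rightrightarrows B$ of \'etale algebras, the coequalizer in commutative algebras is $B/(f-g)$, which one can realize as the image of the idempotent $e \in \Gamma(B \otimes B)$ obtained by pushing $\sigma_B$ along... more carefully: the coequalizer is the largest quotient of $B$ on which $f$ and $g$ agree, and using that $\fT$ is Karoubian together with the splitting $s_B$ of the multiplication, one shows this quotient is a direct summand of $B$ cut out by an idempotent in $\Gamma(B)$, hence again \'etale (a summand of an \'etale algebra, when it is a quotient algebra, inherits a perfect trace pairing). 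Having products and equalizers gives all finite limits.

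For extensivity, I would show the coproduct in $\Et(\fT)^{\op}$ is $\amalg$, which corresponds to the product $A \times B$ of algebras in $\fT$ (whose underlying object is $A \oplus B$, with componentwise multiplication and with the two central idempotents $(1,0)$ and $(0,1)$). This is \'etale: the trace pairing on $A \times B$ is the orthogonal direct sum of those on $A$ and $B$. The extensivity equivalence $\Et(\fT)^{\op}_{/A} \times \Et(\fT)^{\op}_{/B} \xrightarrow{\sim} \Et(\fT)^{\op}_{/(A \times B)}$ then amounts to the statement that an \'etale algebra $C$ equipped with an algebra map $A \times B \to C$ canonically decomposes as $C_A \times C_B$, where $C_A = e_A C$ and $C_B = e_B C$ for $e_A, e_B$ the images in $\Gamma(C)$ of the two central idempotents of $A \times B$; the inverse functor glues a pair $(C_A \text{ over } A, C_B \text{ over } B)$ to $C_A \times C_B$. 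The idempotents $e_A, e_B$ are complementary orthogonal idempotents in $\Gamma(C)$ summing to $1$, so since $\fT$ is Karoubian the decomposition $C = e_A C \oplus e_B C$ exists in $\fT$, and each summand is a quotient algebra of $C$, hence \'etale. Complements are then immediate: a subobject in $\Et(\fT)^{\op}$ of $A$ is (the opposite of) a quotient algebra $A \to A'$, which as above is $A \to e A$ for an idempotent $e \in \Gamma(A)$; its complement is $A \to (1-e) A$, and $e A \times (1-e) A \cong A$.

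Finally, for the functoriality claim: a tensor functor $\Phi$ preserves commutative algebras, preserves rigidity and trace maps (hence trace pairings), and preserves perfectness of a pairing on a rigid object (since $\Phi$ commutes with duals and sends an isomorphism $A \xrightarrow{\sim} A^\vee$ to an isomorphism); so $\Phi$ sends \'etale algebras to \'etale algebras and induces $\Phi \colon \Et(\fT)^{\op} \to \Et(\fT')^{\op}$. That this is additive (preserves finite coproducts, i.e.\ finite products of algebras) is clear since $\Phi$ is additive on underlying objects and monoidal, so $\Phi(A \times B) \cong \Phi(A) \times \Phi(B)$ as algebras. Left-exactness (preservation of finite limits in $\Et(\fT)^{\op}$, i.e.\ finite colimits of algebras) follows because $\Phi$ preserves the terminal object $\bbone$, preserves products $A \otimes B$ (monoidality), and preserves the equalizers constructed above, which were cut out by idempotents in $\Gamma$ whose formation commutes with $\Phi$. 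The main obstacle I anticipate is the equalizer/coequalizer step: one must be careful that the coequalizer of \'etale algebras, computed a priori in all commutative algebras via a coequalizer diagram, is again \'etale and is a \emph{finite limit} in $\Et(\fT)^{\op}$ — the cleanest route is to avoid general coequalizers of $\fT$-objects (which need not exist) and instead identify the relevant quotient as the image of an explicit idempotent coming from $\sigma_B$ and the maps $f, g$, exploiting the Karoubian hypothesis throughout. Everything else is a routine, if somewhat lengthy, unwinding of the \'etale condition and the behavior of trace pairings under $\otimes$, $\oplus$, and quotients by idempotents.
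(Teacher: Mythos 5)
Your outline of finite limits, extensivity, and functoriality is broadly workable, but the ``complements'' step contains a genuine gap, and it is exactly the non-trivial part of the proposition. A subobject of $A$ in $\Et(\fT)^{\op}$ is an \emph{epimorphism} $A \to A'$ in $\Et(\fT)$ (in the categorical sense), and you assert that such a map ``as above is $A \to eA$ for an idempotent $e \in \Gamma(A)$.'' Nothing ``above'' in your argument establishes this: your extensivity discussion only shows that algebra maps \emph{out of a product} $A \times B$ decompose along the central idempotents of the source, and your equalizer discussion (at best) shows that certain coequalizers are idempotent summands; neither implies that an arbitrary categorical epimorphism of \'etale algebras is the projection onto a direct factor. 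The paper proves precisely this in two lemmas, and the argument is not formal: for an epimorphism $\bbone \to A$ one first shows $A \to A\otimes A$, $x \mapsto 1 \otimes x$, is an isomorphism, deduces that $e=\dim(A)$ is an idempotent of $\Gamma(\bbone)$ since $\dim(A)=\dim(A)^2$, decomposes $\fT$ along $e$, shows $A$ lives entirely in the factor where $e=1$ (by comparing $\dim_A(A\otimes A)$ with the base change of $\dim(A)$), and only then concludes $A=\bbone$ there; the general case $A \to B$ follows by passing to $\Mod_A$. You need to supply an argument of this kind; without it the ``has complements'' claim is unproved.

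A secondary concern is your construction of equalizers in $\Et(\fT)^{\op}$ (coequalizers of $f,g\colon A \rightrightarrows B$). The idea of cutting out the agreement locus by an idempotent is sound, but the relevant idempotent is built from the separability idempotent $\sigma_A$ of the \emph{source} (roughly $m_B\circ(f\otimes g)(\sigma_A) \in \Gamma(B)$), not from $\sigma_B$ as you suggest, and one must still verify the universal property. The paper sidesteps this entirely: it establishes finite limits in $\Et(\fT)^{\op}$ from the terminal object $\bbone$ together with pullbacks, where the pullback of $B \leftarrow A \to C$ is $B\otimes_A C = \sigma_A(B\otimes C)$, a direct factor of the \'etale algebra $B\otimes C$ and hence \'etale; this is both easier to make rigorous and makes the left-exactness of $\Phi$ immediate (a canonical summand of $B\otimes C$ is preserved by any tensor functor). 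I would recommend adopting that route for the limits, and in any case the epimorphism lemma must be added for the complements claim.
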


The proposition essentially says that $\Et(\fT)^{\op}$ behaves like the category of finite sets in some important ways. This is most clearly illustrated by considering case where $\fT$ is the category of representations of finite group $G$ over an algebraically closed field: in this case $\Et(\fT)^{\op}$ is equivalent to the category $\bS(G)$ of finite $G$-sets.

We break the proof into a few lemmas.

\begin{lemma}
The category $\Et(\fT)^{\op}$ has finite limits and finite co-products.
\end{lemma}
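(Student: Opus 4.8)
The statement to prove is: \emph{The category $\Et(\fT)^{\op}$ has finite limits and finite co-products.}

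\medskip

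The plan is to translate statements about $\Et(\fT)^{\op}$ into statements about $\Et(\fT)$, so that finite limits in the opposite category become finite co-limits in $\Et(\fT)$, and finite co-products become finite products. Thus I need to show that $\Et(\fT)$ has finite co-limits and finite products. For finite products: given \'etale algebras $A$ and $B$, I claim the tensor product $A \otimes B$ (with its natural algebra structure, using the symmetry of $\fT$ to make it commutative) is the product of $A$ and $B$ in $\Et(\fT)$. The universal property is the standard one for tensor products of commutative algebras: a pair of algebra maps $A \to C$ and $B \to C$ gives $A \otimes B \to C \otimes C \to C$ via multiplication, and conversely. One must check $A \otimes B$ is again \'etale: this is where I would invoke that the trace pairing on a tensor product is the tensor product of the trace pairings (so perfect $\otimes$ perfect is perfect), and rigidity is preserved under $\otimes$. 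The terminal object in $\Et(\fT)$ is the zero algebra $0$ (every algebra maps uniquely to it), which is vacuously \'etale; so $\Et(\fT)$ has all finite products.

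\medskip

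For finite co-limits in $\Et(\fT)$, equivalently finite limits in $\Et(\fT)^{\op}$: the initial object is $\bbone$, which is \'etale. For co-products in $\Et(\fT)$: given $A, B$, I claim the co-product is again $A \otimes B$, now regarded via the two unit maps $A \cong A \otimes \bbone \to A \otimes B$ and $B \to A \otimes B$ — indeed in the category of commutative algebras, $A \otimes B$ is simultaneously the product \emph{and} co-product is false in general, but the \emph{co-product} of commutative algebras is the tensor product; the product is a quotient. Let me restate: in $\Et(\fT)$, the co-product of $A$ and $B$ is $A \otimes B$ with the two unit-insertion maps; this is the usual pushout of commutative algebras over $\bbone$. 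The remaining co-limit to construct is the co-equalizer (equivalently, push-outs, given co-products and co-equalizers). Given two algebra maps $f, g \colon A \rightrightarrows B$, I want their co-equalizer in $\Et(\fT)$. The naive candidate is $B/(f(a) - g(a))$, but quotients are delicate in a general tensor category. Instead I would use the theory of $A$-modules developed in \S\ref{ss:etale}--here is the key idea: form the $A$-algebra $B \otimes_A B$ where $A$ acts on the two factors through $f$ and $g$ respectively; more precisely, realize the co-equalizer of $f,g$ as a suitable idempotent-cut-out summand. Concretely, a co-equalizer of $f, g \colon A \to B$ in commutative algebras is the same as the co-equalizer of the single map $A \to B$, $a \mapsto f(a) - g(a)$ with the zero map... but again subtraction of algebra maps is not an algebra map. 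The cleanest route: co-equalizers in $\Et(\fT)$ should be constructed as \emph{equalizers} in the opposite direction using duality (\S\ref{ss:dual}): the dual $f^\vee, g^\vee \colon B \to A$ are morphisms in $\fT$ (not algebra maps), and the equalizer of $f,g$ in $\Et(\fT)^{\op}$ should be the image of a canonical idempotent on $B$ built from $f, g$ and the splitting idempotent $\sigma_B \in \Gamma(B \otimes B)$. I expect the correct statement is: the equalizer of $f, g$ in $\Et(\fT)^{\op}$ is the \'etale algebra $eB$ where $e \in \Gamma(B)$ is the idempotent obtained by applying $m_B \circ (f^\vee \otimes \id) \circ (\ldots)$ to $\sigma$, i.e. the ``idempotent whose image is the locus where $f = g$,'' and one checks $eB$ is \'etale (a summand of an \'etale algebra, cut out by an idempotent, is \'etale) and has the right universal property since Karoubian-ness guarantees $eB$ exists as an object.

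\medskip

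The hard part will be the co-equalizer/equalizer construction: producing the correct idempotent $e \in \Gamma(B)$ encoding ``$f = g$'' and verifying both that $eB$ is an \'etale algebra and that it satisfies the universal property of the equalizer in $\Et(\fT)^{\op}$. The products/co-products and initial/terminal objects are routine given the standard facts about tensor products of \'etale algebras (perfectness of trace pairings is preserved) and the fact that $\bbone$ and $0$ are \'etale. Once co-products and co-equalizers are in hand, all finite co-limits in $\Et(\fT)$ exist (a standard categorical fact), equivalently $\Et(\fT)^{\op}$ has all finite limits; and finite products in $\Et(\fT)$ give finite co-products in $\Et(\fT)^{\op}$. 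I would also remark that these constructions are manifestly compatible with tensor functors into Karoubian targets, which sets up the additivity and left-exactness assertion in Proposition~\ref{prop:etale-cat} proved in the subsequent lemmas.
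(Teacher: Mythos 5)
There is a genuine error at the start: $A \otimes B$ is \emph{not} the product of $A$ and $B$ in $\Et(\fT)$ — it is the coproduct. The universal property you write down (``a pair of algebra maps $A \to C$ and $B \to C$ gives $A \otimes B \to C \otimes C \to C$'') is precisely the coproduct property, i.e.\ maps \emph{out of} $A\otimes B$, not maps into it. The categorical product in $\Et(\fT)$ is the direct product algebra $A \oplus B$ with componentwise multiplication (an algebra map $C \to A \oplus B$ is a pair of algebra maps, since the projections are algebra maps); its trace pairing is block diagonal, hence perfect, so it is \'etale. Your later parenthetical that ``the product is a quotient'' of $A\otimes B$ is also false (take $A=B=\bbone$: the product is $\bbone^{\oplus 2}$, the tensor product is $\bbone$). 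Since finite coproducts in $\Et(\fT)^{\op}$ are finite products in $\Et(\fT)$, this half of the lemma is not established by your argument as written.

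For finite limits in $\Et(\fT)^{\op}$, your identification of the initial object $\bbone$ and the coproduct $A\otimes B$ is correct, but the coequalizer construction — which you yourself flag as ``the hard part'' — is left as a sketch. The paper sidesteps coequalizers entirely by constructing pushouts: for $B \leftarrow A \to C$, the pushout is $B\otimes_A C$, realized as $\sigma_A(B\otimes C)$ where $\sigma_A\in\Gamma(A\otimes A)$ is the splitting idempotent of the multiplication of the \'etale algebra $A$, pushed into $\Gamma(B\otimes C)$; a factor algebra of the \'etale algebra $B\otimes C$ is again \'etale, and an initial object together with all pushouts yields all finite colimits. Your proposed idempotent $e\in\Gamma(B)$ ``encoding $f=g$'' is the right idea in spirit (it is the image of $\sigma_A$ under $m_B\circ(f\otimes g)\colon A\otimes A\to B$, exhibiting the coequalizer as $B\otimes_{A\otimes A}A$), but you would need to actually produce it and verify the universal property; as it stands the argument is incomplete on this point as well.
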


\begin{proof}
If $A$ and $B$ are \'etale algebras then the product algebra $A \oplus B$ is \'etale, and is the categorical product in $\Et(\fT)$. If $A \to B$ and $A \to C$ are maps of \'etale algebras then $B\otimes C$ is an \'etale algebra. Moreover, the algebra $B\otimes_AC$ is then a factor algebra, and thus \'etale by \cite[Proposition~4.1]{discrete}. To see that it is a factor algebra, observe that we can realise $B\otimes_AC$ as $\sigma(B\otimes C)$, where the idempotent $\sigma$ remains an idempotent in $\Gamma(B\otimes C)$. As is well-known, the tensor product is the categorical push-out in the category of algebras, so certainly in $\Et(\fT)$. Moreover, the unit object $\bbone$ is \'etale, and the initial object of $\Et(\fT)$. We thus see that $\Et(\fT)^{\op}$ has finite co-products and finite limits. 
\end{proof}

\begin{lemma}
The category $\Et(\fT)^{\op}$ is extensive.
\end{lemma}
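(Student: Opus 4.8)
Finite co-products in $\Et(\fT)^{\op}$ were already produced in the previous lemma (they are the product algebras $\oplus$), so the plan is to verify the remaining condition in the definition of extensivity: for \'etale algebras $X,Y$, the functor $(\Et(\fT)^{\op})_{/X}\times(\Et(\fT)^{\op})_{/Y}\to(\Et(\fT)^{\op})_{/(X\oplus Y)}$, $(A,B)\mapsto A\oplus B$, is an equivalence. First I would rewrite this using Proposition~\ref{prop:etale-in-mod}, which identifies $(\Et(\fT)^{\op})_{/X}$ with $\Et(\Mod_X)^{\op}$ (an object $A\to X$ corresponding to $A$ together with its structure map $X\to A$, which is \'etale in $\Mod_X$ iff $A$ is \'etale in $\fT$). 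After passing to opposite categories, and recalling that the co-product in $\Et(\fT)^{\op}$ is the product algebra, the functor becomes
\begin{displaymath}
\Et(\Mod_X)\times\Et(\Mod_Y)\longrightarrow\Et(\Mod_{X\oplus Y}),\qquad (A,B)\mapsto A\oplus B,
\end{displaymath}
where $A\oplus B$ carries the componentwise $(X\oplus Y)$-algebra structure. The mechanism driving the proof is the pair of orthogonal central idempotents $e=(\eta_X,0)$ and $e'=(0,\eta_Y)$ of $X\oplus Y$, which satisfy $e+e'=1$ and $ee'=0$.

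For essential surjectivity I would start from an \'etale algebra $C$ in $\Mod_{X\oplus Y}$, that is, an \'etale algebra of $\fT$ equipped with an algebra map $\phi\colon X\oplus Y\to C$, and set $e_C=\phi(e)$ and $e_C'=\phi(e')$. These are orthogonal central idempotents of $C$ with $e_C+e_C'=1_C$, so $C\cong e_CC\oplus e_C'C$ as algebras; put $A=e_CC$ and $B=e_C'C$. Restricting $\phi$ along the algebra isomorphisms $X\cong e(X\oplus Y)$ and $Y\cong e'(X\oplus Y)$ equips $A$ with an algebra map from $X$ and $B$ with one from $Y$, and then $\phi$ is exactly the componentwise map into $A\oplus B$. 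Since $A$ and $B$ are factor algebras of the \'etale algebra $C$, they are \'etale in $\fT$ by \cite[Proposition~4.1]{discrete} (as already used in the proof of the previous lemma), hence \'etale in $\Mod_X$ and $\Mod_Y$ respectively by Proposition~\ref{prop:etale-in-mod}; thus $C\cong A\oplus B$ in $\Et(\Mod_{X\oplus Y})$.

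For full faithfulness I would note that a morphism $\psi\colon A\oplus B\to A'\oplus B'$ in $\Et(\Mod_{X\oplus Y})$ is an algebra map compatible with the structure maps from $X\oplus Y$, so it sends the idempotent coming from $e$ to the idempotent coming from $e$, and likewise for $e'$; hence $\psi$ preserves the $e$- and $e'$-summands and has the form $\psi_A\oplus\psi_B$ for a unique pair consisting of an $X$-algebra map $\psi_A\colon A\to A'$ and a $Y$-algebra map $\psi_B\colon B\to B'$, and conversely every such pair arises this way. This gives the required bijection on $\Hom$-sets, and combined with essential surjectivity finishes the proof. I do not anticipate a genuine obstacle here: the one point that needs care is the bookkeeping with $e$ and $e'$ (and the translation ``object over $X\amalg Y$'' $=$ ``algebra with a map from the product algebra $X\oplus Y$''). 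More conceptually, the same idempotents yield a tensor equivalence $\Mod_{X\oplus Y}\simeq\Mod_X\times\Mod_Y$ under which \'etale algebras, their morphisms, and the functor $(A,B)\mapsto A\oplus B$ all transport, which is an alternative way to package the argument.
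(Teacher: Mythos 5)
Your proof is correct and follows essentially the same route as the paper, which simply invokes the equivalence $\Mod_{A\oplus B}\simeq\Mod_A\oplus\Mod_B$ (for \'etale $A,B$) and states that extensivity "follows easily"; your argument with the orthogonal idempotents $e,e'$ is precisely the detailed unpacking of that equivalence, as you yourself note in your closing remark.
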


\begin{proof}
If $A$ and $B$ are algebras then $\Mod_{A \oplus B}$ is equivalent to $\Mod_A \oplus \Mod_B$; if they are \'etale algebras this is an equivalence of tensor categories. From this, it follows easily that $\Et(\fT)^{\op}$ is extensive.
\end{proof}

\begin{lemma}
Suppose that $\bbone \to A$ is an epimorphism of \'etale algebras. Then $A$ is a direct factor of $\bbone$.
\end{lemma}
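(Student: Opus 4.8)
The plan is to translate the epimorphism hypothesis into the statement that the multiplication $m_A\colon A\otimes A\to A$ is invertible, to deduce from this an object-level retraction of $\bbone$ onto $A$, and then to promote that retraction to a decomposition of $\bbone$ as a product of algebras. As established above, $A\otimes A$ is the coproduct $A\amalg A$ in $\Et(\fT)$, with coprojections $j_1=\id_A\otimes\eta_A$ and $j_2=\eta_A\otimes\id_A$. These are maps of \'etale algebras with $j_1\circ\eta_A=\eta_{A\otimes A}=j_2\circ\eta_A$, so since $\eta_A$ is an epimorphism in $\Et(\fT)$ we get $j_1=j_2$. The unit axiom gives $m_A\circ j_i=\id_A$, so $m_A$ is the fold map of the coproduct, and a short diagram chase comparing composites with $j_1,j_2$ shows $j_1\circ m_A=\id_{A\otimes A}$. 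Hence $m_A$ is an isomorphism with inverse $j_1$, and therefore the canonical bimodule splitting $s_A$ of $m_A$ from \S\ref{ss:etale} --- being the unique section of $m_A$ --- equals $m_A^{-1}=j_1=\id_A\otimes\eta_A$.

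Next I would use the counit identity $(\epsilon_A\otimes\id_A)\circ s_A=\id_A$, which is the content of \eqref{eq:unitor} for the left unitor of $\Mod_A$ applied to $A$ itself. Substituting $s_A=\id_A\otimes\eta_A$ and using bifunctoriality of $\otimes$ together with the unitor isomorphisms, the left-hand side simplifies to $\eta_A\circ\epsilon_A$, so $\eta_A\circ\epsilon_A=\id_A$. Thus $\eta_A\colon\bbone\to A$ is a split epimorphism in $\fT$ with section $\epsilon_A$, which already exhibits $A$ as a retract of the underlying object $\bbone$.

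It remains to make this an algebra decomposition. Set $e=\epsilon_A\circ\eta_A\in\Gamma(\bbone)=\End(\bbone)$; then $e^2=\epsilon_A\circ(\eta_A\circ\epsilon_A)\circ\eta_A=\epsilon_A\circ\eta_A=e$, so $e$ is idempotent (in fact $e=\dim(A)\cdot\id_\bbone$ by \S\ref{ss:gamma}(c), though this is not needed). Since $\fT$ is Karoubian, $e$ splits, giving a decomposition of algebras $\bbone\cong\bbone e\oplus\bbone'$ with $\bbone e=\im(e)$ and $\bbone'=\im(\id_\bbone-e)$. I would then verify that the restriction $\psi\colon\bbone e\to A$ of $\eta_A$ is an isomorphism of algebras: it preserves units because $\eta_A\circ e=\eta_A$, it is multiplicative because $\eta_A$ is and the inclusion $\bbone e\hookrightarrow\bbone$ respects products, and it is inverse to $A\xrightarrow{\epsilon_A}\bbone\twoheadrightarrow\bbone e$ --- the two identities $\psi\phi=\id_A$ and $\phi\psi=\id_{\bbone e}$ being one-line computations using $\eta_A\circ\epsilon_A=\id_A$. (Equivalently, $\eta_A$ annihilates $\bbone'=\ker\eta_A$, hence factors as $\bbone\twoheadrightarrow\bbone e\xrightarrow{\ \psi\ }A$ with $\psi$ an isomorphism.) Therefore $\bbone\cong A\oplus\bbone'$ as algebras, i.e.\ $A$ is a direct factor of $\bbone$.

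The two diagram chases --- invertibility of $m_A$ and the unitor computation giving $\eta_A\circ\epsilon_A=\id_A$ --- are routine once one commits to the coproduct picture of $A\otimes A$ in $\Et(\fT)$ and the \'etale/Frobenius identities of \S\ref{ss:etale}. I expect the only genuinely delicate point to be the last paragraph: although the splitting $A\cong\bbone e$ is transparent at the level of objects, one must check that it respects the algebra structures, the nuisance being that $\bbone e\hookrightarrow\bbone$ is multiplicative but not unital, so one cannot simply restrict morphisms of algebras along it.
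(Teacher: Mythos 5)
Your proof is correct, and after the common opening move it diverges from the paper's argument in a genuinely different way. Both proofs begin by exploiting that $A\otimes A$ is the coproduct $A\amalg A$ in $\Et(\fT)$ to show that the multiplication $m_A\colon A\otimes A\to A$ (equivalently $x\mapsto 1\otimes x$) is an isomorphism. From there the paper computes dimensions: $\dim(A)=\dim(A)^2$ makes $e=\dim(A)$ an idempotent of $\Gamma(\bbone)$, the category is decomposed as $\fT_1\oplus\fT_2$ accordingly, one checks $A$ lives in $\fT_1$ where $\dim(A)=1$, so that $\epsilon_A\circ\eta_A=\id_\bbone$ there and $A=\bbone\oplus X$; a final application of the isomorphism $A\cong A\otimes A$ is then needed to kill $X$. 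You instead identify the canonical bimodule splitting $s_A$ with $m_A^{-1}=\id_A\otimes\eta_A$ and feed it into the counit identity $(\epsilon_A\otimes\id_A)\circ s_A=\id_A$ coming from \eqref{eq:unitor}, obtaining $\eta_A\circ\epsilon_A=\id_A$ outright; this realizes $A$ as a retract of $\bbone$ in one step, and the rest is splitting the idempotent $e=\epsilon_A\circ\eta_A=\dim(A)$ and the (correctly flagged) check that the resulting object-level isomorphism $\bbone e\cong A$ respects the algebra structures despite $\bbone e\hookrightarrow\bbone$ not being unital. Your route buys a shorter endgame --- no passage to $\fT_1\oplus\fT_2$ and no separate argument that the complement of $\bbone$ in $A$ vanishes --- at the cost of leaning on the explicit unitor/Frobenius formula; the paper's route keeps the argument entirely at the level of dimensions and direct-sum decompositions of the ambient category. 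Both are complete; all the identities you defer to "routine diagram chases" ($j_1\circ m_A=\id_{A\otimes A}$ via the coproduct universal property, and the simplification of $(\epsilon_A\otimes\id_A)\circ(\id_A\otimes\eta_A)$ to $\eta_A\circ\epsilon_A$) check out.
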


\begin{proof}
Since $\bbone \to A$ is an epimorphism, it follows that the map $A \to A \otimes A$ given by $x \mapsto 1 \otimes x$ is an isomorphism. Indeed, it is both an epimorphism and a split monomorphism, alternatively we can pass to the opposite category and use the standard fact that subterminal objects $X$ satisfy $X\times X=X$. We thus see that $\dim(A)=\dim(A)^2$, and so $e=\dim(A)$ is an idempotent of $\Gamma(\bbone)$. This idempotent decomposes $\fT$ as $\fT_1 \oplus \fT_2$, where $\fT_1$ is the category of $e \bbone$ modules and $\fT_2$ is the category of $(1-e) \bbone$ modules. Now, on the other hand, we have
\begin{displaymath}
1 = \dim_A(A) = \dim_A(A \otimes A),
\end{displaymath}
since the isomorphism $A \to A \otimes A$ is one of $A$-modules. By base change (\S \ref{ss:gamma}(d)), $\dim_A(A \otimes A)$ is the image of $\dim(A)$ under $\Gamma(\bbone) \to \Gamma(A)$. We thus see that the map $\Gamma(\bbone) \to \Gamma(A)$ sends $e$ to~1, and therefore $1-e$ to~0. This shows that $A$ lives in the category $\fT_1$. We may thus replace $\fT$ with $\fT_1$, and thereby assume $e=1$, i.e., $\dim(A)=1$. This means that the composition $\bbone \to A \to \bbone$ is the identity, where the first map is the unit and the second its dual, and so $A=\bbone \oplus X$ for some object $X$ of $\fT$. Since the map $A \to A \otimes A$ is an isomorphism and sends $\bbone$ to $\bbone\otimes\bbone$ and $X$ to $\bbone\otimes X$, it follows that $X\otimes\bbone =X=0$. This completes the proof.
\end{proof}

\begin{lemma}
Suppose that $A \to B$ is an epimorphism of \'etale algebras. Then $B$ is a direct factor of $A$.
\end{lemma}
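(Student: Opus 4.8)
The plan is to reduce to the previous lemma --- the case $A=\bbone$ --- by passing to the tensor category $\Mod_A$.

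First I would observe that, since $\fT$ is Karoubian and every $A$-module is a summand of a free one (Proposition~\ref{Prop:Summ}), $\Mod_A$ is again a Karoubian tensor category, with unit object $A$. By Proposition~\ref{prop:etale-in-mod}, the $\fT$-algebra $B$, regarded as an $A$-algebra via $f$, is \'etale in $\Mod_A$, and $f$ itself is the canonical morphism of \'etale algebras in $\Mod_A$ from the unit object $A$ to $B$.

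The key step is to verify that $f$ remains an epimorphism after this change of base, i.e.\ that $f$ is an epimorphism in $\Et(\Mod_A)$. Here I would use the identification of $\Et(\Mod_A)$ with the coslice category $A/\Et(\fT)$ --- this rests on the fact (recorded just before Proposition~\ref{prop:etale-in-mod}) that an algebra in $\Mod_A$ is the same as a $\fT$-algebra equipped with an algebra map from $A$, together with Proposition~\ref{prop:etale-in-mod} for the \'etale condition. Unwinding definitions, $f$ being an epimorphism in $\Et(\fT)$ says precisely that $B$ admits at most one $A$-algebra map into any \'etale algebra, i.e.\ that $B$ has at most one morphism to every object of $A/\Et(\fT)$; since $A$ is the initial object there and $f$ is the unique map out of it, this is exactly the assertion that $f$ is an epimorphism in $\Et(\Mod_A)$. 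I expect this bookkeeping --- checking that ``epimorphism of \'etale algebras'' transports correctly between $\fT$ and $\Mod_A$ --- to be the only point requiring genuine (if short) thought.

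Finally, I would apply the previous lemma inside $\Mod_A$ (legitimate since $\Mod_A$ is Karoubian), with $A=\bbone_{\Mod_A}$ playing the role of $\bbone$: it gives that $B$ is a direct factor of $A$ in $\Mod_A$, say $A\cong B\oplus C$ as algebras in $\Mod_A$ for some \'etale algebra $C$. Pushing this isomorphism through the additive forgetful functor $\Mod_A\to\fT$, which carries an algebra in $\Mod_A$ to its underlying algebra in $\fT$, yields $A\cong B\oplus C$ as algebras in $\fT$, so that $B$ is a direct factor of $A$, as desired.
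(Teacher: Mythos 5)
Your proposal is correct and follows exactly the paper's route: pass to $\Mod_A$, invoke Proposition~\ref{prop:etale-in-mod} to see that $B$ is \'etale there and that $A\to B$ remains an epimorphism of \'etale algebras, and then apply the previous lemma with $A$ as the unit object. Your explicit unwinding of the epimorphism condition via the coslice identification $\Et(\Mod_A)\simeq A/\Et(\fT)$ is a correct elaboration of the step the paper dispatches in one clause.
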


\begin{proof}
This follows from the previous lemma upon passing to $\Mod_A$. Note that $B$ is an \'etale algebra in $\Mod_A$ by Proposition~\ref{prop:etale-in-mod}, and the morphism $A \to B$ remains an epimorphism in $\Et(\Mod_A)$, again by Proposition~\ref{prop:etale-in-mod}.
\end{proof}

The lemma implies that monomorphisms in $\Et(\fT)^{\op}$ admit complements. The following lemma thus completes the proof of the proposition.

\begin{lemma}
if $\Phi \colon \fT \to \fT'$ is a tensor functor (with $\fT'$ Karoubian), then the induced functor $\Phi^{\mathrm{et}}=\Phi \colon \Et(\fT)^{\op} \to \Et(\fT')^{\op}$ is additive and left-exact.
\end{lemma}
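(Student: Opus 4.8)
The plan is to reduce the claim to a handful of generating (co)limits and then check those one at a time. First recall that a tensor functor carries commutative (rigid) algebras to commutative (rigid) algebras and, being symmetric monoidal, preserves trace maps and trace pairings; hence $\Phi$ sends \'etale algebras to \'etale algebras and algebra homomorphisms to algebra homomorphisms, so $\Phi^{\mathrm{et}}$ is well defined. By the previous lemmas $\Et(\fT)^{\op}$ has finite limits and finite coproducts, and since $\fT$ is Karoubian (standing assumption) and $\fT'$ is Karoubian (hypothesis) the same holds for $\Et(\fT')^{\op}$. A category has all finite limits once it has a terminal object and pullbacks, and all finite coproducts once it has an initial object and binary coproducts; so it suffices to show $\Phi^{\mathrm{et}}$ preserves the initial object, the terminal object, binary coproducts, and pullbacks of $\Et(\fT)^{\op}$. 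Dualizing, I must show the covariant functor $\Phi\colon\Et(\fT)\to\Et(\fT')$ preserves the zero algebra, the unit $\bbone$, products $A\oplus B$, and pushouts $B\otimes_A C$.

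The first three are immediate from $\Phi$ being additive, unital and monoidal: it preserves the zero object and direct sums, it sends $\bbone$ to $\bbone$, and it matches the algebra structure on $\Phi(A\oplus B)$ with the product algebra structure on $\Phi(A)\oplus\Phi(B)$. This already yields additivity of $\Phi^{\mathrm{et}}$ together with preservation of the terminal object.

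The one step needing care is preservation of pushouts, i.e.\ of relative tensor products. Here I would reuse the explicit description from the proof that $\Et(\fT)^{\op}$ has finite limits: for \'etale $A$ with algebra maps $A\to B$ and $A\to C$, the pushout $B\otimes_A C$ is the direct summand $\sigma(B\otimes C)$ of $B\otimes C$ cut out by the idempotent $\sigma\in\Gamma(B\otimes C)$ which is the image of the splitting idempotent $\sigma_A\in\Gamma(A\otimes A)$ under the algebra map $A\otimes A\to B\otimes C$. Since $\Phi$ is a tensor functor it commutes with $\otimes$ and with $\Gamma=\Hom(\bbone,-)$, and it carries the equations characterizing $\sigma_A$ to those characterizing $\sigma_{\Phi(A)}$; by uniqueness of the splitting idempotent of the \'etale algebra $\Phi(A)$ this forces $\Phi(\sigma_A)=\sigma_{\Phi(A)}$, hence by functoriality $\Phi(\sigma)$ is exactly the idempotent in $\Gamma(\Phi(B)\otimes\Phi(C))$ cutting out $\Phi(B)\otimes_{\Phi(A)}\Phi(C)$. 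As $\Phi$ is additive it preserves the image of the (split) idempotent endomorphism ``multiplication by $\sigma$'' of $B\otimes C$, so $\Phi(B\otimes_A C)\cong\Phi(B)\otimes_{\Phi(A)}\Phi(C)$ compatibly with the structure maps from $\Phi(B)$ and $\Phi(C)$; hence $\Phi^{\mathrm{et}}$ preserves pullbacks, completing the proof. I expect the main obstacle to be precisely this point: a tensor functor need not preserve general colimits, so one cannot simply assert that ``$\Phi$ preserves pushouts of algebras'' — one must use that the relative tensor product over an \'etale algebra is an \emph{absolute} (split) colimit, recorded by the idempotent $\sigma$, which is what makes it transport along $\Phi$. (Equivalently, one could present $B\otimes_A C$ through its split bar resolution and invoke that tensor functors preserve split coequalizers; the two arguments are essentially the same.)
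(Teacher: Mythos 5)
Your proposal is correct and follows essentially the same route as the paper: additivity of $\Phi^{\mathrm{et}}$ from additivity of $\Phi$, preservation of products and the final object from monoidality, and preservation of fiber products from the fact that $B\otimes_AC$ is the canonical direct summand $\sigma(B\otimes C)$, which an additive tensor functor preserves. Your write-up merely spells out the idempotent argument in more detail than the paper does.
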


\begin{proof}
That $\Phi^{\mathrm{et}}$ is additive follows from the fact that $\Phi$ is additive. Since $\Phi$ is monoidal, it follows that $\Phi^{\mathrm{et}}$ preserves products and the final object. Since $B \otimes_A C$ is canonically a summand of $B \otimes C$, it follows that $\Phi^{\mathrm{et}}$ preserves fiber products. 
\end{proof}

\begin{remark}
If $\fT$ is pre-Tannakian then \cite[Theorem~6.1]{discrete} shows that $\Et(\fT)^{\op}$ is pre-Galois, that is, of the form $\bS(G)$ for some pro-oligomorphic group $G$. When $\fT$ is not pre-Tannakian, $\Et(\fT)^{\op}$ need not be pre-Galois. For instance, it is possible to get categories of the form $\bS(G, \sE)$ with non-trivial stabilizer class $\sE$. It is also possible to get categories in which objects need not admit a finite decomposition into atomic objects. We do not know exactly how ``bad'' $\Et(\fT)^{\op}$ can be in general.
\end{remark}

\section{Universal properties of oligomorphic tensor categories} \label{s:genmap}

In this section we establish a universal property for oligomorphic tensor categories. It states that tensor functors $\Phi \colon \uPerm(G, \mu) \to \fT$ correspond to certain kinds of functors $\Psi \colon \bS(G) \to \Et(\fT)^{\op}$. After proving this theorem, we establish a number of auxiliary results which aid in applying it. 

\subsection{The main theorem}

Let $G$ be a pro-oligomorphic group equipped with a $k$-valued measure $\mu$, and put $\fP = \uPerm(G, \mu)$. Let $\fT$ be a Karoubian tensor category. We aim to describe the category $\Fun^{\otimes}(\fP, \fT)$ of tensor functors.

Suppose we have a tensor functor $\Phi \colon \fP \to \fT$. Let $\Psi^{\circ}$ be the composition
\begin{displaymath}
\xymatrix{
\bS(G)^{\op} \ar[r] & \Et(\fP) \ar[r]^{\Phi} & \Et(\fT), }
\end{displaymath}
where the first functor maps a $G$-set $X$ to the \'etale algebra $\cC(X)$, and let
\begin{displaymath}
\Psi \colon \bS(G) \to \Et(\fT)^{\op}
\end{displaymath}
be the opposite functor to $\Psi$. Since the functor $\bS(G) \to \Et(\fP)^{\op}$ is additive and left-exact, it follows from Proposition~\ref{prop:etale-cat} that $\Psi$ is additive and left-exact.

Suppose now that $f \colon Y \to X$ is a map of $G$-sets with $X$ transitive. We have seen that the algebra homomorphism $f^* \colon \cC(X) \to \cC(Y)$ is uniform with $\gamma(f^*)=\mu(f)$ (\S \ref{ss:gamma}(b)). Assuming $\Phi(\cC(X))$ is non-zero, it follows that $\Phi(f^*)$ is also uniform with $\gamma(\Phi(f^*))=\mu(f)$ (\S \ref{ss:gamma}(g)). This suggests the following definition:

\begin{definition} \label{defn:compatible}
Let $\Psi \colon \bS(G) \to \Et(\fT)^{\op}$ be a functor. We say that $\Psi$ is \defn{compatible} with $\mu$ if for every map $f \colon Y \to X$ in $\bS(G)$ with $X$ transitive, either (a) $\Psi(X)=0$; or (b) $\Psi(f)$ is a uniform map of \'etale algebras with $\gamma(\Psi(f))=\mu(f)$.
\end{definition}

We are now ready for the main theorem. For a category $\fX$, we let $\fX_{\isom}$ denote the category with the same objects, but where the only morphisms are isomorphisms.

\begin{theorem} \label{thm:genmap}
The functor
\begin{equation} \label{eq:genmap}
\Fun^{\otimes}(\fP, \fT) \to \Fun(\bS(G)^{\op}, \Et(\fT))_{\isom}, \qquad \Phi \mapsto \Psi^{\circ}
\end{equation}
is fully faithful. Its essential image consists of functors $\Psi^{\circ}$ such that $\Psi$ is left-exact, additive, and compatible with $\mu$.
\end{theorem}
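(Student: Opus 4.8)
The plan is to produce, from a left-exact, additive, $\mu$-compatible functor $\Psi\colon\bS(G)\to\Et(\fT)^{\op}$, a tensor functor $\Phi\colon\fP\to\fT$ together with an isomorphism $\Psi^\circ\simeq(\text{the functor attached to }\Phi)$, and to check this construction is inverse to $\Phi\mapsto\Psi^\circ$ up to the natural isomorphisms. The key input is the ``primitive'' mapping property of $\fP=\uPerm(G,\mu)$ from \cite{repst}: a tensor functor out of $\fP$ is the same as the data of a commutative algebra object together with compatible pushforward maps, or more precisely the presentation of $\fP$ by generators $\cC(X)$ and relations coming from the maps $f_*$, $f^*$ and the identities $A_Z=p_*q^*$ recalled in \S\ref{ss:uperm}. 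So the first step is: given $\Psi$, set $\Phi(\cC(X)) := \Psi(X)$ regarded as an object of $\fT$ (forgetting the algebra structure), and define $\Phi$ on the generating morphisms $f^*\colon\cC(X)\to\cC(Y)$ to be the algebra map $\Psi(f)$ and on $f_*\colon\cC(Y)\to\cC(X)$ to be its dual $\Psi(f)^\vee=\epsilon_{\Psi(Y)/\Psi(X)}$ (using \S\ref{ss:dual}). One must then verify that all the relations in $\fP$ among the $f^*$'s and $f_*$'s are respected: the functoriality relations for $f^*$ hold because $\Psi$ is a functor; the base-change relation $f_*$ vs.\ pullback holds by Proposition~\ref{prop:etale-bc}; additivity of $f_*$ over orbit decompositions holds by extensivity of $\Et(\fT)^{\op}$ (Proposition~\ref{prop:etale-cat}) together with compatibility; and the one genuinely new relation is the one that records the measure, namely that the composite $\cC(\bone)\xrightarrow{p^*}\cC(X)\xrightarrow{p_*}\cC(\bone)$ equals multiplication by $\mu(X)$ — this is exactly where $\mu$-compatibility of $\Psi$ enters, via $\gamma(\Psi(p))=\mu(p)=\mu(X)$ and \S\ref{ss:gamma}(c),(e).

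Concretely, I would organize the verification around the description of $\Hom_{\fP}(\cC(X),\cC(Y))$ as spanned by the indicator matrices $A_Z=p_*q^*$ for $Z$ an orbit on $Y\times X$. Define $\Phi(A_Z):=\Psi(p)^\vee\circ\Psi(q)\colon\Psi(X)\to\Psi(Y)$ in $\fT$ (where $p\colon Z\to Y$, $q\colon Z\to X$), extend $k$-linearly, and then one must check (i) this is well-defined, i.e.\ independent of presentation, (ii) it is compatible with composition (matrix multiplication), (iii) it is compatible with $\oplus$ and $\otimes$ — the monoidal compatibility coming from the fact that $\Psi$ preserves products in $\Et(\fT)^{\op}$, so $\Psi(X\times Y)\cong\Psi(X)\otimes\Psi(Y)$ via the universal property of $\otimes$ as pushout of algebras. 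The composition check reduces, via $A_ZA_W=\sum(\text{indicator matrices of orbits in a fiber product})$, to the base-change square of Proposition~\ref{prop:etale-bc} together with the transitive law for traces $\epsilon_B=\epsilon_A\circ\epsilon_{B/A}$ from \S\ref{ss:dual} and the additivity of $\gamma$/traces over disjoint unions. Having built $\Phi$, the identity $\Phi(\cC(X))=\Psi(X)$ as \'etale algebras (not merely as objects) holds because $\Phi$ sends the structure maps $p^*,i^*$ of $\cC(X)$ to the structure maps of $\Psi(X)$ by construction, giving $\Psi^\circ\simeq$ (functor of $\Phi$), and conversely starting from $\Phi$ the reconstructed functor agrees with $\Phi$ on generators, hence everywhere; this yields essential surjectivity. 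Full faithfulness of \eqref{eq:genmap} is the statement that a monoidal natural isomorphism $\Phi_1\simeq\Phi_2$ is the same as a natural isomorphism $\Psi_1^\circ\simeq\Psi_2^\circ$ of the associated \'etale-algebra-valued functors: one direction is restriction, and the other uses that $\fP$ is generated by the $\cC(X)$ and the maps $f^*,f_*$, so an isomorphism of the algebra objects $\Psi_i(X)$ natural in $X$ automatically intertwines the images of all generating morphisms and hence extends uniquely to a monoidal natural isomorphism.

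I expect the main obstacle to be the composition-compatibility step, specifically verifying that $\Phi(A_Z)\circ\Phi(A_W)=\Phi(A_ZA_W)$ for indicator matrices of orbits. Unwinding $A_ZA_W$ requires decomposing the relevant fiber product $Z\times_Y W$ into orbits and matching each orbit's contribution $p_*q^*$ against the composite $(\Psi(p_Z)^\vee\Psi(q_Z))(\Psi(p_W)^\vee\Psi(q_W))$; the middle $\Psi(q_Z)\Psi(p_W)^\vee$ term is not a priori of the form $\Psi(\text{something})^\vee\Psi(\text{something})$ and must be re-expressed using a base-change square and then Proposition~\ref{prop:etale-bc}. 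This is essentially the same bookkeeping that underlies Proposition~7.22 of \cite{repst}, transported across $\Psi$; the role of left-exactness of $\Psi$ is precisely that it makes $\Psi$ commute with the fiber products appearing here, and the role of additivity is that it makes $\Psi$ commute with the orbit decompositions. A secondary but real obstacle is keeping track of where $\mu$-compatibility is actually needed versus where it is automatic: it is needed exactly for the trace/scalar relations (the ones detecting $\mu(X)$), and one should check that the case $\Psi(X)=0$ in Definition~\ref{defn:compatible} is handled correctly — when $\Psi(X)=0$ the object $\Phi(\cC(X))$ is zero and all relations involving it are vacuous, so no constraint from $\mu$ is required, consistent with the ``either/or'' in the definition.
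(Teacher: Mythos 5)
Your proposal is correct and follows essentially the same route as the paper: the paper also sets $\Phi(f^*)=\Psi(f)$ and $\Phi(f_*)=\Psi(f)^{\vee}$, verifies base change via Proposition~\ref{prop:etale-bc}, extracts the measure relation from $\gamma(\Psi(f))=\mu(f)$ together with $A$-linearity of the dual map, obtains the monoidal structure from left-exactness of $\Psi$ on products, and proves fullness by noting that the dual of an algebra isomorphism is its inverse (Proposition~\ref{prop:etale-inv}) so that naturality for pull-backs forces naturality for push-forwards. The only organizational difference is that the paper delegates your ``composition-compatibility'' bookkeeping entirely to the balanced-functor result \cite[Proposition~9.3]{repst} (the primitive mapping property you cite), by checking that $(\Psi(-),\Psi(-)^{\vee})$ is additive, satisfies base change, and is $\mu$-adapted, rather than verifying the identities $\Phi(A_Z)\circ\Phi(A_W)=\Phi(A_ZA_W)$ by hand.
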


We note that any monoidal natural transformation between tensor functors with rigid source category is an isomorphism \cite[Proposition~1.13]{DeligneMilne}, which is why the target category in Theorem~\ref{thm:genmap} has the isom subscript.

We break the proof into two lemmas. For the first, we require some theory from \cite[\S 9]{repst}, which we now recall. A \defn{balanced functor} $\Omega \colon \bS(G) \to \fT$ is a pair of functors $\Omega_* \colon \bS(G) \to \fT$ and $\Omega^* \colon \bS(G)^{\op} \to \fT$ that have equal restriction to $\bS(G)_{\isom}$, which is canonically identified with its opposite. Suppose $\Omega$ is a balanced functor. For an object $X$ of $\bS(G)$, we write $\Omega(X)$ for the common value of $\Omega_*(X)$ and $\Omega^*(X)$. For a morphism $f \colon Y \to X$ in $\bS(G)$, we let $\alpha_f \colon \Omega(Y) \to \Omega(X)$ and $\beta_f \colon \Omega(X) \to \Omega(Y)$ be the morphisms provided by $\Omega_*$ and $\Omega^*$. In \cite[\S 9.2]{repst}, we introduced three important conditions on a balanced functor:
\begin{itemize}
\item We say that $\Omega$ is \defn{additive} if $\Omega(X \amalg Y)$ is identified with $\Omega(X) \oplus \Omega(Y)$ in the canonical manner, that is, if $i \colon X \to X \amalg Y$ and $j \colon Y \to X \amalg Y$ are the natural maps then $\alpha_i$, $\alpha_j$, $\beta_i$, and $\beta_j$ induce the direct sum decomposition.
\item We say that $\Omega$ satisfies \defn{base change} if whenever
\begin{displaymath}
\xymatrix{
Y' \ar[r]^{g'} \ar[d]_{f'} & Y \ar[d]^f \\
X' \ar[r]^g & X }
\end{displaymath}
is a cartesian square in $\bS(G)$, we have $\beta_g \alpha_f = \alpha_{f'} \beta_{g'}$.
\item We say that $\Omega$ is \defn{$\mu$-adapted} if whenever $f \colon Y \to X$ is a map of transitive $G$-sets we have $\alpha_f \beta_f=\mu(f) \cdot \id_{\Omega(Y)}$.
\end{itemize}
We have a natural balanced functor $\Omega_0 \colon \bS(G) \to \fP$, defined by $\Omega_0(X)=\cC(X)$, $(\Omega_0)_*(f)=f_*$, and $\Omega_0^*(f)=f^*$. This functor is additive, satisfies base change, and is $\mu$-adapted. If $\Omega$ is an arbitrary balanced functor satisfying these three properties then \cite[Proposition~9.3]{repst} states that there is a unique $k$-linear functor $\Phi \colon \fP \to \fT$ such that $\Omega=\Phi \circ \Omega_0$.

\begin{lemma}
Let $\Psi \colon \bS(G) \to \Et(\fT)^{\op}$ be a functor that is left-exact, additive, and compatible with $\mu$. Then $\Psi^{\circ}$ is in the essential image of \eqref{eq:genmap}.
\end{lemma}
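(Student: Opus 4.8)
The plan is to reverse-engineer the functor $\Phi$ from the data of $\Psi$ by constructing the corresponding balanced functor $\Omega \colon \bS(G) \to \fT$ and invoking \cite[Proposition~9.3]{repst}. First I would unpack what $\Psi$ gives us: for each $G$-set $X$ we get an \'etale algebra $\Psi(X)$ in $\fT$, which I set to be $\Omega(X)$; for a morphism $f \colon Y \to X$ we get an algebra homomorphism, which in $\Et(\fT)^{\op}$ is a morphism $\Psi(X) \to \Psi(Y)$, i.e.\ an algebra map $\Psi(f)^{\circ} \colon \cO(X) \to \cO(Y)$ in the notation of the excerpt. I would then define $\beta_f$ to be this algebra map $\Psi(f)^{\circ}$ (the ``pullback''), and $\alpha_f$ to be its dual $(\Psi(f)^{\circ})^{\vee}$ in the sense of \S\ref{ss:dual} (the ``pushforward''). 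Since $\Psi$ is additive, $\Psi(X)$ and $\Psi(Y)$ are self-dual via their trace pairings, so $\alpha_f$ lands in the right place. I should check the balanced-functor axiom that $\Omega_*$ and $\Omega^*$ agree on isomorphisms: this is exactly Proposition~\ref{prop:etale-inv} applied to $\Psi(f)^{\circ}$.

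Next I would verify the three conditions of \cite[Proposition~9.3]{repst}. \emph{Additivity of $\Omega$}: since $\Psi$ is additive, $\Psi(X \amalg Y) = \Psi(X) \oplus \Psi(Y)$ canonically, and one checks the four structure maps $\alpha_i,\alpha_j,\beta_i,\beta_j$ realize this — the $\beta$'s because $\Psi$ is a functor into $\Et(\fT)^{\op}$ and the coproduct in $\Et(\fT)$ of algebras over the relevant objects is the product of algebras, and the $\alpha$'s by taking duals. \emph{Base change}: given a cartesian square in $\bS(G)$, I apply $\Psi$ to get a cartesian square in $\Et(\fT)^{\op}$ (since $\Psi$ is left-exact, hence preserves fiber products), and then the identity $\beta_g \alpha_f = \alpha_{f'}\beta_{g'}$ is precisely Proposition~\ref{prop:etale-bc} — the Beck--Chevalley statement for duals of \'etale algebra maps along a cartesian square. \emph{$\mu$-adapted}: for $f \colon Y \to X$ a map of transitive $G$-sets, I need $\alpha_f \beta_f = \mu(f)\cdot \id$. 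By definition $\beta_f = \Psi(f)^{\circ}$ and $\alpha_f = (\Psi(f)^{\circ})^{\vee}$, so $\alpha_f\beta_f$ is ``multiplication by $\tilde\gamma$'' of the map $\Psi(f)^{\circ}$; concretely $\alpha_f\beta_f(m) = (\Psi(f)^{\circ})^{\vee}(\Psi(f)^{\circ}(m))$, which unwinds to $\tilde\gamma(\Psi(f)^{\circ}) \cdot m$ using $A$-linearity of the dual. Compatibility of $\Psi$ with $\mu$ tells us that either $\Psi(X) = 0$ (in which case $\Omega(Y)$ need not vanish, but one handles this by noting that the only endomorphism ring issue is trivial, or more carefully: if $\Psi(X)=0$ then $\cO(X)=0$ so $\mu(f)\cdot\id$ and $\alpha_f\beta_f$ are both forced — actually $\Psi(Y)$ maps to $\Psi(X)=0$ so $\Psi(Y)$ could be nonzero; I'd argue $\alpha_f\beta_f$ factors through $\cO(X)=0$, hence is $0$, while we'd want it to be $\mu(f)\cdot\id$ — this needs the observation that $\mu(f)=0$ when $\Psi(X)=0$... no: so the cleaner route is that when $\Psi(X)=0$, transitivity forces $\mu(f)$ to be whatever, and one must instead treat this degenerate case by hand), or $\Psi(f)^{\circ}$ is uniform with $\gamma = \mu(f)$, giving exactly $\alpha_f\beta_f = \mu(f)\cdot\id$.

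Having produced an additive, base-change-satisfying, $\mu$-adapted balanced functor $\Omega$, \cite[Proposition~9.3]{repst} yields a unique $k$-linear functor $\Phi \colon \fP \to \fT$ with $\Omega = \Phi \circ \Omega_0$. It remains to check $\Phi$ is a \emph{tensor} functor, i.e.\ symmetric monoidal, and that $\Psi^{\circ}$ is recovered from $\Phi$ as in \eqref{eq:genmap}. The latter is immediate from $\Omega = \Phi\circ\Omega_0$ and the definition of $\Psi^{\circ}$ as $\Phi$ composed with $X \mapsto \cC(X)$. For the monoidal structure, the key point is that $\Phi(\cC(X)) = \Psi(X)$ carries the algebra structure, and $\cC(X)\otimes\cC(Y) = \cC(X\times Y)$ in $\fP$ with the algebra structure on the left being the tensor product of algebras; since $\Psi$ is left-exact it sends the product $X\times Y$ in $\bS(G)$ (a fiber product over $\bone$) to the coproduct $\Psi(X)\otimes\Psi(Y)$ in $\Et(\fT)^{\op}$, i.e.\ $\Psi(X\times Y) = \Psi(X)\otimes\Psi(Y)$ as algebras, which gives the tensor structure map; compatibility with associativity and symmetry follows by naturality and the universal property. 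I expect the \textbf{main obstacle} to be the bookkeeping around the degenerate case $\Psi(X) = 0$ in verifying the $\mu$-adapted condition, together with carefully checking that the monoidal constraint built from left-exactness of $\Psi$ is coherent (associative, unital, symmetric) — this is where one must be most careful that the formal structure genuinely matches what \cite[Proposition~9.3]{repst} requires rather than merely a $k$-linear functor.
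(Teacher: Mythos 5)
This is essentially the paper's proof: define the balanced functor by $\beta_f=\Psi(f)$ and $\alpha_f=\Psi(f)^{\vee}$, verify additivity, base change (via left-exactness and Proposition~\ref{prop:etale-bc}) and $\mu$-adaptedness (via $A$-linearity of the dual together with $\alpha_f(1)=\gamma(\Psi(f))=\mu(f)$), invoke \cite[Proposition~9.3]{repst}, and build the monoidal constraint from $\Psi(X\times Y)\cong\Psi(X)\otimes\Psi(Y)$. The degenerate case you flag as the main obstacle is in fact vacuous: if $\Psi(X)=0$ then the unital algebra homomorphism $\Psi(X)\to\Psi(Y)$ forces $\eta_{\Psi(Y)}=0$ and hence $\Psi(Y)=0$, so $\alpha_f\beta_f$ and $\mu(f)\cdot\id$ both vanish. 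The one point you should spell out more is naturality of the monoidal constraint with respect to \emph{all} morphisms of $\fP$ (not just pullbacks): the paper gets this because the constraint is natural for the $\beta$ maps by construction, hence also for the $\alpha$ maps since the dual of an algebra isomorphism is its inverse (Proposition~\ref{prop:etale-inv}), and these two classes of maps generate all morphisms of $\fP$ (\S\ref{ss:uperm}).
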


\begin{proof}
We define a balanced functor $\Omega$ by putting $\Omega_*(-)=\Psi(-)^{\vee}$ and $\Omega^*(-)=\Psi(-)$. Note that since $\Psi(X)$ is an \'etale algebra it is a rigid object and canonically identified with its own dual. On objects, we have $\Omega(X)=\Psi(X)$. If $f \colon Y \to X$ is a map of $G$-sets then $\beta_f \colon \Psi(X) \to \Psi(Y)$ is the given algebra homomorphism $\Psi(f)$, and $\alpha_f \colon \Psi(Y) \to \Psi(X)$ is the dual map $\beta_f^{\vee}$, as in \S \ref{ss:dual}.

We now verify that $\Omega$ satisfies the three properties discussed above. Additivity follows directly from additivity of $\Psi$. Given a cartesian square as in the above discussion, we obtain a cartesian square
\begin{displaymath}
\xymatrix@C=3em{
\Psi(Y) \ar[r]^-{\Psi(g')} &  \Psi(Y')  \\
\Psi(X) \ar[r]^-{\Psi(g)} \ar[u]^{\Psi(f)} & \Psi(X') \ar[u]_{\Psi(f')} }
\end{displaymath}
of \'etale algebras since $\Psi$ is left-exact. Applying Proposition~\ref{prop:etale-bc}, we find that $\Omega$ satisfies base change. Now suppose $f \colon Y \to X$ is a map of transitive $G$-sets. Then $\Psi(Y)$ is a $\Psi(X)$-module via $\beta_f$, and $\alpha_f \colon \Psi(Y) \to \Psi(X)$ is a map of $\Psi(X)$-modules; in elemental notation, this means $\alpha_f(\beta_f(x) y)=x \alpha_f(y)$, for $x \in \Psi(X)$ and $y \in \Psi(Y)$. Applying this with $y=1$, we find $\alpha_f(\beta_f(x))=\alpha_f(1) \cdot x$ for $x \in \Psi(X)$. Since $\alpha_f(1)=\gamma(\Psi(f))=\mu(f)$, we see that $\Omega$ is $\mu$-adapted; note that here we have used the compatibility of $\Psi$ with $\mu$.

By \cite[Proposition~9.3]{repst}, we have a unique $k$-linear functor $\Phi \colon \fP \to \fT$ such that $\Omega=\Phi \circ \Omega_0$ as balanced functors. We claim that $\Phi$ is naturally a symmetric monoidal functor. We have natural isomorphisms
\begin{displaymath}
\Phi(\bbone) = \Phi(\cC(\bone)) = \Psi(\bone) = \bbone,
\end{displaymath}
where in the final step we used that $\Psi \colon \bS(G) \to \Et(\fT)^{\op}$ is left-exact, and thus preserves final objects. Let $X$ and $Y$ be $G$-sets. We have an isomorphism
\begin{displaymath}
i_{X,Y} \colon \Phi(\cC(X) \otimes \cC(Y)) \to \Phi(\cC(X)) \otimes \Phi(\cC(Y))
\end{displaymath}
by composing the isomorphisms.
\begin{displaymath}
\Phi(\cC(X) \otimes \cC(Y)) = \Phi(\cC(X \times Y)) = \Psi(X \times Y) = \Psi(X) \otimes \Psi(Y) = \Phi(\cC(X)) \otimes \Phi(\cC(Y)).
\end{displaymath}
In the third step above, we use that $\Psi$ is left-exact. Since the $i$ isomorphism is canonical, one easily sees that it is compatible with the associativity constraints. Since $i$ comes from the $\Psi$ functor, it is natural with respect to the $\beta$ maps. Since $i$ is an isomorphism, its dual coincides with its inverse (Proposition~\ref{prop:etale-inv}). Thus $i$ is also natural with respect to the $\alpha$ maps. Since the $\alpha$ and $\beta$ maps generate all maps (\S \ref{ss:uperm})), we see that $i$ is in fact a natural transformation. This shows that $\Phi$ has a symmetric monoidal structure.

The image of $\Phi$ under the functor \eqref{eq:genmap} is naturally identified with the functor $\Psi^{\circ}$. This completes the proof.
\end{proof}

\begin{lemma}
The functor \eqref{eq:genmap} is fully faithful.
\end{lemma}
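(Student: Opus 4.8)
The plan is to establish faithfulness and fullness separately; faithfulness is essentially formal, while fullness uses the combinatorial description of $\fP$ from \S\ref{ss:uperm} together with the facts about \'etale algebras from \S\ref{s:etale}. For faithfulness, note that every object of $\fP=\uPerm(G,\mu)$ has the form $\cC(X)$ for a finitary $G$-set $X$, so a monoidal natural transformation $\eta\colon\Phi_1\to\Phi_2$ is determined by its components $\eta_{\cC(X)}$; since the image of $\eta$ under \eqref{eq:genmap} is precisely the natural transformation $\Psi_1^{\circ}\to\Psi_2^{\circ}$ whose component at $X$ is $\eta_{\cC(X)}$, two monoidal natural transformations with the same image coincide.

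For fullness, fix tensor functors $\Phi_1,\Phi_2\colon\fP\to\fT$ and a natural isomorphism $\theta\colon\Psi_1^{\circ}\to\Psi_2^{\circ}$, whose components $\theta_X\colon\Phi_1(\cC(X))\to\Phi_2(\cC(X))$ are isomorphisms of \'etale algebras. I would set $\eta_{\cC(X)}:=\theta_X$ and show this defines a monoidal natural isomorphism $\Phi_1\to\Phi_2$; its image under \eqref{eq:genmap} is then visibly $\theta$. The first task is to see that $\eta$ is a $k$-linear natural transformation. By construction, naturality of $\theta$ along a morphism $f\colon Y\to X$ of $G$-sets says exactly that $\eta$ is compatible with the pullback maps $\Phi_i(f^{*})$. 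Compatibility with the pushforward maps $\Phi_i(f_{*})$ then follows by dualizing: in $\fP$ one has $f_{*}=(f^{*})^{\vee}$ for the trace-pairing self-duality of the objects $\cC(X)$, and since the formation of duals of maps of \'etale algebras commutes with tensor functors (\S\ref{ss:dual}), we get $\Phi_i(f_{*})=\Phi_i(f^{*})^{\vee}$; applying $(-)^{\vee}$ to the naturality square $\theta_Y\circ\Phi_1(f^{*})=\Phi_2(f^{*})\circ\theta_X$, using that $(-)^{\vee}$ reverses composition together with $\theta_X^{\vee}=\theta_X^{-1}$ (Proposition~\ref{prop:etale-inv}), yields $\theta_X\circ\Phi_1(f_{*})=\Phi_2(f_{*})\circ\theta_Y$. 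Since $\Hom_{\fP}(\cC(X),\cC(Y))$ is $k$-spanned by the matrices $A_Z=p_{*}q^{*}$ attached to the orbits $Z$ on $Y\times X$ (\S\ref{ss:uperm}), $k$-linearity of $\eta$ and of the $\Phi_i$ now gives $\eta_{\cC(Y)}\circ\Phi_1(g)=\Phi_2(g)\circ\eta_{\cC(X)}$ for every morphism $g$ of $\fP$.

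The second task is to check that $\eta$ is monoidal. For the unit constraint: $\bbone_{\fP}=\cC(\bone)$ is the initial \'etale algebra of $\fP$, so each $\Phi_i(\bbone_{\fP})$, being isomorphic as an algebra to the initial \'etale algebra $\bbone_{\fT}$ of $\fT$ via the unit isomorphism, is itself initial in $\Et(\fT)$; consequently the two unit isomorphisms and the algebra map $\eta_{\bbone_{\fP}}=\theta_{\bone}$ are all the unique morphisms between initial objects, hence compatible. For the tensor constraints: $\cC(X)\otimes\cC(Y)=\cC(X\times Y)$ is the coproduct of $\cC(X)$ and $\cC(Y)$ in $\Et(\fP)$ — the tensor product of \'etale algebras being their coproduct, i.e.\ the push-out of algebras — with structure maps $\pi_X^{*},\pi_Y^{*}$; applying the strong monoidal functor $\Phi_i$ and using naturality of its structure isomorphism identifies $\Phi_i(\cC(X\times Y))$, together with $\Phi_i(\pi_X^{*})$ and $\Phi_i(\pi_Y^{*})$, with the coproduct $\Phi_i(\cC(X))\otimes\Phi_i(\cC(Y))$ in $\Et(\fT)$, the structure isomorphism $J_i\colon\Phi_i(\cC(X))\otimes\Phi_i(\cC(Y))\to\Phi_i(\cC(X\times Y))$ becoming the canonical comparison of coproducts. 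The square expressing compatibility of $\eta$ with the tensor constraints is then an equality of two algebra maps out of the coproduct $\Phi_1(\cC(X))\otimes\Phi_1(\cC(Y))$, so it suffices to verify it after precomposing with the two coproduct inclusions, where it reduces to the compatibility of $\eta$ with $\Phi_i(\pi_X^{*})$ and $\Phi_i(\pi_Y^{*})$ established above. Finally $\eta$ is an isomorphism since each $\theta_X$ is.

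The step I expect to require the most care is the tensor-constraint part of monoidality, which rests on identifying the strong-monoidal structure isomorphisms of the $\Phi_i$ with the coproduct comparison maps in $\Et(\fT)$; by contrast, the passage from pullbacks to pushforwards via duality and the unit-constraint verification should be routine given \S\ref{ss:dual} and Proposition~\ref{prop:etale-inv}.
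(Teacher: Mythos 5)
Your proposal is correct and follows essentially the same route as the paper: faithfulness is the observation that the functor acts as the identity on the underlying systems of maps, and fullness is proved by upgrading naturality for pullbacks to naturality for pushforwards via duality ($\theta_X^{\vee}=\theta_X^{-1}$) and the generation of $\Hom$-spaces by $p_*q^*$, then deducing the monoidal constraints from the algebra structure. Your coproduct-in-$\Et(\fT)$ phrasing of the tensor-constraint check is just a categorical restatement of the paper's explicit diagram with $m\circ(p_1^*\otimes p_2^*)$.
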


\begin{proof}
Let $\Phi, \Phi' \colon \fP \to \fT$ be tensor functors, and let $\Psi^{\circ}$ and $(\Psi')^{\circ}$ be the functors coming via \eqref{eq:genmap}. Suppose that for each $G$-set $X$ we have an isomorphism
\begin{displaymath}
\alpha_X \colon \Phi(\cC(X)) \to \Phi'(\cC(X))
\end{displaymath}
in $\fT$. Consider the following conditions on the system $\alpha$:
\begin{enumerate}
\item $\alpha$ is natural with respect to pull-back maps, i.e., morphisms $f^* \colon \cC(X) \to \cC(Y)$ when $f \colon Y \to X$ is a map of $G$-sets.
\item $\alpha_X$ is an algebra isomorphism for each $X$.
\item $\alpha$ is natural with respect to all morphisms in $\fP$.
\item $\alpha$ is compatible with the monoidal structures.
\end{enumerate}
A monoidal natural isomorphism $\Phi \to \Phi'$ is a system $\alpha$ satisfying (c) and (d), while a natural isomorphism $\Psi^{\circ} \to (\Psi')^{\circ}$ is a system $\alpha$ satisfying (a) and (b). On morphisms, the functor \eqref{eq:genmap} simply takes the system $\alpha$ to itself\footnote{Since $\Phi$ is a well-defined functor it follows that (c) and (d) imply (a) and (b).}, and so it is faithful. To prove fullness, we must show that (a) and (b) imply (c) and (d).

Thus let $\alpha$ be a given system satisfying (a) and (b). We show that $\alpha$ satisfies (d). First, the monoidal unit of $\fP$ is $\cC(\bzero)$, and so as part of the data of a monoidal functor, we are given isomorphisms $\bbone \to \Phi(\cC(\bzero))$ and $\bbone \to \Phi'(\cC(\bzero))$. We must show that $\alpha_{\bzero}$ is compatible with these isomorphisms. However, this is clear since $\alpha_{\bzero}$ is an algebra homomorphism by (b), and these maps are the units for the algebra structures. Next, let $X$ and $Y$ be $G$-sets, let $Z=X \times Y$, and let $p_1 \colon Z \to X$ and $p_2 \colon Z \to Y$ be the two projections. Also let $m \colon \cC(Z) \otimes \cC(Z) \to \cC(Z)$ be the multiplication map. Consider the following diagram
\begin{displaymath}
\xymatrix@C=4em{
\Phi'(\cC(X)) \otimes \Phi'(\cC(Y)) \ar[r]^{p_1^* \otimes p_2^*} &
\Phi'(\cC(Z)) \otimes \Phi'(\cC(Z)) \ar[r]^-m &
\Phi'(\cC(Z)) \\
\Phi(\cC(X)) \otimes \Phi(\cC(Y)) \ar[r]^{p_1^* \otimes p_2^*} \ar[u]^{\alpha_X \otimes \alpha_Y} &
\Phi(\cC(Z)) \otimes \Phi(\cC(Z)) \ar[r]^-m \ar[u]^{\alpha_Z \otimes \alpha_Z} &
\Phi(\cC(Z)) \ar[u]^{\alpha_Z} }
\end{displaymath}
The left square commutes by (a) and the right square commutes by (b). The compositions in the two rows are the are isomorphisms in the monoidal structures for $\Phi$ and $\Phi'$, where here we identify $\cC(Z)$ with $\cC(X) \otimes \cC(Y)$. We have thus shown that (d) holds.

We now show that $\alpha$ satisfies (c). Since $\alpha_X$ is an algebra isomorphism, its dual is its inverse (Proposition~\ref{prop:etale-inv}). It follows that $\alpha$ is natural with respect to push-forwards. Since push-forwards and pull-backs generate all morphisms in $\fP$ (\S \ref{ss:uperm}), it follows that $\alpha$ is natural with respect to all morphisms in $\fP$.
\end{proof}

\begin{remark}
Theorem~\ref{thm:genmap} works just as well in the relative case; we briefly explain. Suppose $\sE$ is a stabilizer class for $G$ and $\mu$ is a measure for $G$ relative to $\sE$. Put $\fP=\uPerm(G, \sE, \mu)$. Then giving a tensor functor $\fP \to \fT$ is equivalent to giving a left-exact additive functor $\bS(G, \sE) \to \Et(\fT)^{\op}$ that is compatible with $\mu$ in the obvious sense. The proof is the same. Other results in \S \ref{s:genmap} apply in the relative case as well.
\end{remark}

\subsection{Fullness and faithfulness}

It is often important to understand when a tensor functor $\Phi \colon \fP \to \fT$ is full or faithful. We now give criteria for this in terms of the associated functor $\Psi$. We begin with a purely combinatorial result.

\begin{proposition} \label{prop:Psi-faithful}
Let $G$ be a pro-oligomorphic group, let $\cS$ be a lextensive category, and let $\Psi \colon \bS(G) \to \cS$ be an additive left-exact functor. The following are equivalent:
\begin{enumerate}
\item $\Psi$ is faithful.
\item $\Psi(X)=\bzero$ if and only if $X=\bzero$, for an object $X$ of $\bS(G)$.
\item For any object $X$ of $\bS(G)$, the map $\Psi \colon \Sub(X) \to \Sub(\Psi(X))$ is injective, where $\Sub(-)$ denotes the class of subobjects.
\end{enumerate}
\end{proposition}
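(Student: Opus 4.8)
The plan is to prove the cyclic chain $(a)\Rightarrow(b)\Rightarrow(c)\Rightarrow(a)$. Throughout I use that $\Psi$, being additive and left-exact, preserves finite coproducts and finite limits; in particular it preserves the initial object $\bzero$, sends monomorphisms to monomorphisms (so it does induce a map $\Sub(X)\to\Sub(\Psi(X))$), and preserves pullbacks and equalizers. I also use two features of a lextensive category: coproducts are disjoint, so a subobject $Z\subseteq X$ and its complement $Z^c$ satisfy $Z\cap Z^c=\bzero$; and $\Psi$ sends complements to complements, since the additive functor $\Psi$ turns the isomorphism $Z\amalg Z^c\cong X$ into an isomorphism $\Psi(Z)\amalg\Psi(Z^c)\cong\Psi(X)$ compatible with the inclusions. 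Recall that $\bS(G)$ is lextensive with complements, so all of this applies to its subobjects.

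For $(a)\Rightarrow(b)$: if $X=\bzero$ then $\Psi(X)=\bzero$ since $\Psi$ preserves the initial object. Conversely, if $X\neq\bzero$, let $c_1,c_2\colon X\to\bone\amalg\bone$ be the composites of the map $X\to\bone$ with the two coproduct inclusions. Under the extensivity equivalence $\cS_{/(\bone\amalg\bone)}\simeq\cS\times\cS$, the object $X$ over $\bone\amalg\bone$ via $c_1$ (resp. $c_2$) corresponds to the pair $(X,\bzero)$ (resp. $(\bzero,X)$), so $c_1\neq c_2$. Faithfulness of $\Psi$ forces $\Psi(c_1)\neq\Psi(c_2)$, so $\Hom_{\cS}(\Psi(X),\Psi(\bone\amalg\bone))$ is not a singleton, whence $\Psi(X)\neq\bzero$.

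For $(b)\Rightarrow(c)$: suppose $Y_1,Y_2\subseteq X$ satisfy $\Psi(Y_1)=\Psi(Y_2)$ as subobjects of $\Psi(X)$, and write $Y_2^c$ for the complement of $Y_2$ in $X$. Since $Y_1\cap Y_2^c$ is a pullback in $\bS(G)$ and $\Psi$ preserves pullbacks and complements, $\Psi(Y_1\cap Y_2^c)=\Psi(Y_1)\cap\Psi(Y_2)^c=\Psi(Y_2)\cap\Psi(Y_2)^c=\bzero$. By $(b)$ this gives $Y_1\cap Y_2^c=\bzero$, i.e. $Y_1\subseteq Y_2$; by symmetry $Y_1=Y_2$. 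For $(c)\Rightarrow(a)$: given $f,g\colon Y\to X$ with $\Psi(f)=\Psi(g)$, let $e\colon E\hookrightarrow Y$ be their equalizer. Then $\Psi(e)$ is the equalizer of the equal maps $\Psi(f),\Psi(g)$, hence an isomorphism; so $E$ and $Y$ are subobjects of $Y$ with the same (top) image under $\Psi\colon\Sub(Y)\to\Sub(\Psi(Y))$, and $(c)$ gives $E=Y$, i.e. $f=g$. Thus $\Psi$ is faithful.

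I do not expect a serious obstacle: the argument is bookkeeping with the exactness of $\Psi$ and the combinatorics of lextensive categories. The point that needs care is confirming that $\Psi$ genuinely preserves complements — this uses extensivity rather than just the existence of finite (co)limits — and that $\bS(G)$ has complements, so that the subobject $Y_1\cap Y_2^c$ is available in $(b)\Rightarrow(c)$. If one prefers, $(c)\Rightarrow(a)$ can be run via the kernel pair of $f$ or the diagonal of $(f,g)\colon Y\to X\times X$ in place of the equalizer, but the equalizer formulation is the most economical.
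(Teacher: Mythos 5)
Your proof is correct and takes essentially the same route as the paper's: the paper proves $(a)\Rightarrow(b)$ by noting the switching map on $X\amalg X$ is not the identity (your two maps to $\bone\amalg\bone$ are the same extensivity trick), proves $(b)\Rightarrow(c)$ via complements and disjointness of coproducts (reducing first to the nested case rather than computing $\Psi(Y_1\cap Y_2^c)$ directly, a cosmetic difference), and proves $(c)\Rightarrow(a)$ via the graph of $f$, which it defines as an equalizer — so your equalizer argument is the same idea.
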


\begin{proof}
(a) $\Rightarrow$ (b). Let $X$ be a non-empty object of $\bS(G)$. The switching map on $X \amalg X$ is then not the identity map. Since $\Psi$ is faithful, it follows that the switching map on $\Psi(X) \amalg \Psi(X)$ is not the identity. Thus $\Psi(X)$ is not empty.

(b) $\Rightarrow$ (c). First note that since $\Psi$ is left-exact, it preserves monomorphisms, and thus maps subobjects to subobjects. Let $A$ and $B$ be subobjects of $X$ such that $\Psi(A)=\Psi(B)$. We must show that $A=B$. First suppose that $A \subset B$. Then $B=A \amalg A'$, where $A'=B \setminus A$. We thus see that the map $\Psi(A) \to \Psi(A) \amalg \Psi(A') = \Psi(B)$ is an isomorphism. Since $\cS$ is extensive, it follows that $\Psi(A')=\bzero$. Thus, by (b), $A'=\bzero$, and so $A=B$, as required. To treat the general case, let $C=A \cap B$. We have
\begin{displaymath}
\Psi(C)=\Psi(A) \cap \Psi(B)=\Psi(A),
\end{displaymath}
where in the first step we use that $\Psi$ is left-exact, and in the second that $\Psi(A)=\Psi(B)$. Thus, by the previous case, we have $A=C$. The same argument shows $B=C$, and so $A=B$, as required.

(c) $\Rightarrow$ (a). Suppose $f,g \colon X \to Y$ are morphisms in $\bS(G)$ such that $\Psi(f)=\Psi(g)$. Let $\Gamma_f \subset Y \times X$ denote the graph of $f$, which we define as the equalizer of the maps $Y \times X \rightrightarrows Y$. Since $\Psi$ is left-exact, it commutes with formation of graphs. We thus have $\Psi(\Gamma_f)=\Psi(\Gamma_g)$, and so $\Gamma_f=\Gamma_g$ by (c). Thus $f=g$; to see this, note that $\Gamma_f$ as we have defined it agrees with the naive set-theoretic definition of the graph. This completes the proof.
\end{proof}

We now turn to tensor functors.

\begin{proposition} \label{prop:gen-faithful}
Let $\Phi \colon \fP \to \fT$ be a tensor functor, and let $\Psi \colon \bS(G) \to \Et(\fT)^{\op}$ be the associated functor. Then $\Phi$ is faithful if and only if $\Psi$ is.
\end{proposition}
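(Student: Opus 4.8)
The statement to prove is: $\Phi \colon \fP \to \fT$ is faithful if and only if the associated $\Psi \colon \bS(G) \to \Et(\fT)^{\op}$ is faithful. The strategy is to reduce the faithfulness of $\Phi$ — a statement about a tensor functor on the oligomorphic category, where morphisms are spanned by indicator functions of orbits — to a statement purely about $\Psi$, using Proposition~\ref{prop:Psi-faithful} to further convert "$\Psi$ faithful" into the convenient criterion (b): $\Psi(X) = \bzero \iff X = \bzero$.

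\textbf{Step 1: $\Phi$ faithful $\Rightarrow$ $\Psi$ faithful.} Since $\Psi$ is built from $\Phi$ by restricting to the étale algebras $\cC(X)$ and passing to opposite categories, and on morphisms $\Psi(f) = \Phi(f^*)$, faithfulness of $\Phi$ immediately gives that $\Phi$ is injective on the Hom-sets $\Hom_{\fP}(\cC(X), \cC(Y)) \supseteq \{f^* : f \colon Y \to X\}$, hence $\Psi$ is faithful on the corresponding morphisms. More carefully: if $\Psi(f) = \Psi(g)$ for $f, g \colon X \to Y$ in $\bS(G)$, then $\Phi(f^*) = \Phi(g^*)$ as maps $\Phi(\cC(Y)) \to \Phi(\cC(X))$, so $f^* = g^*$ by faithfulness of $\Phi$, whence $f = g$ since $f \mapsto f^*$ is itself faithful (the pullback map on Schwartz functions recovers $f$, being essentially the indicator of the graph of $f$). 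So this direction is routine.

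\textbf{Step 2: $\Psi$ faithful $\Rightarrow$ $\Phi$ faithful.} This is the substantive direction, and I expect it to be the main obstacle. The plan is to use that every morphism space $\Hom_{\fP}(\cC(X), \cC(Y))$ is spanned by the indicator functions $A_Z$ of the $G$-orbits $Z$ on $Y \times X$, and that $A_Z = p_* q^*$ where $p \colon Z \to Y$, $q \colon Z \to X$ are the projections (as recalled in \S\ref{ss:uperm}). Thus $\Phi$ is faithful iff, for every $X, Y$, the images $\{\Phi(A_Z)\}_Z = \{\Phi(p_*)\Phi(q^*)\}_Z$ — as $Z$ ranges over the finitely many orbits on $Y \times X$ — remain linearly independent in $\Hom_{\fT}(\Phi(\cC(X)), \Phi(\cC(Y)))$. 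To establish this I would use a "separation of variables" / duality argument: pair $\Phi(A_Z)$ against suitable test maps. Concretely, the orbits $Z$ correspond to a decomposition $\cC(Y \times X) = \bigoplus_Z \cC(Z)$, and linear independence of the $A_Z$ after applying $\Phi$ amounts to $\Phi$ being injective on $\Gamma$-level pairings. Use that $\cC(Y \times X) = \cC(Y) \otimes \cC(X)$ is an étale algebra with orthogonal idempotents indexed by $Y \times X$-points, its decomposition into $\bigoplus_Z \cC(Z)$ being a decomposition into étale algebras; faithfulness of $\Psi$ (in the form of criterion (b) from Proposition~\ref{prop:Psi-faithful}) says $\Phi(\cC(Z)) = \Psi(Z) \ne 0$ for every non-empty orbit $Z$, so none of the summands collapses. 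Since $\Et(\fT)^{\op}$ is extensive (Proposition~\ref{prop:etale-cat}), the decomposition $\Phi(\cC(Y\times X)) = \bigoplus_Z \Phi(\cC(Z))$ is a genuine direct-sum decomposition with all summands non-zero; the corresponding orthogonal idempotents $e_Z \in \Gamma(\Phi(\cC(Y)) \otimes \Phi(\cC(X)))$ are therefore non-zero and distinct. Then one identifies $\Phi(A_Z)$ — or rather the element of $\Gamma$ detecting it — with (a non-zero multiple of, or exactly) $e_Z$ under the standard isomorphism $\Hom(\Phi(\cC(X)), \Phi(\cC(Y))) \cong \Gamma(\Phi(\cC(Y)) \otimes \Phi(\cC(X)))$ coming from self-duality of these étale algebras; distinctness and non-vanishing of the $e_Z$ then gives linear independence of the $\Phi(A_Z)$, hence faithfulness of $\Phi$.

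\textbf{Main obstacle.} The delicate point is Step 2: correctly matching up $\Phi(A_Z) = \Phi(p_*)\Phi(q^*)$ with the idempotent $e_Z$ produced by the extensive structure of $\Et(\fT)^{\op}$, and checking that a single non-vanishing $\Psi(Z) \neq \bzero$ really forces $\Phi(A_Z) \neq 0$ — and more, that the whole family stays independent rather than merely individually non-zero. The cleanest route is probably to observe that $A_Z$, viewed in $\cC(Y)\otimes\cC(X) = \cC(Y\times X)$ under the self-dualities, is precisely the idempotent cutting out the summand $\cC(Z)$ (up to the factor coming from the measure $\mu(Z)$, which is harmless: if $\mu(Z)=0$ we should instead argue via the unnormalized projector, and here the étale/self-dual bookkeeping must be done with care), so applying $\Phi$ and using extensivity of $\Et(\fT)^{\op}$ plus hypothesis (b) on $\Psi$ yields non-zero orthogonal idempotents, forcing independence. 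I would lean on Proposition~\ref{prop:etale-cat} (extensivity and left-exactness) and the description $A_Z = p_* q^*$ from \S\ref{ss:uperm} as the two load-bearing inputs, and invoke Proposition~\ref{prop:Psi-faithful} to replace "$\Psi$ faithful" by the pointwise criterion throughout.
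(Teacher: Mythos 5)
Your proposal is correct and is essentially the paper's argument: both directions reduce, via rigidity, to injectivity of $\Phi$ on the spaces $\Gamma(\cC(Y\times X))$, whose basis of orbit indicators survives because each $\Psi(Z)\ne\bzero$ — your orthogonal idempotents $e_Z$ are precisely the units of the transitive summands that the paper's proof uses, and "orthogonal nonzero idempotents are independent" is the paper's "a direct sum of injective maps is injective." The measure-factor worry you flag is moot: the coevaluation of $\cC(X)$ is the diagonal indicator and $\int_X\delta_x\,dx=1$, so $A_Z$ corresponds to $\mathbf{1}_Z$ on the nose (and in any case the duality map is an isomorphism taking $A_Z$ to a multiple of $\mathbf{1}_Z$, so the scalar is automatically nonzero).
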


\begin{proof}
Suppose $\Phi$ is faithful. If $X$ is a non-empty $G$-set then the identity map of $\cC(X)$ is non-zero, and so the identity map of $\Phi(\cC(X))=\Psi(X)$ is non-zero. Hence $\Psi(X)$ is not the empty object of $\Et(\fT)^{\op}$. Thus $\Psi$ is faithful (Proposition~\ref{prop:Psi-faithful}).

Now suppose that $\Psi$ is faithful. Let $X$ be a $G$-set, and consider the map
\begin{displaymath}
\Phi \colon \Gamma(\cC(X)) \to \Gamma(\Phi(\cC(X))).
\end{displaymath}
First suppose that $X$ is transitive. Then $\Gamma(\cC(X))$ is one dimensional, and spanned by the identity map $i$. Since the $\Psi(X)=\Phi(\cC(X))$ is non-zero (Proposition~\ref{prop:Psi-faithful}), it follows that $\Phi(i)$ is non-zero, and so $\Phi$ is injective. Now consider the general case. Write $X=X_1 \sqcup \cdots \sqcup X_n$, where each $X_i$ is transitive. Then $\Phi$ for $X$ is the direct sum of the corresponding maps on the $X_i$'s. Since each of these maps is injective, so is their sum. We have thus shown that for any object $A$ of $\fP$, the induced map
\begin{displaymath}
\Phi \colon \Hom_{\fP}(\bbone, A) \to \Hom_{\fT}(\bbone, \Phi(A))
\end{displaymath}
is injective. Since $\fP$ is rigid, it follows that $\Phi$ is faithful.
\end{proof}

\begin{proposition} \label{prop:gen-full}
Let $\Phi \colon \fP \to \fT$ be a tensor functor, and let $\Psi \colon \bS(G) \to \Et(\fT)^{\op}$ be the associated functor. Then $\Phi$ is full if and only if for every transitive $G$-set we have $\dim \Gamma(\Psi(X)) \le 1$.
\end{proposition}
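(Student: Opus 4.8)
The plan is to reduce fullness of $\Phi$ to a statement about a single $\Hom$-space for each transitive $G$-set, and then to compute that space. First I would use rigidity: every object of $\fP$ has the form $\cC(X)$ and is self-dual (\S \ref{ss:uperm}), so ``bending a leg'' gives a natural isomorphism $\Hom_{\fP}(\cC(X),\cC(Y)) \cong \Gamma(\cC(X)^{\vee}\otimes\cC(Y)) = \Gamma(\cC(X\times Y))$, and, since $\Phi$ is symmetric monoidal and hence preserves duals, a compatible isomorphism $\Hom_{\fT}(\Phi\cC(X),\Phi\cC(Y)) \cong \Gamma(\Phi(\cC(X\times Y)))$. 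Because every finitary $G$-set $W$ equals $W\times\bone$, it follows that $\Phi$ is full if and only if $\Phi\colon \Gamma(\cC(W))\to\Gamma(\Phi(\cC(W)))$ is surjective for all finitary $W$. Writing $W$ as a disjoint union of transitive $G$-sets $W_1,\ldots,W_n$ and using that $\Phi$ and $\Gamma$ are additive while $\cC(W)=\bigoplus_i\cC(W_i)$, this reduces to the assertion that for every transitive $G$-set $X$ the map $\Phi\colon\Gamma(\cC(X))\to\Gamma(\Psi(X))$ is surjective.

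Next I would identify the source. For transitive $X$ the space $\Gamma(\cC(X))$ consists of the $G$-invariant Schwartz functions on $X\times\bone\cong X$ and is one-dimensional since $X$ is a single orbit; the algebra unit $\eta_{\cC(X)}=p^*$, with $p\colon X\to\bone$, is a nonzero element of it (as $\cC(X)\neq 0$), hence a basis. As $\Phi$ is monoidal, $\Phi(\eta_{\cC(X)})=\eta_{\Psi(X)}$, so the image of $\Phi\colon\Gamma(\cC(X))\to\Gamma(\Psi(X))$ is $k\cdot\eta_{\Psi(X)}$. I then split into two cases. If $\Psi(X)=0$, then $\Gamma(\Psi(X))=0$, the map is trivially surjective, and $\dim\Gamma(\Psi(X))=0\le 1$. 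If $\Psi(X)\neq 0$, then $\eta_{\Psi(X)}\neq 0$ --- otherwise $\id_{\Psi(X)}=m_{\Psi(X)}\circ(\eta_{\Psi(X)}\otimes\id)=0$, forcing $\Psi(X)=0$ --- so $\dim\Gamma(\Psi(X))\ge 1$ and the map is surjective exactly when $\dim\Gamma(\Psi(X))=1$. In either case the map $\Phi|_{\Gamma(\cC(X))}$ is surjective if and only if $\dim\Gamma(\Psi(X))\le 1$, and assembling this over all transitive $X$ gives the proposition.

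I expect the only real obstacle to be the first reduction step: one must check that the ``bend a leg'' isomorphisms are natural in $X$ and $Y$ and intertwine the action of $\Phi$. This is formal, following from $\Phi$ being a symmetric monoidal functor between rigid tensor categories (so it carries the (co)evaluation maps of $\cC(X)$ to those of $\Phi\cC(X)$, up to the structure isomorphisms of $\Phi$), but it should be written out carefully. The remaining ingredients --- one-dimensionality of $\Gamma(\cC(X))$ for transitive $X$, the identity $\Phi(\eta_{\cC(X)})=\eta_{\Psi(X)}$, and the two-case dimension count --- are routine.
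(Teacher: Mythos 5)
Your proposal is correct and follows essentially the same route as the paper: reduce fullness via rigidity to surjectivity of $\Gamma(\cC(X)) \to \Gamma(\Psi(X))$, reduce to transitive $X$ by additivity, observe that $\Gamma(\cC(X))$ is one-dimensional and that the map is a unital $k$-algebra homomorphism, and conclude by a dimension count. The only difference is that you spell out the ``bend a leg'' reduction and the case $\Psi(X)=0$ slightly more explicitly than the paper does.
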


\begin{proof}
Since $\fP$ is rigid, $\Phi$ is full if and only if the map
\begin{displaymath}
a_X \colon \Gamma(\cC(X)) \to \Gamma(\Phi(\cC(X)) = \Gamma(\Psi(X))
\end{displaymath}
is surjective for all objects $X$ of $\bS(G)$. By additivity, $a_X$ is surjective for all $X$ if and only if it is surjective for all transitive $X$. Let $X$ be a transitive $G$-set. Then $\Gamma(\cC(X))$ is the space of $G$-invariant $X \times \bone$ matrices, which is one dimensional since $G$ acts transitively on $X$. Thus if $a_X$ is surjective then $\Gamma(\Psi(X))$ is at most one dimensional. On the other hand, if $\Gamma(\Psi(X))$ is at most one dimensional then $a_X$ is surjective: indeed, since $\Phi$ is a tensor functor, $a_X$ is a $k$-algebra homomorphism, and therefore maps~1 to~1. This completes the proof.
\end{proof}

\subsection{A criterion for compatibility} \label{ss:crit}

Fix a faithful additive left-exact functor $\Psi \colon \bS(G) \to \Et(\fT)^{\op}$. The compatibility of $\Psi$ with a measure $\mu$ typically involves infinitely many conditions: we require $\gamma(\Psi(f))=\mu(f)$ for each map $f$ of transitive $G$-sets. We now show that it suffices to check this condition on a set of $\Theta$-generators, which can simplify the task enormously. In what follows, we let $\Sigma$ denote the class of morphisms $f \colon Y \to X$ in $\bS(G)$ with $X$ transitive, and we fix a set $S$ of $\Theta$-generators for $G$.

\begin{proposition} \label{prop:gamma-meas1}
Suppose $\Psi(f)$ is uniform for all $f \in \Sigma$. Then $f \mapsto \gamma(\Psi(f))$ defines a $k$-valued measure for $G$.
\end{proposition}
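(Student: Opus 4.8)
The plan is to verify the three axioms of Definition~\ref{defn:meas} directly for the assignment $\nu$ that sends a morphism $f$ of transitive $G$-sets to $\gamma(\Psi(f))$. First I would note that $\nu$ is well defined: such an $f$ lies in $\Sigma$, so by hypothesis $\Psi(f)$ is a uniform map of \'etale algebras and $\gamma(\Psi(f))\in k$ makes sense. Throughout it is convenient to regard $\Psi$ as a contravariant functor $\bS(G)\to\Et(\fT)$ (using that $\Et(\fT)^{\op}$ is lextensive, Proposition~\ref{prop:etale-cat}); then for a composite $g\circ f$ in $\bS(G)$ the map $\Psi(g\circ f)$ is $\Psi(f)\circ\Psi(g)$ in $\Et(\fT)$, and the product/coproduct and pullback/pushout features of $\Et(\fT)$ are exactly the ones needed below. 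Since $\Psi$ is additive and left-exact (standing hypotheses of \S\ref{ss:crit}), it carries disjoint unions, final objects, and cartesian squares in $\bS(G)$ to the corresponding categorical data in $\Et(\fT)^{\op}$.

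Axiom (a) is immediate: if $f$ is an isomorphism then so is $\Psi(f)$, which is therefore uniform with $\gamma(\Psi(f))=1$ (Proposition~\ref{prop:etale-inv}, as recorded in \S\ref{ss:gamma}), so $\nu(f)=1$. For axiom (b), given composable morphisms $Z\to Y\to X$ of transitive $G$-sets, each of $f$, $g$, $g\circ f$ lies in $\Sigma$, hence each of $\Psi(f)$, $\Psi(g)$ is uniform; applying the composition clause \S\ref{ss:gamma}(e) to the composite $\Psi(X)\to\Psi(Y)\to\Psi(Z)$ in $\Et(\fT)$, whose first leg is $\Psi(g)$ and whose second leg is $\Psi(f)$, gives $\gamma(\Psi(g\circ f))=\gamma(\Psi(f))\cdot\gamma(\Psi(g))$, i.e.\ $\nu(g\circ f)=\nu(g)\nu(f)$ in $k$.

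Axiom (c) is the only one with any content, and even it is bookkeeping. Starting from the cartesian square in $\bS(G)$ attached to $f\colon Y\to X$ and $X'\to X$, with base change $f'\colon Y'\to X'$, left-exactness of $\Psi$ produces a cartesian square of \'etale algebras in the sense of Proposition~\ref{prop:etale-bc}, with $\Psi(X)$, $\Psi(X')$, $\Psi(f)$, $\Psi(f')$ playing the roles of $A$, $A'$, $f$, $f'$ there. The pullback clause \S\ref{ss:gamma}(d) then shows $\Psi(f')$ is uniform with $\gamma(\Psi(f'))=\gamma(\Psi(f))=\nu(f)$. Writing $Y'=\bigsqcup_{i=1}^n Y'_i$ for the orbit decomposition, additivity of $\Psi$ identifies $\Psi(Y')$ with $\bigoplus_{i=1}^n\Psi(Y'_i)$ in $\Et(\fT)$ and realizes $\Psi(f'_i)$ as the $i$-th component of $\Psi(f')$; since each $f'_i$ lies in $\Sigma$, each $\Psi(f'_i)$ is uniform, so the addition clause \S\ref{ss:gamma}(f) gives $\gamma(\Psi(f'))=\sum_{i=1}^n\gamma(\Psi(f'_i))=\sum_{i=1}^n\nu(f'_i)$. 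Comparing the two expressions for $\gamma(\Psi(f'))$ yields $\nu(f)=\sum_{i=1}^n\nu(f'_i)$, which is axiom (c).

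The only thing requiring care will be the variance bookkeeping: pinning down which square is ``cartesian'' in the sense of Proposition~\ref{prop:etale-bc} (a pullback in $\Et(\fT)^{\op}$, equivalently a pushout in $\Et(\fT)$), and checking that the orbit inclusions $Y'_i\hookrightarrow Y'$ map under $\Psi$ to the product projections $\Psi(Y')\to\Psi(Y'_i)$. Once that dictionary is fixed, the three verifications are direct consequences of \S\ref{ss:gamma}(d)--(f) together with the additivity and left-exactness of $\Psi$, so I do not expect any genuine obstacle.
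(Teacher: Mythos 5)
Your proof is correct and follows the same route as the paper's, which simply verifies the three axioms of Definition~\ref{defn:meas} via the properties in \S\ref{ss:gamma}; you have just spelled out the variance bookkeeping. One small point in your favor: your appeal to the addition clause \S\ref{ss:gamma}(f) for the orbit-decomposition part of axiom (c) is genuinely needed, whereas the paper's proof cites only \S\ref{ss:gamma}(d,e), which appears to be an oversight.
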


\begin{proof}
We must verify the three measure axioms from Definition~\ref{defn:meas}. Axiom (a) is clear, while (b) and (c) follow from \S \ref{ss:gamma}(d,e).
\end{proof}

\begin{proposition} \label{prop:gamma-meas2}
If $\Psi(f)$ is uniform for $f \in S$ then $\Psi(f)$ is uniform for all $f \in \Sigma$.
\end{proposition}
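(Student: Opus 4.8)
The plan is to mimic the proof of Proposition~\ref{prop:theta-gen}. I would let $\Pi$ be the subclass of $\Sigma$ consisting of those $f$ for which $\Psi(f)$ is uniform, verify that $\Pi$ is a $\Theta$-class, and then conclude $\Pi=\Sigma$ from the hypotheses that $S\subseteq\Pi$ and that $S$ is $\Theta$-generating. The one preliminary observation to record is that, since $\Psi$ is faithful, Proposition~\ref{prop:Psi-faithful} guarantees $\Psi(Z)$ is non-zero for every non-empty $G$-set $Z$; in particular $\Psi(X)$ is non-zero whenever $X$ is transitive (and likewise for the base change of a transitive set along a map of transitive sets). Thus the ``non-zero source'' clause in the definition of uniform is automatic throughout $\Sigma$, and for $f\in\Sigma$ membership in $\Pi$ is simply the condition $\tilde{\gamma}(\Psi(f))\in k\cdot\eta$.

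With that in hand, each of the four $\Theta$-class axioms for $\Pi$ is a direct translation of a formal property of $\gamma$ gathered in \S\ref{ss:gamma}. For axiom (a), $\Psi$ sends an isomorphism of transitive $G$-sets to an isomorphism of \'etale algebras, which is uniform with $\gamma=1$ by Proposition~\ref{prop:etale-inv}. For axiom (b), functoriality of $\Psi$ into $\Et(\fT)^{\op}$ identifies $\Psi(f\circ g)$ with the composite $\Psi(g)\circ\Psi(f)$ of the underlying algebra maps, so \S\ref{ss:gamma}(e) shows $\Psi(f\circ g)$ is uniform when $\Psi(f)$ and $\Psi(g)$ are. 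For axiom (c), left-exactness of $\Psi$ turns the pullback square in $\bS(G)$ defining the base change $f'$ into a cartesian square of \'etale algebras, so \S\ref{ss:gamma}(d) (i.e.\ Proposition~\ref{prop:etale-bc}) gives that $\Psi(f')$ is uniform whenever $\Psi(f)$ is, with the same value of $\gamma$. For axiom (d), additivity of $\Psi$ identifies $\Psi(Y)$ with $\Psi(Y_1)\oplus\Psi(Y_2)$ in the canonical way and $\Psi(f_1),\Psi(f_2)$ with the two projections of $\Psi(f)$, so \S\ref{ss:gamma}(f) yields $\tilde{\gamma}(\Psi(f))=\tilde{\gamma}(\Psi(f_1))+\tilde{\gamma}(\Psi(f_2))$ and hence the required two-out-of-three closure. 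This completes the verification that $\Pi$ is a $\Theta$-class, and since it contains $S$ it must equal $\Sigma$.

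I do not expect any real obstacle: the argument is purely formal, with every step a citation of material already established. The only thing to be careful about is the bookkeeping of variances — remembering that a morphism $f\colon Y\to X$ in $\bS(G)$ induces an algebra homomorphism $\Psi(f)\colon\Psi(X)\to\Psi(Y)$ (matching $f^{*}$ in the oligomorphic picture), so that the dual maps $\Psi(f)^{\vee}$ run in the direction needed to apply \S\ref{ss:gamma}, and that ``cartesian square of \'etale algebras'' there refers to a square cartesian in $\Et(\fT)^{\op}$, which is precisely what a left-exact functor into $\Et(\fT)^{\op}$ produces from a pullback in $\bS(G)$. An equivalent way to package the outcome, worth noting, is that once $\Psi(f)$ is uniform for all $f\in\Sigma$ then $f\mapsto\gamma(\Psi(f))$ is a measure by Proposition~\ref{prop:gamma-meas1}; but for the present statement the $\Theta$-class computation above is the cleanest route.
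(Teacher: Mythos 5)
Your proposal is correct and follows exactly the paper's argument: the paper also defines $\Pi$ as the set of $f\in\Sigma$ with $\Psi(f)$ uniform, checks it is a $\Theta$-class via \S\ref{ss:gamma}(d,e,f), and concludes $\Pi=\Sigma$ from $S\subseteq\Pi$. Your extra remark that faithfulness (via Proposition~\ref{prop:Psi-faithful}) guarantees the non-vanishing of $\Psi(X)$ required in the definition of uniformity is a point the paper leaves implicit in the standing hypotheses of \S\ref{ss:crit}, and is worth having made explicit.
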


\begin{proof}
Let $\Pi \subset \Sigma$ be the set of $f$ such that $\Psi(f)$ is uniform. This is a $\Theta$-class by the results in \S \ref{ss:gamma}; precisely, axioms (b), (c), and (d) for $\Theta$-classes follow from \S \ref{ss:gamma}(d,e,f). Since $\Pi$ contains $S$ by assumption, we see that $\Pi=\Sigma$, which completes the proof.
\end{proof}

\begin{corollary} \label{cor:compatible}
Suppose that for all $f \in S$ the map $\Psi(f)$ is uniform with $\gamma(\Psi(f))=\mu(f)$. Then $\Psi$ is compatible with $\mu$.
\end{corollary}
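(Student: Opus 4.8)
The plan is simply to assemble the three ingredients of this subsection with the measure-uniqueness statement of Corollary~\ref{cor:theta-gen}; there is no substantive new content to prove.

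First I would dispose of the escape clause in Definition~\ref{defn:compatible}. Since $\Psi$ is faithful (standing hypothesis of \S\ref{ss:crit}) and any transitive $G$-set $X$ is non-empty, Proposition~\ref{prop:Psi-faithful} gives $\Psi(X) \neq \bzero$. Hence alternative~(a) never occurs, and to prove compatibility I must verify alternative~(b) for \emph{every} $f \colon Y \to X$ in $\Sigma$: that $\Psi(f)$ is uniform and $\gamma(\Psi(f)) = \mu(f)$.

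Uniformity of $\Psi(f)$ for all $f \in \Sigma$ is exactly Proposition~\ref{prop:gamma-meas2}, applied to the hypothesis that $\Psi(f)$ is uniform for all $f \in S$. With that in hand, Proposition~\ref{prop:gamma-meas1} shows that $\nu \colon f \mapsto \gamma(\Psi(f))$ is a genuine $k$-valued measure for $G$. By hypothesis $\nu(f) = \mu(f)$ for every $f \in S$, and $S$ is a $\Theta$-generating set for $G$, so Corollary~\ref{cor:theta-gen} forces $\nu = \mu$. In particular $\gamma(\Psi(f)) = \mu(f)$ for all $f \in \Sigma$, which is precisely condition~(b). Therefore $\Psi$ is compatible with $\mu$.

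I do not expect any real obstacle: the corollary is pure bookkeeping on top of Propositions~\ref{prop:gamma-meas1}, \ref{prop:gamma-meas2} and Corollary~\ref{cor:theta-gen}. The only point deserving a moment's care is that, under the standing faithfulness assumption, the ``$\Psi(X) = 0$'' option is unavailable, so one genuinely needs the numerical identity $\gamma(\Psi(f)) = \mu(f)$ for all $f$ and not merely for $f \in S$; this is exactly what the measure-uniqueness argument supplies.
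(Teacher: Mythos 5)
Your proposal is correct and follows exactly the paper's argument: Proposition~\ref{prop:gamma-meas2} for uniformity on all of $\Sigma$, Proposition~\ref{prop:gamma-meas1} to get a measure, and Corollary~\ref{cor:theta-gen} to identify it with $\mu$. Your extra remark that the standing faithfulness hypothesis rules out alternative~(a) of Definition~\ref{defn:compatible} is a point the paper leaves implicit, but it changes nothing of substance.
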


\begin{proof}
Proposition~\ref{prop:gamma-meas2} implies that $\Psi(f)$ is uniform for all $f \in \Sigma$, and so Proposition~\ref{prop:gamma-meas1} implies that $f \mapsto \gamma(\Psi(f))$ is a $k$-valued measure for $G$. Since $\mu$ and $\gamma \circ \Psi$ are two $k$-valued measures that agree on $S$, they agree on all of $\Sigma$ by Corollary~\ref{cor:theta-gen}. Thus $\Psi$ is compatible with $\mu$.
\end{proof}

\begin{remark}
In the above discussion, we required $\Psi$ to be faithful. One way for $\Psi$ to be non-faithful (which seems to be typical) is that it could factor as
\begin{displaymath}
\xymatrix{
\bS(G) \ar[r]^{\Pi} & \bS(H) \ar[r]^-{\Phi'} \ar[r] & \Et(\fT)^{\op} }
\end{displaymath}
where $\Pi$ is a quotient and $\Phi'$ is faithful, additive, and left-exact. By ``quotient,'' we mean $\Pi$ is additive, left-exact, maps transitive sets to either transitive sets or $\bzero$, and hits every transitive set; essentially this means that the Fra\"iss\'e class for $H$ is a subclass for the one for $G$. Suppose $\Phi'$ sends maps of transitive $H$-sets to uniform maps. Then the above discussion shows that $\Phi'$ is compatible with a measure $\nu$ for $H$, and Theorem~\ref{thm:genmap} produces a tensor functor $\Phi' \colon \uPerm(H, \nu) \to \fT$. If $\nu$ measure extends to a measure $\mu$ on $\bS(G)$ then $\Psi$ will be compatible with $\mu$, and there will be a tensor functor $\Phi \colon \uPerm(G, \mu) \to \fT$ that factors through $\Phi'$. However, in general, $\nu$ need not extend to $\mu$.
\end{remark}

\subsection{Maps to oligomorphic tensor categories}

Suppose now that we have a second pro-oligomorphic group $H$ equipped with a $k$-valued measure $\nu$, and let $\fQ = \uPerm(H, \nu)$. We now examine what our mapping property for $\fP$ yields when the target category is $\fQ$. Let $f \colon Y \to X$ be a map of finitary $H$-sets, let  be the orbit decomposition of $X$, and let $f_i \colon Y_i \to X_i$ be the base change of $X$ to $X_i$. We say that $f$ is \defn{uniform} (with respect to $\nu$) if the map $f^* \colon \cC(X) \to \cC(Y)$ in $\fQ$ is uniform in the sense of \S \ref{ss:gamma}. Using \S \ref{ss:gamma}(b), we can describe this condition concretely as follows. Let $X=\bigsqcup_{i=1}^n X_i$ be the orbit decomposition of $X$, and let $f_i \colon Y_i \to X_i$ be the base change of $X$ to $X_i$. Then $f$ is uniform if and only if $n \ge 1$ and $\nu(f_i)$ is independent of $i$. In this case, we let $\nu(f)$ be the common value of $\nu(f_i)$.

\begin{proposition}
Let $\Psi \colon \bS(G) \to \bS(H)$ be an additive left-exact functor such that whenever $f \colon Y \to X$ is a map of transitive $G$-sets either $\Psi(X)=\bzero$, or the map $\Psi(f)$ is uniform (with respect to $\nu$) and $\nu(\Psi(f))=\mu(f)$. Then there is an associated tensor functor
\begin{displaymath}
\Phi \colon \fP \to \fQ, \qquad \cC(X) \mapsto \cC(\Psi(X)).
\end{displaymath}
If $\Psi'$ is a second such functor with associated tensor functor $\Phi'$ then we have a natural identification
\begin{displaymath}
\Isom(\Phi, \Phi') = \Isom(\Psi, \Psi'),
\end{displaymath}
where the left side is computed in $\Fun^{\otimes}(\fP, \fQ)$ and the right side in $\Fun(\bS(G), \bS(H))$.
\end{proposition}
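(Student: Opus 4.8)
The plan is to deduce this from Theorem~\ref{thm:genmap} by precomposing $\Psi$ with the canonical functor attaching to each $H$-set its Schwartz algebra. Write $J\colon \bS(H)\to\Et(\fQ^{\rm kar})^{\op}$ for the functor $Z\mapsto\cC(Z)$; as recalled in the introduction and used in the proof of Theorem~\ref{thm:genmap}, $J$ is additive, left-exact, and fully faithful. Set $\widetilde\Psi = J\circ\Psi\colon \bS(G)\to\Et(\fQ^{\rm kar})^{\op}$; being a composite of additive left-exact functors, $\widetilde\Psi$ is additive and left-exact.

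The first step is to check that $\widetilde\Psi$ is compatible with $\mu$ in the sense of Definition~\ref{defn:compatible}. Let $f\colon Y\to X$ be a map in $\bS(G)$ with $X$ transitive. If $\Psi(X)=\bzero$ then $\widetilde\Psi(X)=\cC(\bzero)=0$, so case~(a) holds. Otherwise, by hypothesis $\Psi(f)\colon\Psi(Y)\to\Psi(X)$ is uniform with respect to $\nu$ and $\nu(\Psi(f))=\mu(f)$. By the definition of ``uniform with respect to $\nu$'' recalled just before the statement, $\widetilde\Psi(f)=\Psi(f)^*\colon\cC(\Psi(X))\to\cC(\Psi(Y))$ is then a uniform map of \'etale algebras in the sense of \S\ref{ss:gamma}, and the concrete description of uniformity given there (which rests on \S\ref{ss:gamma}(b)) identifies $\gamma(\widetilde\Psi(f))$ with the common value $\nu(\Psi(f))=\mu(f)$. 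Thus case~(b) holds, and $\widetilde\Psi$ is compatible with $\mu$.

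Now Theorem~\ref{thm:genmap}, applied with the Karoubian target $\fQ^{\rm kar}$, yields a tensor functor $\fP\to\fQ^{\rm kar}$ whose associated functor is $\widetilde\Psi^{\circ}$; in particular it sends $\cC(X)$ to $\widetilde\Psi(X)=\cC(\Psi(X))$. Since every object and morphism in its image already lies in the fully faithful subcategory $\fQ\subseteq\fQ^{\rm kar}$, it corestricts to the desired $\Phi\colon\fP\to\fQ$ with $\Phi(\cC(X))=\cC(\Psi(X))$. For the last assertion, let $\Psi'$, $\widetilde\Psi'=J\circ\Psi'$ and $\Phi'$ be as above. The fully faithfulness part of Theorem~\ref{thm:genmap} (together with the fact, noted after it, that monoidal natural transformations out of the rigid category $\fP$ are automatically invertible) gives $\Isom(\Phi,\Phi')=\Isom(\widetilde\Psi,\widetilde\Psi')$, computed in $\Fun(\bS(G),\Et(\fQ^{\rm kar})^{\op})$. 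Finally, since $J$ is fully faithful, applying $J$ componentwise is a bijection from natural isomorphisms $\Psi\Rightarrow\Psi'$ onto natural isomorphisms $J\Psi\Rightarrow J\Psi'$: each component $\cC(\Psi(X))\to\cC(\Psi'(X))$ of the latter is $J$ of a unique isomorphism $\Psi(X)\to\Psi'(X)$, and naturality transports across $J$. Hence $\Isom(\Phi,\Phi')=\Isom(\Psi,\Psi')$.

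I do not expect a genuine obstacle here: the argument is a formal reduction to Theorem~\ref{thm:genmap}. The one point needing care is the second step --- matching ``$\Psi(f)$ is uniform with respect to $\nu$'', a statement about the orbitwise values of the measure $\nu$, with condition~(b) of Definition~\ref{defn:compatible}, a statement about the invariant $\gamma$ of a map of \'etale algebras --- and this is precisely what the concrete description of uniformity preceding the statement, built on \S\ref{ss:gamma}(b), supplies. A minor secondary point is the passage between $\fQ$ and $\fQ^{\rm kar}$, handled by the corestriction above.
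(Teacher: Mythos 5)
Your proof is correct and follows the same route as the paper, which simply observes that the hypothesis on $\Psi$ is exactly compatibility of $J\circ\Psi$ with $\mu$ and then invokes Theorem~\ref{thm:genmap}. You merely spell out details the paper leaves implicit (the passage through $\fQ^{\rm kar}$ and the full faithfulness of $Z\mapsto\cC(Z)$ for the $\Isom$ identification), and these are handled correctly.
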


\begin{proof}
The condition on $\Psi$ exactly means that $\Psi$ is compatible with $\mu$, and so the result follows from Theorem~\ref{thm:genmap}
\end{proof}

\begin{remark}
If the natural functor $\bS(H) \to \Et(\fQ)^{\op}$ is an equivalence then every tensor functor $\fP \to \fQ$ comes from the construction in the proposition. However, there are cases where $\bS(H) \to \Et(\fQ)^{\op}$ is not an equivalence (see \cite[Remark~7.3]{interp}), and then it is possible for there to be tensor functors $\fP \to \fQ$ that do not come from the proposition.
\end{remark}

\section{Universal properties for Deligne's category}

We now explain how to recover the well-known universal property of Deligne's interpolation category `$\underline{\Rep}S_t$' from our general Theorem~\ref{thm:genmap}. We only sketch the proofs here since the results are already known; the details are similar to those in \S \ref{s:delgp} and \S \ref{s:delmap}.

\subsection{The category of $\fS$-sets}

Let $\Omega$ be the set $\{1,2,\ldots\}$ of positive integers and let $\fS$ be the group of all permutations of $\Omega$. This action is easily seen to be oligomorphic. We introduce some notation (here $n \ge 0$ is an integer):
\begin{itemize}
\item We let $\Omega^{[n]}$ be the subset of $\Omega^n$ consisting of $n$-tuples with distinct coordinates; this is easily seen to be a transitive $\fS$-set.
\item We let $p_n \colon \Omega^{[n]} \to \Omega^{[n-1]}$ be the projection map omitting the final coordinate.
\item We let $\Omega^{(n)}$ be the set of $n$-element subsets of $\Omega$, which is isomorphic to $\Omega^{[n]}/\fS_n$, where the finite symmetric group $\fS_n$ acts by permuting coordinates.
\item We let $\sE=\sE(\Omega)$ be the stabilizer class defined by $\Omega$ (\S \ref{ss:relative}).
\end{itemize}
The following proposition records the relevant structural facts about $\fS$-sets.

\begin{proposition} \label{prop:sym-sets}
We have the following:
\begin{enumerate}
\item Any transitive $\sE$-smooth $\fS$-set is isomorphic to some $\Omega^{[n]}$.
\item Any morphism $\Omega^{[n]} \to \Omega^{[m]}$ is the projection onto some subset of coordinates; in particular $m \le n$.
\item Any morphism $\Omega^{[n]} \to \Omega^{[m]}$ factors into a sequence $f_{m+1} \circ \cdots \circ f_{n-1} \circ f_n$, where $f_i$ is isomorphic to $p_i$.
\item Any transitive $\fS$-set is isomorphic to $\Omega^{[n]}/\Gamma$ for some $n$ and some subgroup $\Gamma$ of $\fS_n$.
\end{enumerate}
\end{proposition}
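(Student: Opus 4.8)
The plan is to reduce everything to the structure of open subgroups of $\fS$ in terms of the pointwise stabilizers $\fS_{(A)}$ of finite subsets $A\subseteq\Omega$: these form a neighborhood basis of the identity, and they are exactly the members of the stabilizer class $\sE=\sE(\Omega)$. Given this, part~(a) is essentially immediate. A transitive $\sE$-smooth $\fS$-set is $\fS/H$ with $H\in\sE$, i.e.\ $H=\fS_{(A)}$ for a finite $A$ of size $n$; since $\fS$ acts transitively on tuples of distinct elements, the orbit of any enumeration of $A$ is $\Omega^{[n]}$ and has stabilizer $\fS_{(A)}$, so $\fS/\fS_{(A)}\cong\Omega^{[n]}$. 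Conversely $\Omega^{[n]}$ is visibly $\sE$-smooth.

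For~(b) I would fix the base point $(1,\dots,n)\in\Omega^{[n]}$ and consider where an $\fS$-equivariant map $\phi$ sends it, say to $(b_1,\dots,b_m)$. Equivariance forces the stabilizer containment $\fS_{(\{1,\dots,n\})}\subseteq\fS_{(\{b_1,\dots,b_m\})}$, which is only possible when $\{b_1,\dots,b_m\}\subseteq\{1,\dots,n\}$ (otherwise a transposition moving some $b_j$ but fixing $1,\dots,n$ gives a contradiction). Transporting the base point to an arbitrary tuple by an element of $\fS$ and using equivariance then pins $\phi$ down as the projection onto the coordinates indexed by the $b_j$, and $m\le n$ is forced by distinctness of the $b_j$. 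Part~(c) then follows by induction on $n-m$: using~(b) we may assume $\phi$ is a coordinate projection; if $n=m$ it is an isomorphism and the empty composition works; if $n>m$, pick a coordinate $c$ not retained by $\phi$, note that the map $\Omega^{[n]}\to\Omega^{[n-1]}$ deleting coordinate $c$ is isomorphic to $p_n$ (conjugate it by coordinate permutations, which are exactly the automorphisms of the $\Omega^{[k]}$ by the $n=m$ case of~(b)), and factor $\phi$ through it as a coordinate projection $\Omega^{[n-1]}\to\Omega^{[m]}$ to which the inductive hypothesis applies.

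Part~(d) is where the real content sits. Writing a transitive $\fS$-set as $\fS/H$ with $H$ open, I would consider the family $\cF$ of finite $A\subseteq\Omega$ with $\fS_{(A)}\subseteq H$; it is nonempty, and the key point is that it is closed under intersection. I expect this to be the main obstacle: it rests on the identity $\langle\fS_{(A)},\fS_{(B)}\rangle=\fS_{(A\cap B)}$, which holds because $\Omega$ is infinite. Concretely, for $x\in A\setminus B$, $y\in B\setminus A$ and any $z\notin A\cup B$, the transposition $(x\,y)$ is the conjugate of $(z\,y)\in\fS_{(A)}$ by $(x\,z)\in\fS_{(B)}$, so every transposition of $\Omega\setminus(A\cap B)$ lies in the subgroup generated, forcing $\fS_{(A\cap B)}$ into it. Granting this, all elements of $\cF$ being finite, $\cF$ has a least element $A_0$, of size $n$ say. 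For $g\in H$ we have $\fS_{(gA_0)}=g\fS_{(A_0)}g^{-1}\subseteq H$, so $gA_0\in\cF$, whence $A_0\subseteq A_0\cap gA_0$; thus $A_0\subseteq gA_0$, and $gA_0=A_0$ by cardinality. Hence $H\subseteq\fS_{\{A_0\}}=N_{\fS}(\fS_{(A_0)})$, the quotient $\fS_{\{A_0\}}/\fS_{(A_0)}$ is naturally identified with $\fS_n$, and $\Gamma:=H/\fS_{(A_0)}$ is a subgroup of $\fS_n$ with $\fS/H\cong(\fS/\fS_{(A_0)})/\Gamma\cong\Omega^{[n]}/\Gamma$. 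Apart from the transposition computation, the whole argument is bookkeeping with stabilizers and normalizers.
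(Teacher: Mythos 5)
Your argument is correct in substance and, for parts (a)--(c), follows the same route the paper takes (the paper merely asserts these: orbits on $\Omega^n$ are classified by coincidence patterns of coordinates, (b) is checked directly on a base point, and (c) follows from (b) by peeling off one coordinate at a time). The real difference is in (d): the paper simply cites the classification of open subgroups of $\fS$ from an earlier paper, whereas you reprove that classification from scratch via the family $\cF$ of finite sets $A$ with $\fS_{(A)}\subseteq H$, its closure under intersection, and the resulting minimum $A_0$ squeezed between $\fS_{(A_0)}$ and its setwise stabilizer. This is exactly the standard proof of the cited result, so your version buys self-containedness at the cost of redoing known work.

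One step in (d) is stated too quickly. You conclude that $\fS_{(A\cap B)}\subseteq\langle\fS_{(A)},\fS_{(B)}\rangle$ because the generated subgroup contains every transposition supported off $A\cap B$. But $\fS_{(A\cap B)}$ is the \emph{full} symmetric group on the infinite set $\Omega\setminus(A\cap B)$, and transpositions generate only its finitary part, so ``forcing $\fS_{(A\cap B)}$ into it'' does not follow as written. The repair is short: given $\sigma\in\fS_{(A\cap B)}$, note $\sigma$ maps $A\setminus B$ into $\Omega\setminus(A\cap B)$, so one can choose a finitary permutation $\pi$ supported off $A\cap B$ with $\pi(\sigma(a))=a$ for all $a\in A\setminus B$; then $\pi\sigma\in\fS_{(A)}$, and $\pi$ is a product of transpositions supported off $A\cap B$, each of which your case analysis already places in $\langle\fS_{(A)},\fS_{(B)}\rangle$. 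With that patch the argument is complete. (A cosmetic point: in (c) with $n=m$ the map is a coordinate permutation rather than the identity, so the ``empty composition'' only works if one reads the factorization up to isomorphism, as the statement clearly intends.)
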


\begin{proof}
The $\fS$-orbits on $\Omega^n$ are simply characterized by which coordinates are equal, and (a) follows from this. Statement (b) is easy to see directly, and (c) follows from (b). Statement (d) follows from the classification of open subgroups of $\fS$ given in \cite[Proposition~14.1]{repst}.
\end{proof}

\subsection{The mapping property for $\fS$}

Let $\cS$ be a lextensive category. We say that an object $X$ of $\cS$ is \defn{$\Delta$-complemented} if the diagonal $\Delta_X \to X \times X$ admits a complementary subobject. Suppose $X$ has this property. Write $X^{[2]}$ for the unique complement of $\Delta_X$. For $n \ge 3$, we define $X^{[n]} \subset X^n$ to be the intersection of $p^{-1}(X^{[2]})$ as $p \colon X^n \to X^2$ varies over all projection maps. We also put $X^{[1]}=X$ and $X^{[0]}=\bone$. It is not difficult to see that $X^n$ decomposes into a coproduct of objects that are isomorphic to $X^{[m]}$ for various $m$. See \S \ref{ss:ord-power} for a detailed proof of a related (and more complicated) claim. We say that $X$ is \defn{finite-like} if $X^{[n]}=\bzero$ for some $n$, and \defn{infinite-like} otherwise.

\begin{example}
Suppose $\cS$ is the category of sets. Then any object $X$ is $\Delta$-complemented. The object $X^{[n]}$ is the subset of $X^n$ where the coordinates are distinct. The object $X$ is finite-like if and only if $X$ is a finite set.
\end{example}

We now give a mapping property for the category $\bS(\fS, \sE)$. Let $\cS^{\Delta}$ be the full subcategory of $\cS$ spanned by the $\Delta$-complemented objects.

\begin{proposition} \label{prop:sym-map}
The functor
\begin{displaymath}
i \colon \LEx^{\oplus}(\bS(\fS, \sE), \cS) \to \cS^{\Delta}, \qquad \Psi \mapsto \Psi(\Omega)
\end{displaymath}
is an equivalence of categories. Moreover, a functor $\Psi$ is faithful if and only if the object $\Psi(\Omega)$ is infinite-like.
\end{proposition}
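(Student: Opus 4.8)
The plan is to check that $i$ is well defined, essentially surjective, and fully faithful, and then to read off the faithfulness criterion. For any left-exact additive $\Psi$, the diagonal $\Delta_{\Omega} \hookrightarrow \Omega \times \Omega$ is an equalizer and $\Omega \times \Omega = \Delta_{\Omega} \amalg \Omega^{[2]}$ in $\bS(\fS, \sE)$, so $\Psi$ carries this to a decomposition $\Psi(\Omega) \times \Psi(\Omega) = \Delta_{\Psi(\Omega)} \amalg \Psi(\Omega^{[2]})$; by uniqueness of complements, $\Psi(\Omega)$ lies in $\cS^{\Delta}$ and $\Psi(\Omega^{[2]}) = \Psi(\Omega)^{[2]}$. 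Since $X^{[n]} = \bigcap_{p} p^{-1}(X^{[2]})$ with $p$ ranging over the $2$-coordinate projections $X^{n} \to X^{2}$, and $\Psi$ preserves these pullbacks and finite intersections, the same reasoning gives a natural isomorphism $\Psi(\Omega^{[n]}) \cong \Psi(\Omega)^{[n]}$ for all $n \geq 0$. In particular $i$ is well defined.

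For essential surjectivity, fix $X \in \cS^{\Delta}$ and build $\Psi$ using Proposition~\ref{prop:sym-sets}: every object of $\bS(\fS, \sE)$ is a finite coproduct of transitive objects, each isomorphic to some $\Omega^{[n]}$, and $\Hom(\Omega^{[n]}, \Omega^{[m]})$ consists precisely of the coordinate projections, i.e.\ of the injections $[m] \hookrightarrow [n]$. I set $\Psi(\Omega^{[n]}) = X^{[n]}$, send an injection $\phi \colon [m] \hookrightarrow [n]$ to the coordinate map $(x_{1}, \dots, x_{n}) \mapsto (x_{\phi(1)}, \dots, x_{\phi(m)})$ — which lands in $X^{[m]}$ because composing it with any $2$-coordinate projection $X^{m} \to X^{2}$ gives a $2$-coordinate projection of $X^{n}$, hence factors through $X^{[2]}$ on $X^{[n]}$ — and extend $\Psi$ to all of $\bS(\fS, \sE)$ by additivity. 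Functoriality follows from Proposition~\ref{prop:sym-sets}(c), and $\Psi(\Omega^{[0]}) = X^{[0]} = \bone$, so $\Psi$ preserves the terminal object. The point requiring work is that $\Psi$ preserves products and fiber products: one checks that the orbit decomposition of $\Omega^{[n]} \times \Omega^{[m]}$, indexed by the ways of identifying coordinates across the two factors, is carried by $\Psi$ onto the decomposition of $X^{[n]} \times X^{[m]} \subseteq X^{n+m}$ into the pieces $X^{[\ell]}$ coming from the decomposition of $X^{n+m}$ into copies of the $X^{[\bullet]}$ (\S\ref{ss:ord-power}), after which a diagram chase on coordinate maps identifies $\Psi$ of the product, resp.\ fiber product, with the product, resp.\ fiber product, of the values. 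Hence $\Psi \in \LEx^{\oplus}(\bS(\fS, \sE), \cS)$ with $\Psi(\Omega) = X$.

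For full faithfulness, let $\Psi, \Psi'$ be two such functors and put $X = \Psi(\Omega)$, $X' = \Psi'(\Omega)$. Naturality of a transformation $\eta \colon \Psi \Rightarrow \Psi'$ along the coordinate projections $\Omega^{[n]} \to \Omega$ forces $\eta_{\Omega^{[n]}}$ to be the unique lift of $\Psi(\Omega^{[n]}) \hookrightarrow X^{n} \to (X')^{n}$ through the monomorphism $\Psi'(\Omega^{[n]}) \hookrightarrow (X')^{n}$, and on a general object $\eta$ is the coproduct of these; thus $\eta$ is determined by $\eta_{\Omega}$ and $i$ is faithful. Conversely, a morphism $f \colon X \to X'$ in $\cS^{\Delta}$ carries each $X^{[n]}$ into $(X')^{[n]}$ (reduce to $n = 2$ via $X^{[n]} = \bigcap_{p} p^{-1}(X^{[2]})$), and these maps assemble into a natural transformation $\Psi \Rightarrow \Psi'$ over $f$, naturality being checked on the generators $p_{i}$ of Proposition~\ref{prop:sym-sets}(c); so $i$ is full, and therefore an equivalence.

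Finally, for the faithfulness criterion: by Proposition~\ref{prop:Psi-faithful} the functor $\Psi$ is faithful if and only if it sends no nonzero object of $\bS(\fS, \sE)$ to $\bzero$, and by additivity together with the classification of transitive $\sE$-smooth $\fS$-sets (Proposition~\ref{prop:sym-sets}(a)) this holds exactly when $\Psi(\Omega^{[n]}) \neq \bzero$ for every $n$, i.e.\ when $\Psi(\Omega)^{[n]} \neq \bzero$ for every $n$ — which is the definition of $\Psi(\Omega)$ being infinite-like. I expect the main obstacle to be the left-exactness of the functor $\Psi$ constructed in the essential-surjectivity step: everything else is essentially formal, but preservation of binary and fiber products genuinely requires matching the orbit decomposition of $\Omega^{[n]} \times \Omega^{[m]}$ with that of $X^{[n]} \times X^{[m]}$, and hence relies on the cited structure of the objects $X^{[\bullet]}$ inside the powers $X^{\bullet}$.
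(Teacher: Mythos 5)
Your overall strategy --- construct the quasi-inverse by $X \mapsto \Psi_X$ with $\Psi_X(\Omega^{[n]}) = X^{[n]}$, verify left-exactness by matching the orbit decomposition of $\Omega^{[n]} \times \Omega^{[m]}$ with the decomposition of $X^{[n]} \times X^{[m]}$ into pieces $X^{[\ell]}$, and deduce the faithfulness criterion from Proposition~\ref{prop:Psi-faithful} --- is exactly the paper's, which only sketches these steps and defers the left-exactness verification to the detailed Delannoy analogue in \S\ref{ss:delgp-univ}. Your well-definedness, essential surjectivity, and faithfulness-of-$i$ arguments are correct and in fact more detailed than the paper's.

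The fullness step, however, rests on a false claim: an arbitrary morphism $f \colon X \to X'$ in the \emph{full} subcategory $\cS^{\Delta}$ does not carry $X^{[2]}$ into $(X')^{[2]}$. In $\cS = \Set$ take $X = \{1,2\}$, $X' = \{\ast\}$ and $f$ the unique map: then $(f \times f)(X^{[2]}) \subseteq \Delta_{X'}$ while $(X')^{[2]} = \bzero$, and indeed no natural transformation $\Psi_X \Rightarrow \Psi_{X'}$ has component $f$ at $\Omega$, since it would require a map from the nonempty set $X^{[2]}$ to $\bzero$. Your reduction to $n=2$ is fine, but the $n=2$ case itself requires the hypothesis $X^{[2]} \subseteq (f \times f)^{-1}\bigl((X')^{[2]}\bigr)$, which is the analogue of the monotonicity condition built into $\Ord(\cS)$ in Theorem~\ref{thm:comb} but is not imposed by the definition of $\cS^{\Delta}$. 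So as literally stated $i$ is faithful but not full; the statement should either restrict the morphisms of $\cS^{\Delta}$ to such ``distinctness-preserving'' maps, or be read on isomorphisms only, which is all that is used in \S\ref{ss:deligne-map}. With either fix your argument goes through unchanged; without one, the gap is in the statement as much as in your proof, but the sentence ``$f$ carries each $X^{[n]}$ into $(X')^{[n]}$'' cannot stand as written.
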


\begin{proof}
Suppose $X$ is a $\Delta$-complemented object. We define a functor $\Psi_X \colon \bS(\fS, \sE) \to \cS^{\Delta}$, as follows. We let $\Psi_X(\Omega^{[n]})=X^{[n]}$. We also define $\Psi_X(p_n)$ to be the obvious analog $X^{[n]} \to X^{[n-1]}$ of $p_n$. This determines $\Psi_X$ on the category of transitive objects, and we then extend to general objects by additivity. It is not difficult to verify that $\Psi_X$ is left-exact; a detailed proof in a similar case can be found in \S \ref{ss:delgp-univ}. We thus have a functor
\begin{displaymath}
j \colon \cS^{\Delta} \to \LEx^{\oplus}(\bS(\fS, \sE), \cS).
\end{displaymath}
It is not difficult to then verify that $i$ and $j$ are quasi-inverse; again, see \S \ref{ss:delgp-univ} for details in a related case. The statement about faithfulness follows from Propositions~\ref{prop:Psi-faithful} and~\ref{prop:gen-faithful}
\end{proof}

\begin{remark}
There does not seem to be a nice mapping property for additive left-exact functors out of the category $\bS(\fS)$. Indeed, suppose one has such a functor $\Psi \colon \bS(\fS) \to \cS$, and put $X=\Psi(\Omega)$. Recall that the transitive objects of $\bS(\fS)$ have the form $\Omega^{[n]}/\Gamma$, where $\Gamma$ is a subgroup of $\fS_n$. Since $\Psi$ is only left-exact, one cannot determine $\Psi(\Omega^{[n]}/\Gamma)$ from $X$ alone, except when $\Gamma$ is trivial. Thus one seems to need an infinite amount of data (with various relations) to describe such functors. There is a nice mapping property for additive \defn{exact} functors out of $\bS(\fS)$, though this is a very restrictive condition.
\end{remark}

\subsection{Measures}

Let $\bZ\langle x \rangle$ be the ring of integer-valued polynomials. This is the subring of $\bQ[x]$ generated (as a $\bZ$-module) by the binomial coefficients $\binom{x}{n}$. We have the following description of $\Theta$ rings.

\begin{proposition} \label{prop:sym-theta}
We have the following:
\begin{enumerate}
\item We have a ring isomorphism $\Theta(\fS) \cong \bZ\langle x \rangle$ under which $[\Omega^{(n)}]$ maps to $\binom{x}{n}$.
\item We have a ring isomorphism $\Theta(\fS, \sE) \cong \bZ[x]$ under which $[p_n]$ corresponds to $x-n+1$.
\end{enumerate}
\end{proposition}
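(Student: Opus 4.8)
The plan for both parts follows the same three steps: first, exhibit a small $\Theta$-generating set of morphisms using the structure of $\fS$-sets recorded in Proposition~\ref{prop:sym-sets}; second, read off the relations among the corresponding classes from the measure axioms (almost all coming from base change, i.e.\ axiom~(3) of Definition~\ref{defn:meas}); third, identify the resulting quotient of a free ring with $\bZ[x]$ (resp.\ $\bZ\langle x\rangle$), using the existence of ``enough'' measures --- the specialization measures $\mu_t$ for $t\in\bN$, coming from genuine finite symmetric groups --- to rule out spurious relations. Everything below runs parallel to, and is lighter than, the corresponding computation for $\GG$ in \S\ref{s:delgp}, which we would cross-reference for the bookkeeping.

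\textbf{Part (b).} By Proposition~\ref{prop:sym-sets}(a)--(c) the set $S=\{p_n:n\ge1\}$ is a $\Theta$-generating set for $(\fS,\sE)$: every transitive $\sE$-smooth $\fS$-set is some $\Omega^{[n]}$, every morphism is a composite of maps isomorphic to the $p_i$, and the disjoint-union axiom for $\Theta$-classes then assembles these into all of $\Sigma$. Hence, by Proposition~\ref{prop:theta-gen}, $\Theta(\fS,\sE)$ is generated as a ring by $x:=[p_1]=[\Omega]$ together with the $[p_n]$. I would then base change $p_n\colon\Omega^{[n]}\to\Omega^{[n-1]}$ along the coordinate projection $p_n$ itself: the fiber product splits into one orbit isomorphic to $\Omega^{[n+1]}$, on which the base change is (isomorphic to) $p_{n+1}$, and one orbit on which it is an isomorphism, so axiom~(3) gives $[p_n]=[p_{n+1}]+1$, whence $[p_n]=x-n+1$. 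Thus all $[p_n]$ already lie in $\bZ[x]$ and we get a surjection $\bZ[x]\twoheadrightarrow\Theta(\fS,\sE)$, $x\mapsto[p_1]$, carrying $x-n+1$ to $[p_n]$. For injectivity I would invoke, for each $t\in\bN$, the $k$-valued measure $\mu_t$ with $\mu_t(p_n)=t-n+1$ underlying Deligne's $\uRep(S_t)$ (equivalently the counting measure of a genuine $t$-element set, for which the axioms are an elementary check): a nonzero element of the kernel would be a polynomial vanishing at all $t\in\bN$, which is absurd.

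\textbf{Part (a).} The first step is to show $\Theta(\fS)$ is generated as a ring by $y_n:=[\Omega^{(n)}]$ ($n\ge0$, with $y_0=1$, $y_1=x$). By Proposition~\ref{prop:sym-sets}(d) every transitive $\fS$-set is $\Omega^{[n]}/\Gamma$ with $\Gamma\le\fS_n$; since $\Omega^{[n]}\to\Omega^{[n]}/\Gamma$ is a $\Gamma$-torsor, self-base-change exhibits it as $|\Gamma|$ copies of an isomorphism, so its class is $|\Gamma|$. In particular $[\Omega^{[n]}\to\Omega^{(n)}]=n!$, whence $n!\,y_n=[\Omega^{[n]}]=\prod_{i=1}^n[p_i]=x(x-1)\cdots(x-n+1)$ using part (b); base changing the natural map $\Omega^{[n]}/\Gamma\to\Omega^{(n)}$ against $\Omega^{[n]}\to\Omega^{(n)}$ gives similarly $[\Omega^{[n]}/\Gamma]=[\fS_n:\Gamma]\,y_n$, and an analogous reduction along torsor maps expresses an arbitrary morphism class through the $y_n$. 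The second step is to compute $y_my_n=[\Omega^{(m)}\times\Omega^{(n)}]$ by splitting $\Omega^{(m)}\times\Omega^{(n)}$ into orbits indexed by the intersection size $k$; the $k$-th orbit maps onto $\Omega^{(m+n-k)}$ (union of the two subsets) with constant fiber, and that fiber count $\binom{m+n-k}{k}\binom{m+n-2k}{m-k}$ is its class (base change against $\Omega^{[m+n-k]}\to\Omega^{(m+n-k)}$), giving
\begin{displaymath}
y_my_n=\sum_k\binom{m+n-k}{k}\binom{m+n-2k}{m-k}\,y_{m+n-k}.
\end{displaymath}
These are exactly the structure constants of the $\bZ$-module basis $\{\binom{x}{n}\}$ of $\bZ\langle x\rangle$, so --- $\bZ\langle x\rangle$ being $\bZ$-free on this basis --- the assignment $\binom{x}{n}\mapsto y_n$ extends to a well-defined surjective ring homomorphism $\bZ\langle x\rangle\to\Theta(\fS)$. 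For injectivity I would use, for each $t\in\bN$, the $\bZ$-valued measure $\mu_t^{\fS}$ on $\fS$ coming from the symmetric group $S_t$ on $\{1,\dots,t\}$ (evaluation of $\fS$-sets at a $t$-element set), which satisfies $\mu_t^{\fS}(\Omega^{(n)})=\binom{t}{n}$; the composite $\bZ\langle x\rangle\to\Theta(\fS)\xrightarrow{\mu_t^{\fS}}\bZ$ is evaluation at $t$, and no nonzero integer-valued polynomial vanishes at every $t\in\bN$, so the map is injective, hence an isomorphism carrying $\binom{x}{n}$ to $[\Omega^{(n)}]$.

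\textbf{Main obstacle.} Part (b) is routine once Proposition~\ref{prop:sym-sets} is in hand. In part (a) the real work is the generation claim of the first step: extracting all of $\Theta(\fS)$ --- including the classes of maps among the non-$\sE$-smooth transitive sets $\Omega^{[n]}/\Gamma$ --- from the $y_n=[\Omega^{(n)}]$ requires a careful, systematic reduction along torsor maps, and this is the part I would lean on \S\ref{s:delgp} for. A secondary input, which I would import rather than reprove, is the existence of the specialization measures $\mu_t$ and $\mu_t^{\fS}$ needed for injectivity; these come from Deligne's interpolation and from evaluation of $\fS$-sets at finite sets, respectively.
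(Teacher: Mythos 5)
The paper does not prove this proposition at all: both parts are dispatched by citing \cite[Theorem~14.4]{repst} and \cite[Proposition~14.15]{repst}, consistent with the section's stated policy of only sketching known results. So your from-scratch argument is necessarily a different route, and for part (b) it is essentially the standard one and is correct: the self-base-change of $p_n$ giving $[p_n]=[p_{n+1}]+1$ is exactly the computation the paper itself performs in the neighbouring proposition showing $p_1$ is a $\Theta$-generator, and the specialization measures $\mu_t$ for $t\in\bN$ (which do exist, by elementary counting for finite sets) kill the kernel. Your product formula $y_my_n=\sum_k\binom{m+n-k}{k}\binom{m+n-2k}{m-k}y_{m+n-k}$ and the identity $[\Omega^{[n]}/\Gamma]=[\fS_n:\Gamma]\,y_n$ are also correct, the latter because $[\Omega^{[n]}/\Gamma\to\Omega^{(n)}]$ is computed directly as an integer by base change along the $\fS_n$-torsor $\Omega^{[n]}\to\Omega^{(n)}$.

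The genuine soft spot, which you flag but do not resolve, is the generation claim in part (a). For classes of \emph{objects} your torsor trick works because the needed factorization $[\Omega^{[n]}/\Gamma\to\bone]=[\Omega^{[n]}/\Gamma\to\Omega^{(n)}]\cdot y_n$ has the unknown on the left and integers on the right. For a general morphism $f\colon\Omega^{[n]}/\Gamma\to\Omega^{[m]}/\Gamma'$ the ``analogous reduction'' produces relations of the shape $|\Gamma|\cdot[f]=(\text{known element})$, and you cannot divide: $\Theta(\fS)$ is not yet known to be torsion-free (indeed, establishing that is tantamount to the theorem). One must instead exhibit each such $[f]$ directly as a $\bZ$-combination of the $y_n$'s, e.g.\ by analyzing the fibers of $f$ as finitary sets for the (again oligomorphic) point stabilizer; this is precisely the content of \cite[Theorem~14.4]{repst} that you are leaning on. So part (b) stands on its own, while part (a) as written is an outline whose key step is deferred to the reference the paper cites anyway.
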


\begin{proof}
(a) is \cite[Theorem~14.4]{repst}, and (b) follows from \cite[Proposition~14.15]{repst}.
\end{proof}

The proposition shows that for each $t \in k$ there is a unique $k$-valued measure $\mu_t$ for $(\fS, \sE)$ satisfying $\mu_t(\Omega)=t$. When $k$ has characteristic~0, the same is true in the absolute case (i.e., without the stabilizer class), but in positive characteristic the situation is more subtle. Proposition~\ref{prop:sym-theta}(b) shows that $[p_1]$ generates $\Theta(\fS, \sE)$, which suggests that $p_1$ could be a $\Theta$-generator for $\fS$ relative to $\sE$. We now verify that this is indeed the case.

\begin{proposition} \label{prop:sym-theta}
The map $p_1$ is a $\Theta$-generator for $\fS$ relative to $\sE$.
\end{proposition}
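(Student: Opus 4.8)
The plan is to show that the $\Theta$-class $\Pi \subset \Sigma$ that is $\Theta$-generated by $\{p_1\}$ equals all of $\Sigma$, where $\Sigma$ is the class of morphisms $f\colon Y\to X$ in $\bS(\fS,\sE)$ with $X$ transitive. Recall that $\Pi$ is the smallest subclass of $\Sigma$ containing $p_1$ and closed under the four operations from \S\ref{ss:theta-gen}: (a) it contains all isomorphisms of transitive $\sE$-smooth sets; (b) it is closed under composition; (c) it is closed under base change along maps of transitive $\sE$-smooth sets; and (d) in a decomposition $f=f_1\sqcup f_2$ of the source, if two of $f,f_1,f_2$ lie in $\Pi$ then so does the third. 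We work relative to $\sE=\sE(\Omega)$ throughout, noting that suborbits of $\sE$-smooth sets are $\sE$-smooth and that $\sE$-smoothness is preserved under the base changes below (orbits on powers of $\Omega$ are $\sE$-smooth by definition of $\sE$).

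\textbf{Step 1: all structure maps $p_n$ lie in $\Pi$.} I claim $p_n\in\Pi$ for every $n\ge 1$. The case $n=1$ is the hypothesis. For $n\ge 2$, form the base change of $p_1\colon\Omega\to\bone$ along the unique map $\Omega^{[n-1]}\to\bone$; this is the projection $\pi\colon\Omega^{[n-1]}\times\Omega\to\Omega^{[n-1]}$, which lies in $\Pi$ by axiom (c). Now decompose the source into $\fS$-orbits. For each $j$ with $1\le j\le n-1$ there is an orbit consisting of the tuples $\bigl((x_1,\dots,x_{n-1}),x_j\bigr)$; it is isomorphic to $\Omega^{[n-1]}$, and $\pi$ restricts to an isomorphism on it. There is one further orbit, consisting of the tuples $\bigl((x_1,\dots,x_{n-1}),y\bigr)$ with $y\notin\{x_1,\dots,x_{n-1}\}$; it is isomorphic to $\Omega^{[n]}$, and under this identification $\pi$ restricts to $p_n$. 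Since $\pi\in\Pi$ and the restrictions to the $n-1$ ``diagonal'' orbits are isomorphisms, hence in $\Pi$ by (a), repeated application of axiom (d) (peeling off one isomorphism at a time) forces the remaining restriction $p_n$ into $\Pi$.

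\textbf{Step 2: every morphism of transitive $\sE$-smooth sets lies in $\Pi$.} Let $f\colon Z\to X$ be such a morphism. By Proposition~\ref{prop:sym-sets}(a), after composing with isomorphisms on source and target (which lie in $\Pi$ by (a)), we may assume $f$ is a morphism $\Omega^{[n]}\to\Omega^{[m]}$; by Proposition~\ref{prop:sym-sets}(c) it factors as a composite of maps each isomorphic to some $p_i$. By Step 1 together with axioms (a) and (b), $f\in\Pi$.

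\textbf{Step 3: the general case.} Let $f\colon Y\to X$ be an arbitrary morphism in $\Sigma$, so $X$ is transitive $\sE$-smooth and $Y$ is an $\sE$-smooth $\fS$-set. Write $Y=Y_1\sqcup\cdots\sqcup Y_r$ as the disjoint union of its orbits; each $Y_i$ is transitive $\sE$-smooth, so the restriction $f_i\colon Y_i\to X$ lies in $\Pi$ by Step 2. Iterating the ``two imply the third'' form of axiom (d)---building up $f_1\sqcup\cdots\sqcup f_i$ one orbit at a time---we get $f=f_1\sqcup\cdots\sqcup f_r\in\Pi$. Hence $\Pi=\Sigma$, which is precisely the statement that $p_1$ is a $\Theta$-generator for $\fS$ relative to $\sE$. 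The only point requiring care is the orbit bookkeeping in Step 1---correctly identifying that, among the orbits of $\Omega^{[n-1]}\times\Omega$, the generic one carries a copy of $p_n$ while the others carry isomorphisms---and the repeated use of (d); I expect no serious obstacle, as everything reduces to the combinatorics of $\fS$-orbits on powers of $\Omega$ recorded in Proposition~\ref{prop:sym-sets} together with the formal closure properties of $\Theta$-classes.
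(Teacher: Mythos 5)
Your proof is correct. It follows the same overall skeleton as the paper's (reduce via axioms (a), (b), (d) and Proposition~\ref{prop:sym-sets} to showing $p_n\in\Pi$ for all $n$), but the key step is genuinely different. The paper proceeds by induction on $n$: assuming $p_{n-1}\in\Pi$, it forms the fiber product of $p_{n-1}$ with itself over $\Omega^{[n-2]}$, whose source has exactly two orbits ($\Omega^{[n]}$ carrying $p_n$ and a diagonal copy of $\Omega^{[n-1]}$ carrying the identity), and then applies (d) once. You instead base change $p_1$ itself along $\Omega^{[n-1]}\to\bone$ to get $\Omega^{[n-1]}\times\Omega\to\Omega^{[n-1]}$, whose source has $n$ orbits ($n-1$ diagonal isomorphisms plus one generic orbit carrying $p_n$), and peel these off by iterating (d). Your version reaches each $p_n$ in a single base change of the generator rather than through an inductive chain, at the cost of slightly heavier orbit bookkeeping and repeated use of (d); the paper's version keeps each application of (d) to a clean two-orbit split but needs the induction hypothesis at every stage. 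Both are valid, and the orbit identifications you make (which orbits carry isomorphisms, which carries $p_n$) are correct.
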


\begin{proof}
Let $\Sigma$ be the class of all maps $f \colon Y \to X$ in $\bS(\fS, \sE)$ with $X$ transitive. Let $\Pi \subset \Sigma$ be the $\Theta$-class generated by $p_1$. We must show that $\Pi=\Sigma$. It suffices, by axiom (d), to show that $\Pi$ contains all maps of transitive $\sE$-smooth $\fS$-sets. From the description of the category $\bS(\fS, \sE)$ in Proposition~\ref{prop:sym-sets}, and the axioms of $\Theta$-classes, it thus suffices to show that $\Pi$ contains $p_n$ for each $n$.

Suppose $\Pi$ contains $p_{n-1}$ for some $n \ge 1$. Consider the fiber product
\begin{displaymath}
\xymatrix@C=3em{
X \ar[r] \ar[d]_q & \Omega^{[n-1]} \ar[d]^{p_{n-1}} \\
\Omega^{[n-1]} \ar[r]^{p_{n-1}} & \Omega^{[n-2]}. }
\end{displaymath}
We can identify $X$ with the subset of $\Omega^n$ consisting of tuples $(x_1, \ldots, x_{n-2}, y, z)$ where all pairs of coordinates are distinct except perhaps $y$ and $z$. The map $q$ forgets the $z$ coordinate. We see that $X$ decomposes into the union of two orbits: $\Omega^{[n]}$, where $y$ and $z$ are distinct, and $\Omega^{[n-1]}$, where $y=z$. Moreover, $q$ is $p_n$ on $\Omega^{[n]}$ and the identity on $\Omega^{[n-1]}$. Since $p_{n-1}$ belongs to $\Pi$, so does $q$ by axiom (c). The identity map of $\Omega^{[n-1]}$ belongs to $\Pi$ by axiom (a). Thus $p_n$ belongs to $\Pi$ by axiom (d).

Since $\Pi$ contains $p_1$, the above argument inductively shows that $\Pi$ contains each $p_n$, and so the result follows.
\end{proof}

\subsection{The universal property} \label{ss:deligne-map}

The tensor category $\fP=\uPerm(\fS, \sE, \mu_t)$ is equivalent to Deligne's interpolation category $\underline{\Rep}S_t$ from \cite{Deligne}. For a tensor category $\fT$, let $\Et_t(\fT)$ denote the full subcategory of $\Et(\fT)$ spanned by algebras $A$ such that $\dim(A)=t$. We are now ready to give the universal property of $\fP$, as in \cite[Proposition~8.3]{Deligne}.

\begin{proposition}
The functor
\begin{displaymath}
i \colon \Fun^{\otimes}(\fP, \fT) \to \Et_t(\fT)_{\isom}, \qquad
\Phi \mapsto \Phi(\cC(\Omega))
\end{displaymath}
is an equivalence. Moreover, a tensor functor $\Phi$ is faithful if and only if the corresponding algebra $A=\Phi(\cC(\Omega))$ is infinite-like.
\end{proposition}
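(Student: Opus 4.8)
The plan is to combine the general mapping property (Theorem~\ref{thm:genmap}) with the purely combinatorial universal property for $\bS(\fS,\sE)$ (Proposition~\ref{prop:sym-map}), translating the category $\Et(\fT)^{\op}$-valued side of the combinatorial statement into the language of \'etale algebras of dimension $t$. First I would recall that, by Proposition~\ref{prop:etale-cat}, the category $\cS=\Et(\fT)^{\op}$ is lextensive, so Proposition~\ref{prop:sym-map} applies and says that additive left-exact functors $\Psi\colon\bS(\fS,\sE)\to\Et(\fT)^{\op}$ correspond (via $\Psi\mapsto\Psi(\Omega)$) to $\Delta$-complemented objects of $\Et(\fT)^{\op}$. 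But every \'etale algebra $A$ in $\fT$ gives a $\Delta$-complemented object of $\Et(\fT)^{\op}$: the diagonal $A\otimes A\to A$ (which is the codiagonal $\Delta\colon A^{[2]}?\to\dots$ in the opposite category) splits because, as recalled in \S\ref{ss:etale}, the multiplication $m_A\colon A\otimes A\to A$ has a canonical splitting $s$ as $A\otimes A$-modules, the corresponding idempotent being $\sigma_A$; thus $A\otimes A\cong A\oplus (1-\sigma_A)(A\otimes A)$ as algebras over $A\otimes A$ — wait, more precisely the complement of the diagonal subobject is the algebra $(A\otimes A)[\sigma_A^{-1}]$-type factor, concretely $(1-\sigma_A)(A\otimes A)$. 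So \emph{every} object of $\Et(\fT)^{\op}$ is $\Delta$-complemented, i.e.\ $(\Et(\fT)^{\op})^{\Delta}=\Et(\fT)^{\op}$, and Proposition~\ref{prop:sym-map} already gives an equivalence between additive left-exact functors $\bS(\fS,\sE)\to\Et(\fT)^{\op}$ and \'etale algebras in $\fT$, sending $\Psi$ to $\Psi(\Omega)$.

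Next I would invoke Theorem~\ref{thm:genmap}: tensor functors $\Phi\colon\fP\to\fT$ correspond (fully faithfully, with image the compatible-with-$\mu_t$, additive, left-exact functors) to such $\Psi$'s, with $\Psi(X)=\Phi(\cC(X))$. So it remains to identify, under the equivalence of the previous paragraph, which \'etale algebras $A=\Psi(\Omega)$ arise from $\mu_t$-compatible $\Psi$. Here I would use the criterion in \S\ref{ss:crit}: since $p_1$ is a $\Theta$-generator for $\fS$ relative to $\sE$ (Proposition~\ref{prop:sym-theta}), Corollary~\ref{cor:compatible} says $\Psi$ is compatible with $\mu_t$ as soon as $\Psi(p_1)$ is uniform with $\gamma(\Psi(p_1))=\mu_t(p_1)=t$ (using $\mu_t(\Omega)=t$ and that under $\Theta(\fS,\sE)\cong\bZ[x]$ we have $[p_1]\mapsto x$). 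Now $p_1\colon\Omega\to\bone$ in $\bS(\fS,\sE)$, so $\Psi(p_1)$ is the structure map $\bbone=\Psi(\bone)\to\Psi(\Omega)=A$ in $\Et(\fT)$, i.e.\ the unit $\eta_A$; and by \S\ref{ss:gamma}(c), $\tilde\gamma(\eta_A)=\dim(A)$. Hence $\Psi(p_1)$ is uniform with $\gamma=t$ if and only if $\dim(A)=t$, i.e.\ $A\in\Et_t(\fT)$. (One should also note that the ``either $\Psi(X)=0$'' branch of Definition~\ref{defn:compatible} cannot help make a non-dimension-$t$ algebra compatible: $\Psi(\Omega)=0$ would force $\dim A=0$ and one checks directly $t=0$ in that case, or more cleanly the two measures $\mu_t$ and $\gamma\circ\Psi$ must agree on all of $\Sigma$ by Corollary~\ref{cor:theta-gen} once they agree on $S=\{p_1\}$, which already forces $\dim A = t$.) Combining, $i$ is an equivalence onto $\Et_t(\fT)_{\isom}$.

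For the faithfulness clause, I would argue: by Proposition~\ref{prop:gen-faithful}, $\Phi$ is faithful iff the associated $\Psi\colon\bS(\fS,\sE)\to\Et(\fT)^{\op}$ is faithful (the same proof works in the relative case — this is the relative version of Proposition~\ref{prop:gen-faithful}, noted in the remark after Theorem~\ref{thm:genmap}). By the faithfulness statement in Proposition~\ref{prop:sym-map}, $\Psi$ is faithful iff $\Psi(\Omega)=A$ is infinite-like as an object of $\Et(\fT)^{\op}$. Tracing through the definition of ``finite-like'', $A^{[n]}$ in $\Et(\fT)^{\op}$ is the algebra-theoretic analogue of the $n$-th ``configuration'' factor — concretely, under $\cC(\Omega^{[n]})$ it is the image algebra of $A^{\otimes n}$ under the idempotent cutting out ``pairwise distinct coordinates'', built from the $1-\sigma$ idempotents — so $A$ is infinite-like exactly when none of these vanish. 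This is exactly the standard ``infinite-like'' condition on an \'etale algebra, matching the classical statement that $\Phi$ is faithful iff $A$ has no ``finiteness'' relations. The main obstacle I anticipate is the bookkeeping in the second paragraph: carefully checking that the $\Delta$-complement of the diagonal in $\Et(\fT)^{\op}$ really is the factor algebra $(1-\sigma_A)(A\otimes A)$, and that the functor $\Psi_A$ built by Proposition~\ref{prop:sym-map} from this datum is the same as the one coming from a tensor functor $\Phi$ — i.e.\ matching $\Psi_A(\Omega^{[n]})=A^{[n]}$ with $\Phi(\cC(\Omega^{[n]}))$ and $\Psi_A(p_n)$ with $\Phi(p_n^*)$. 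This is genuinely routine given \S\ref{ss:uperm} (where $\cC(\Omega^{[n]})$ and the maps $p_n^*$ are described) together with the explicit description of $\sigma_A$, but it is the place where all the identifications must be pinned down; everything else is a direct citation of the already-established machinery.
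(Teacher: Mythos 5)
Your overall route is the same as the paper's: combine Proposition~\ref{prop:sym-map} (applied to $\cS=\Et(\fT)^{\op}$, every object of which is $\Delta$-complemented because that category has complements by Proposition~\ref{prop:etale-cat}) with Theorem~\ref{thm:genmap}, identify compatibility with $\mu_t$ with the condition $\dim(A)=t$ via the $\Theta$-generator $p_1$, and deduce the faithfulness clause from Propositions~\ref{prop:Psi-faithful} and~\ref{prop:gen-faithful}. That is exactly the paper's argument.

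There is, however, one genuine gap in the middle step. Corollary~\ref{cor:compatible} is stated under the standing hypothesis of \S\ref{ss:crit} that $\Psi$ is \emph{faithful}; this is needed because uniformity of a map of \'etale algebras requires the source algebra to be non-zero, so $\gamma\circ\Psi$ is only a measure (Proposition~\ref{prop:gamma-meas1}) when $\Psi(X)\neq 0$ for every transitive $X$. By Propositions~\ref{prop:Psi-faithful} and~\ref{prop:sym-map}, faithfulness of $\Psi$ is exactly the condition that $A$ be infinite-like. So when $A$ is finite-like (say $A^{[n]}=0$ for some $n\ge 2$ but $A\ne 0$) you cannot invoke Corollary~\ref{cor:compatible}, and your parenthetical fallback does not repair this: it only treats the case $\Psi(\Omega)=0$, and the ``more cleanly'' alternative via Corollary~\ref{cor:theta-gen} again presupposes that $\gamma\circ\Psi$ is a measure on all of $\Sigma$, which is precisely what fails. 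For finite-like $A$ one must verify Definition~\ref{defn:compatible} directly on the maps $p_m\colon\Omega^{[m]}\to\Omega^{[m-1]}$ with $A^{[m-1]}\ne 0$, e.g.\ by first showing that a finite-like \'etale algebra decomposes suitably (in the spirit of Proposition~\ref{prop:fin-ord}) so that $\gamma(\Psi(p_m))=t-m+1$ can be checked by hand. The paper explicitly flags this (``if $A$ is finite-like an additional argument is required'') without supplying it; your write-up instead asserts that the general case follows from the corollary, which is not correct as stated.
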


\begin{proof}
By Proposition~\ref{prop:sym-map}, giving an additive left-exact functor $\Psi \colon \bS(\fS, \sE) \to \Et(\fT)^{\op}$ amounts to giving an \'etale algebra $A$ in $\fT$; note that $\Et(\fT)^{\op}$ has complements (Proposition~\ref{prop:etale-cat}). The functor $\Psi$ is compatible with $\mu_t$ if and only if $\dim{A}=t$. If $A$ is infinite-like then this follows from Corollary~\ref{cor:compatible} and Proposition~\ref{prop:sym-theta}, while if $A$ is finite-like an additional argument is required. The fact that $i$ is an equivalence now follows from Theorem~\ref{thm:genmap}. The claim about faithfulness follows from Propositions~\ref{prop:Psi-faithful} and~\ref{prop:gen-faithful}.
\end{proof}

%\subsection{Knop's category}

%the following universal property for these categories: tensor functors from $\uRep(\GL_t(\bF_q))$ to a tensor category $\fT$ correspond to $\mathbb{F}_q$-linear Frobenius spaces in $\fT$ of dimension $t$ (\cite{EntovaAizenbudHeidersdorf}, Theorem 1.1.1). An $\mathbb{F}_q$-linear Frobenius space is an \'etale algebra with additional structure encoding the axioms of an $\mathbb{F}_q$-vector space, in much the same way that an ordered \'etale algebra encodes the axioms of a total order.  The full definition is somewhat involved, since the axioms for a vector space are more involved than those for a total order, so we not go into more detail here.

\begin{remark}
\begin{enumerate}
\item Knop defined categories $\uRep(\GL_t(\bF_q))$ interpolating the representation theory of finite general linear groups, see \cite{Knop1, Knop2}. These categories were further studied by Entova-Aizenbud and Heidersdorf, who proved a universal property in \cite{EntovaAizenbudHeidersdorf} in the sense that tensor functors from $\uRep(\GL_t(\bF_q))$ correspond to \'etale algebras with extra structure. Let $\bV=\bigcup_{n \ge 1} \bF_q^n$, let $G=\GL(\bV)$, and let $\sE=\sE(\bV)$ be the stabilizer class in $G$ defined by $\bV$. Then $\uRep(\GL_t(\bF_q))$ is the category $\uPerm(G, \sE, \mu_t)^{\rm kar}$ for an appropriate measure $\mu_t$. The universal property can, in principal, be recovered from Theorem~\ref{thm:genmap}, similar to the symmetric group case.
\item In \cite{interp}, tensor categories are attached to other infinite rank classical groups using the oligomorphic theory. One should be able to give universal properties for these categories using the approach suggested above.
\end{enumerate}

\end{remark}

\section{Universal property of the Delannoy group} \label{s:delgp}

In this section, we establish a universal property for the Delannoy group $\GG$, or, more precisely, the category $\bS(\GG)$. It states that additive left-exact functors $\bS(\GG) \to \cS$, with $\cS$ lextensive, correspond to (totally) ordered objects in $\cS$. Most of the work in this section is devoted to developing the theory of ordered objects in lextensive categories. In \S \ref{ss:ordered}--\S \ref{ss:order-functor} we give the definitions and establish the most fundamental properties. We define the category $\bS(\GG)$ in \S \ref{ss:delgp}, and then prove its universal property in \S \ref{ss:delgp-univ}. The remainder of the section provides some additional material on ordered objects.

\subsection{Ordered objects} \label{ss:ordered}

Fix, for the duration of \S \ref{s:delgp}, a lextensive category $\cS$ (\S \ref{ss:lextensive}). Let $X$ be an object of $\cS$. A \defn{binary relation} on $X$ is a subobject $R$ of $X \times X$. The \defn{opposite relation} $R^{\op}$ is the image of $R$ under the switching map $X \times X \to X \times X$. We say that $R$ is \defn{total} if the natural map
\begin{displaymath}
R \amalg \Delta_X \amalg R^{\op} \to X \times X
\end{displaymath}
is an isomorphism. We say that $R$ is \defn{transitive} if
\begin{displaymath}
R_{12} \cap R_{23} \subset R_{13},
\end{displaymath}
where $R_{ij}$ is the inverse image of $R$ under the projection $p_{ij} \colon X^3 \to X^2$. A \defn{total order} on $X$ is a binary relation that is total and transitive. A \defn{totally ordered object} of $\cS$ is a pair $(X, R_X)$, where $X$ is an object of $\cS$ and $R_X$ is a total order on $X$. We will typically drop the word ``total'' in what follows, and just speak of ``ordered objects.'' A morphism $f \colon X \to Y$ of ordered objects is \defn{monotonic} if $R_X \subset f^{-1}(R_Y)$. We let $\Ord(\cS)$ be the category of ordered objects and monotonic morphisms.

\begin{remark}
Intuitively, $R$ is the set of ordered pairs $(x,y)$ where $x<y$. The monotonic condition thus means that $f$ is strictly ordered preserving.
\end{remark}

\subsection{Functor of points} \label{ss:ord-func}

For many purposes, the definition of ordered object given above is somewhat cumbersome. A more flexible approach is provided through the functor of points. We now explain how this works.

Let $F$ be a pre-sheaf on $\cS$, i.e., a functor $\cS^{\op} \to \Set$. A \defn{(total) order} on $F$ consists of the data of a binary relation $<$ on $F(T)$ for each object $T$ of $\cS^{\op}$ such that the following conditions hold:
\begin{enumerate}
\item The relation $<$ on $F(T)$ is transitive for all $T$.
\item If $T\not= \bzero$ then the relation $<$ on $F(T)$ is anti-symmetric, i.e., at most one of $x<y$ and $x=y$ and $y<x$ is true.
\item For any morphism $f \colon T \to T'$, the function $f^* \colon F(T') \to F(T)$ is order-preserving, that is, $x<y$ implies $f^*(x)<f^*(y)$.
\item Given $a,b \in F(T)$, there is a decomposition $T=T_1 \sqcup T_2 \sqcup T_3$ such that $a \vert_{T_1}<b \vert_{T_1}$ and $a \vert_{T_2}=b \vert_{T_2}$ and $a \vert_{T_3}>b \vert_{T_3}$. Here $a \vert_{T_i}$ means the image of $a$ under the induced map $F(T) \to F(T_i)$.
\end{enumerate}
Here is a simple example, to provide some intuition. Take $\cS$ to be the category of sets, and $F(T)$ to be the space of real-valued functions on $T$. For $a,b \in F(T)$, we define $a<b$ if $a(x)<b(x)$ for all $x \in T$. In (d), $T_1$ consists of those $x \in T$ such that $a(x)<b(x)$. While we obtain a partial order on $F(T)$ by putting $a \le b$ if $a<b$ or $a=b$, this is somewhat misleading since $a \le b$ is not equivalent to $a(x) \le b(x)$ for all $x \in T$. This is why we prefer to work with strict orders in this setting.

We make one general observation. Fix a pre-sheaf $F$ with an order $<$.

\begin{proposition} \label{prop:order-d-unique}
The decomposition in (d) is unique.
\end{proposition}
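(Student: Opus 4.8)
The plan is to show that any two decompositions of $T$ arising from (d) for a fixed pair $a, b \in F(T)$ must coincide, by exploiting anti-symmetry (condition (b)) together with the functoriality (condition (c)) of the order along the inclusion maps $T_i \hookrightarrow T$. The key observation is that conditions (b), (c) rule out two of the three relations holding simultaneously on any nonzero piece, so on each piece of a common refinement only one relation can survive.

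First I would set up notation: suppose $T = T_1 \sqcup T_2 \sqcup T_3 = T_1' \sqcup T_2' \sqcup T_3'$ are two decompositions as in (d), so that $a|_{T_1} < b|_{T_1}$, $a|_{T_2} = b|_{T_2}$, $a|_{T_3} > b|_{T_3}$, and similarly for the primed pieces. Form the nine pairwise intersections $T_{ij} = T_i \cap T_j'$; since $\cS$ is lextensive these exist, and $T = \coprod_{i,j} T_{ij}$ (extensivity lets us intersect a coproduct decomposition with another coproduct decomposition componentwise). It then suffices to show $T_{ij} = \bzero$ whenever $i \neq j$, since that forces $T_i = T_{ii} = T_i'$ for each $i$.

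Next I would fix $i \neq j$ and derive a contradiction from $T_{ij} \neq \bzero$. Restricting along the monomorphism $T_{ij} \hookrightarrow T_i$ and using condition (c) (the restriction map is order-preserving, and equalities and strict inequalities are both preserved), we get that $a|_{T_{ij}}$ and $b|_{T_{ij}}$ stand in the relation prescribed by the $i$-th piece; restricting along $T_{ij} \hookrightarrow T_j'$ gives the relation prescribed by the $j$-th primed piece. Since $i \neq j$, these two prescribed relations are distinct elements of $\{<, =, >\}$. But condition (b) applied to the object $T_{ij}$ (which is nonzero by assumption) says that at most one of $a|_{T_{ij}} < b|_{T_{ij}}$, $a|_{T_{ij}} = b|_{T_{ij}}$, $a|_{T_{ij}} > b|_{T_{ij}}$ holds — a contradiction. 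Hence $T_{ij} = \bzero$ for all $i \neq j$, and uniqueness follows.

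**The main obstacle**, such as it is, is purely bookkeeping: making sure that intersecting one finite coproduct decomposition of $T$ against another yields the expected $3 \times 3$ coproduct decomposition, which is exactly where extensivity of $\cS$ is used (the canonical map $\coprod_{i,j}(T_i \cap T_j') \to T$ is an isomorphism). One should also double-check the trivial point that condition (b) really does apply here: it is stated for nonzero objects, and we have arranged $T_{ij} \neq \bzero$, so there is no gap. No serious difficulty is expected; the proof is short once the common-refinement setup is in place.
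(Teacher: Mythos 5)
Your proof is correct and follows essentially the same route as the paper: form the nine pairwise intersections $T_i \cap T_j'$, use anti-symmetry (b) together with functoriality (c) to kill the off-diagonal pieces, and conclude $T_i = T_i'$. The only place the paper is slightly more explicit is the final step, where it invokes the fact that in an extensive category a coproduct of morphisms being an isomorphism forces each component to be an isomorphism; your ``this forces $T_i = T_{ii} = T_i'$'' is the same argument, just compressed.
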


\begin{proof}
Let $a,b \in F(T)$ be given, and suppose we have two decompositions
\begin{displaymath}
T = T_1 \sqcup T_2 \sqcup T_3 = T'_1 \sqcup T'_2 \sqcup T'_3
\end{displaymath}
as in (d). We then have
\begin{displaymath}
T = \coprod_{1 \le i,j \le 3} (T_i \cap T'_j).
\end{displaymath}
We have $a<b$ and $a>b$ on $T_1 \cap T'_3$, and so $T_1 \cap T'_3=\bzero$ by (b); more generally, $T_i \cap T'_j=\bzero$ whenever $i \ne j$. We thus find that the canonical inclusion
\begin{displaymath}
(T_1 \cap T_1') \sqcup (T_2 \cap T_2') \sqcup (T_3 \cap T_3') \to T_1 \sqcup T_2 \sqcup T_3
\end{displaymath}
is an isomorphism. In an extensive category, if $i \colon X \to X'$ and $j \colon Y \to Y'$ are morphisms such that $i \sqcup j$ is an isomorphism then $i$ and $j$ are each isomorphisms. We thus see that $T_i \cap T_i' \to T_i$ is an isomorphism for each $i$, and so $T_i \subset T_i'$. The reverse containment follows by symmetry, which completes the proof.
\end{proof}

For an object $X$ of $\cS$, let $h_X \colon \cS^{\op} \to \Set$ be the functor $\Hom_{\cS}(-, X)$. The following proposition is the main point of this discussion.

\begin{proposition}
There is a natural bijective correspondence between orders on $X$ and orders on $h_X$.
\end{proposition}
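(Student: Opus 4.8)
The plan is to write down the correspondence explicitly in both directions and then check that the two constructions are mutually inverse, with the uniqueness assertion of Proposition~\ref{prop:order-d-unique} and the universality of coproducts in $\cS$ carrying almost all of the weight.

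\emph{From $X$ to $h_X$.} Given a total order $R \subseteq X \times X$, I would declare, for $a,b \in h_X(T)=\Hom_{\cS}(T,X)$, that $a<b$ exactly when the map $(a,b)\colon T\to X\times X$ factors through the subobject $R$. Because $R$ is a subobject this is an unambiguous relation, and the four axioms come out routinely: transitivity follows from transitivity of $R$ by pulling $(a,b,c)\colon T\to X^3$ back along the $p_{ij}$; anti-symmetry follows from disjointness of the summands, i.e.\ $R\cap\Delta_X=R\cap R^{\op}=\bzero$, together with strictness of the initial object $\bzero$ in an extensive category; axiom (c) is immediate; and axiom (d) is obtained by pulling the decomposition $X\times X=R\amalg\Delta_X\amalg R^{\op}$ back along $(a,b)$, using universality of coproducts, so that $(a,b)^{-1}(R)$, $(a,b)^{-1}(\Delta_X)$, $(a,b)^{-1}(R^{\op})$ are the required three pieces.

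\emph{From $h_X$ to $X$.} Given an order $<$ on $h_X$, I apply axiom (d) to the pair of projections $p_1,p_2\in h_X(X\times X)$ to obtain the (unique, by Proposition~\ref{prop:order-d-unique}) decomposition $X\times X=S_1\amalg S_2\amalg S_3$, and set $R:=S_1$. The engine of the whole argument is the following lemma: for \emph{any} $a,b\in h_X(T)$, the axiom-(d) decomposition $T=T_1\amalg T_2\amalg T_3$ for $a,b$ satisfies $T_i=(a,b)^{-1}(S_i)$. To prove it, pull the decomposition of $X\times X$ back along $(a,b)$; on $(a,b)^{-1}(S_1)$ the elements $a,b$ restrict to $(p_1\vert_{S_1})\circ\overline{(a,b)}$ and $(p_2\vert_{S_1})\circ\overline{(a,b)}$ respectively (where $\overline{(a,b)}$ is the induced map to $S_1$), so axiom (c) shows the pulled-back decomposition satisfies the defining conditions of axiom (d) for $a,b$, and uniqueness finishes it. Two consequences are immediate: taking $(a,b)$ such that $(a,b)^{-1}(R)=T$, resp.\ observing that $T_2=T_3=\bzero$ whenever $a<b$ (by anti-symmetry, since $a<b$ also holds after restriction to $T_2$ and $T_3$), gives $a<b$ \emph{if and only if} $(a,b)$ factors through $R$; that is, the order on $h_X$ recovered from $R$ by the first construction is again $<$.

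It then remains to verify that $R$ really is a total order, and that the other round trip is also the identity. For totality one must identify $S_2$ with $\Delta_X$ and $S_3$ with $R^{\op}$. The inclusion $S_2\hookrightarrow X\times X$ equalizes $p_1$ and $p_2$, hence factors through the diagonal, so $S_2\subseteq\Delta_X$; conversely, applying the lemma to $a=b=\id_X$—whose axiom-(d) decomposition of $X$ has trivial $T_1$ and $T_3$ by anti-symmetry—shows $(\id,\id)$ factors through $S_2$, giving $\Delta_X\subseteq S_2$. Applying the lemma instead to $T=X\times X$ with $(a,b)=(p_2,p_1)$, i.e.\ along the switching map $s$, and comparing with the original decomposition identifies $S_3$ with $s^{-1}(S_1)=R^{\op}$. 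For transitivity, the lemma applied on $X^3$ to the pairs of coordinate projections identifies $R_{ij}:=p_{ij}^{-1}(R)$ with the subobject of $X^3$ where $p_i<p_j$; restricting to $R_{12}\cap R_{23}$ one has $p_1<p_2$ and $p_2<p_3$, hence $p_1<p_3$ by transitivity of $<$, whence (by the lemma again) $R_{12}\cap R_{23}\hookrightarrow X^3$ factors through $R_{13}$. Finally, starting from a total order $R$ and running both constructions returns $R$, since the axiom-(d) decomposition of $X\times X$ for the order $<_R$ relative to $(p_1,p_2)=\id$ is, by construction of $<_R$, precisely $R\amalg\Delta_X\amalg R^{\op}$; naturality in $X$ is a routine check. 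I expect the main obstacle to be not a single hard idea but the disciplined bookkeeping in the last paragraph: keeping straight which subobject of $X^n$ one is pulling back to, and invoking Proposition~\ref{prop:order-d-unique} at each point where the recovered data needs to be pinned down.
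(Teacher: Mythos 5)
Your proposal is correct and follows essentially the same route as the paper: the same definition of $<$ on $h_X$ via factoring $(a,b)$ through $R$, the same recovery of $R$ from the axiom-(d) decomposition of $X\times X$ for the two projections, and the same pullback/uniqueness reasoning for totality, transitivity, and the round trips. Your key lemma identifying $T_i=(a,b)^{-1}(S_i)$ is just a clean packaging of steps the paper treats as ``clear'' or leaves to the reader (notably the mutual-inverse check), so no substantive difference.
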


\begin{proof}
Suppose $R \subset X \times X$ is an order on $X$. Given $a,b \in h_X(T)$, we define $a<b$ if the product morphism $a \times b \colon T \to X \times X$ factors through $R$. We now check axioms for an order on $h_X$.
\begin{enumerate}
\item Suppose $a,b,c \in F(T)$ and $a<b$ and $b<c$. Consider the product morphism
\begin{displaymath}
a \times b \times c \colon T \to X \times X \times X.
\end{displaymath}
This factors through $R_{12}$ since $a<b$, and through $R_{23}$ since $b<c$, and therefore through $R_{12} \cap R_{23} \subset R_{13}$. This exactly means that $a<c$, and so $<$ is transitive.
\item Now suppose $T\not= \bzero$, and say $a,b \in F(T)$ satisfy $a<b$ and $a>b$. Then $a \times b \colon T \to X \times X$ maps into $R$ and $R^{\op}$, and therefore into $R \cap R^{\op}=\bzero$; this is a contradiction since $T\not=\bzero$ (initial objects are strict in extensive categories). The other cases (when $a=b$) are similar.
\item It is clear that pull-back morphisms are order-preserving.
\item Let $a,b \in F(T)$ be given, and consider the map $a \times b$ as above. Since $X \times X$ decomposes into $R \sqcup X \sqcup R^{\op}$, the requisite decomposition of $T$ follows from the fact that $\cS$ is extensive.
\end{enumerate}
We thus see that $<$ does indeed define an order on $h_X$.

Next, suppose we are given an order $<$ on $h_X$. Let $p,q \in h_X(X \times X)$ be the two projections. By axiom (d), we obtain a decomposition $X \times X = R \sqcup D \sqcup R'$, where $p \vert_R<q \vert_R$, $p \vert_D=q \vert_D$, and $p \vert_{R'}>q \vert_{R'}$. Clearly, $D$ contains the diagonal $\Delta_X$. We have
\begin{displaymath}
\Delta_X = (R \cap \Delta_X) \sqcup D \sqcup (R^{\op} \cap \Delta_X).
\end{displaymath}
We have $p=q$ and $p<q$ on $R \cap \Delta_X$, and so this intersection is empty; similarly for $R^{\op} \cap \Delta_X$. Thus $D=\Delta_X$. It is clear that $R'=R^{\op}$. We thus see that the relation $R$ is total. We now verify transitivity. Consider a morphism $(a,b,c) \colon T \to X^3$ that maps into $R_{12} \cap R_{23}$. We regard $a$, $b$, and $c$ as elements of $h_X(T)$. Since $(a,b,c)$ maps into $R_{12}$, it follows that $(a,b)$ maps into $R$, which exactly means $a<b$. Similarly $b<c$. Thus $a<c$ since the order on $h_X$ is transitive, which exactly means that $(a,b,c)$ means into $R_{13}$. We have thus shown that every $T$-point of $R_{12} \cap R_{23}$ factors through $R_{13}$, which means $R_{12} \cap R_{23} \subset R_{13}$, as required. We have therefore shown that $R$ is indeed a total order on $X$.

One easily sees that the two constructions are mutually inverse, which completes the proof.
\end{proof}

\subsection{Cartesian powers} \label{ss:ord-power}

Let $X$ be an ordered object of $\cS$. We can define various subobjects of $X^n$ by imposing relations between different coordinates. To systematically work with these subobjects, we introduce the following notion. An \defn{order scheme} is a finite set $S$ equipped with an equivalence relation $\sim$ and an order $<$ such that $<$ is compatible with the equivalence relation (i.e., equivalent elements have the same order), $<$ is transitive, and for any $x,y \in S$ at most one of $x<y$, $x>y$, or $x \sim y$ holds. If exactly one of these three possibilities hold, we say that the order scheme is \defn{maximal}. In other words, an order scheme is equivalent to the data of a preorder on a finite set, and it is maximal if the preorder it total.

Suppose $S$ is an order scheme. We define $X^S$ to be $X^n$ where $n=\# S$, but with the coordinates labeled by $S$. Given $x,y \in S$, we have the projection map $q_{x,y} \colon X^S \to X^2$ onto the $x$ and $y$ coordinates, with the $x$ coordinate is put first. We define $\Delta_{x,y}$ to be the inverse image of the diagonal, and $R_{x,y}$ to be the inverse image of $R$. We now define
\begin{displaymath}
X[S] = \big( \bigcap_{x \sim y} \Delta_{x,y} \big) \cap \big( \bigcap_{x<y} R_{x,y} \big).
\end{displaymath}
These are essentially all the natural subobjects of $X^S$ one can define using the order.

The most important instance of this construction comes by taking $S=[n]$ with order $1<2<\cdots<n$ and the trivial equivalence relation. In this case, we put $X^{(n)}=X[S]$. Thus, roughly speaking, $X^{(n)}$ is the subobject of $X^n$ where the coordinates are strictly increasing. If $\cS$ is the category of sets this is literally true: $X^{(n)}$ is the subset of $X^n$ consists of points $(x_1, \ldots, x_n)$ with $x_1<x_2<\cdots<x_n$.

If $S$ is any order scheme then there is an induced order scheme on $S/\hspace{-0.9mm}\sim$  that has trivial equivalence relation, and we have a natural isomorphism $X[S]=X[S/\hspace{-0.9mm}\sim]$. If $S$ is maximal then $S/\hspace{-0.9mm}\sim$ is a totally ordered set, and thus isomorphic to $[n]$ with the above order scheme where $n$ is the cardinality of $S/\hspace{-0.9mm}\sim$. We thus see that, if $S$ is maximal, then $X[S]$ is isomorphic to $X^{(n)}$ for some $n$.

Let $S$ be an order scheme. A \defn{refinement} of $S$ is an order scheme $S'$ on the same underlying set such that $x \sim_S y$ implies $x \sim_{S'} y$, and $x <_S y$ implies $x <_{S'} y$. Every order scheme has a maximal refinement, and often times many such refinements. If $S'$ is a refinement of $S$ then $X[S']$ is clearly a subobject of $X[S]$. Moreover, we have an isomorphism
\begin{displaymath}
\coprod_{S'} X[S'] \to X[S],
\end{displaymath}
where the coproduct is taken over the maximal refinements of $S$. This is easily seen using the functor of points perspective. In particular, we see that $X^n$ decomposes into a disjoint union of pieces, each of which are isomorphic to some $X^{(m)}$. (Note that $X^n=X[S]$ where $S=[n]$ has the trivial equivalence relation and trivial order, i.e., $x<y$ never holds.)

If $S$ and $S'$ are two order schemes, then the disjoint union $S \amalg S'$ carries a natural order scheme, where we introduce no relations between elements of $S$ and elements of $S'$. One easily sees that we have an isomorphism $X[S] \times X[S'] = X[S \amalg S']$. This is useful since it tells us how $X^{(n)} \times X^{(m)}$ decomposes: one considers the maximal refinements of the order scheme $[n] \amalg [m]$.

Let $S$ and $S'$ be order schemes. A \defn{strict injection} $j \colon S \to S'$ is an injective function such that the relations on $S$ are induced from those on $S'$, i.e., $x \sim y$ if and only if $j(x) \sim j(y)$, and $x<y$ if and only if $j(x)<j(y)$. Suppose we have such a map. The projection map $j^* \colon X^{S'} \to X^S$ then maps $X[S']$ into $X[S]$. In particular, if $j \colon [n] \to [m]$ is an order preserving map of finite sets, there is an induced map $X^{(m)} \to X^{(n)}$. For $1 \le i \le n$, we let $p_{n,i} \colon X^{(n)} \to X^{(n-1)}$ be the projection corresponding to the inclusion $j \colon [n-1] \to [n]$ such that $\im(j)$ does not contain $i$.

We rephrase part of the above discussion in the following manner. Let $\OI$ be the category of finite totally ordered sets with (strictly) order preserving maps. Then we have a functor $\OI^{\op} \to \cS$ that sends $[n]$ to $X^{(n)}$.

\begin{remark} \label{rmk:ord-finite}
An ordered object $X$ is necessarily $\Delta$-complemented, as $R \amalg R^{\op}$ provides a complement to $\Delta_X$ in $X \times X$. Using the functor of points perspective, it is not difficult to see that we have an isomorphism
\begin{displaymath}
X^{[n]} = \coprod_S X[S],
\end{displaymath}
where the coproduct is taken over maximal order schemes $S$ with trivial equivalence relations. Thus $X^{[n]}$ is isomorphic to $(X^{(n)})^{\amalg n!}$. In particular, we see that $X$ is finite-like if and only if $X^{(n)}=\bzero$ for some $n$.
\end{remark}

\subsection{Functors} \label{ss:order-functor}

We now examine how ordered objects behave under functors.

\begin{proposition} \label{prop:ord-func}
Let $\Psi \colon \cS' \to \cS$ be an additive left-exact functor of lextensive categories. Then $\Psi$ induces a functor
\begin{displaymath}
\Psi \colon \Ord(\cS') \to \Ord(\cS), \qquad (X, R_X) \mapsto (\Psi(X), \Psi(R_X)).
\end{displaymath}
Moreover, this functor is compatible with the order scheme constructions: we have a natural isomorphism $\Psi(X[S])=\Psi(X)[S]$, and if $i \colon S \to S'$ is a strict injection of order schemes then $\Psi(i^*)=i^*$.
\end{proposition}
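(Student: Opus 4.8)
The plan is to reduce everything to two facts: a left-exact functor preserves all finite limits (equalizers, products, pullbacks, hence monomorphisms, inverse images of subobjects, finite intersections of subobjects, and the ``$\subseteq$'' relation between subobjects of a fixed object), and an additive functor preserves finite coproducts (in particular the initial object and disjoint unions). Every ingredient in the definition of an ordered object, and in the order-scheme constructions, is assembled from these operations, so $\Psi$ will respect them automatically. Concretely, I would first record the bookkeeping: (i) $\Psi$ carries $\Sub(Y)$ to $\Sub(\Psi Y)$ and preserves containments; (ii) $\Psi$ carries $f^{-1}(A)$ to $(\Psi f)^{-1}(\Psi A)$ and finite intersections to finite intersections; (iii) $\Psi$ preserves products, so $\Psi(X\times X)$ is canonically $\Psi(X)\times\Psi(X)$, and $\Psi$ sends the switching automorphism to the switching automorphism and the diagonal $\Delta_X$ (the equalizer of the two projections) to $\Delta_{\Psi(X)}$; in particular a binary relation $R\subseteq X\times X$ goes to a binary relation $\Psi(R)\subseteq\Psi(X)\times\Psi(X)$ with $\Psi(R^{\op})=\Psi(R)^{\op}$.

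With this in hand, given an ordered object $(X,R_X)$: applying $\Psi$ to the totality isomorphism $R_X\amalg\Delta_X\amalg R_X^{\op}\xrightarrow{\sim}X\times X$, and using preservation of finite coproducts and products together with the identifications in (iii), shows that $\Psi(R_X)$ is total. Applying $\Psi$ to the transitivity inclusion $R_{12}\cap R_{23}\subseteq R_{13}$ inside $X^3$, and using (i), (ii), and $\Psi(X^3)=\Psi(X)^3$, shows that $\Psi(R_X)$ is transitive. Hence $(\Psi(X),\Psi(R_X))$ is an ordered object of $\cS$. If $f\colon X\to Y$ is monotonic, i.e.\ $R_X\subseteq f^{-1}(R_Y)$, then $\Psi(R_X)\subseteq(\Psi f)^{-1}(\Psi R_Y)$ by (i)--(ii), so $\Psi(f)$ is monotonic; functoriality (identities and composites) is inherited directly from $\Psi$.

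For the order-scheme statements, recall that $X[S]$ is the finite intersection inside $X^S=X^{\#S}$ of the subobjects $\Delta_{x,y}=q_{x,y}^{-1}(\Delta_X)$ for $x\sim y$ and $R_{x,y}=q_{x,y}^{-1}(R_X)$ for $x<y$, where $q_{x,y}\colon X^S\to X^2$ is a coordinate projection. Since $\Psi$ preserves products (so $\Psi(X^S)=\Psi(X)^S$ and each $q_{x,y}$ goes to the analogous projection), preserves inverse images, preserves $\Delta_X$ and $R_X$, and preserves finite intersections, one obtains a canonical isomorphism $\Psi(X[S])\cong\Psi(X)[S]$; unwinding the construction shows it is natural in $X$ and in $S$. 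For a strict injection $i\colon S\to S'$, the map $i^*\colon X^{S'}\to X^S$ is a coordinate projection, so $\Psi(i^*)$ is the corresponding projection $\Psi(X)^{S'}\to\Psi(X)^S$, which restricts along the identifications $\Psi(X[S'])\cong\Psi(X)[S']$ and $\Psi(X[S])\cong\Psi(X)[S]$ to the map $i^*$ for $\Psi(X)$; this is exactly the asserted compatibility, and in particular $\Psi$ intertwines the two functors $\OI^{\op}\to\cS',\cS$ given by $[n]\mapsto X^{(n)}$ and $[n]\mapsto\Psi(X)^{(n)}$.

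I do not expect a serious obstacle here: the entire argument is ``translate each defining diagram and apply $\Psi$.'' The only points that need genuine care are (a) verifying that every construction invoked -- the diagonal, the opposite relation, inverse images, finite intersections, and the inclusion relation among subobjects -- is expressible purely through finite limits or finite colimits, so that $\Psi$ preserves it on the nose; and (b) checking that the isomorphism $\Psi(X[S])\cong\Psi(X)[S]$ is genuinely natural, i.e.\ compatible with strict injections and with morphisms of ordered objects. Step (b) is the one most likely to require a careful diagram chase, though it is still routine.
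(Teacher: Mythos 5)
Your proposal is correct and is exactly the argument the paper has in mind: the paper's proof consists of the single observation that the definition of an order and the order-scheme constructions refer only to finite coproducts and finite limits, hence are preserved by any additive left-exact functor, and calls the rest a straightforward verification. You have simply carried out that verification explicitly, with no divergence in method.
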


\begin{proof}
This is a straightforward verification. The key point is that the definition of order and the order scheme constructions only refer to co-products and fiber products, and are thus compatible with additive left-exact functors.
\end{proof}

\subsection{The Delannoy group} \label{ss:delgp}

Let $\GG=\Aut(\bR, <)$ be the group of all order preserving self-bijections of the real line $\bR$. This group is oligomorphic via its action on $\bR$. We can view $\bR$ as an ordered object of the lextensive category $\bS(\GG)$. We therefore have an object $\bR^{(n)}$ for each $n \ge 0$. Explicitly, this is just the set of increasing tuples $(x_1, \ldots, x_n)$ in $\bR^n$, meaning $x_1<x_2<\cdots<x_n$. One easily sees that the action of $\GG$ on $\bR^{(n)}$ is transitive. In fact:

\begin{proposition}
The functor $\OI^{\op} \to \bS(\GG)$ given by $[n] \mapsto \bR^{(n)}$ is an equivalence onto the full subcategory of $\bS(\GG)$ spanned by transitive objects.
\end{proposition}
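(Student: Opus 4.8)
The functor $\OI^{\op}\to\bS(\GG)$, $[n]\mapsto\bR^{(n)}$, is already well defined by the general construction of \S\ref{ss:ord-power} applied to the ordered object $\bR$ of the lextensive category $\bS(\GG)$, and its values are transitive (the paper records that $\GG$ acts transitively on each $\bR^{(n)}$). So the plan is to check the remaining three conditions for it to be an equivalence onto the full subcategory of transitive objects: faithfulness, fullness, and essential surjectivity onto the transitive objects. Faithfulness is immediate: two distinct strict injections $\phi,\phi'\colon[n]\hookrightarrow[m]$ induce coordinate projections $\bR^{(m)}\to\bR^{(n)}$ that send $(1,2,\dots,m)$ to the distinct tuples $(\phi(1),\dots,\phi(n))$ and $(\phi'(1),\dots,\phi'(n))$.

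For fullness, the first step is the elementary remark that for finite subsets $A,B\subseteq\bR$ one has $G(A)=G(B)$ if and only if $A=B$: if $b\in B\setminus A$ lies in the open interval $I$ of $\bR\setminus A$ containing it, then homogeneity of $\bR$ inside $I$ gives an element of $G(A)$ moving $b$. Next, given a morphism $f\colon\bR^{(m)}\to\bR^{(n)}$ in $\bS(\GG)$, I would analyze its graph $\Gamma_f\subseteq\bR^{(m)}\times\bR^{(n)}$: it is the image of the monomorphism $(\id,f)$, hence $\GG$-isomorphic to the transitive object $\bR^{(m)}$, hence a single $\GG$-orbit. Picking $(\bx,\by)\in\Gamma_f$, so $\by=f(\bx)$, and writing $A=\{x_1,\dots,x_m\}$, $B=\{y_1,\dots,y_n\}$, the stabilizer of $(\bx,\by)$ is $G(A)\cap G(B)=G(A\cup B)$, using that an order preserving bijection fixing a finite set pointwise preserves it setwise. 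Since the first projection $\Gamma_f\to\bR^{(m)}$ is a bijective map of transitive $\GG$-sets it identifies point stabilizers, so $G(A\cup B)=G(A)$, whence $B\subseteq A$ by the previous remark. Thus each $y_j$ equals some $x_{\phi(j)}$ with $\phi\colon[n]\hookrightarrow[m]$ strictly increasing, and $\GG$-equivariance forces $f$ to be the coordinate projection attached to $\phi$; so $f$ lies in the image of the functor.

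For essential surjectivity, a transitive object of $\bS(\GG)$ is $\GG/U$ with $U$ an open subgroup, and $U\supseteq G(A)$ for some finite $A=\{a_1<\cdots<a_n\}$; identifying $\GG/G(A)$ with $\bR^{(n)}$ yields a $\GG$-equivariant surjection $\bR^{(n)}\tto\GG/U$. It therefore suffices to prove that any quotient of $\bR^{(n)}$ in $\bS(\GG)$ is $\GG$-isomorphic to some $\bR^{(m)}$, i.e.\ that for every $\GG$-invariant equivalence relation $E\subseteq\bR^{(n)}\times\bR^{(n)}$ one has $\bR^{(n)}/E\cong\bR^{(m)}$. Such an $E$ is a union of $\GG$-orbits on $\bR^{(n)}\times\bR^{(n)}$, which (as in the order-scheme discussion of \S\ref{ss:ord-power}) are classified by the interleaving patterns of two increasing $n$-tuples with coincidences allowed, and $E$ contains the diagonal. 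Put $S=\{\,i\in[n] : x_i=y_i\text{ for every }(\bx,\by)\in E\,\}$ and let $\sim_S$ be the relation ``$x_i=y_i$ for all $i\in S$'', so that $E\subseteq{\sim_S}$ tautologically; the claim is $E={\sim_S}$. Granting this, the map $\bR^{(n)}\to\bR^{(|S|)}$ forgetting the coordinates not in $S$ has fibres exactly the $\sim_S$-classes, hence induces a $\GG$-equivariant bijection $\bR^{(n)}/E\xrightarrow{\ \sim\ }\bR^{(|S|)}$, which is the required isomorphism.

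The hard part is the claim ${\sim_S}\subseteq E$, which is equivalent to the assertion that the open subgroups of $\GG$ containing $G(A)$ are precisely the $G(B)$ with $B\subseteq A$; this is the one place where the special geometry of $\bR$ really enters, and I would prove it by a back-and-forth argument. For each $i\notin S$ there is, by definition of $S$, a pair $(\bu,\bv)\in E$ with $u_i\neq v_i$, so $v_i$ lies in one of the open gaps cut out by the coordinates of $\bu$; acting on this pair by $\mathrm{Stab}_\GG(\bu)$ and combining with the symmetry and transitivity of $E$ produces pairs in $E$ that agree in every coordinate except the $i$-th and move that coordinate to an arbitrary point of its gap. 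Splicing together finitely many such elementary moves, each changing one coordinate with index outside $S$, connects any two $\sim_S$-equivalent tuples inside $E$, giving ${\sim_S}\subseteq E$. I expect this back-and-forth bookkeeping to be the main obstacle; everything else is formal manipulation with transitive $G$-sets and their stabilizers.
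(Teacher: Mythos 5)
Your treatment of morphisms is correct and amounts to the same computation as the paper's: the paper identifies $\Hom(\bR^{(n)},\bR^{(m)})$ with the $H_n$-fixed points of $\bR^{(m)}$ (where $H_n=G(\{1,\dots,n\})$) and observes these are exactly the tuples with entries in $\{1,\dots,n\}$; your graph-and-stabilizer argument, resting on the observation that $G(A\cup B)=G(A)$ forces $B\subseteq A$, proves the same fact. The divergence is on essential surjectivity: the paper simply cites \cite[Corollary~16.2]{repst} for the statement that every transitive $\GG$-set is some $\bR^{(n)}$, whereas you attempt to reprove this classification from scratch, and that is where your argument has a genuine gap.

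Concretely, the unproven claim is ${\sim_S}\subseteq E$, and your proposed elementary move does not work as stated. Given $(\bu,\bv)\in E$ with $u_i\neq v_i$, acting by $\mathrm{Stab}_\GG(\bu)=G(\{u_1,\dots,u_n\})$ and composing with $(\bv,\bu)\in E$ only lets you move those coordinates of $\bv$ that lie in the \emph{open} gaps cut out by the coordinates of $\bu$; but $v_i$ may well coincide with some $u_j$ for $j\neq i$ (e.g.\ $\bu=(1,2)$, $\bv=(2,3)$, $i=1$), in which case this move fixes $v_i$ and produces nothing. Moreover, even when $v_i$ does lie in an open gap, $\bv$ may differ from $\bu$ in several coordinates at once, so isolating a pair in $E$ that differs \emph{only} in the $i$-th coordinate requires a further argument (iterating between $\mathrm{Stab}(\bu)$ and $\mathrm{Stab}(\bv)$, or an induction on the number of disagreeing coordinates), which you acknowledge but do not supply. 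Since the claim you are after is exactly equivalent to the classification of open subgroups of $\GG$ above a point stabilizer --- the content of the cited Corollary~16.2 of \cite{repst} --- the cleanest fix is either to invoke that reference, as the paper does, or to complete the back-and-forth carefully; as written, the essential surjectivity step is incomplete.
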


\begin{proof}
Every transitive $\GG$-set is isomorphic to $\bR^{(n)}$ for some $n$ by \cite[Corollary~16.2]{repst}. Let $x_n=(1, \ldots, n) \in \bR^{(n)}$, and let $H_n \subset \GG$ be the stabilizer of $x_n$. Then $\bR^{(n)} \cong \GG/H_n$. Thus giving a map $\bR^{(n)} \to \bR^{(m)}$ is equivalent to giving an $H_n$-fixed point on $\bR^{(m)}$. One easily sees that the fixed points are exactly those points $(y_1, \ldots, y_m)$ in $\bR^{(m)}$ with $\{y_1, \ldots, y_m\} \subset [n]$. This shows that every map $\bR^{(n)} \to \bR^{(m)}$ comes from a unique order preserving injection $[m] \to [n]$, which completes the proof.
\end{proof}

\subsection{The universal property} \label{ss:delgp-univ}

We now come to the main result of \S \ref{s:delgp}:

\begin{theorem} \label{thm:comb}
The functor
\begin{displaymath}
i \colon \LEx^{\oplus}(\bS(\GG), \cS) \to \Ord(\cS), \qquad \Psi \mapsto \Psi(\bR).
\end{displaymath}
is an equivalence of categories.
\end{theorem}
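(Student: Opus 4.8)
The plan is to build an explicit quasi-inverse $j\colon \Ord(\cS) \to \LEx^{\oplus}(\bS(\GG),\cS)$ and check that $i$ and $j$ are mutually inverse up to natural isomorphism. First I would note that $i$ is well-defined: if $\Psi$ is additive and left-exact, then writing $R=\bR^{(2)}\subset \bR\times\bR$ for the strict-order relation on $\bR$ (viewed as an ordered object of $\bS(\GG)$), Proposition~\ref{prop:ord-func} tells us $(\Psi(\bR),\Psi(R))$ is an ordered object of $\cS$; and evaluating a natural transformation $\Psi\to\Psi'$ on the inclusion $R\hookrightarrow\bR\times\bR$ shows its component at $\bR$ is monotonic.

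To construct $j$, given an ordered object $X$ of $\cS$ I would define $\Psi_X$ first on the full subcategory of transitive objects of $\bS(\GG)$, which is $\OI^{\op}$ via $\bR^{(n)}\leftrightarrow[n]$ (\S\ref{ss:delgp}): set $\Psi_X(\bR^{(n)})=X^{(n)}$ and send an order-preserving injection $[m]\hookrightarrow[n]$ — that is, a strict injection of order schemes — to the induced map $X^{(n)}\to X^{(m)}$ of \S\ref{ss:ord-power}. Functoriality on transitive objects is exactly the last sentence of \S\ref{ss:ord-power}. Since every object of $\bS(\GG)$ is a finite coproduct of transitive objects and a morphism out of a transitive object lands in a single transitive component of its target, this extends uniquely to an additive functor $\Psi_X\colon\bS(\GG)\to\cS$; a monotonic map $f\colon X\to Y$ induces $f^{(n)}\colon X^{(n)}\to Y^{(n)}$, hence a natural transformation $\Psi_X\to\Psi_Y$, which defines $j$ on morphisms.

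The crux is that $\Psi_X$ is left-exact; since $\cS$ is lextensive it suffices to check $\Psi_X$ preserves the terminal object, binary products, and equalizers (pullbacks, hence all finite limits, then follow). The terminal object is $\bR^{(0)}$ and $\Psi_X(\bR^{(0)})=X^{(0)}=\bone$. For products, by extensivity it is enough to treat two transitive objects: one has $\bR^{(n)}\times\bR^{(m)}=\bR[[n]\amalg[m]]=\coprod_S\bR[S]$ over maximal refinements $S$ of the order scheme $[n]\amalg[m]$, with each $\bR[S]\cong\bR^{(|S/\sim|)}$, so $\Psi_X(\bR[S])=X[S]$; summing and using $X^{(n)}\times X^{(m)}=X[[n]\amalg[m]]=\coprod_S X[S]$ (all from \S\ref{ss:ord-power}) identifies $\Psi_X(\bR^{(n)}\times\bR^{(m)})$ with $X^{(n)}\times X^{(m)}$, and the projection cone is respected because the composite $\bR[S]\hookrightarrow\bR^{(n)}\times\bR^{(m)}\to\bR^{(n)}$ is the strict injection $[n]\hookrightarrow S$. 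For equalizers, again by extensivity it is enough to treat a parallel pair $f,g\colon\bR^{(n)}\rightrightarrows\bR^{(m)}$, i.e.\ order-preserving injections $\phi,\psi\colon[m]\hookrightarrow[n]$: if $\phi=\psi$ there is nothing to check, and if $\phi\neq\psi$ the equalizer in $\bS(\GG)$ is $\bzero$ (a transitive object has no proper non-empty subobject), while picking $k$ with, say, $\phi(k)<\psi(k)$, the equalizer of $\Psi_X(f),\Psi_X(g)$ factors through $\Delta_{\phi(k),\psi(k)}\cap R_{\phi(k),\psi(k)}$ inside $X^{(n)}\subset X^{n}$, which is $\bzero$ by totality of the order on $X$. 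Hence $\Psi_X$ is left-exact and $j$ indeed takes values in $\LEx^{\oplus}(\bS(\GG),\cS)$.

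Finally I would check $i$ and $j$ are mutually inverse. On the one hand $i(\Psi_X)=(\Psi_X(\bR^{(1)}),\Psi_X(\bR^{(2)}))=(X^{(1)},X^{(2)})=(X,R_X)$ and $i(j(f))=f^{(1)}=f$, so $i\circ j=\id$. On the other hand, given additive left-exact $\Psi$ with $X=\Psi(\bR)$, Proposition~\ref{prop:ord-func} supplies natural isomorphisms $\Psi(\bR^{(n)})=\Psi(\bR[[n]])\cong\Psi(\bR)[[n]]=X^{(n)}=\Psi_X(\bR^{(n)})$ compatible with the maps induced by strict injections — and these are all the morphisms among transitive objects — so additivity extends this to a natural isomorphism $\Psi\cong\Psi_X=\Psi_{i(\Psi)}$, natural in $\Psi$. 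Thus $j\circ i\cong\id$ and $i$ is an equivalence. The only real work is the left-exactness check, and within it the only delicate point is matching the order-scheme decompositions of $\bR^{(n)}\times\bR^{(m)}$ and $X^{(n)}\times X^{(m)}$; the vanishing of equalizers of distinct parallel pairs is immediate from totality.
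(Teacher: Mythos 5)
Your proposal is correct and follows the paper's strategy: build $\Psi_X$ on transitive objects via the equivalence $\OI^{\op}\simeq\{\text{transitive }\GG\text{-sets}\}$, extend by additivity, verify left-exactness using the order-scheme decompositions of \S\ref{ss:ord-power}, and check that $i$ and $j$ are mutually inverse via Proposition~\ref{prop:ord-func}. The one place you genuinely diverge is the last step of the left-exactness check. The paper, after handling binary products, treats fiber products directly: for strict injections $S\to S'$ and $S\to S''$ it forms the amalgamated order scheme $S'\amalg_S S''$ and identifies $X[S']\times_{X[S]}X[S'']$ with $X[S'\amalg_S S'']$, then runs the same comparison argument as for products. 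You instead invoke the standard fact that preserving the terminal object, binary products, and equalizers suffices, and observe that the equalizer of a distinct parallel pair of $\OI$-morphisms between transitive objects is $\bzero$ on both sides --- in $\bS(\GG)$ because points of $\bR^{(n)}$ have distinct coordinates, and in $\cS$ because a point of the equalizer would factor through $\Delta_{\phi(k),\psi(k)}\cap R_{\phi(k),\psi(k)}=\bzero$ by totality, with strictness of the initial object finishing the argument. Your route avoids introducing $S'\amalg_S S''$ altogether and is arguably more elementary at that step; the paper's route has the mild advantage of producing, along the way, the explicit description of fiber products of the $X[S]$'s, which is reused implicitly elsewhere (e.g.\ in the universality principle of Remark~\ref{rmk:ord-univ}). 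Both are complete proofs.
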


To prove the theorem, we define a functor in the opposite direction. Let $X$ be a totally ordered object of $\cS$. We then have a functor $\OI^{\op} \to \cS$ given by $[n] \mapsto X^{(n)}$. We also have a functor $\OI^{\op} \to \bS(\GG)$ given by $[n] \mapsto \bR^{(n)}$, which is an equivalence onto the category of transitive objects. It follows that there is a unique (up to isomorphism) additive functor $\Psi_X \colon \bS(G) \to \cS$ given on transitive objects by $\Psi_X(\bR^{(n)}) = X^{(n)}$.

\begin{lemma}
The functor $\Psi_X$ is left-exact.
\end{lemma}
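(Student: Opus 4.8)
The plan is to verify left-exactness by checking that $\Psi_X$ preserves the final object and fiber products, which suffices since $\cS$ (and $\bS(\GG)$) have all finite limits generated by these together with finite products, and $\Psi_X$ is additive by construction. Preservation of the final object is immediate: the final object $\bone$ of $\bS(\GG)$ is the transitive set $\bR^{(0)}$, and $\Psi_X(\bR^{(0)}) = X^{(0)} = \bone$ by definition of the order-scheme construction.

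The heart of the matter is fiber products. By additivity it is enough to treat a cospan $\bR^{(n)} \to \bR^{(\ell)} \leftarrow \bR^{(m)}$ of transitive objects, since a general fiber product decomposes as a coproduct of such after breaking each object into its transitive pieces and using extensivity of both $\cS$ and $\bS(\GG)$. Using the equivalence $\OI^{\op} \simeq \bS(\GG)^{\mathrm{tr}}$ from \S\ref{ss:delgp}, such a cospan corresponds to order-preserving injections $j \colon [\ell] \hookrightarrow [n]$ and $j' \colon [\ell] \hookrightarrow [m]$. The first key step is therefore a purely combinatorial computation: describe the fiber product $\bR^{(n)} \times_{\bR^{(\ell)}} \bR^{(m)}$ explicitly inside $\bS(\GG)$ and decompose it into $\GG$-orbits. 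Concretely, a point of the fiber product is a pair of increasing tuples $(x_1,\dots,x_n)$ and $(y_1,\dots,y_m)$ whose images in $\bR^{(\ell)}$ agree, i.e.\ $x_{j(k)} = y_{j'(k)}$ for all $k$. The $\GG$-orbits on this set are indexed by the ways of interleaving the ``free'' coordinates of the $x$-tuple with those of the $y$-tuple, subject to the constraint that the shared coordinates sit in the fixed positions — equivalently, by maximal order schemes $S$ on the pushout-type set $[n] \amalg_{[\ell]} [m]$ refining the obvious preorder. Each such orbit is isomorphic to some $\bR^{(r)}$, and the inclusion into $\bR^{(n)}$ (resp.\ $\bR^{(m)}$) is the map induced by a strict injection of order schemes. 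This is precisely the content already developed in \S\ref{ss:ord-power}: the colimit diagram there, applied to $\bR$, identifies $\bR^{(n)} \times_{\bR^{(\ell)}} \bR^{(m)}$ with $\coprod_S \bR[S]$.

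The second step is to observe that the \emph{same} combinatorial recipe computes $X^{(n)} \times_{X^{(\ell)}} X^{(m)}$ for an arbitrary ordered object $X$: this is exactly the functor-of-points argument in \S\ref{ss:ord-power}, which shows $\coprod_S X[S] \xrightarrow{\sim} X^{(n)} \times_{X^{(\ell)}} X^{(m)}$ with the coproduct over the same index set of maximal order schemes $S$, independent of $X$. Since $\Psi_X$ sends $\bR^{(n)}$ to $X^{(n)}$, sends $\bR[S]$ to $X[S]$, and sends the connecting strict-injection maps to their counterparts — here I invoke Proposition~\ref{prop:ord-func}, or rather its elementary ingredients, to match up the order-scheme data — comparing the two decompositions gives a canonical isomorphism $\Psi_X(\bR^{(n)} \times_{\bR^{(\ell)}} \bR^{(m)}) \cong \Psi_X(\bR^{(n)}) \times_{\Psi_X(\bR^{(\ell)})} \Psi_X(\bR^{(m)})$, compatible with the projection maps.

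The main obstacle is bookkeeping rather than conceptual: one must set up the indexing of orbits by order schemes carefully enough that the identification of the fiber product decomposition in $\bS(\GG)$ with the one in $\cS$ is manifestly the ``same'' combinatorial object, so that $\Psi_X$ visibly carries one to the other. Once the order-scheme formalism of \S\ref{ss:ord-power} is in hand, essentially no new work is needed — the decomposition there was written for a general ordered object precisely so that it specializes both to $X = \bR$ in $\bS(\GG)$ and to the target ordered object in $\cS$. I would therefore phrase the proof as: reduce to transitive cospans by additivity and extensivity; invoke the order-scheme decomposition of a fiber product of the $X^{(n)}$'s from \S\ref{ss:ord-power} in both categories; and conclude that $\Psi_X$ preserves these fiber products since it is defined to respect the order-scheme combinatorics. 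Together with preservation of the final object, this shows $\Psi_X$ is left-exact.
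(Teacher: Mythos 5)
Your proposal is correct and follows essentially the same route as the paper: reduce to the order-scheme decomposition of \S\ref{ss:ord-power}, identify $\Psi_X(\bR[S])$ with $X[S]$ canonically, and note that fiber products of the $\bR^{(n)}$'s and of the $X^{(n)}$'s are governed by the same combinatorial data (the refinements of $S'\amalg_S S''$). The only part you compress is the verification that the identifications $\Psi_X(\bR[S])\cong X[S]$ are compatible with the projection maps — the paper devotes its diagram chase to exactly this, and it cannot be deduced from Proposition~\ref{prop:ord-func} (which presupposes left-exactness) but must be checked directly from the definition of $\Psi_X$ on $\OI$-morphisms.
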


\begin{proof}
Write $\Psi$ in place of $\Psi_X$ in what follows. For a maximal order scheme $S$, we let $i_S \colon \Psi(\bR[S]) \to X[S]$ be the natural isomorphism, obtained via the canonical identifications $\bR[S] \cong \bR^{(n)}$ and $X[S]=X^{(n)}$ where $n$ is the cardinality of $S/\hspace{-0.9mm}\sim$. Fix $x \in S$, and consider the following diagram
\begin{displaymath}
\xymatrix{
\Psi(\bR[S]) \ar[r] \ar[d]_{i_S} & \Psi(\bR^S) \ar[r] \ar[d]^j & \Psi(\bR) \ar@{=}[d] \\
X[S] \ar[r] & X^S \ar[r] & X }
\end{displaymath}
where $j$ is the canonical one and the right maps are the projections onto the $x$ coordinate. The right square commutes; indeed, this is essentially how $j$ is defined. The outer square also commutes, since the map $X[S] \to X$ comes from an $\OI$ map, specifically, the strict inclusion $\{x\} \to S$. It follows that the left square also commutes. Now consider the diagram
\begin{displaymath}
\xymatrix{
\Psi(\bR^n) \ar[d]_j \ar@{=}[r] & \coprod_S \Psi(\bR[S]) \ar[d] \\
X^n \ar@{=}[r] & \coprod_S X[S] }
\end{displaymath}
where $j$ is as before, $S$ varies over the maximal refinements of the trivial order scheme on $[n]$, and the right map is the coproduct of the $i_S$ isomorphisms. The diagram commutes by the above discussion. Thus $j$ is an isomorphism. This verifies that $\Psi$ is compatible with products in one particular case.

A slight modification of the above argument shows that for any order scheme $S$ we have a canonical isomorphism $\Psi(\bR[S]) \to X[S]$: simply identify $\bR[S]$ with a subobject of $\bR^S$, and use the decomposition of $\bR[S]$ into $\coprod \bR[S']$ with $S'$ varying over maximal refinements of $S$. If $S$ and $S'$ are two order schemes, then the diagram
\begin{displaymath}
\xymatrix{
\Psi(\bR[S] \times \bR[S']) \ar@{=}[r] \ar[d] & \Psi(\bR[S \amalg S']) \ar[d] \\
X[S] \times X[S'] \ar@{=}[r] & X[S \amalg S'] }
\end{displaymath}
is easily seen to commute. We thus see that the left map is an isomorphism. In particular, we see that for all $n,m \ge 0$ the natural map
\begin{displaymath}
\Psi(\bR^{(n)} \times \bR^{(m)}) \to \Psi(\bR^{(n)}) \times \Psi(\bR^{(m)})
\end{displaymath}
is an isomorphism. It thus follows that $\Psi$ is compatible with products. (Note that $\Psi$ preserves final objects, so it is indeed compatible with all finite products.)

Another minor modification gives compatibility with fiber products. Indeed, let $i \colon S \to S'$ and $j \colon S \to S''$ be strict injections of order schemes. Define $S' \amalg_S S''$ to be the minimal refinement of $S' \amalg S''$ in which $i(x) \sim j(x)$ for all $x \in S$. Then we have a natural identification
\begin{displaymath}
X[S'] \times_{X[S]} X[S''] \cong X(S' \amalg_S S'').
\end{displaymath}
The same argument used for products now gives compatibility of $\Psi$ with fiber products. The result thus follows.
\end{proof}

If $f \colon X \to Y$ is a monotonic map of ordered objects then $f$ induces maps $X^{(n)} \to Y^{(n)}$ for all $n$, and these maps are compatible with the $\OI$-morphisms. It follows that $f$ defines a natural transformation $\Psi_X \to \Psi_Y$. We therefore have a functor
\begin{displaymath}
j \colon \Ord(\cS) \to \LEx^{\oplus}(\bS(G), \cS), \qquad X \mapsto \Psi_X.
\end{displaymath}
We can now finish the proof of the theorem.

\begin{proof}[Proof of Theorem~\ref{thm:comb}]
It is clear that $i \circ j$ is the identity endofunctor of $\Ord(\cS)$. We now verify that the other composition is isomorphic to the identity. Let $\Psi \colon \bS(\GG) \to \cS$ be an additive left-exact functor, and let $X=\Psi(\bR)$. We have two functors $\OI^{\op} \to \cS$, namely, $[n] \mapsto X^{(n)}$ and $[n] \mapsto \Psi(\bR^{(n)}$. By Proposition~\ref{prop:ord-func}, they are isomorphic. The former functor can be written equivalently as $[n] \mapsto \Psi_X(\bR^{(n)})$. We thus see that $\Psi$ and $\Psi_X$ are isomorphic when restricted to the category of transitive objects in $\bS(\GG)$. Since both functors are additive, it follows that they are isomorphic. The isomorphism $\Psi \cong \Psi_X$ just obtained is clearly natural, and so we see that $j \circ i$ is isomorphic to the identity.
\end{proof}

We have the following companion result, which describes when functors are faithful.

\begin{proposition} \label{prop:comb-faithful}
Let $\Psi \colon \bS(\GG) \to \cS$ be a left-exact additive functor, and let $X=\Psi(\bR)$ be the associated ordered object of $\cS$. Then $\Psi$ is faithful if and only if $X$ is infinite-like, that is, $X^{(n)} \ne \bzero$ for all $n \ge 0$.
\end{proposition}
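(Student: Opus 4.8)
The plan is to deduce this from Proposition~\ref{prop:Psi-faithful}, which already reduces faithfulness of an additive left-exact functor $\Psi$ out of $\bS(\GG)$ to the condition that $\Psi(Y) = \bzero$ only when $Y = \bzero$. The one piece of bookkeeping needed is the identification $\Psi(\bR^{(n)}) \cong X^{(n)}$: since $\bR^{(n)} = \bR[S]$ for the order scheme $S = [n]$ with its standard total order, Proposition~\ref{prop:ord-func} provides a natural isomorphism $\Psi(\bR^{(n)}) \cong \Psi(\bR)[S] = X^{(n)}$. I would record this at the outset.

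For the ``only if'' direction I would argue: if $\Psi$ is faithful then, since each $\bR^{(n)}$ is a nonempty (transitive) $\GG$-set and hence not isomorphic to $\bzero$, Proposition~\ref{prop:Psi-faithful} forces $X^{(n)} \cong \Psi(\bR^{(n)}) \neq \bzero$ for all $n$, i.e.\ $X$ is infinite-like.

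For the converse I would assume $X^{(n)} \neq \bzero$ for every $n$ and verify the criterion of Proposition~\ref{prop:Psi-faithful}. Given a nonzero object $Y$ of $\bS(\GG)$, write it as the finite coproduct of its $\GG$-orbits; a nonzero $Y$ has at least one orbit, which by the classification of transitive $\GG$-sets (the equivalence of $\OI^{\op}$ with the full subcategory of transitive objects established in \S\ref{ss:delgp}, or \cite[Corollary~16.2]{repst}) is isomorphic to some $\bR^{(n)}$. Then $\Psi(Y) \cong X^{(n)} \amalg \Psi(Y')$ by additivity, where $Y'$ is the complement of that orbit. Were $\Psi(Y) = \bzero$, the coproduct injection would yield a morphism $X^{(n)} \to \bzero$; since $\cS$ is extensive its initial object is strict, so $X^{(n)} = \bzero$, contradicting the hypothesis. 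Hence $\Psi(Y) \neq \bzero$, and since also $\Psi(\bzero) = \bzero$ (additivity), Proposition~\ref{prop:Psi-faithful} gives that $\Psi$ is faithful.

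I do not expect any real obstacle here; the proposition is essentially the concatenation of Propositions~\ref{prop:Psi-faithful} and~\ref{prop:ord-func} with Remark~\ref{rmk:ord-finite} (which translates ``infinite-like'' into ``$X^{(n)} \neq \bzero$ for all $n$''). The only two points deserving a word of care are the isomorphism $\Psi(\bR^{(n)}) \cong X^{(n)}$ and the strictness of the initial object in an extensive category, both of which are already available (the latter was used in \S\ref{ss:ord-func}).
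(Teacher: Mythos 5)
Your proof is correct and follows exactly the route the paper takes: the paper's own proof is the one-line citation of Proposition~\ref{prop:Psi-faithful}, and your write-up simply supplies the details left implicit there (the identification $\Psi(\bR^{(n)})\cong X^{(n)}$ via Proposition~\ref{prop:ord-func}, and the orbit decomposition plus strictness of $\bzero$ for the converse). No gaps.
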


\begin{proof}
This follows directly from Proposition~\ref{prop:Psi-faithful}.
\end{proof}

\begin{example} \label{ex:comb}
Let $X=\bR \amalg \bone$ be the lexicographic sum of $\bR$ and $\bone$ in the category $\bS(\GG)$. Concretely, $X$ is simply $\bR$ with a maximal point $\infty$ added. By Theorem~\ref{thm:comb}, there is a left-exact additive functor $\Psi \colon \bS(\GG) \to \bS(\GG)$ satisfying $\Psi(\bR)=X$.

Recall that $p_{2,2} \colon X^{(2)} \to X$ is the map given by $(x,y) \mapsto x$. Since $\infty$ is the maximal point of $X$, there is no element $(\infty, y)$ in $X^{(2)}$. Thus the fiber of $p_{2,2}$ over $\infty$ is empty, and so $p_{2,2}$ is not surjective. Of course, the corresponding map $p_{2,2} \colon \bR^{(2)} \to \bR$ is surjective. We thus see that $\Psi$ does not preserve surjections. It follows that $\Psi$ is not induced from any group homomorphism $\GG \to \GG$, and also not exact.

This particular example is very relevant to Delannoy categories: we will see (\S \ref{ss:degone}) that it leads to a tensor functor $\fC_2 \to \fC_1$.
\end{example}

\begin{remark} \label{rmk:ord-univ}
Theorem~\ref{thm:comb} implies the existence of various kinds of universal formulas for ordered objects: essentially, any formula valid for $\bR$ in $\bS(\GG)$ will be valid for ordered objects in any lextensive category. For instance, one can directly verify that we have an isomorphism of $\bS(\GG)$-sets
\begin{displaymath}
(\bR^{(2)})^{(2)} \cong (\bR^{(4)})^{\amalg 3} \amalg (\bR^{(3)})^{\amalg 3}.
\end{displaymath}
It follows that for any ordered object $X$ in a lextensive category we have
\begin{displaymath}
(X^{(2)})^{(2)} \cong (X^{(4)})^{\amalg 3} \amalg (X^{(3)})^{\amalg 3}.
\end{displaymath}
Here we are using the lexicographic order on $X^{(2)}$; see \S \ref{ss:ord-constr}(e).
\end{remark}

\begin{remark}
Theorem~\ref{thm:comb} gives an equivalence
\begin{displaymath}
\LEx^{\oplus}(\bS(\GG), \bS(\GG)) = \Ord(\bS(\GG)).
\end{displaymath}
It follows that $\Ord(\bS(\GG))$ admits a natural monoidal structure, corresponding to composition on the left side. It also admits two other monoidal structures, coming from the lexicographic sum and product discussed in \S \ref{ss:ord-constr}. It would be interesting to investigate this category in more detail. For instance, can objects in this category be classified in any useful way?
\end{remark}

\subsection{Constructions of orders} \label{ss:ord-constr}

We now discuss various constructions of ordered objects.

\textit{(a) The reverse order.} If $(X, R_X)$ is an ordered object then so is $(X, R_X^{\op})$.

\textit{(b) The induced order.} Suppose that $Y$ is an ordered object and $X$ is a subobject of $Y$. Then $h_X(T) \subset h_Y(T)$ for all objects $T$. One easily verifies that endowing $h_X(T)$ with the induced $<$ relation from $h_Y(T)$ defines an order on $h_X$, and thus on $X$. We call this the \defn{induced order} on $X$.

\textit{(c) Subobjects of the final object.} Suppose that $X$ is a subobject of the final object. Then $h_X(T)$ is either empty or a singleton for all $T$. There is thus a unique anti-symmetric relation $<$ on $h_X(T)$. One readily verifies that this defines an order on $h_X$, and thus $X$. We thus see that $X$ admits a unique order.

\textit{(d) Lexicographic sum.} Let $X$ and $Y$ be ordered objects. We define an order on $X \amalg Y$, called the \defn{lexicographic sum}, by putting $X$ before $Y$. To be precise, suppose $a,b \in h_{X \amalg Y}(T)$ are given. Put
\begin{align*}
T_1 &= a^{-1}(X) \cap b^{-1}(X) & T_3 &= a^{-1}(Y) \cap b^{-1}(X) \\
T_2 &= a^{-1}(X) \cap b^{-1}(Y) & T_4 &= a^{-1}(Y) \cap b^{-1}(Y).
\end{align*}
Note that $T$ is the disjoint union of the $T_i$'s. We define $a<b$ if the following conditions hold:
\begin{itemize}
\item $a\vert_{T_1}<b \vert_{T_1}$ using the order on $h_X(T_1)$
\item $a\vert_{T_4}<b \vert_{T_4}$ using the order on $h_Y(T_4)$
\item $T_3=\bzero$.
\end{itemize}
We leave to the reader the routine verification that this does indeed define an order on $h_{X \amalg Y}$.

\textit{(e) Lexicographic product.} Let $X$ and $Y$ be ordered objects. We define an order on $X \times Y$, called the \defn{lexicographic product}, in the usual manner. To be precise, suppose $a,b \in h_{X \times Y}(T)$ are given. Write $a=(a_1,a_2)$ where $a_1 \in h_X(T)$ and $a_2 \in h_Y(T)$, and similarly write $b=(b_1,b_2)$. Let $T=T_1 \sqcup T_2 \sqcup T_3$ be the decomposition of $T$ such that $a_1<b_1$ on $T_1$, $a_1=b_1$ on $T_2$, and $a_1>b_1$ on $T_3$. We define $a<b$ if $T_3$ is empty and $a_2 \vert_{T_2}<b_2 \vert_{T_2}$. We leave to the reader the routine verification that this does indeed define an order on $h_{X \times Y}$.

\textit{(f) Ordered tuples.} Given an ordered object $X$, we have defined the subobject $X^{(n)}$ of $X^n$. It inherits the lexicographic order from $X^n$. For any permutation $\sigma \in \fS_n$, we can regard $X^{(n)}$ as a subobject of $X^n$ by composing the standard embedding with $\sigma$, which acts on $X^n$ by permuting coordinates, and then endow $X^{(n)}$ with the induced order. We call these the \defn{permlex} orders on $X^{(n)}$. The standard lexicographic order is the permlex order with $\sigma=1$. Another notably case is the reverse lexicographic order, which corresponds to the permutation $\sigma$ that reverses the elements of the set $[n]$, i.e., $\sigma(1)=n$, $\sigma(2)=n-2$, and so on. The reverse lexicographic order on $X^{(n)}$ is perhaps the most natural, since it compares the largest coordinates first.

\subsection{The decomposition associated to a point} \label{ss:pt-decomp}

Let $X$ be an ordered object in $\cS$ and suppose we have a morphism $a \colon \bone \to X$. This gives us a morphism
\begin{displaymath}
X = X \times \bone \stackrel{\id \times a}{\longrightarrow} X \times X = R \amalg \Delta_X \amalg R^{\op}.
\end{displaymath}
We thus obtain a decomposition of $X$
\begin{displaymath}
X = Y \sqcup \bone \sqcup Z,
\end{displaymath}
where $Y$ is the inverse image of $R$, $\bone$ is the inverse image of $\Delta_X$ (which maps to $X$ via $a$), and $Z$ is the inverse image of $R^{\op}$. Essentially by definition, $h_Y(T)$ consists of those elements $b \in h_X(T)$ such that $b<a$, and $h_Z$ is similarly described. From this, it follows that the above decomposition is a lexicographic sum, where $Y$ and $Z$ are equipped with the induced orders. We say that $a$ is \defn{maximal} if $Z=\bzero$ and \defn{minimal} if $Y=\bzero$.

\subsection{Finite-like orders}

Recall (Remark~\ref{rmk:ord-finite}) that an ordered object $X$ is finite-like if $X^{(n)}=\bzero$ for some $n$. The following result gives a nice characterization of these objects.

\begin{proposition} \label{prop:fin-ord}
Assume $\cS$ has complements, and let $X$ be an ordered object in $\cS$. The following are equivalent:
\begin{enumerate}
\item $X^{(n+1)}=\bzero$.
\item $X$ is isomorphic to a lexicographic co-product $X_1 \sqcup \cdots \sqcup X_n$, where $X_n \subset \cdots \subset X_1 \subset \bone$ are subobjects of the final object equipped with their unique orders.
\end{enumerate}
\end{proposition}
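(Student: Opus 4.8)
For the implication (b)$\Rightarrow$(a): suppose $X \cong X_1 \sqcup \cdots \sqcup X_n$ is a lexicographic sum of subobjects of $\bone$. By extensivity, $X^{n+1}$ is the coproduct of the subobjects $X_{i_1} \times \cdots \times X_{i_{n+1}}$ over all index sequences $(i_1,\ldots,i_{n+1}) \in \{1,\ldots,n\}^{n+1}$. Each such sequence has a repetition $i_a = i_b$ with $a<b$, and on the corresponding summand both the $a$-th and the $b$-th coordinate factor through the subterminal object $X_{i_a}$, which has at most one $T$-valued point for every $T$. Imposing the relation ``$a$-th coordinate $<$ $b$-th coordinate'' — incompatible with equality of the two coordinates by anti-symmetry — therefore kills this summand, using that $\bzero$ is strict. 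Hence $X^{(n+1)} = \bzero$.

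For (a)$\Rightarrow$(b) the plan is to induct on $n$; the case $n=0$ is trivial since both conditions then say $X=\bzero$. Assuming the statement for $n-1$ and that $X^{(n+1)}=\bzero$, I would first show that $X^{(n)}$ is a subobject of $\bone$. Applying the functor associated to $X$ by Theorem~\ref{thm:comb}, which is additive and left-exact and hence compatible with the order-scheme constructions (Proposition~\ref{prop:ord-func}), to the decomposition of Cartesian powers in \S\ref{ss:ord-power}, one gets that $X^{(n)} \times X^{(n)}$ is the coproduct, over maximal refinements $S$ of the order scheme $[n]\amalg[n]$, of the pieces $X[S]$. Exactly one refinement — identifying the $i$-th coordinate of the two factors — contributes the diagonal $\Delta_{X^{(n)}}$; every other contributes a piece isomorphic to $X^{(m)}$ with $m \ge n+1$, hence $\bzero$, since such a piece maps to $X^{(n+1)} = \bzero$ and $\bzero$ is strict. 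So $X^{(n)} \to X^{(n)} \times X^{(n)}$ is an isomorphism, i.e. $X^{(n)}$ is subterminal.

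Next I would peel off $X^{(n)}$ as the top block. Let $q\colon X^{(n)}\to X$ be the projection to the last coordinate; since its source is subterminal, $q$ is a monomorphism, so $X_n := \im(q) \cong X^{(n)}$ is a subobject of $X$, and I let $X'$ be its complement (available as $\cS$ has complements), giving $X = X' \sqcup X_n$. Using the functor of points I would check: (i) this is the lexicographic sum with $X'$ first, because $(X' \times X_n)\cap \Delta_X = X' \cap X_n = \bzero$ and $(X' \times X_n)\cap R^{\op} = \bzero$ — a nonzero $T$-point of the latter, together with the $X^{(n)}$-point underlying the $X_n$-coordinate, would produce a $T$-point of $X^{(n+1)}=\bzero$; (ii) $(X')^{(n)} = \bzero$, since a $T$-point of it has all coordinates in $X'$ while its last coordinate lies in $X_n = \im(q)$, forcing $T=\bzero$. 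By (ii) and the inductive hypothesis, $X' \cong X_1 \sqcup \cdots \sqcup X_{n-1}$ lexicographically with $X_{n-1}\subseteq\cdots\subseteq X_1\subseteq\bone$; combining with (i) and associativity of the lexicographic sum yields $X \cong X_1 \sqcup \cdots \sqcup X_n$. Finally, to obtain $X_n \subseteq X_{n-1}$: in any lexicographic sum of $n-1$ subterminals an increasing $(n-1)$-tuple uses each block exactly once, so $(X')^{(n-1)}$ is canonically $X_{n-1}$ via the last coordinate; and forgetting the last coordinate gives a map $X^{(n)} \to (X')^{(n-1)}$, well defined because each of the first $n-1$ coordinates of a point of $X^{(n)}$ is strictly below the last, which lies in the subterminal $X_n$, hence lies in $X'$. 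This exhibits $X_n = X^{(n)}$ as a subobject of $X_{n-1}$, completing the induction.

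The main obstacle is not any individual computation but the organization of the induction around the nesting $X_n \subseteq \cdots \subseteq X_1$: the inductive hypothesis only returns \emph{some} lexicographic decomposition of the complement $X'$, so one must re-identify its top block intrinsically (as $(X')^{(n-1)}$) before the chain of inclusions can be propagated. A secondary point needing care is justifying that the various functor-of-points manipulations (``on the locus where a given coordinate lies in $X'$'') are legitimate in an arbitrary lextensive category; as usual these reduce to the extensivity axioms and the strictness of $\bzero$.
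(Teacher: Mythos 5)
Your proof is correct, and the inductive step takes a genuinely different route from the paper's. The direction (b)$\Rightarrow$(a) and the first move of (a)$\Rightarrow$(b) --- showing $X^{(n)}$ is subterminal --- match the paper in substance (the paper deduces subterminality from $(X^{(n)})^{(2)}=\bzero$ via the universality principle and the $n=1$ base case, whereas you compute $X^{(n)}\times X^{(n)}$ directly from the order-scheme decomposition and the fact that the unique size-$n$ maximal refinement of $[n]\amalg[n]$ is the diagonal; both are valid). After that the inductions diverge. The paper sets $E=X^{(n)}$, splits $\bone=E\sqcup F$ and hence $X=X_E\sqcup X_F$, applies induction only to $X_F$, and identifies \emph{all} $n$ blocks of $X_E$ at once as the images $W_1,\ldots,W_n$ of the $n$ projections $X^{(n)}\to X_E$, working in $\cS_{/E}$ and proving pairwise disjointness of the $W_i$ and vanishing of the leftover complement; the nesting $X_n\subset\cdots\subset X_1$ then falls out of the reassembly $X_i=W_i\sqcup W_i'$ since each $W_i$ equals $E$. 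You instead peel off a single top block $X_n=\im(q_n)$, verify it is a maximal lexicographic summand and that $(X')^{(n)}=\bzero$, apply induction to the complement $X'$, and recover the nesting separately by showing the forget-last-coordinate map carries $X^{(n)}$ into $(X')^{(n-1)}=X_{n-1}$ (using that $X_n^{(2)}=\bzero$ forces the first $n-1$ coordinates into $X'$, and that a morphism between subterminal objects witnesses an inclusion of subobjects of $\bone$). Your version buys a shorter decomposition step --- no passage to $\cS_{/E}$, no disjointness computation --- at the cost of the extra nesting argument, which you correctly flag as the delicate point and carry out correctly.
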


\begin{proof}
(b) $\Rightarrow$ (a). If $Y$ is a subobject of the final object then the diagonal $Y \to Y \times Y$ is an isomorphism, and so $Y^{(2)}=\bzero$. Now, $X^{(n+1)}$ decomposes into pieces of the form
\begin{displaymath}
X_1^{(a_1)} \times \cdots \times X_n^{(a_n)},
\end{displaymath}
where $a_1+\cdots+a_n=n+1$. Since some $a_i$ is at least~2, we have $X_i^{(a_i)}=\bzero$, and so every piece vanishes. Thus $X^{(n+1)}=\bzero$, as required.

(a) $\Rightarrow$ (b). First suppose $n=1$, i.e., $X^{(2)}=\bzero$. Recall that, by definition of order, the natural map
\begin{displaymath}
R \amalg \Delta_X \amalg R^{\op} \to X \times X
\end{displaymath}
is an isomorphism. Since $R=X^{(2)}$, we see that $R$ and $R^{\op}$ are empty. Thus the diagonal $X \to X \times X$ is an isomorphism. This means that every object has at most one map to $X$, and so the map $X \to \bone$ is a monomorphism, i.e., $X$ is a subobject of $\bone$. Thus (b) holds.

Now suppose $n \ge 2$. If $(a_1,\ldots,a_n)$ and $(b_1,\ldots,b_n)$ are distinct elements of $\bR^{(n)}$ then the set $\{a_i,b_i\}_{1 \le i \le n}$ has at least $n+1$ elements. Thus every $\GG$-orbit on $(\bR^{(n)})^{(2)}$ has the form $\bR^{(m)}$ with $m \ge n+1$. By the universality principle (Remark~\ref{rmk:ord-univ}), we see that for any ordered object $Y$ in a lextensive category we have a decomposition $(Y^{(n)})^{(2)} = \bigsqcup_{i=1}^N Y^{(m_i)}$ with each $m_i \ge n+1$. In particular, we see that $(X^{(n)})^{(2)}=\bzero$. Thus, by the $n=1$ case, $X^{(n)}$ is a subobject of $\bone$.

Write $E$ for $X^{(n)}$. By assumption, $E$ has a complement in $\bone$, that is, we have a decomposition $\bone=E \sqcup F$. By extensivity, we thus have $X=X_E \sqcup X_F$, where $X_E=X \times E$ and $X_F=X \times F$. Again, for the same reason, we have $E=X^{(n)}=X_E^{(n)} \sqcup X_F^{(n)}$, and so we see that $X_F^{(n)}=\bzero$. Thus, by induction, $X_F$ has the required form. We will return to this later; for now, we focus on $X_E$.

In what follows, we work in $\cS_{/E}$. Let $q_i \colon X^{(n)} \to X_E$ be the $i$th projection map. This is a monomorphism since $X^{(n)}=E$ is the final object. We will write $W_i$ for $q_i$, which we regard as a subobject of $X_E$; note that each $W_i$ is a copy of $E$, but the inclusions $W_i \to X_E$ are possibly different. We in fact claim that the $W_i$'s are disjoint. Let $q'_i \colon \bR^n \to \bR$ be the projection onto the $i$th coordinate. One easily sees that the fiber product of $q_i$ and $q_j$, for $i \ne j$, decomposes into $\GG$-orbits of the form $\bR^{(m)}$ with $m \ge n+1$. By the universality principle, the same statement holds for the $q_i$'s. Since $X_E^{(n+1)}=\bzero$, we see that $W_i \cap W_j=\bzero$ for $i \ne j$, as required.

Let $Y$ be the complement of the union of the $W_i$'s in $X_E$, so that we have a decomposition
\begin{displaymath}
X_E=W_1 \sqcup \cdots \sqcup W_n \sqcup Y.
\end{displaymath}
Now, $X^{(n+1)}=\bzero$ contains $W_1 \times \cdots \times W_n \times Y=E \times Y$ as a summand, and so $E \times Y=\bzero$. Since $E$ is the final object of $\cS_{/E}$, it follows that $Y=\bzero$. One easily sees that $W_n$ is a maximal point of $X_E$ (in the sense of \S \ref{ss:pt-decomp}), and so $X_E$ is the lexicographic sum of $W_1 \sqcup \cdots \sqcup W_{n-1}$ (with its induced order) and $W_n$ (with its unique order). Similarly, $W_{n-1}$ is a maximal point of $W_1 \sqcup \cdots \sqcup W_{n-1}$. Continuing in this way, we see that $X_E$ is the lexicographic sum of $W_1, \ldots, W_n$.

We now complete the proof. By induction, we have a lexicographic sum
\begin{displaymath}
X_F=W'_1 \sqcup \cdots \sqcup W'_{n-1},
\end{displaymath}
where each $W'_i$ is a subobject of $F$, and $W'_{i+1} \subset W'_i$. Put $X_i=W_i \sqcup W_i'$ for $1 \le i \le n-1$ and $X_n=W_n$. Then
\begin{displaymath}
X=X_1 \sqcup \cdots \sqcup X_n,
\end{displaymath}
is a lexicographic sum, and $X_n \subset \cdots \subset X_1$ are subobjects of $\bone$, as required.
\end{proof}

\begin{remark}
The order is necessary for Proposition~\ref{prop:fin-ord}; that is, if $X$ is a finite-like $\Delta$-complemented object then $X$ need not decompose into a co-product of subobjects of $\bone$. Indeed, suppose $G$ is a finite group and let $\cS=\bS(G)$ be the category of finite $G$-sets. This category has complements and every object is finite-like, however, not every object is a co-product of final objects; this is the case only for sets on which $G$ acts trivially.
\end{remark}

\section{Universal properties of the Delannoy categories} \label{s:delmap}

In this section, we establish the universal properties of the four Delannoy categories. These state that tensor functors $\fC_i \to \fT$ correspond to certain kinds of \'etale algebras in $\fT$ called Delannic algebras. We begin in \S \ref{ss:delcat} by defining the Delannoy categories. In \S \ref{ss:ordet} we define ordered \'etale algebras and prove some very basic results about them, and in \S \ref{ss:delannic} we do the same for Delannic algebras. The universal property is then proved in \S \ref{ss:delmap}. The remainder of the section provides some additional material on ordered and Delannic algebras.

\subsection{The Delannoy categories} \label{ss:delcat}

Recall that $\GG=\Aut(\bR, <)$ acts oligomorphically on $\bR$. We let $p_{n,i} \colon \bR^{(n)} \to \bR^{(n-1)}$ be the projection map omitting the $i$th coordinate. The ring $\Theta(\GG)$ that carries the universal measure for $\GG$ is isomorphic to $\bZ^4$. Thus $\GG$ admits exactly four $k$-valued measures $\mu_1, \ldots, \mu_4$. The following table gives their values on $p_{1,1}$, $p_{2,1}$ and $p_{2,2}$, which uniquely determines them.
\begin{center}
\begin{tabular}{l|rrrr}
& $\mu_1$ & $\mu_2$ & $\mu_3$ & $\mu_4$ \\
\hline
$p_{1,1}$ & $-1$ & 0 & 0 & \phantom{$-$}1 \\
$p_{2,1}$ & $-1$ & $-1$ &  $0$ & 0 \\
$p_{2,2}$ & $-1$ &  $0$ & $-1$ & 0
\end{tabular}
\end{center}
See \cite[\S 16]{repst} for proofs of the assertions made here.

We define the $i$th \defn{Delannoy catetory} to be
\begin{displaymath}
\fC_i = \uPerm(\GG, \mu_i)^{\rm kar},
\end{displaymath}
where the kar superscript denotes Karoubi envelope. The category $\fC_1$ is semi-simple pre-Tannakian, and was studied in depth in \cite{line}. The other three Delannoy categories are not abelian, and have not yet received much attention in the literature.

We now determine $\Theta$-generators for $\GG$.

\begin{proposition} \label{prop:del-theta-gen}
The maps $p_{1,1}$, $p_{2,1}$ and $p_{2,2}$ are $\Theta$-generators for $\GG$.
\end{proposition}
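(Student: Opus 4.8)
The plan is to imitate the argument for the symmetric group (the analogous computation appears in Proposition~\ref{prop:sym-theta}). Write $\Pi$ for the $\Theta$-class generated by $\{p_{1,1},p_{2,1},p_{2,2}\}$; we must show $\Pi=\Sigma$. Since every morphism of transitive $\GG$-sets is, under the equivalence between $\OI^{\op}$ and the full subcategory of transitive objects of $\bS(\GG)$, a composition of elementary projections $p_{n,i}\colon\bR^{(n)}\to\bR^{(n-1)}$, and since $\Pi$ contains all isomorphisms (axiom (a)) and is closed under composition (axiom (b)), it suffices to show $p_{n,i}\in\Pi$ for all $n\ge1$ and all $1\le i\le n$. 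Granting this, an arbitrary $f\colon Y\to X$ in $\Sigma$ has $X$ transitive but $Y=\bigsqcup_k Y_k$ a finite union of orbits; each restriction $f_k\colon Y_k\to X$ then lies in $\Pi$, and repeated application of axiom (d) gives $f\in\Pi$, whence $\Pi=\Sigma$.

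I would prove $p_{n,i}\in\Pi$ by induction on $n$, the cases $n\le2$ being immediate as $p_{1,1},p_{2,1},p_{2,2}$ are generators. For the inductive step, fix $n\ge3$ and assume $p_{m,j}\in\Pi$ for all $m<n$. First, $p_{n,1}\in\Pi$: the set $\bR^{(n)}$ is the fiber product of $p_{2,1}\colon\bR^{(2)}\to\bR$, $(u,v)\mapsto v$, along the map $\bR^{(n-1)}\to\bR$, $(w_1,\dots,w_{n-1})\mapsto w_1$, and under this identification the base-change projection $\bR^{(n)}\to\bR^{(n-1)}$ is exactly $p_{n,1}$; hence it lies in $\Pi$ by axiom (c), since $p_{2,1}$ is a generator. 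Second, for each $1\le j\le n-1$ I would base change $p_{n-1,j}$ along itself: the fiber product $\bR^{(n-1)}\times_{\bR^{(n-2)}}\bR^{(n-1)}$ breaks into three $\GG$-orbits according to the order relation between the two coordinates that the identification does not force to be equal — two copies of $\bR^{(n)}$ on which the projection restricts to $p_{n,j}$ and $p_{n,j+1}$ respectively, and one copy of $\bR^{(n-1)}$ on which it is the identity. By axiom (c) this projection lies in $\Pi$ (using $p_{n-1,j}\in\Pi$), and stripping off the identity piece via axiom (d) gives $p_{n,j}\sqcup p_{n,j+1}\in\Pi$. Feeding $p_{n,1}\in\Pi$ into these relations for $j=1,2,\dots,n-1$ in turn, axiom (d) yields successively $p_{n,2},p_{n,3},\dots,p_{n,n}\in\Pi$, completing the induction and the proof.

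The only step demanding real care is the explicit bookkeeping in these two fiber-product computations: one has to verify that the asserted fiber product is indeed $\bR^{(n)}$ (resp.\ decomposes into precisely the three listed orbits), and that when each orbit is written as a strictly increasing tuple the relevant projection is literally the map deleting the correct coordinate. This amounts to tracking interleavings of finitely many real numbers and is entirely routine; everything else is a formal consequence of the $\Theta$-class axioms.
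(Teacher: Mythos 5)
Your proof is correct, and it uses the same toolkit as the paper --- reduce to the elementary projections $p_{n,i}$, apply axiom (c) to fiber products, and use axiom (d) to strip off identity components --- but the induction is organized differently. The paper confines all explicit orbit bookkeeping to $n\le 3$: it gets $p_{3,3}$ and $p_{3,1}$ by base-changing $p_{2,2}$ along $p_{2,1}$ (and vice versa), gets $p_{3,2}$ from the self-fiber-product of $p_{2,2}$, and then obtains every $p_{n,i}$ with $n\ge 4$ in a single step, exhibiting it as a base change of $p_{3,2}$ along $x\mapsto(x_{i-1},x_i)$ for $1<i<n$, or of $p_{2,1}$, $p_{2,2}$ for the extreme indices. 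You instead run a genuine induction on $n$: first $p_{n,1}$ as the base change of $p_{2,1}$ along $(w_1,\dots,w_{n-1})\mapsto w_1$, then a sliding argument $p_{n,j}\Rightarrow p_{n,j+1}$ via the self-fiber-product of $p_{n-1,j}$, whose orbit decomposition (two copies of $\bR^{(n)}$ carrying $p_{n,j}$ and $p_{n,j+1}$, plus a diagonal copy of $\bR^{(n-1)}$ carrying the identity) you describe correctly; your two-stage application of axiom (d) to this three-orbit decomposition is also handled properly. Both routes are valid. The paper's version buys boundedness --- the only interleaving computations are in $\bR^{(3)}$ and below, at the cost of one extra family of cartesian squares --- while yours is more uniform but re-verifies the three-orbit decomposition at every level $n$. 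At $n=3$ the two arguments essentially coincide (your $j=2$ step is exactly the paper's self-fiber-product of $p_{2,2}$); they diverge only for $n\ge 4$.
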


\begin{proof}
Let $\Sigma$ be as in \S \ref{ss:theta-gen}, and let $\Pi \subset \Sigma$ be the $\Theta$-class generated by $p_{1,1}$, $p_{2,1}$, and $p_{2,2}$. We must show that $\Pi=\Sigma$. It suffices, by axiom (d), to show that $\Pi$ contains all maps of transitive $\GG$-sets. Every map of transitive $\GG$-sets factors into a sequence of maps $p_{n,i}$, and so it suffices, by (b), to show that $\Pi$ contains the $p_{n,i}$. We are given that $\Pi$ contains all $p_{n,i}$ with $n \le 2$.

Consider the fiber product
\begin{displaymath}
\xymatrix{
X \ar[r] \ar[d]_q & \bR^{(2)} \ar[d]^{p_{2,2}} \\
\bR^{(2)} \ar[r]^{p_{2,1}} & \bR }
\end{displaymath}
The set $X$ is isomorphic to $\bR^{(3)}$ and the map $q$ is identified with $p_{3,3}$. Since $p_{2,2}$ belongs to $\Pi$, we see that $p_{3,3}$ belongs to $\Pi$ by axiom (c). Interchanging the roles of $p_{2,2}$ and $p_{2,1}$ above shows that $p_{3,1}$ belongs to $\Pi$.

Next, consider the similar fiber product
\begin{displaymath}
\xymatrix{
X \ar[r] \ar[d]_q & \bR^{(2)} \ar[d]^{p_{2,2}} \\
\bR^{(2)} \ar[r]^{p_{2,2}} & \bR }
\end{displaymath}
The set $X$ consists of all points $(x,y,z)$ in $\bR^3$ such that $x<y$ and $x<z$, and $q$ maps $(x,y,z)$ to $(x,y)$. There are three orbits on $X$: two are isomorphic to $\bR^{(3)}$, while the third is isomorphic to $\bR^{(2)}$. The restriction of $q$ to these orbits is $p_{3,2}$, $p_{3,3}$, and $\id$ respectively. We have already seen that $\Pi$ contains $p_{3,1}$. We also know that $\Pi$ contains $\id$, by axiom (a), and $q$ by axiom (c). Thus $\Pi$ contains $p_{3,2}$ by axiom (d).

We have now shown that $\Pi$ contains all $p_{n,i}$ with $n \le 3$. Every other $p_{n,i}$ can be obtained from one of these by an appropriate base change. Indeed, for $1<i<n$ we have a cartesian square
\begin{displaymath}
\vcenter{\vbox{\xymatrix{
\bR^{(n)} \ar[r] \ar[d]_{p_{n,i}} & \bR^{(3)} \ar[d]^{p_{3,2}} \\
\bR^{(n-1)} \ar[r]^f & \bR^{(2)} }}}
\qquad f(x)=(x_{i-1}, x_i).
\end{displaymath}
We also have a cartesian square
\begin{displaymath}
\vcenter{\vbox{\xymatrix{
\bR^{(n)} \ar[r] \ar[d]_{p_{n,1}} & \bR^{(2)} \ar[d]^{p_{2,1}} \\
\bR^{(n-1)} \ar[r]^f & \bR }}}
\qquad f(x)=x_1.
\end{displaymath}
There is a similar square for $p_{n,n}$. We thus see that $\Pi$ contains all $p_{n,i}$ by (c), which completes the proof.
\end{proof}

\begin{remark}
The two maps $p_{2,1}$ and $p_{2,2}$ are ``weak $\Theta$-generators'' as in Remark~\ref{rmk:weak-theta}. Note that this agrees with the fact that the measures are determined by their values on $p_{2,1}$, $p_{2,2}$.
\end{remark}

\subsection{Ordered \'etale algebras} \label{ss:ordet}

Fix, for the duration of \S \ref{s:delmap}, a Karoubian tensor category $\fT$. We now come to one of the key concepts of this paper:

\begin{definition}
An \defn{ordered \'etale algebra} in $\fT$ is an ordered object in the lextensive category $\Et(\fT)^{\op}$. We write $\OrdEt(\fT)$ for the category $\Ord(\Et(\fT)^{\op})^{\op}$ of ordered \'etale algebras in $\fT$.
\end{definition}

We make the definition more explicit. Let $A$ be an \'etale algebra. Recall that there is an idempotent $\sigma=\sigma_A$ in $\Gamma(A \otimes A)$ satisfying $(x \otimes 1) \sigma_A=(1 \otimes x) \sigma_A$ that provides a splitting of the multiplication map $A \otimes A \to A$. Giving an order on $A$ amounts to giving another idempotent $\tau=\tau_A$ in $\Gamma(A \otimes A)$ satisfying two conditions. First, we require an orthogonal decomposition
\begin{displaymath}
1 = \tau + \sigma + \tau^{\op},
\end{displaymath}
where $\tau^{\op}$ is the image of $\tau$ under the switching map on $\Gamma(A \otimes A)$. And second, we require $\tau_{1,3} \le \tau_{1,2} \tau_{2,3}$, where $\tau_{i,j}$ is the idempotent in $\Gamma(A \otimes A \otimes A)$ obtained by applying the map $A \otimes A \to A \otimes A \otimes A$ obtained from $\eta_A$ that maps the first $A$ to the $i$th factor and the second to the $j$th factor, and $e \le f$ means $ef=e$. If $(B, \tau_B)$ is a second ordered \'etale algebra, an algebra map $f \colon A \to B$ is monotonic if $f(\tau_A) \le \tau_B$. These are the morphisms in the category $\OrdEt(\fT)$.

The general constructions of ordered objects in \S \ref{ss:ord-constr} applies in particular to ordered \'etale algebras. Thus if $A$ and $B$ are ordered \'etale algebras then there is a lexicographic sum $A \oplus B$ and a lexicographic product $A \otimes B$. We also have the $A^{(n)}$ construction; note that $A^{(2)}$ is simply $\tau(A \otimes A)$. Recall that $A$ is said to be \defn{finite-like} if $A^{(n)}=0$ for some $n$; otherwise, we say $A$ is \defn{infinite-like}.

\begin{example}
The tensor unit $\bbone$ always has the structure of an ordered \'etale algebra. Taking lexicographic sums, we see that $\bbone^{\oplus n}$ has the structure of an ordered \'etale algebra. This construction defines a functor $\Ord(\FinSet) \to \OrdEt(\fT)^{\op}=\Ord(\Et(\fT)^{\op})$.
\end{example}

\begin{example} \label{ex:oea-alggp}
Let $\fT=\Rep(G)$ be the representation category of an algebraic group $G$. The category $\Et(\fT)^{\op}$ is equivalent to the category of finite $\pi_0(G)$-sets. It follows that an ordered \'etale algebra in $\fT$ corresponds to a finite $\pi_0(G)$-set equipped with a total order that is preserved by the group. Since $\pi_0(G)$ is a finite group, any such set must have trivial action. From this, it follows that the functor $\Ord(\FinSet) \to \OrdEt(\fT)^{\op}$ is an equivalence.
\end{example}

\begin{example}
The algebra $\cC(\bR)$ in the Delannoy category $\fC_i$ is an ordered \'etale algebra. Indeed, the functor
\begin{displaymath}
\bS(\GG) \to \Et(\fC_i)^{\op}, \qquad X \mapsto \cC(X)
\end{displaymath}
is additive and left-exact, and therefore carries ordered objects to ordered objects. The $\GG$-set $\bR$ is ordered, using the standard order on the real numbers. This is, as far as we know, the simplest example of a non-trivial ordered \'etale algebra.
\end{example}

The following proposition is the main reason we care about ordered \'etale algebras.

\begin{proposition} \label{prop:oea-univ-prop}
The functor
\begin{displaymath}
\LEx^{\oplus}(\bS(\GG), \Et(\fT)^{\op}) \to \OrdEt(\fT)^{\op}, \qquad
\Psi \mapsto \Psi(\bR)
\end{displaymath}
is an equivalence of categories. Moreover, a functor $\Psi$ is faithful if and only if the ordered \'etale algebra $\Psi(\bR)$ is infinite-like.
\end{proposition}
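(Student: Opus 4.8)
The plan is to obtain this proposition as a direct specialization of the abstract universal property of $\bS(\GG)$ proved in Theorem~\ref{thm:comb}. Recall that, by definition, $\OrdEt(\fT) = \Ord(\Et(\fT)^{\op})^{\op}$, so that $\OrdEt(\fT)^{\op} = \Ord(\Et(\fT)^{\op})$. By Proposition~\ref{prop:etale-cat}, the category $\cS = \Et(\fT)^{\op}$ is lextensive (and in fact has complements). Hence Theorem~\ref{thm:comb}, applied with this choice of $\cS$, provides an equivalence
\begin{displaymath}
\LEx^{\oplus}(\bS(\GG), \Et(\fT)^{\op}) \to \Ord(\Et(\fT)^{\op}) = \OrdEt(\fT)^{\op}, \qquad \Psi \mapsto \Psi(\bR),
\end{displaymath}
which is exactly the first assertion. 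The only bookkeeping needed is to match the functor $i$ of Theorem~\ref{thm:comb} (after passing to opposite categories) with the functor $\Psi \mapsto \Psi(\bR)$ in the statement, and to note that the ordered object $\Psi(\bR)$ of $\Et(\fT)^{\op}$ is precisely what \S\ref{ss:ordet} calls an ordered \'etale algebra; both of these are immediate from the definitions.

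For the faithfulness claim, I would invoke Proposition~\ref{prop:comb-faithful} (which in turn rests on Proposition~\ref{prop:Psi-faithful}) with $\cS = \Et(\fT)^{\op}$: an additive left-exact functor $\Psi \colon \bS(\GG) \to \cS$ is faithful if and only if $\Psi(\bR)^{(n)} \ne \bzero$ for all $n \ge 0$, i.e., $\Psi(\bR)$ is infinite-like as an ordered object of $\cS$. Since ``infinite-like'' for the ordered \'etale algebra $\Psi(\bR)$ was defined in \S\ref{ss:ordet} to mean exactly this condition on its powers $\Psi(\bR)^{(n)}$, this is the desired statement.

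I do not expect any real obstacle here: the entire mathematical content has already been isolated --- the universal property of $\bS(\GG)$ in Theorem~\ref{thm:comb}, the lextensivity of $\Et(\fT)^{\op}$ in Proposition~\ref{prop:etale-cat}, and the faithfulness criterion in Proposition~\ref{prop:comb-faithful} --- so that this proposition is simply their common instantiation at $\cS = \Et(\fT)^{\op}$. The one point requiring (minor) care is the manipulation of opposite categories, to be sure that $\Psi \mapsto \Psi(\bR)$ lands in $\OrdEt(\fT)^{\op}$ rather than $\OrdEt(\fT)$, but this is purely formal.
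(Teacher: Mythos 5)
Your proposal is correct and matches the paper's own argument: the first claim is exactly Theorem~\ref{thm:comb} applied to the lextensive category $\Et(\fT)^{\op}$ (Proposition~\ref{prop:etale-cat}), and the faithfulness criterion is exactly Proposition~\ref{prop:comb-faithful}. The opposite-category bookkeeping you flag is indeed the only point of care, and it is handled correctly.
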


\begin{proof}
The first statement follows directly from Theorem~\ref{thm:comb}, while the second follows from Proposition~\ref{prop:comb-faithful}.
\end{proof}

We now characterize finite-like algebras, with the above proposition in mind.

\begin{proposition} \label{prop:finite-oea}
Let $A$ be an ordered \'etale algebra in $\fT$. The following are equivalent:
\begin{enumerate}
\item $A^{(n+1)}=0$.
\item $A$ is isomorphic to a lexicographic sum $A_1 \oplus \cdots \oplus A_n$ where $A_1$ is a direct factor of $\bbone$ and $A_i$ is a direct factor of $A_{i-1}$ for $2 \le i \le n$.
\end{enumerate}
\end{proposition}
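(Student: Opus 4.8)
The plan is to derive this from Proposition~\ref{prop:fin-ord} via the universal property of $\bS(\GG)$. Recall that $\Et(\fT)^{\op}$ is lextensive and has complements (Proposition~\ref{prop:etale-cat}), so Proposition~\ref{prop:fin-ord} applies directly with $\cS=\Et(\fT)^{\op}$. The content of that proposition, translated through the dictionary ``subobject of $\bone$ in $\Et(\fT)^{\op}$ $\leftrightarrow$ direct factor algebra of $\bbone$ in $\fT$'' and ``lexicographic co-product in $\Et(\fT)^{\op}$ $\leftrightarrow$ lexicographic sum of \'etale algebras,'' is exactly the asserted equivalence. So the proof is essentially a matter of confirming that these translations are correct.

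First I would unwind the dictionary. In $\Et(\fT)^{\op}$, the final object is the initial \'etale algebra $\bbone$, so a subobject of $\bone$ in $\Et(\fT)^{\op}$ is a monomorphism $B \to \bbone$ in $\Et(\fT)^{\op}$, i.e.\ an epimorphism $\bbone \to B$ in $\Et(\fT)$; by the lemma in \S\ref{ss:lextensive} such a $B$ is a direct factor of $\bbone$, and conversely any direct factor algebra $\bbone = B \oplus B'$ gives such an epimorphism. The chain of subobjects $X_n \subset \cdots \subset X_1 \subset \bone$ in Proposition~\ref{prop:fin-ord}(b) then corresponds to a chain of algebra quotients $\bbone \tto A_1 \tto \cdots \tto A_n$, each of which exhibits the next as a direct factor; that is, $A_1$ is a direct factor of $\bbone$ and $A_i$ is a direct factor of $A_{i-1}$. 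Finally, the lexicographic co-product in $\cS = \Et(\fT)^{\op}$ is, by definition, the lexicographic sum of \'etale algebras as defined in \S\ref{ss:ord-constr}(d) applied in $\Et(\fT)^{\op}$; and the condition $X^{(n+1)} = \bzero$ in $\cS$ is precisely $A^{(n+1)} = 0$ in $\fT$ since the zero algebra is the initial object of $\Et(\fT)^{\op}$ (equivalently, $A^{(n+1)}$ computed in $\Et(\fT)^{\op}$ agrees with the $A^{(n+1)}$ construction for ordered \'etale algebras, as noted in \S\ref{ss:ordet}). Having set up this dictionary, the equivalence (a) $\Leftrightarrow$ (b) is a direct restatement of Proposition~\ref{prop:fin-ord}.

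I do not expect a serious obstacle here; the work is entirely bookkeeping. The one point requiring a little care is checking that ``direct factor of $A_{i-1}$'' in $\fT$ matches ``subobject of a subobject of $\bone$ in $\cS$'' — i.e.\ that being a monomorphism into $X_{i-1}$ in $\Et(\fT)^{\op}$, where $X_{i-1}$ is itself a subobject of $\bone$, is the same as being a direct factor algebra of $A_{i-1}$. This follows by passing to $\Mod_{A_{i-1}}$: an epimorphism $A_{i-1} \to A_i$ of \'etale algebras exhibits $A_i$ as a direct factor of $A_{i-1}$ by the same lemma in \S\ref{ss:lextensive}, applied in $\Mod_{A_{i-1}}$ (using Proposition~\ref{prop:etale-in-mod} to know $A_i$ remains \'etale there and $A_{i-1} \to A_i$ remains an epimorphism). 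Thus the short proof is: apply Proposition~\ref{prop:fin-ord} with $\cS = \Et(\fT)^{\op}$, then translate conditions (a) and (b) into the language of \'etale algebras using the dictionary above.

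\begin{proof}
Apply Proposition~\ref{prop:fin-ord} with $\cS = \Et(\fT)^{\op}$, which is lextensive and has complements by Proposition~\ref{prop:etale-cat}. Condition (a) of Proposition~\ref{prop:fin-ord} for the ordered object $A$ of $\cS$ reads $A^{(n+1)} = \bzero$, and the initial object $\bzero$ of $\cS = \Et(\fT)^{\op}$ is the zero algebra; moreover the $A^{(n+1)}$ formed in $\cS$ is the same as the ordered-\'etale-algebra construction $A^{(n+1)}$, so this is exactly condition (a) above. For condition (b), a subobject of the final object $\bone$ of $\cS$ is a monomorphism $A_1 \to \bbone$ in $\Et(\fT)^{\op}$, i.e.\ an epimorphism $\bbone \to A_1$ in $\Et(\fT)$, and by the lemmas of \S\ref{ss:lextensive} this is the same as $A_1$ being a direct factor of $\bbone$. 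A chain of subobjects $X_n \subset \cdots \subset X_1 \subset \bone$ in $\cS$ corresponds, under this identification and after passing to $\Mod_{A_{i-1}}$ (using Proposition~\ref{prop:etale-in-mod}), to a sequence of epimorphisms of \'etale algebras exhibiting each $A_i$ as a direct factor of $A_{i-1}$, with $A_1$ a direct factor of $\bbone$. Finally, the lexicographic co-product in $\cS$ is by definition the lexicographic sum $A_1 \oplus \cdots \oplus A_n$ of the corresponding \'etale algebras. Thus conditions (a) and (b) of Proposition~\ref{prop:fin-ord}, transported along the dictionary just described, are precisely conditions (a) and (b) of the present proposition, and their equivalence follows.
\end{proof}
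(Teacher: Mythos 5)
Your proof is correct and takes exactly the route the paper does: apply Proposition~\ref{prop:fin-ord} with $\cS=\Et(\fT)^{\op}$, using Proposition~\ref{prop:etale-cat} for the complements hypothesis. The paper leaves the dictionary (subobjects of $\bone$ in $\Et(\fT)^{\op}$ correspond to direct factors of $\bbone$, etc.) implicit, whereas you spell it out, but the argument is the same.
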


\begin{proof}
This follows from Proposition~\ref{prop:fin-ord}. Note that $\Et(\fT)^{\op}$ has complements by Proposition~\ref{prop:etale-cat}.
\end{proof}

\subsection{Delannic algebras} \label{ss:delannic}

Let $A$ be an ordered algebra and let $\pi_A^i \colon A \to A^{(2)}$ for $i=1,2$ be the maps corresponding to the two projections; explicitly, $\pi_A^1(x)=\tau_A (x \otimes 1)$, and $\pi_A^2(x)=\tau_A (1 \otimes x)$. Put $\tilde{\gamma}_i(A)=\tilde{\gamma}(\pi_A^i)$, and drop the tilde when the map is uniform. The following is another important definition:

\begin{definition}
A non-zero ordered \'etale algebra $A$ is \defn{Delannic} if the unit $\eta_A$ and the two maps $\pi_A^1$ and $\pi_A^2$ are uniform (\S \ref{ss:gamma}). The zero algebra is also Delannic.
\end{definition}

The following proposition is trivial, but useful enough to record:

\begin{proposition} \label{prop:simple-delannic}
Let $A$ be an ordered \'etale algebra. If $\Gamma(\bbone)=\Gamma(A)=k$ then $A$ is Delannic.
\end{proposition}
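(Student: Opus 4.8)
The statement is essentially immediate from the definitions, so the plan is short. First I would unpack the hypothesis: $\Gamma(A)=k$ is to be read as $\Gamma(A)=k\cdot\eta_A$, the one-dimensional $k$-subspace of $\Gamma(A)$ spanned by the unit. In particular this forces $A\neq 0$, since $\Gamma(0)=0\neq k$; and likewise $\bbone\neq 0$, since $\Gamma(\bbone)=k\neq 0$. So the zero-algebra case does not arise, and it suffices to check that $\eta_A$, $\pi_A^1$, and $\pi_A^2$ are all uniform in the sense of \S\ref{ss:gamma}.

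Next I would run through the three maps. Recall that $A^{(2)}=\tau_A(A\otimes A)$ is a factor algebra of the \'etale algebra $A\otimes A$, hence \'etale, and that $\pi_A^i\colon A\to A^{(2)}$ is an algebra homomorphism; thus all three maps are maps of \'etale algebras, and in each case the source is non-zero (it is $\bbone$ or $A$). For the unit $\eta_A\colon\bbone\to A$, \S\ref{ss:gamma}(c) gives $\tilde{\gamma}(\eta_A)=\dim(A)\in\Gamma(\bbone)=k=k\cdot\eta_{\bbone}$, so $\eta_A$ is uniform. For each projection $\pi_A^i$, the element $\tilde{\gamma}(\pi_A^i)=(\pi_A^i)^{\vee}\circ\eta_{A^{(2)}}$ lies by definition in $\Gamma(A)$, and $\Gamma(A)=k\cdot\eta_A$ by hypothesis, so $\pi_A^i$ is uniform.

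Since $A$ is non-zero and $\eta_A$, $\pi_A^1$, $\pi_A^2$ are all uniform, $A$ is Delannic by definition, completing the argument. There is no genuine obstacle here; the only point that needs a moment's care is the interpretation of the hypothesis $\Gamma(A)=k$ (and the observation that it rules out $A=0$, as well as $\bbone=0$). I would present the proof in just a few lines.
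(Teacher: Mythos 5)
Your proof is correct and is exactly the intended argument: the paper states this proposition without proof, calling it trivial, and the content is precisely the unpacking of the definitions of ``uniform'' and ``Delannic'' that you carry out. The only points requiring care --- reading $\Gamma(A)=k$ as $k\cdot\eta_A$, noting it forces $A\neq 0$ and $\bbone\neq 0$, and checking $\tilde{\gamma}$ of each of $\eta_A$, $\pi_A^1$, $\pi_A^2$ lands in the right copy of $k$ --- are all handled correctly.
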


It will sometimes be helpful to treat the finite-like case separately from the infinite-like case. The following proposition aids us in this.

\begin{proposition} \label{prop:finite-delannic}
A finite-like Delannic algebra is isomorphic to 0 or $\bbone$.
\end{proposition}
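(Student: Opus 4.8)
The plan is to invoke the structure theorem for finite-like ordered \'etale algebras (Proposition~\ref{prop:finite-oea}) to put $A$ in a standard form, and then extract a contradiction from the uniformity of the projection maps $\pi_A^1,\pi_A^2$ unless that standard form is trivial.

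If $A=0$ there is nothing to prove, so assume $A\neq 0$. Since $A$ is finite-like, Proposition~\ref{prop:finite-oea} gives a lexicographic decomposition $A\cong A_1\oplus\cdots\oplus A_n$ in which $A_1$ is a direct factor of $\bbone$ and each $A_i$ is a direct factor of $A_{i-1}$. Zero summands form a tail (if $A_{i_0}=0$ then $A_{i_0+1}\subset A_{i_0}=0$), so after deleting them we may assume every $A_i$ is nonzero. Write $A_i=e_i\bbone$ for idempotents $e_i\in\Gamma(\bbone)=\End_\fT(\bbone)$; the chain condition becomes $e_ie_j=e_{\max(i,j)}$, and each $e_i\neq 0$. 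The goal is to prove $n=1$ and $e_1=1$. The case $n=1$ is quick: then $A=e_1\bbone$, and uniformity of $\eta_A$ (part of the Delannic hypothesis) together with \S\ref{ss:gamma}(c) says $\dim(A)=\tilde\gamma(\eta_A)\in k$. Since $\dim(e_1\bbone)=\tr_\bbone(e_1)=e_1$ and $e_1$ is a nonzero idempotent of the field $k$, we get $e_1=1$, i.e.\ $A\cong\bbone$.

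So suppose $n\ge 2$; I will contradict uniformity of $\pi_A^1$. Each $A_i$ is a subobject of $\bone$ in $\Et(\fT)^{\op}$, hence $A_i^{(2)}=0$ (as in the proof of Proposition~\ref{prop:fin-ord}) and $A_i\otimes A_j=A_{\max(i,j)}$. Expanding $A\otimes A$ along the lexicographic order then gives $A^{(2)}=\tau_A(A\otimes A)=\bigoplus_{1\le i<j\le n}A_i\otimes A_j$. Next one computes $\pi_A^1$ componentwise: for $x$ in the summand $A_k$ of $A$ one gets $\pi_A^1(x)=\sum_{l>k}x\otimes 1_{A_l}$, so the projection of $\pi_A^1$ onto the factor $A_i\otimes A_j$ (with $i<j$) is the algebra map $A\twoheadrightarrow A_i\to A_i\otimes A_j$ whose second map is $x\mapsto x\otimes 1_{A_j}$. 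Using composition and additivity of $\tilde\gamma$ (\S\ref{ss:gamma}(e,f)), the identity $\dim_{A_i}(A_i\otimes A_j)=e_j$, and the fact that the dual of the algebra projection $A\to A_i$ is the inclusion $\iota_i$ of the $i$-th summand, this yields
\begin{displaymath}
\tilde\gamma(\pi_A^1)=\sum_{1\le i<j\le n}\iota_i(e_j)\in\Gamma(A)=\bigoplus_{m=1}^n\Gamma(A_m),
\end{displaymath}
whose $m$-th component is $e_{m+1}+\cdots+e_n$; in particular this component is $0$ for $m=n$ and equals $e_n$ for $m=n-1$. If $\pi_A^1$ were uniform we would have $\tilde\gamma(\pi_A^1)=\lambda\,\eta_A$ for some $\lambda\in k$, so the $m$-th component would be $\lambda e_m$ for every $m$. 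Comparing $m=n$ and $m=n-1$ gives $\lambda e_n=0$ and $e_n=\lambda e_{n-1}$; multiplying the latter by $e_n$ and using $e_{n-1}e_n=e_n$ gives $e_n=\lambda e_n=0$, contradicting $e_n\neq 0$. Hence $n\le 1$, and by the previous paragraph $A\cong\bbone$ (or $A=0$), as claimed.

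The main obstacle is the explicit computation in the third paragraph: tracking the lexicographic idempotent $\tau_A$ acting on $A\otimes A$, disentangling the various $A_i$- and $A$-module structures on the pieces of $A^{(2)}$, and identifying the duals of the projection maps. This is routine but is where the substance of the proof lies; everything else is formal, and one can run the argument with $\pi_A^2$ in place of $\pi_A^1$ equally well.
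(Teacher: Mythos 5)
Your proof is correct and follows essentially the same route as the paper: both rest on Proposition~\ref{prop:finite-oea} and on computing $\tilde{\gamma}_1$ of a finite lexicographic sum to see it cannot be a scalar multiple of the unit when $n\ge 2$. The only difference is bookkeeping: the paper first treats $A=\bbone^{\oplus n}$ (where $\tilde{\gamma}_1(A)=(0,1,\dots,n-1)$) and reduces the general case to it by passing to $\Mod_{A_n}$, whereas you carry the idempotents $e_i$ through the computation and multiply by $e_n$ at the end.
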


\begin{proof}
Let $A$ be a finite-like Delannic algebra. First suppose $A$ is a lexicographic sum $\bbone^{\oplus n}$. Then $\Gamma(A)=R^{\oplus n}$ where $R=\Gamma(\bbone)$. One easily sees that $\tilde{\gamma}_1(A)$ is the element $(0, 1, 2, \ldots, n-1)$ of $R^{\oplus n}$. Since this belongs to $k$, we have $n=0$ or $n=1$, as required.

We now treat the general case. By Proposition~\ref{prop:finite-oea}, we have a lexicographic sum $A=A_1 \oplus \cdots \oplus A_n$ for some $n$, where $A_1$ is a direct factor of $\bbone$ and $A_i$ is a direct factor of $A_{i-1}$ for $2 \le i \le n$. Write $\bbone=B \oplus C$ where $B=A_n$, so that $\fT$ decomposes as $\Mod_B \oplus \Mod_C$. Delannic algebras are clearly preserved under tensor functors, so the image of $A$ in $\Mod_B$ is Delannic. However, this is a sum of $n$ copies of the unit algebra $B$, and so $n \le 1$ by the previous case. We thus see that $A$ is a direct factor of $\bbone$. Since $\dim(A)$ belongs to $k$, we must have $A=0$ or $A=\bbone$.
\end{proof}

The next proposition is the reason Delannic algebras are important.

\begin{proposition} \label{prop:delannic-compatible}
Let $A$ be an ordered \'etale algebra and let $\Psi \colon \bS(\GG) \to \Et(\fT)^{\op}$ be the associated left-exact additive functor (Proposition~\ref{prop:oea-univ-prop}). Then $A$ is Delannic if and only if $\Psi$ is compatible with one of the measures $\mu_i$ for $1 \le i \le 4$. Moreover if $A \ne 0$ then $i$ is unique (if it exists).
\end{proposition}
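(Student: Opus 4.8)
The plan is to translate the abstract compatibility condition for $\Psi$ into concrete numerical conditions on the maps $\eta_A$, $\pi_A^1$, $\pi_A^2$, using the $\Theta$-generators identified in Proposition~\ref{prop:del-theta-gen}. First I would recall that under the equivalence of Proposition~\ref{prop:oea-univ-prop}, the functor $\Psi$ sends $\bR^{(n)}$ to $A^{(n)}$ and sends the projection maps $p_{n,i}$ to the obvious projections among the $A^{(n)}$. In particular $\Psi(p_{1,1}) \colon \bbone \to A$ is the unit $\eta_A$, while $\Psi(p_{2,1})$ and $\Psi(p_{2,2})$ are (up to the identification $A^{(2)}=\tau_A(A\otimes A)$) the maps $\pi_A^1$ and $\pi_A^2$. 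Thus the statement ``$A$ is Delannic,'' which is exactly the assertion that $\eta_A$, $\pi_A^1$, $\pi_A^2$ are uniform, is literally the assertion that $\Psi(f)$ is uniform for each $f$ in the $\Theta$-generating set $S=\{p_{1,1}, p_{2,1}, p_{2,2}\}$.

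With that dictionary in place, here is the forward direction. Suppose $\Psi$ is compatible with some $\mu_i$. If $\Psi(\bR)=A=0$ then $A$ is Delannic by fiat, so assume $A\ne 0$; I would like to conclude that $\Psi(X)\ne 0$ for every transitive $\GG$-set $X$, so that clause (a) of Definition~\ref{defn:compatible} never triggers and we genuinely have $\gamma(\Psi(f))=\mu_i(f)$ for all $f\in\Sigma$, in particular for $f\in S$, giving uniformity and hence Delannicity. The needed nonvanishing is precisely the faithfulness statement: by Proposition~\ref{prop:oea-univ-prop}, if $A=\Psi(\bR)$ were finite-like the functor would fail to be faithful and some $A^{(n)}$ would vanish. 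Here I expect a small subtlety and this is the main obstacle: if $A$ is finite-like and nonzero, $\Psi$ is not faithful, and a priori compatibility with $\mu_i$ together with the clause-(a) escape hatch might still hold. To handle this I would invoke Proposition~\ref{prop:finite-delannic} in reverse spirit: a finite-like nonzero ordered \'etale algebra that is Delannic must be $\bbone$, and one checks directly that $\cC(\bR)=\bbone$ is only compatible with... actually the cleanest route is to observe that a finite-like $A=\bbone^{\oplus n}$ gives $\tilde\gamma_1(A)=(0,1,\dots,n-1)\in\Gamma(\bbone)^{\oplus n}$, which lies in $k$ only if $n\le 1$; and if $n=1$, i.e. $A=\bbone$, then $\eta_A=\id$ is uniform with $\gamma=1$, $\pi_A^1=\pi_A^2$ is the map $\bbone\to\bbone^{(2)}=0$, and one reads off that the only measure compatible with this is $\mu_4$ (since $\mu_4(p_{1,1})=1$ and $\mu_4(p_{2,1})=\mu_4(p_{2,2})=0$, matching $\gamma$ of the zero maps). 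In every case $A$ turns out Delannic.

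For the converse, suppose $A$ is Delannic. If $A=0$ then $\Psi(\bR)=0$, and one checks $\Psi(X)=0$ for all nonempty transitive $X$ (as $\bR^{(n)}\mapsto A^{(n)}=0$ for $n\ge 1$), so $\Psi$ is compatible with any $\mu_i$; but here $A=0$ so uniqueness of $i$ is not claimed. Assume $A\ne 0$. Then $\eta_A$, $\pi_A^1$, $\pi_A^2$ are uniform, so $\Psi(f)$ is uniform for all $f\in S$. I would now want to apply Corollary~\ref{cor:compatible}, but that corollary presupposes $\Psi$ faithful. So I split into cases. If $A$ is infinite-like, $\Psi$ is faithful by Proposition~\ref{prop:oea-univ-prop}, and by Corollary~\ref{cor:compatible} $\Psi$ is compatible with the measure $\nu$ defined by $\nu(f)=\gamma(\Psi(f))$ on $S$ — and one must check this $\nu$ is actually one of $\mu_1,\dots,\mu_4$; since $\Theta(\GG)\cong\bZ^4$ has exactly four $k$-valued measures and $\nu$ is a genuine $k$-valued measure (Proposition~\ref{prop:gamma-meas1}), it must equal some $\mu_i$. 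If $A$ is finite-like and nonzero, Proposition~\ref{prop:finite-delannic} forces $A=\bbone$, and the explicit computation above shows $\Psi$ is compatible with $\mu_4$ (and only $\mu_4$). For uniqueness when $A\ne 0$: two distinct $\mu_i$, $\mu_j$ disagree on one of $p_{1,1}, p_{2,1}, p_{2,2}$ by the table, hence on one of $\eta_A$, $\pi_A^1$, $\pi_A^2$; but compatibility with $\mu_i$ pins $\gamma(\Psi(p))=\mu_i(p)$ for those $p$ (using $\Psi(X)\ne 0$ for $X=\bR,\bR^{(2)}$, which holds since $A\ne 0$ implies $A\ne 0$ and $A^{(2)}$ may or may not vanish — if $A^{(2)}=0$ then $\gamma$ of $\pi_A^i$ is $0$, still a determined value), so at most one $i$ can work. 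The genuinely delicate point throughout is keeping track of when clause (a) of compatibility is vacuous versus active, i.e. controlling exactly which $\Psi(X)$ vanish; I expect that bookkeeping, rather than any deep idea, to be where the care is needed, and Propositions~\ref{prop:finite-oea} and~\ref{prop:finite-delannic} are the tools that make it routine.
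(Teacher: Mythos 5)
Your proof is correct and follows essentially the same route as the paper: identify $\Psi(p_{1,1})=\eta_A$ and $\Psi(p_{2,i})=\pi_A^i$, use the $\Theta$-generators of Proposition~\ref{prop:del-theta-gen} together with \S\ref{ss:crit} in the infinite-like case, and dispose of the finite-like case via Proposition~\ref{prop:finite-delannic}. Two small remarks. First, the ``main obstacle'' you flag in the forward direction is not actually there: clause (a) of Definition~\ref{defn:compatible} can only trigger for a map whose \emph{target} $X$ has $\Psi(X)=0$, and the targets of the three generators are $\bone$ and $\bR$, with $\Psi(\bone)=\bbone\ne 0$ and $\Psi(\bR)=A\ne 0$; so uniformity of $\eta_A,\pi_A^1,\pi_A^2$ follows immediately from compatibility, with no need to analyze finite-like algebras. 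Second, your claim that for $A=0$ the functor $\Psi$ is compatible with \emph{any} $\mu_i$ is false: maps with target $\bone$ still impose conditions, e.g.\ $f=p_{1,1}$ forces $\mu(\bR)=\gamma(\eta_0)=\dim(0)=0$, so $A=0$ is compatible exactly with $\mu_2$ and $\mu_3$. This slip is harmless for the proposition (which only asserts existence of a compatible measure when $A=0$ and claims uniqueness only for $A\ne 0$), but the paper records the correct list.
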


\begin{proof}
First, observe that
\begin{displaymath}
\Psi(p_{1,1})=\eta_A, \qquad \Psi(p_{2,i})=\pi_A^i.
\end{displaymath}
Suppose that $\Psi$ is compatible with some measure. If $A \ne 0$ then, by definition, the above maps are uniform, and so $A$ is Delannic; of course, if $A=0$ then $A$ is Delannic too. Now suppose that $A$ is Delannic and infinite-like. Then $\Psi$ is faithful (Proposition~\ref{prop:oea-univ-prop}). Since $p_{1,1}$ and the $p_{2,1}$ are $\Theta$-generators for $\GG$ (Proposition~\ref{prop:del-theta-gen}) and sent to uniform maps, it follows that $\Psi$ is compatible with a unique measure (\S \ref{ss:crit}).

Finally, suppose $A$ is Delannic and finite-like. There are two cases: $A=0$ or $A=\bbone$ (Proposition~\ref{prop:finite-delannic}). In the first case, $A$ is compatible with $\mu_2$ and $\mu_3$ (and no other measures), while in the second case $A$ is compatible with $\mu_4$ (and no other measures).
\end{proof}

Let $A$ be a non-zero Delannic algebra. Then $A$ is compatible with exactly one of the $\mu_i$, and we say that $A$ has \defn{type $i$}. We have the following characterization of the various types (compare with the table of measures in \S \ref{ss:delcat}):
\begin{center}
\begin{tabular}{r|rrrr}
type & 1 & 2 & 3 & 4 \\
\hline
$\dim(A)$     & $-1$ &  $0$ &  $0$ & \phantom{$-$}$1$ \\
$\gamma_1(A)$ & $-1$ & $-1$ &  $0$ & $0$ \\
$\gamma_2(A)$ & $-1$ &  $0$ & $-1$ & $0$
\end{tabular}
\end{center}
Note in particular that if $A$ is Delannic then $\dim(A)$ is $\pm 1$ or 0, and that if $\dim(A) \ne 0$ then one can determine the type of $A$ solely from $\dim(A)$. The zero algebra is considered to have both type~2 and type~3, as it is compatible with $\mu_2$ and $\mu_3$. The unit algebra $\bbone$ is Delannic of type~4. Essentially by definition, the basic algebra $\cC(\bR)$ in $\fC_i$ is Delannic of type $i$. We let $\Del_i(\fT)$ be the full subcategory of $\OrdEt(\fT)$ spanned by Delannic algebras of type $i$.

\subsection{The universal property} \label{ss:delmap}

The following is our mapping property for Delannoy categories. It is one of the main results of this paper.

\begin{theorem} \label{thm:delmap}
The functor
\begin{displaymath}
\Fun^{\otimes}(\fC_i, \fT) \to \Del_i(\fT)_{\isom}, \qquad \Phi \mapsto \Phi(\cC(\bR))
\end{displaymath}
is an equivalence of categories.
\end{theorem}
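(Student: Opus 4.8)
The plan is to chain together three equivalences already available in the paper. \textbf{Step 1: reduce to $\uPerm(\GG,\mu_i)$.} Since $\fT$ is Karoubian, the universal property of the Karoubi envelope shows that restriction along the canonical $k$-linear symmetric monoidal inclusion $\uPerm(\GG,\mu_i) \hookrightarrow \fC_i$ induces an equivalence $\Fun^{\otimes}(\fC_i,\fT) \to \Fun^{\otimes}(\uPerm(\GG,\mu_i),\fT)$: every tensor functor on $\uPerm(\GG,\mu_i)$ extends, uniquely up to unique monoidal isomorphism, over $\fC_i$, because $\fC_i$ is obtained by splitting idempotents and every idempotent in $\fT$ already splits. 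Under this equivalence a functor $\Phi$ on $\fC_i$ is sent to its restriction $\Phi_0$, and the object $\Phi(\cC(\bR))$ coincides with $\Phi_0(\cC(\bR))$.

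\textbf{Step 2: apply the general mapping property.} Theorem~\ref{thm:genmap}, applied with $G=\GG$ and $\mu=\mu_i$, says that $\Phi_0 \mapsto \Psi^{\circ}$, where $\Psi(X)=\Phi_0(\cC(X))$, is a fully faithful functor from $\Fun^{\otimes}(\uPerm(\GG,\mu_i),\fT)$ whose essential image is the class of $\Psi^{\circ}$ with $\Psi\colon \bS(\GG)\to\Et(\fT)^{\op}$ additive, left-exact and compatible with $\mu_i$. Thus $\Fun^{\otimes}(\uPerm(\GG,\mu_i),\fT)$ is equivalent to the full subcategory of $\mu_i$-compatible functors in $\LEx^{\oplus}(\bS(\GG),\Et(\fT)^{\op})$ (restricting, here and below, to the subcategories in which the morphisms are isomorphisms; this loses nothing, since monoidal natural transformations out of the rigid category $\uPerm(\GG,\mu_i)$ are automatically invertible, by the remark following Theorem~\ref{thm:genmap}).

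\textbf{Step 3: pass to ordered \'etale algebras.} Since $\Et(\fT)^{\op}$ is lextensive and has complements (Proposition~\ref{prop:etale-cat}), Proposition~\ref{prop:oea-univ-prop} gives an equivalence $\LEx^{\oplus}(\bS(\GG),\Et(\fT)^{\op})\to\OrdEt(\fT)^{\op}$, $\Psi\mapsto\Psi(\bR)$. By Proposition~\ref{prop:delannic-compatible}, a functor $\Psi$ in the source is compatible with $\mu_i$ if and only if the ordered \'etale algebra $\Psi(\bR)$ is Delannic of type $i$; hence the subcategory of $\mu_i$-compatible functors matches $\Del_i(\fT)$. Composing the three equivalences, a tensor functor $\Phi\colon\fC_i\to\fT$ is carried to $\Psi(\bR)=\Phi_0(\cC(\bR))=\Phi(\cC(\bR))$, which is precisely the assignment in the statement, and the composite restricts to an equivalence on the $\isom$-subcategories. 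This proves the theorem.

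Essentially all the mathematical substance has already been carried out (Propositions~\ref{prop:del-theta-gen} and~\ref{prop:delannic-compatible}, together with the order theory of \S\ref{s:delgp}), so the chief difficulty here is bookkeeping: tracking the opposite categories and the $\isom$ decorations, and verifying that the Karoubi reduction of Step~1 is compatible with the symmetric monoidal structures, so that the extension $\fC_i\to\fT$ is genuinely a tensor functor and not merely $k$-linear. A minor point to address is the degenerate case: the zero algebra is declared to have both type~$2$ and type~$3$, so $\Del_2(\fT)$ and $\Del_3(\fT)$ are not disjoint, but this is consistent, because the zero functor $\bS(\GG)\to\Et(\fT)^{\op}$ is compatible with both $\mu_2$ and $\mu_3$ and yields a well-defined (zero) tensor functor from either $\fC_2$ or $\fC_3$.
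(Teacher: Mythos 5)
Your proposal is correct and follows exactly the route the paper takes: Theorem~\ref{thm:genmap} combined with Propositions~\ref{prop:oea-univ-prop} and~\ref{prop:delannic-compatible}. The only addition is your explicit Step~1 (extending tensor functors over the Karoubi envelope $\fC_i=\uPerm(\GG,\mu_i)^{\rm kar}$ using that $\fT$ is Karoubian), a standard point the paper leaves implicit, and which you handle correctly.
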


\begin{proof}
This follows from the general mapping property for oligomorphic tensor categories (Theorem~\ref{thm:genmap}), combined with Propositions~\ref{prop:oea-univ-prop} and~\ref{prop:delannic-compatible}.
\end{proof}

We also have the following companion result.

\begin{proposition} \label{prop:delmap-ff}
Let $\Phi \colon \fC_i \to \fT$ be a tensor functor, and let $A=\Phi(\cC(\bR))$ be the associated Delannic algebra.
\begin{enumerate}
\item $\Phi$ is faithful if and only if $A$ is not 0 or $\bbone$.
\item$\Phi$ is full if and only if $\dim \Gamma(A^{(n)}) \le 1$ for all $n$.
\end{enumerate}
\end{proposition}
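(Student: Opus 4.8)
The plan is to obtain both statements by feeding the general fullness and faithfulness criteria of \S\ref{s:genmap} through the combinatorial dictionary of \S\ref{s:delgp}. Let $\Psi \colon \bS(\GG) \to \Et(\fT)^{\op}$ be the additive left-exact functor attached to $\Phi$ via Theorem~\ref{thm:delmap}; note that passing to the Karoubi envelope alters neither fullness nor faithfulness of a tensor functor into a Karoubian category, so the results of \S\ref{s:genmap}, stated for $\uPerm(\GG,\mu_i)$, apply to $\Phi$ verbatim. By construction $\Psi(\bR) = \Phi(\cC(\bR)) = A$, and by Proposition~\ref{prop:oea-univ-prop} together with the explicit description of $\Psi_A$ in \S\ref{ss:delgp-univ} we have $\Psi(\bR^{(n)}) = A^{(n)}$ for every $n \ge 0$.

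For part~(a): Proposition~\ref{prop:gen-faithful} says $\Phi$ is faithful if and only if $\Psi$ is, and Proposition~\ref{prop:comb-faithful} says $\Psi$ is faithful if and only if $A = \Psi(\bR)$ is infinite-like, i.e.\ $A^{(n)} \ne 0$ for all $n$. Thus it only remains to check that a Delannic algebra is finite-like precisely when it is $0$ or $\bbone$. The forward implication is Proposition~\ref{prop:finite-delannic}. Conversely, $0$ is trivially finite-like, and $\bbone$ is finite-like because it is the terminal object of $\Et(\fT)^{\op}$, so its diagonal is an isomorphism and hence $\bbone^{(2)} = 0$ (cf.\ \S\ref{ss:ord-constr}(c)). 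Therefore $\Phi$ is faithful if and only if $A \notin \{0, \bbone\}$.

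For part~(b): Proposition~\ref{prop:gen-full} says $\Phi$ is full if and only if $\dim \Gamma(\Psi(X)) \le 1$ for every transitive $\GG$-set $X$. Since the transitive $\GG$-sets are exactly the $\bR^{(n)}$ (\S\ref{ss:delgp}) and $\Psi(\bR^{(n)}) = A^{(n)}$, this condition is precisely $\dim \Gamma(A^{(n)}) \le 1$ for all $n \ge 0$, as claimed.

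I expect no genuine obstacle: the entire content is packaged in the cited results, and the one hands-on point is the elementary observation $\bbone^{(2)} = 0$, which is immediate from the fact that $\bbone$ is (sub)terminal in $\Et(\fT)^{\op}$.
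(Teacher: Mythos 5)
Your proposal is correct and follows essentially the same route as the paper: part (a) chains Proposition~\ref{prop:gen-faithful} with the infinite-like criterion and Proposition~\ref{prop:finite-delannic}, and part (b) is a direct application of Proposition~\ref{prop:gen-full} using $\Psi(\bR^{(n)})=A^{(n)}$. The only differences are that you spell out the (easy) converse direction of the finite-like equivalence and the Karoubi-envelope reduction, both of which the paper leaves implicit.
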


\begin{proof}
(a) $\Phi$ is faithful if and only if the associated functor $\Psi \colon \bS(\GG) \to \Et(\fT)^{\op}$ is faithful (Proposition~\ref{prop:gen-faithful}). The functor $\Psi$ is faithful if and only if $A$ is infinite-like (Proposition~\ref{prop:oea-univ-prop}). This, in turn, is equivalent to the stated condition (Proposition~\ref{prop:finite-delannic}).

(b) This follows from Proposition~\ref{prop:gen-full}.
\end{proof}

\begin{remark}
Theorem~\ref{thm:delmap} gives tensor functors
\begin{align*}
\fC_2 &\to \Vec & \fC_3 &\to \Vec & \fC_4 &\to \Vec \\
\cC_2(\bR) &\mapsto 0 & \cC_3(\bR) &\mapsto 0 & \cC_4(\bR) &\mapsto \bbone.
\end{align*}
These functors are manifestly full and essentially surjective, and therefore realize $\Vec$ as the semi-simplification of these $\fC_i$'s by \cite{BEEO} Lemma 2.6.
\end{remark}

\subsection{Operations on Delannic algebras} \label{ss:delop}

We now discuss some ways of constructing Delannic algebras. First, a simple observation: if $A$ is a Delannic algebra then so is $A$ equipped with the reverse order (\S \ref{ss:ord-constr}(a)). This preserves types~1 and~4, and interchanges types~2 and~3. This yields the following result.

\begin{proposition}
There is an equivalence of pre-Tannakian categories $\fC_2 \to \fC_3$.
\end{proposition}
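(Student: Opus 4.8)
The plan is to produce the equivalence directly from the universal property of the Delannoy categories (Theorem~\ref{thm:delmap}), using the reverse-order operation. In $\fC_3$ the basic algebra $\cC(\bR)$ is Delannic of type~3, so equipping it with the reverse order (\S\ref{ss:ord-constr}(a)) yields a Delannic algebra of type~2 in $\fC_3$. By Theorem~\ref{thm:delmap} applied with $i=2$ and target $\fT=\fC_3$, there is a tensor functor
\begin{displaymath}
\Phi\colon \fC_2\longrightarrow \fC_3,
\end{displaymath}
unique up to isomorphism, carrying $\cC(\bR)$ with its standard order to $\cC(\bR)$ with the reverse order. Exchanging the roles of $\fC_2$ and $\fC_3$ (now using that $\cC(\bR)$ in $\fC_2$ is type~2, so its reverse order is type~3) produces in the same way a tensor functor $\Psi\colon \fC_3\to\fC_2$ sending $\cC(\bR)$ with its standard order to $\cC(\bR)$ with the reverse order.

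Next I would check that $\Phi$ and $\Psi$ are mutually quasi-inverse. Consider $\Psi\circ\Phi\colon\fC_2\to\fC_2$. By Theorem~\ref{thm:delmap} a tensor functor $\fC_2\to\fC_2$ is determined up to isomorphism by the ordered \'etale algebra it assigns to $\cC(\bR)$, so it suffices to compute $(\Psi\circ\Phi)(\cC(\bR))$. Writing $R$ for the standard order, $\Phi(\cC(\bR))$ is $(\cC(\bR),R^{\op})$. Since $\Psi$ is a tensor functor, the induced functor on \'etale algebras is additive and left-exact (Proposition~\ref{prop:etale-cat}), and formation of the opposite relation uses only the switching morphism and image factorizations, hence commutes with $\Psi$ (this is contained in Proposition~\ref{prop:ord-func}); thus $\Psi(\cC(\bR),R^{\op})=(\Psi\cC(\bR),(\Psi R)^{\op})$. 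But $\Psi$ sends $(\cC(\bR),R)$ to $(\cC(\bR),R^{\op})$ by construction, so $\Psi\cC(\bR)\cong\cC(\bR)$ and $\Psi R\cong R^{\op}$, giving $(\Psi R)^{\op}\cong (R^{\op})^{\op}=R$. Hence $(\Psi\circ\Phi)(\cC(\bR))\cong(\cC(\bR),R)$, which is also the algebra attached to $\id_{\fC_2}$, so $\Psi\circ\Phi\cong\id_{\fC_2}$ because the correspondence of Theorem~\ref{thm:delmap} is (fully faithful, being) an equivalence. The identical argument in $\fC_3$ gives $\Phi\circ\Psi\cong\id_{\fC_3}$, so $\Phi$ is an equivalence of tensor categories, which establishes the claim.

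I do not expect a genuine obstacle: the whole argument is a formal manipulation of the universal property. The one point requiring a moment's care is the identity $\Psi(R^{\op})\cong(\Psi R)^{\op}$ together with the involutivity $(R^{\op})^{\op}=R$, which is exactly what closes the loop on the composites; both are immediate once one recalls that $R^{\op}$ is the image of $R$ under the symmetry of $\cC(\bR)\otimes\cC(\bR)$ and that tensor functors preserve images and the symmetry. (Alternatively, one can phrase the proof Yoneda-style: Theorem~\ref{thm:delmap} and the reverse-order involution together give an equivalence $\Fun^{\otimes}(\fC_2,-)\simeq\Fun^{\otimes}(\fC_3,-)$, natural in the Karoubian tensor category in the slot, which yields $\fC_3\simeq\fC_2$; I would use whichever formulation reads more cleanly.)
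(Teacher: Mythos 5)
Your proposal is correct and follows the paper's argument exactly: reverse the order on $\cC_3(\bR)$ to get a type~2 Delannic algebra, invoke Theorem~\ref{thm:delmap} in both directions, and use the universal property again (via involutivity of the reverse-order operation) to see the composites are isomorphic to the identities. The extra detail you supply on $\Psi(R^{\op})\cong(\Psi R)^{\op}$ is exactly the point the paper leaves implicit.
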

\begin{proof}
Equipping $\cC_3(\bR)$ with its reverse order gives a type~2 algebra in $\fC_3$, and thus a tensor functor $\fC_2\to\fC_3$ via Theorem~\ref{thm:delmap}. By reversing the order  of $\cC_2(\bR)$ we get a tensor functor in the other direction and Theorem~\ref{thm:delmap} shows that they are mutually inverse.
\end{proof}

\begin{remark}
For $\fC_1$ and $\fC_4$ reversing the order instead induces a non-trivial auto-equi\-valence $\fC_i \to \fC_i$.  For $\fC_1$ this autoequivalence exchanges the simple objects $L_\wa$ and $L_\wb$; see \cite[Remark~4.17]{line}.
\end{remark}

Next we turn to lexicographic sum. Endow the set $\{1, 2, 3, 4\}$ with a partially defined binary operation $+$, as follows:
\begin{center}
\begin{tabular}{r|rrrr}
& 1 & 2 & 3 & 4 \\
\hline
1 &   &   & 1 & 2 \\
2 & 1 & 2 &   &   \\
3 &   &   & 3 & 4 \\
4 & 3 & 4 &   &
\end{tabular}
\end{center}
The first parameter corresponds to the row and the second to the column, e.g., $1+3=1$ but $3+1$ is undefined. The operation $+$ is associative (when defined), but not commutative. It has a natural geometric interpretation. Think of 1 as an open interval, 2 as a half-open interval with its right endpoint, 3 as a half-open interval with its left endpoint, and 4 as a closed interval. Then $i+j = \ell$ means that $\ell$ can be decomposed into a left piece of type $i$ and a right piece of type $j$.

\begin{proposition} \label{prop:delannic-sum}
Let $A$ and $B$ be non-zero ordered \'etale algebras, with lexicographic sum $A \oplus B$.
\begin{enumerate}
\item If $A$, $B$, and $A \oplus B$ are Delannic of types $i$, $j$, and $\ell$ then $i+j = \ell$.
\item If $A$ and $B$ are Delannic of types $i$ and $j$ and $i+j = \ell$ then $A \oplus B$ is Delannic of type $\ell$.
\item If $A \oplus B$ is Delannic and either $\dim(A)$ or $\dim(B)$ belongs to $k$ then both $A$ and $B$ are Delannic.
\end{enumerate}
\end{proposition}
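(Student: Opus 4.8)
\emph{Approach.} All three parts reduce to one structural computation: expressing the ordered \'etale algebra $(A\oplus B)^{(2)}$ and the two projection maps $\pi^1,\pi^2\colon A\oplus B\to (A\oplus B)^{(2)}$ in terms of the analogous data for $A$ and $B$. First I would show that, under the algebra decomposition $A\oplus B=A\times B$, there is a canonical identification
\[
(A\oplus B)^{(2)}\;\cong\;A^{(2)}\oplus(A\otimes B)\oplus B^{(2)}
\]
of \'etale algebras. This is just the lexicographic-sum recipe of \S\ref{ss:ord-constr}(d) read in $\Et(\fT)^{\op}$, together with the way the defining relation decomposes over a coproduct (cf.\ \S\ref{ss:ord-power}): the $A^{(2)}$ and $B^{(2)}$ strata carry the orders pulled back from $A$ and $B$, while the $A\otimes B$ stratum is where every point of $A$ precedes every point of $B$. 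Under this identification I would check that the projection ``remembering the second coordinate'' restricts, on the $A$-summand of the source, to $\pi^1_A$ (into $A^{(2)}$) and, on the $B$-summand, to the structure map $B\to A\otimes B$ followed by $A\otimes B\hookrightarrow (A\oplus B)^{(2)}$; symmetrically for the projection remembering the first coordinate. This bookkeeping in $\Et(\fT)^{\op}$ is the only delicate point, and I expect it to be the main obstacle.

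\emph{Invariant formulas.} Granting the above, \S\ref{ss:gamma} gives everything at once. Additivity of $\tilde{\gamma}$ over coproduct decompositions of the target (\S\ref{ss:gamma}(f)), the identification of $\tilde{\gamma}$ of a structure map $A\to A\otimes B$ with the image of $\dim(B)$ in $\Gamma(A)$ (\S\ref{ss:gamma}(c)), and $\dim(A\oplus B)=\dim A+\dim B$ yield, in $\Gamma(A\oplus B)=\Gamma(A)\oplus\Gamma(B)$,
\[
\tilde{\gamma}(\eta_{A\oplus B})=\dim A+\dim B,\qquad
\tilde{\gamma}_1(A\oplus B)=\bigl(\tilde{\gamma}_1(A),\ \dim A+\tilde{\gamma}_1(B)\bigr),\qquad
\tilde{\gamma}_2(A\oplus B)=\bigl(\tilde{\gamma}_2(A)+\dim B,\ \tilde{\gamma}_2(B)\bigr),
\]
where the two off-diagonal $\dim$'s are interpreted as images under the relevant structure maps. (The corrections are geometrically transparent: over a point of $A$ the ``first-coordinate'' fibre of $(A\oplus B)^{(2)}$ sees all of $B$, and over a point of $B$ the ``second-coordinate'' fibre sees all of $A$.) Here I am matching the convention $\pi^1=\Psi(p_{2,1})$, $\pi^2=\Psi(p_{2,2})$ of Proposition~\ref{prop:delannic-compatible}.

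\emph{Parts (a) and (b).} For (a): if $A$, $B$ and $A\oplus B$ are Delannic of types $i$, $j$, $\ell$, every entry of the three formulas lies in $k$, and since $k$ sits diagonally in $\Gamma(A)\oplus\Gamma(B)$ the two entries of each of the last two must agree. Reading off, $\dim(\ell)=\dim(i)+\dim(j)$, $\gamma_1(\ell)=\gamma_1(i)=\dim(i)+\gamma_1(j)$ and $\gamma_2(\ell)=\gamma_2(j)=\gamma_2(i)+\dim(j)$. Comparing with the tables of \S\ref{ss:delcat}--\S\ref{ss:delannic}, these equalities hold exactly for the pairs $(i,j)$ on which $+$ is defined, and they then force $\ell=i+j$. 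For (b): conversely, if $A$ and $B$ are Delannic of types $i$ and $j$ with $i+j$ defined, the finite check against the same tables shows the three formulas produce scalars; hence $\eta_{A\oplus B}$, $\pi^1_{A\oplus B}$, $\pi^2_{A\oplus B}$ are uniform, so the non-zero algebra $A\oplus B$ is Delannic, with invariant triple $(\dim(i)+\dim(j),\,\gamma_1(i),\,\gamma_2(j))$, which is precisely that of type $i+j$.

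\emph{Part (c).} Assume $A\oplus B$ is Delannic. Then $\dim A+\dim B=\dim(A\oplus B)\in k$, so $\dim A\in k$ if and only if $\dim B\in k$; hence the hypothesis forces $\dim A,\dim B\in k$, and in particular $\eta_A$ and $\eta_B$ are uniform. Now $\tilde{\gamma}_1(A\oplus B)\in k$ forces its $\Gamma(A)$-entry $\tilde{\gamma}_1(A)$ into $k$ and, using $\dim A\in k$, its $\Gamma(B)$-entry $\dim A+\tilde{\gamma}_1(B)$ into $k$, so $\tilde{\gamma}_1(B)\in k$; symmetrically, $\tilde{\gamma}_2(A\oplus B)\in k$ together with $\dim B\in k$ gives $\tilde{\gamma}_2(A)\in k$ and $\tilde{\gamma}_2(B)\in k$. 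Thus $\pi^1_A,\pi^2_A,\pi^1_B,\pi^2_B$ are all uniform, so $A$ and $B$ are Delannic, which completes the plan.
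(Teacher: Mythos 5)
Your proposal is correct and follows essentially the same route as the paper: decompose $(A\oplus B)^{(2)}$ as $A^{(2)}\oplus(A\otimes B)\oplus B^{(2)}$, compute $\tilde{\gamma}_1$ and $\tilde{\gamma}_2$ of the lexicographic sum componentwise in $\Gamma(A)\oplus\Gamma(B)$ using the properties of $\tilde{\gamma}$ from \S\ref{ss:gamma}, and then reduce (a)--(c) to a finite check against the type table (the paper isolates exactly this computation as Lemma~\ref{lem:gamma-lex-sum}). Note only that your formulas carry the $\dim$-corrections in the opposite components from the paper's lemma because you index $\pi^i=\Psi(p_{2,i})$ while the lemma's proof takes $\pi^1$ dual to the first-coordinate projection; under your explicitly stated convention (which is the one matching the printed type table) the formulas and the subsequent case check are correct.
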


Before giving the proof we require a lemma.

\begin{lemma} \label{lem:gamma-lex-sum}
Let $A$ and $B$ be ordered \'etale algebras, with lexicographic sum $A \oplus B$. Then
\begin{displaymath}
\tilde{\gamma}_1(A \oplus B)=(\tilde{\gamma}_1(A)+\eta_A(\dim(B)), \tilde{\gamma}_1(B)), \quad
\tilde{\gamma}_2(A \oplus B)=(\tilde{\gamma}_2(A), \eta_B(\dim(A))+\tilde{\gamma}_2(B))
\end{displaymath}
in $\Gamma(A) \oplus \Gamma(B)$. Here $\eta_A \colon \Gamma(\bbone) \to \Gamma(A)$ is the map induced by the unit $\eta_A \colon \bbone \to A$.
\end{lemma}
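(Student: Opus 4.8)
The plan is to make the algebra $(A\oplus B)^{(2)}$ and the two projection maps $\pi^1_{A\oplus B},\pi^2_{A\oplus B}\colon A\oplus B\to(A\oplus B)^{(2)}$ completely explicit, and then read off the answer from the $\tilde\gamma$-calculus of \S\ref{ss:gamma}.

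First I would identify $(A\oplus B)^{(2)}$. Working in the lextensive category $\cS=\Et(\fT)^{\op}$ and writing $X,Y$ for the objects corresponding to $A,B$ — so that $A\oplus B\leftrightarrow X\amalg Y$, $A\otimes B\leftrightarrow X\times Y$, $A^{(2)}\leftrightarrow X^{(2)}$, $B^{(2)}\leftrightarrow Y^{(2)}$ — the functor-of-points description of the lexicographic sum (\S\ref{ss:ord-constr}(d)) shows that, in the coproduct decomposition $(X\amalg Y)^2=(X\times X)\amalg(X\times Y)\amalg(Y\times X)\amalg(Y\times Y)$, a $T$-point $(a,b)$ satisfies $a<b$ exactly on the pieces $X^{(2)}$, $X\times Y$, and $Y^{(2)}$. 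Hence
\begin{displaymath}
(A\oplus B)^{(2)}=A^{(2)}\oplus(A\otimes B)\oplus B^{(2)}.
\end{displaymath}
Tracing the first and second projections of $(X\amalg Y)^2$ through this identification, one finds that, with $\mathrm{pr}_A\colon A\oplus B\to A$ and $\mathrm{pr}_B\colon A\oplus B\to B$ the structure maps of the product algebra and $i_1,i_2$ the maps $A,B\to A\otimes B$ sending $x$ to $x\otimes 1$ and to $1\otimes x$, the map $\pi^1_{A\oplus B}$ has components $\pi_A^1\circ\mathrm{pr}_A$, $i_1\circ\mathrm{pr}_A$, $\pi_B^1\circ\mathrm{pr}_B$ into the three summands, while $\pi^2_{A\oplus B}$ has components $\pi_A^2\circ\mathrm{pr}_A$, $i_2\circ\mathrm{pr}_B$, $\pi_B^2\circ\mathrm{pr}_B$.

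The rest is a formal computation. By \S\ref{ss:gamma}(f) (iterated over the three summands),
\begin{displaymath}
\tilde\gamma_1(A\oplus B)=\tilde\gamma(\pi_A^1\circ\mathrm{pr}_A)+\tilde\gamma(i_1\circ\mathrm{pr}_A)+\tilde\gamma(\pi_B^1\circ\mathrm{pr}_B).
\end{displaymath}
I would evaluate each term with \S\ref{ss:gamma}(e): the first equals $\mathrm{pr}_A^\vee(\tilde\gamma_1(A))$, the third equals $\mathrm{pr}_B^\vee(\tilde\gamma_1(B))$, and the middle one equals $\mathrm{pr}_A^\vee(\tilde\gamma(i_1))$ where $\tilde\gamma(i_1)=\dim_A(A\otimes B)=\eta_A(\dim B)$ by \S\ref{ss:gamma}(c,d), since $A\otimes B$ is the base change of $B$ along $\bbone\to A$. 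Now $\mathrm{pr}_A^\vee\colon A\to A\oplus B$ is the inclusion of the first summand and $\mathrm{pr}_B^\vee$ the inclusion of the second — this is the standard fact that the trace on a product of \'etale algebras is $\epsilon_{A\oplus B}(u,v)=\epsilon_A(u)+\epsilon_B(v)$ — so the three terms are $(\tilde\gamma_1(A),0)$, $(\eta_A(\dim B),0)$, and $(0,\tilde\gamma_1(B))$, and their sum is the claimed expression for $\tilde\gamma_1(A\oplus B)$. The computation of $\tilde\gamma_2(A\oplus B)$ is the mirror image, using $\tilde\gamma(i_2)=\dim_B(A\otimes B)=\eta_B(\dim A)$.

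The only point that takes any care is the first step: getting the summand decomposition of $(A\oplus B)^{(2)}$ and, above all, the components of $\pi^1_{A\oplus B}$ and $\pi^2_{A\oplus B}$ under it right — this involves keeping the $\Et(\fT)$ versus $\Et(\fT)^{\op}$ bookkeeping straight and remembering that the dual maps $\mathrm{pr}_A^\vee,\mathrm{pr}_B^\vee$ are not algebra maps. After that there is no real obstacle; everything is manipulation of the identities already set up in \S\ref{ss:gamma}.
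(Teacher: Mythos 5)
Your proposal is correct and follows essentially the same route as the paper: identify $(A\oplus B)^{(2)}$ with $A^{(2)}\oplus(A\otimes B)\oplus B^{(2)}$ via the functor-of-points description of the lexicographic sum, read off the components of $\pi^1_{A\oplus B}$ and $\pi^2_{A\oplus B}$, and conclude with the $\tilde\gamma$-calculus (the base-change identity $\tilde\gamma(i)=\eta_A(\dim B)$ being the same key input). The only difference is that you spell out the final bookkeeping via \S\ref{ss:gamma}(e,f) and the identification of $\mathrm{pr}_A^\vee,\mathrm{pr}_B^\vee$ with the summand inclusions, which the paper leaves implicit.
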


\begin{proof}
We first make a general observation. Let $\cS$ be a lextensive category, and let $X$ and $Y$ be ordered objects in $\cS$ with lexicographic sum $X \amalg Y$. Consider the first projection
\begin{displaymath}
X^{(2)} \amalg (X \times Y) \amalg Y^{(2)} \to (X \amalg Y)^{(2)} \to X \amalg Y.
\end{displaymath}
On $X^{(2)}$, this is the first projection onto $X$; on $X \times Y$, it is the projection onto $X$; and on $Y^{(2)}$ it is the first projection. Applying this with $\cS=\Et(\fT)^{\op}$ and $X=A$ and $Y=B$, we see that the map
\begin{displaymath}
\pi^1_{A \oplus B} \colon A \oplus B \to A^{(2)} \oplus (A \otimes B) \oplus B^{(2)}
\end{displaymath}
is given by
\begin{displaymath}
\pi^1_{A \oplus B}(a, b) = (\pi_A^1(a), i(a), \pi_B^1(b)),
\end{displaymath}
where $i \colon A \to A \otimes B$ is the natural map. It follows that
\begin{displaymath}
\tilde{\gamma}(\pi^1_{A \oplus B}) = (\tilde{\gamma}(\pi_A^1)+\tilde{\gamma}(i), \tilde{\gamma}(\pi^1_B)).
\end{displaymath}
Since $\tilde{\gamma}(i)=\eta_A(\dim(B))$ by base change \S \ref{ss:gamma}(d), the formula for $\tilde{\gamma}_1(A \oplus B)$ follows. The other formula is similar.
\end{proof}

\begin{proof}[Proof of Proposition~\ref{prop:delannic-sum}]
(b) We prove the case $1+4 = 2$; the others are similar. Thus suppose $A$ has type~1 and $B$ has type~4. By Lemma~\ref{lem:gamma-lex-sum}, we have
\begin{displaymath}
\tilde{\gamma}_1(A \oplus B) = (-1+1, 0)=0, \qquad
\tilde{\gamma}_2(A \oplus B) = (-1, -1+0) = -1,
\end{displaymath}
and so we see that the two maps $\pi^i_{A \oplus B}$ are uniform with $\gamma_1(A \oplus B)=0$ and $\gamma_2(A \oplus B)=-1$. Since 
\begin{displaymath}
\dim(A \oplus B) = \dim(A) + \dim(B) = (-1) + 1 = 0,
\end{displaymath}
we also have that $\eta_{A \oplus B}$ is uniform. The result follows.

(a) Since we have already proved (b), it is now enough to show that if $A$ and $B$ are Delannic of types $i$ and $j$ and no relation $i+j = \ell$ holds then $A \oplus B$ is not Delannic. This is again handled by considering the various cases. We treat the case $i=j=1$. By Lemma~\ref{lem:gamma-lex-sum}, we have
\begin{displaymath}
\tilde{\gamma}_1(A \oplus B) = (-2, -1),
\end{displaymath}
which does not belong to $k \subset \Gamma(A \oplus B)$. Thus $\pi^1_{A \oplus B}$ is not uniform, and so $A \oplus B$ is not Delannic.

(c) Observe that
\begin{displaymath}
\dim(A \oplus B) = \dim(A) + \dim(B)
\end{displaymath}
belongs to $k$ since $A \oplus B$ is Delannic, and so both $\dim(A)$ and $\dim(B)$ belong to $k$. From Lemma~\ref{lem:gamma-lex-sum}, we find
\begin{displaymath}
\tilde{\gamma}_1(A)=\tilde{\gamma}_1(A \oplus B) - \eta_A(\dim(B)), \quad
\tilde{\gamma}_2(A)=\tilde{\gamma}_2(A \oplus B),
\end{displaymath}
which both belong to $k$. Thus $\eta_A$, $\pi_A^1$, and $\pi_A^2$ are uniform, and so $A$ is Delannic. Similarly for $B$.
\end{proof}

We next examine the lexicographic product. Define a binary relation $\times$ on $\{1,2,3,4\}$ by:
\begin{center}
\begin{tabular}{r|rrrr}
& 1 & 2 & 3 & 4 \\
\hline
1 & 4 & 2 & 3 & 1 \\
2 & 3 & 2 & 3 & 2 \\
3 & 2 & 2 & 3 & 3 \\
4 & 1 & 2 & 3 & 4
\end{tabular}
\end{center}
Again, the first parameter corresponds to the row, e.g., $2 \times 3 = 3$ and $3 \times 2 = 2$. The operation $\times$ is associative but not commutative. The element~4 is the identity for $\times$, while the element~1 is an involution. The operation $\times$ distributes over the operation $+$, when the latter is defined. 
\begin{remark}
Intuition for this product comes from looking at $\bR^2$ with the lexicographic order, equipped with the product measure from the two factors. The set of points lexicographically larger than a point $(x,y)$ is a union of a half-line and a half-plane: $\{(a,y) \ | \ a >x \} \cup \{(c,d) \ | \ d > y\}$. Computing the measure of this gives either $-1$ or $0$ depending on the measures on the two factors. This, along with a similar calculation for set of points lexicographically smaller than $(x,y)$ determines the Delannic type of $\bR^2$. 
\end{remark}

\begin{proposition} \label{prop:delannic-prod}
If $A$ and $B$ are Delannic algebras of types $i$ and $j$ then the lexicographic product $A \otimes B$ is Delannic of type $i \times j$.
\end{proposition}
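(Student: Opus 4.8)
The plan is to reduce the type of $A\otimes B$ to three numerical invariants and compute them. Since the four types give four pairwise-distinct triples $(\dim(A),\gamma_1(A),\gamma_2(A))$ in the table of \S\ref{ss:delannic}, the type of a Delannic algebra is determined by that triple; so it suffices to show $A\otimes B$ is Delannic and that its triple matches the one for type $i\times j$. First I would dispose of the degenerate case: if $A=0$ or $B=0$ then $A\otimes B=0$ (the zero object is absorbing for $\otimes$), which is Delannic of type $2$ and $3$ by convention, and one checks from the $\times$-table that when an argument lies in $\{2,3\}$ the product stays in $\{2,3\}$, so the claim holds. Thus I may assume $A,B\neq 0$.

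The heart of the argument is a lemma analogous to Lemma~\ref{lem:gamma-lex-sum}: if $A$ and $B$ are Delannic with lexicographic product $A\otimes B$, then $A\otimes B$ is Delannic, $\dim(A\otimes B)=\dim(A)\dim(B)$, and
\[
\gamma_1(A\otimes B)=\gamma_1(A)\dim(B)+\gamma_1(B),\qquad
\gamma_2(A\otimes B)=\gamma_2(A)\dim(B)+\gamma_2(B).
\]
To prove this I work in $\cS=\Et(\fT)^{\op}$, where the lexicographic product is an order $R_Z$ on $Z=X\times Y$ with $X=A$, $Y=B$. Reorganizing $Z\times Z\cong (X\times X)\times(Y\times Y)$ and decomposing each factor by its total order, one reads off $R_Z=\bigl(R_X\times(Y\times Y)\bigr)\amalg\bigl(\Delta_X\times R_Y\bigr)$, since $(a_1,b_1)<(a_2,b_2)$ iff $a_1<a_2$, or $a_1=a_2$ and $b_1<b_2$. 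Translating back to $\Et(\fT)$, this reads $(A\otimes B)^{(2)}\cong\bigl(A^{(2)}\otimes B^{\otimes 2}\bigr)\oplus\bigl(A\otimes B^{(2)}\bigr)$, and the two components of the algebra map $\pi_{A\otimes B}^1$ under this decomposition are $\pi_A^1\otimes\iota_B$ and $\id_A\otimes\pi_B^1$, where $\iota_B\colon B\to B^{\otimes 2}$ is a coordinate inclusion; being a base change of $\eta_B$, it has $\tilde\gamma(\iota_B)=\eta_B(\dim B)$ by \S\ref{ss:gamma}(c,d). Likewise for $\pi_{A\otimes B}^2$. Now the addition rule \S\ref{ss:gamma}(f) gives $\tilde\gamma(\pi_{A\otimes B}^1)=\tilde\gamma(\pi_A^1\otimes\iota_B)+\tilde\gamma(\id_A\otimes\pi_B^1)$, and since $(f\otimes g)^{\vee}=f^{\vee}\otimes g^{\vee}$ (two applications of the $\otimes\id$ case in \S\ref{ss:dual}), $\tilde\gamma$ is multiplicative on tensor products of maps; so in the Delannic case the two terms are the scalars $\gamma_1(A)\dim(B)$ and $\gamma_1(B)$, giving the first formula, and similarly for $\pi^2$. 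The right-hand sides lie in $k$, so the $\pi^i_{A\otimes B}$ are uniform, and $\dim(A\otimes B)=\dim(A)\dim(B)\in k$ makes $\eta_{A\otimes B}$ uniform too; hence $A\otimes B$ is Delannic.

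With the lemma in hand the conclusion is immediate: for $A$ of type $i$ and $B$ of type $j$, all of $\dim(A),\dim(B),\gamma_1(A),\gamma_1(B),\gamma_2(A),\gamma_2(B)$ are the scalar table values, and one checks directly—a finite verification organized by the $4\times 4$ $\times$-table—that $\bigl(\dim(A)\dim(B),\ \gamma_1(A)\dim(B)+\gamma_1(B),\ \gamma_2(A)\dim(B)+\gamma_2(B)\bigr)$ equals the triple assigned to type $i\times j$ in \S\ref{ss:delannic}; since those triples determine the type, $A\otimes B$ has type $i\times j$.

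\textbf{Main obstacle.} The one genuinely delicate point is the bookkeeping in the middle paragraph: correctly obtaining the decomposition of $(A\otimes B)^{(2)}$ and the two components of each $\pi_{A\otimes B}^i$ under the dictionary between $\cS=\Et(\fT)^{\op}$ and $\Et(\fT)$ (coproduct $\leftrightarrow\oplus$, product $\leftrightarrow\otimes$, diagonal $\leftrightarrow$ multiplication), and checking that the coordinate inclusion $\iota_B$ contributes exactly the factor $\dim(B)$. The remaining $4\times 4$ table check is routine but must be done case by case, or more slickly by recognizing $\dim$ as a monoid homomorphism from $(\{1,2,3,4\},\times)$ to $(k,\cdot)$ and $\gamma_1,\gamma_2$ as crossed homomorphisms for the resulting twisted action.
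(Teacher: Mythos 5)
Your proof is correct and follows essentially the same route as the paper: the same decomposition $(A\otimes B)^{(2)}\cong (A\otimes B^{(2)})\oplus(A^{(2)}\otimes B\otimes B)$, the same formulas $\tilde\gamma_i(A\otimes B)=\tilde\gamma_i(B)+\tilde\gamma_i(A)\dim(B)$, and the same concluding numerical verification (which the paper merely organizes into three cases according to the type of $B$ rather than the full $4\times 4$ table). Your explicit treatment of the zero algebra and the identification of the second component of $\pi^1_{A\otimes B}$ as $\pi^1_A\otimes\iota_B$ with $\tilde\gamma(\iota_B)=\eta_B(\dim B)$ are fine and only make explicit what the paper leaves to ``properties of $\tilde\gamma$ from \S\ref{ss:gamma}.''
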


Again, we first require a lemma.

\begin{lemma}
Let $A$ and $B$ be ordered \'etale algebras with lexicographic product $A \otimes B$. Then
\begin{displaymath}
\tilde{\gamma}_1(A \otimes B) = \tilde{\gamma}_1(B)+\tilde{\gamma}_1(A) \dim(B), \quad
\tilde{\gamma}_2(A \otimes B) = \tilde{\gamma}_2(B)+\tilde{\gamma}_2(A) \dim(B)
\end{displaymath}
in $\Gamma(A \otimes B)$. Here we have implicitly mapped elements of $\Gamma(\bbone)$, $\Gamma(A)$, and $\Gamma(B)$ into $\Gamma(A \otimes B)$ in the canonical manner.
\end{lemma}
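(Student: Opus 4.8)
The plan is to imitate the proof of Lemma~\ref{lem:gamma-lex-sum} essentially verbatim, replacing the decomposition of $(X\amalg Y)^{(2)}$ used there by the analogous decomposition for a lexicographic \emph{product}, and then rerunning the bookkeeping with $\tilde\gamma$. So the first step is a purely combinatorial identity in an arbitrary lextensive category $\cS$: if $X,Y$ are ordered objects and $X\times Y$ carries the lexicographic order (\S\ref{ss:ord-constr}(e)), then there is a canonical isomorphism
\begin{displaymath}
(X\times Y)^{(2)}\;\cong\;\big(X^{(2)}\times Y\times Y\big)\;\amalg\;\big(X\times Y^{(2)}\big),
\end{displaymath}
compatible with the inclusions into $(X\times Y)^2$, under which the ``smaller element of the pair'' projection restricts to $\pi^1_X\times\mathrm{pr}_1$ on the first summand (apply $\pi^1_X$ to the $X^{(2)}$-factor; keep the first $Y$-coordinate) and to $\id_X\times\pi^1_Y$ on the second, while the ``larger element'' projection restricts to $\pi^2_X\times\mathrm{pr}_2$ and to $\id_X\times\pi^2_Y$. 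I would prove this with the functor-of-points description of \S\ref{ss:ord-func}, exactly as in \S\ref{ss:ord-power} and in the proof of Lemma~\ref{lem:gamma-lex-sum}: a $T$-point of $(X\times Y)^{(2)}$ is a lexicographically increasing pair $a=(a_1,a_2)<b=(b_1,b_2)$ of $T$-points of $X\times Y$; applying axiom~(d) of an order to $a_1,b_1$ splits $T$ into the locus where $a_1<b_1$ (on which the remaining data is precisely a $T$-point of $X^{(2)}\times Y\times Y$) and the locus where $a_1=b_1$ (on which $a_2<b_2$ necessarily holds, so the data is a $T$-point of $X\times Y^{(2)}$), and the formulas for the two projections are then read off directly.

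Next I would specialize this to $\cS=\Et(\fT)^{\op}$ with $X=A$ and $Y=B$, so that $\amalg$ becomes $\oplus$, $\times$ becomes $\otimes$, and a product projection in $\cS$ becomes a coproduct insertion of algebras. This yields an isomorphism of \'etale algebras $(A\otimes B)^{(2)}\cong(A^{(2)}\otimes B\otimes B)\oplus(A\otimes B^{(2)})$ under which $\pi^1_{A\otimes B}$ has components
\begin{displaymath}
\pi^1_A\otimes\iota\colon A\otimes B\to A^{(2)}\otimes B\otimes B
\qquad\text{and}\qquad
\id_A\otimes\pi^1_B\colon A\otimes B\to A\otimes B^{(2)},
\end{displaymath}
where $\iota=\id_B\otimes\eta_B\colon B\to B\otimes B$ is the insertion $b\mapsto b\otimes1$; likewise $\pi^2_{A\otimes B}$ has components $\pi^2_A\otimes\iota'$ and $\id_A\otimes\pi^2_B$ with $\iota'=\eta_B\otimes\id_B$.

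The last step is formal. Since $\tilde\gamma$ of a map into a direct sum is the sum of the $\tilde\gamma$'s of its two components (\S\ref{ss:gamma}(f)), and since $(f\otimes g)^{\vee}=f^{\vee}\otimes g^{\vee}$ for maps of \'etale algebras (the propositions in \S\ref{ss:dual}), whence $\tilde\gamma(f\otimes g)=\tilde\gamma(f)\,\tilde\gamma(g)$ inside the relevant $\Gamma$, I obtain $\tilde\gamma_1(A\otimes B)=\tilde\gamma(\pi^1_A)\,\tilde\gamma(\iota)+\tilde\gamma(\id_A)\,\tilde\gamma(\pi^1_B)$. Here $\tilde\gamma(\id_A)=1$ (Proposition~\ref{prop:etale-inv}), and $\iota$ is obtained from $\eta_B\colon\bbone\to B$ by base change, so $\tilde\gamma(\iota)=\dim(B)$ by \S\ref{ss:gamma}(c,d); reading everything inside $\Gamma(A\otimes B)$ gives $\tilde\gamma_1(A\otimes B)=\tilde\gamma_1(B)+\tilde\gamma_1(A)\dim(B)$, and the formula for $\tilde\gamma_2$ follows identically with $\pi^2,\iota'$ in place of $\pi^1,\iota$.

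The only genuinely delicate point will be the first two steps: correctly identifying $(X\times Y)^{(2)}$ and, above all, keeping straight which coordinate each of its two structural projections selects, and then transporting this faithfully across the $\Et(\fT)^{\op}$-duality (products versus coproducts, projections versus insertions). A slip there would interchange $\tilde\gamma_1(A)$ and $\tilde\gamma_2(A)$ --- only one of which is weighted by $\dim(B)$ --- so I would double-check the identification of $\pi^i_{A\otimes B}$ against the explicit formulas $\pi^i_A(x)=\tau_A(x\otimes1)$, $\tau_A(1\otimes x)$ from \S\ref{ss:ordet}. Once those identifications are pinned down, the remainder is a routine manipulation of the formal properties of $\tilde\gamma$ already collected in \S\ref{ss:gamma} and \S\ref{ss:dual}.
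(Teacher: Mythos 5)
Your proposal is correct and takes essentially the same route as the paper's proof: the identical decomposition $(X\times Y)^{(2)}\cong (\Delta_X\times Y^{(2)})\amalg(X^{(2)}\times Y^2)$, the same identification of the components of $\pi^1_{A\otimes B}$ as $\id_A\otimes\pi^1_B$ and $\pi^1_A\otimes(\id_B\otimes\eta_B)$ (with the mirror-image components for $\pi^2$), and the same bookkeeping with the additivity, multiplicativity, and base-change properties of $\tilde\gamma$. You simply spell out the final step that the paper dismisses as "follows easily."
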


\begin{proof}
We first make a general observation. Let $\cS$ be a lextensive category, and let $X$ and $Y$ be ordered objects in $\cS$ with lexicographic product $X \times Y$. We have
\begin{displaymath}
(X \times Y)^{(2)} = (\Delta_X \times Y^{(2)}) \amalg (X^{(2)} \times Y^2).
\end{displaymath}
The first projection $(X \times Y)^{(2)} \to X \times Y$ is just the first projection from each component above, and similarly for the second component. Applying this with $\cS=\Et(\fT)^{\op}$ and $X=A$ and $Y=B$, we see that the map
\begin{displaymath}
\pi^1_{A \otimes B} \colon A \otimes B \to (A \otimes B^{(2)}) \oplus (A^{(2)} \otimes B \otimes B)
\end{displaymath}
is given by
\begin{displaymath}
\pi^1_{A \otimes B}(a,b) = (a \otimes \pi^1_B(b)) \oplus (\pi^1_A(a) \otimes b \otimes 1).
\end{displaymath}
The formula for $\tilde{\gamma}_1$ follows easily using properties of $\tilde{\gamma}$ from \S \ref{ss:gamma}. The case of $\tilde{\gamma}_2$ is similar.
\end{proof}

\begin{proof}[Proof of Proposition~\ref{prop:delannic-prod}]
First suppose that $\dim(B)=0$. Then
\begin{displaymath}
\dim(A \otimes B)=0, \quad \tilde{\gamma}_1(A \otimes B) = \tilde{\gamma}_1(B), \quad \tilde{\gamma}_2(A \otimes B) = \tilde{\gamma}_2(B).
\end{displaymath}
We thus see that if $B$ has type~2 (resp.~3) then $A \otimes B$ is Delannic of type~2 (resp.~3). This handles all case with $j=2$ or $j=3$.

Next, suppose that $B$ has type~4. Then
\begin{displaymath}
\dim(A \otimes B)=\dim(A), \quad \tilde{\gamma}_1(A \otimes B) = \tilde{\gamma}_1(A), \quad \tilde{\gamma}_2(A \otimes B) = \tilde{\gamma}_2(A).
\end{displaymath}
Thus $A \otimes B$ is Delannic of the same type as $A$. This handles all cases where $j=4$.

Finally, suppose that $B$ has type~1. Then
\begin{displaymath}
\dim(A \otimes B)=-\dim(A), \quad \tilde{\gamma}_1(A \otimes B) = -\tilde{\gamma}_1(A)-1, \quad \tilde{\gamma}_2(A \otimes B) = -\tilde{\gamma}_2(A)-1.
\end{displaymath}
Going case by case through the four options for $i$, we see that $A \otimes B$ has the stated type. This handles the all cases where $j=1$.
\end{proof}

\begin{remark}
In fact, the proof shows that if $B$ is Delannic of type~2 (resp.~3) and $A$ is any ordered \'etale algebra then $A \otimes B$ is Delannic of type~2 (resp.~3).
\end{remark}

We now examine the operation $A^{(n)}$ on ordered \'etale algebras, as discussed in \S \ref{ss:ord-constr}(f). For $n \ge 1$, define an operator $\lambda_n$ on the set $\{1,2,3,4\}$ as follows.
\begin{itemize}
\item $\lambda_n(1)$ is~1 if $n$ is odd and~4 if $n$ is even.
\item $\lambda_n(2)$ is~2 is $n$ is odd and~3 if $n$ is even.
\item $\lambda_n(3)$ is~3 for all $n$.
\item $\lambda_n(4)$ is~4 if $n=1$ and~3 if $n \ge 2$.
\end{itemize}
We also define $\lambda_0(n)=4$ for all $n$.

\begin{proposition} \label{prop:delannic-subset}
Let $A$ be a Delannic algebra and let $n \ge 0$.
\begin{enumerate}
\item If $A^{(n)}$ is endowed with any of the $n!$ permlex orders then $A^{(n)}$ is Delannic.
\item If $A$ has type~1 then $A^{(n)}$ has type $\lambda_n(1)$ under any permlex order.
\item If $A$ has type $i$ then $A^{(n)}$ has type $\lambda_n(i)$ under the lexicographic order.
\end{enumerate}
\end{proposition}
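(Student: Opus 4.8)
The plan is to deduce everything from the universal property of the Delannoy categories (Theorem~\ref{thm:delmap}) and from the translation of $\tilde\gamma$-invariants into measure values. First I would reduce to the universal case. If $A$ is Delannic of type $i$, Theorem~\ref{thm:delmap} gives a tensor functor $\Phi_A\colon\fC_i\to\fT$ with $\Phi_A(\cC(\bR))=A$ \emph{as ordered \'etale algebras}. The induced functor $\Et(\fC_i)^{\op}\to\Et(\fT)^{\op}$ is additive and left-exact (Proposition~\ref{prop:etale-cat}), hence commutes with the order-scheme constructions (Proposition~\ref{prop:ord-func}); in particular it carries $\cC(\bR)^{(n)}$ with a given permlex order to $A^{(n)}$ with the corresponding order. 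By \S\ref{ss:gamma}(g) a uniform map with nonzero image stays uniform with the same $\gamma$, and $\dim$ is preserved, so if $\cC(\bR)^{(n)}$ is Delannic of type $\ell$ in $\fC_i$ and $A^{(n)}\neq 0$ then $A^{(n)}$ is Delannic of type $\ell$; and if $\ell\in\{1,4\}$ then $\dim\cC(\bR)^{(n)}=\pm 1$ forces $A^{(n)}\neq 0$ automatically, while if $\ell\in\{2,3\}$ then $A^{(n)}$ may vanish but still has type $\ell$, the zero algebra having both types~2 and~3. Thus it suffices to prove (a)--(c) for $A=\cC(\bR)$ inside $\fC_i$. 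Part (a) is then immediate: the three structure maps of $\cC(\bR)^{(n)}$ are pullbacks along maps of $\GG$-sets with transitive target, namely $\eta$ along $\bR^{(n)}\to\bone$ and $\pi^1,\pi^2$ along the two projections $(\bR^{(n)})^{(2)}\to\bR^{(n)}$ (for the chosen permlex order), and such pullbacks are automatically uniform by \S\ref{ss:gamma}(b).

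For part (b) I would just compute the dimension. By \S\ref{ss:gamma}(b,c) we have $\dim\cC(\bR)^{(n)}=\mu_1(\bR^{(n)})$, and evaluating along the chain $\bR^{(n)}\to\bR^{(n-1)}\to\cdots\to\bR\to\bone$ via multiplicativity of $\mu_1$ together with the fact that $\mu_1$ takes the value $-1$ on every coordinate-forgetting map $p_{k,j}$ gives $\dim\cC(\bR)^{(n)}=(-1)^n$. Since a Delannic algebra of nonzero dimension has its type determined by its dimension ($\dim=-1$ gives type~1, $\dim=+1$ gives type~4), part (a) shows $\cC(\bR)^{(n)}$, and hence $A^{(n)}$, has type $\lambda_n(1)$; this is manifestly independent of the permlex order, because the dimension is.

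Part (c) is where the real work lies. When $\dim\cC(\bR)^{(n)}=0$ — which happens for $i\in\{2,3\}$, and for $i=4$ with $n\ge 2$ — the dimension no longer pins down the type, so one must compute $\gamma_1=\mu_i(q_1)$ and $\gamma_2=\mu_i(q_2)$ where $q_1,q_2\colon(\bR^{(n)})^{(2)}_{\mathrm{lex}}\to\bR^{(n)}$ are the two projections and distinguish types~2 and~3. I would do this by decomposing $(\bR^{(n)})^{(2)}_{\mathrm{lex}}=\{(\vec x,\vec y):\vec x<_{\mathrm{lex}}\vec y\}$ into $\GG$-orbits, indexed by the least $k$ with $x_k\neq y_k$ together with how $x_k,\dots,x_n$ interleaves $y_k,\dots,y_n$, and summing $\mu_i$ of the restriction of $q_1$ (resp. $q_2$) to each orbit; each such restriction is a composite of coordinate-forgetting maps, so its $\mu_i$-value is a product of entries of the table in \S\ref{ss:delcat} once one records how many first-, middle-, and last-type coordinates get forgotten. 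An equivalent route is an induction on $n$: $\bR^{(n)}_{\mathrm{lex}}$ is the subobject $\{x_1<y_1\}$ of the lexicographic product $\bR\otimes\bR^{(n-1)}_{\mathrm{lex}}$ (with $\bR$ carrying its standard type-$i$ order and $\bR^{(n-1)}$ its lexicographic order, of type $\lambda_{n-1}(i)$ by induction), and the complementary locus decomposes as a copy of $\bR^{(n-1)}_{\mathrm{lex}}$ (where $x_1=y_1$) together with further pieces; comparing measures through Proposition~\ref{prop:delannic-prod} and base change then produces $\lambda_n(i)$.

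The main obstacle in part (c) is purely the bookkeeping of this orbit decomposition and of which coordinate-forgetting maps contribute to each $q_j$; there is no conceptual difficulty beyond organizing it, but it has to be carried out carefully for each of $i\in\{2,3,4\}$. It is precisely the restriction to the lexicographic order that makes these contributions collate into type $\lambda_n(i)$ — a general permlex order would reshuffle the pattern of forgotten first/middle/last coordinates and change the answer, which is why (c), unlike (b), is order-sensitive. Finally, the edge cases $n=0$ (where $\cC(\bR)^{(0)}=\bbone$ is Delannic of type~4$=\lambda_0$) and $n=1$ (type $i=\lambda_1(i)$) are immediate and should be noted separately.
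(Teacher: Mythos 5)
Your proposal is correct and follows essentially the same route as the paper: reduce via Theorem~\ref{thm:delmap} to the universal algebra $\cC(\bR^{(n)})$ in $\fC_i$, where (a) follows because the three structure maps are pullbacks along maps of $\GG$-sets with transitive target (the paper instead invokes Proposition~\ref{prop:simple-delannic}, i.e.\ $\Gamma(\bbone)=\Gamma(\cC(\bR^{(n)}))=k$ in $\fC_i$, which amounts to the same thing), and (b) follows from $\dim\cC_1(\bR^{(n)})=\mu_1(\bR^{(n)})=(-1)^n$. For (c) the paper's entire proof is the sentence ``this can be proved via explicit computations with $\cC(\bR)$,'' so your orbit-decomposition/induction sketch is, if anything, more detailed than the published argument; neither carries the computation to completion, but you correctly locate where the work lies and why the restriction to the lexicographic order matters.
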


\begin{proof}
(a) Suppose that $A$ has type $i$. By Theorem~\ref{thm:delmap} we have a tensor functor $\Phi \colon \fC_i \to \fT$ mapping $\cC(\bR)$ to $A$. We thus see that $A^{(n)}=\Phi(\cC(\bR^{(n)}))$, where we use the same permlex order on $\cC(\bR^{(n)})$ as we use on $A^{(n)}$. Since $\Gamma(\bbone)=k$ and $\Gamma(\cC(\bR^{(n)}))=k$ hold in $\fC_i$, it follows that $\cC(\bR^{(n)})$ is Delannic (Proposition~\ref{prop:simple-delannic}), and thus so is $A^{(n)}$.

(b) The dimension of $\cC(\bR^{(n)})$ is the measure of the set $\bR^{(n)}$. If $i=1$, this is $(-1)^n$, and so the type is as described.

(c) This can be proved via explicit computations with $\cC(\bR)$.
\end{proof}

\begin{remark}
The set $\{1,2,3,4\}$ equipped with $+$ and $\times$ is a ring-like object. The $\lambda_n$ operations make it into something like a $\lambda$-ring.
\end{remark}

\subsection{The pre-Tannakian case}

We now study ordered \'etale algebras under the assumption that $\fT$ is pre-Tannakian. We note that part (b) in the following proposition can be generalised to arbitrary fields, by applying extension of scalars to the separable closure as in \cite{Del14}.

\begin{proposition} \label{prop:oea-dim}
Assume $k$ is separably closed. Let $A$ be an ordered \'etale algbera in $\fT$.
\begin{enumerate}
\item If $A$ is simple and, then $\dim(A)$ is $\pm 1$ or~0.
\item In general, $\dim(A)$ is an integer.
\end{enumerate}
\end{proposition}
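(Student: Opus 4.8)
The plan is to deduce both parts from the classification of Delannic algebras by type (the table in \S\ref{ss:delannic}), together with the structure theory of étale algebras in pre-Tannakian categories over a separably closed field.

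For part (a), I would show that a simple ordered étale algebra is automatically Delannic. Since $\fT$ is pre-Tannakian we have $\Gamma(\bbone)=\End(\bbone)=k$. Since $k$ is separably closed and $A$ is a \emph{simple} étale algebra, I would use that $\Gamma(A)=k$ as well: this is the standard fact that a simple étale algebra in a pre-Tannakian category over a separably closed field is ``connected'' over the base (see \cite{discrete}; conceptually, under the equivalence $\Et(\fT)^{\op}\simeq\bS(G)$ of \cite[Theorem~6.1]{discrete}, $A$ corresponds to a transitive $G$-set). With $\Gamma(\bbone)=\Gamma(A)=k$ in hand, Proposition~\ref{prop:simple-delannic} shows $A$ is Delannic, and then the table of types in \S\ref{ss:delannic} forces $\dim(A)\in\{-1,0,1\}$ (types $1$, $2/3$, $4$ respectively; a nonzero simple $A$ of course cannot be the zero algebra). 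This settles (a).

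For part (b), the plan is to reduce to (a) by decomposing $A$. Since $\fT$ is pre-Tannakian, $\Et(\fT)^{\op}$ is pre-Galois, i.e.\ of the form $\bS(G)$ for a pro-oligomorphic group $G$ (the remark following Proposition~\ref{prop:etale-cat}, citing \cite[Theorem~6.1]{discrete}); in particular every étale algebra decomposes as a finite direct sum $A=A_1\oplus\cdots\oplus A_n$ of simple étale algebras. The total order on $A$, viewed as an ordered object of $\Et(\fT)^{\op}=\bS(G)$, restricts to each summand: each $A_i$, regarded as a subobject of $A$ in $\Et(\fT)^{\op}$, carries the induced order of \S\ref{ss:ord-constr}(b), and this order is again total since $A_i\times A_i$ is one of the clopen pieces of the decomposition $A\times A=R\sqcup\Delta_A\sqcup R^{\op}$, so that $A_i\times A_i=(R\cap A_i^2)\sqcup\Delta_{A_i}\sqcup(R^{\op}\cap A_i^2)$, and transitivity is inherited by intersecting with $A_i^3$. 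Thus each $A_i$ is a simple ordered étale algebra, whence $\dim(A_i)\in\{-1,0,1\}$ by part (a). Since categorical dimension is additive over direct sums and each $A_i$ is rigid (being a summand of the rigid object $A$), we conclude $\dim(A)=\sum_{i=1}^n\dim(A_i)\in\bZ$.

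The main obstacle — really the only non-formal ingredient — is the input from the theory of étale algebras in pre-Tannakian categories over a separably closed field: that a simple such algebra has $\Gamma(A)=k$, and that an arbitrary one decomposes into finitely many simple summands. Once these are granted, parts (a) and (b) are immediate applications of Proposition~\ref{prop:simple-delannic}, the type table, and additivity of dimension, with no further computation involving $\sigma_A$, $\tau_A$, or the maps $\pi^i_A$. As already noted in the text, the separably-closed hypothesis in (b) can afterwards be removed by base change to the separable closure, following \cite{Del14}.
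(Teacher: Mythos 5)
Your proof is correct and follows essentially the same route as the paper: part (a) via $\Gamma(\bbone)=\Gamma(A)=k$ and Proposition~\ref{prop:simple-delannic}, and part (b) by decomposing into simple \'etale algebras, restricting the order to each summand, and using additivity of dimension. The extra detail you supply on why the induced order on each summand remains total is a welcome elaboration of the paper's citation of \S\ref{ss:ord-constr}(b), but does not change the argument.
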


\begin{proof}
(a) Since $\fT$ is pre-Tannakian we have $\Gamma(\bbone)=k$. Since $A$ is a simple \'etale algebra and $k$ is separably closed we have $\Gamma(A)=k$. Thus $A$ is Delannic (Proposition~\ref{prop:simple-delannic}), and so its dimension is as stated.

(b) Let $A=\bigoplus_{i=1}^n A_i$ be the decomposition of $A$ into simple \'etale algebras. Each $A_i$ inherits an ordered structure from $A$ (\S \ref{ss:ord-constr}(b)), and thus has dimension $\pm 1$ or~0 by part (a). Since $\dim(A)=\sum_{i=1}^n \dim(A_i)$, the result follows. 
\end{proof}

In fact, all three possibilities in (a) can occur. Indeed, the unit object in any tensor category provides an example with dimension $+1$, while the basic object $\cC(\bR)$ in the Delannoy category $\fC_1$ provides an example with dimension $-1$. The algebra $\cC(\bR^{(2)})$ in $\fC_1$ is also an example with dimension $+1$ that is not simply the tensor unit. It is more difficult to give an example of dimension~0, but we will do so in \S \ref{ss:dim0}.

\begin{proposition} \label{prop:oea-aut}
For an ordered \'etale algebra $A$ in $\fT$, the group $\Aut(A,<)$ is trivial.
\end{proposition}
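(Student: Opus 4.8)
The plan is to work inside the lextensive category $\cS=\Et(\fT)^{\op}$, in which $A$ is an ordered object and an element of $\Aut(A,<)$ is exactly an automorphism $f\colon A\to A$ in $\cS$ for which both $f$ and $f^{-1}$ are monotonic. First I would apply axiom~(d) of the functor-of-points description of orders (\S\ref{ss:ord-func}) to the pair $\mathrm{id}_A,f\in h_A(A)$, obtaining a coproduct decomposition $A=A_{<}\sqcup A_{=}\sqcup A_{>}$ in which $A_{<}=(\mathrm{id}_A,f)^{-1}(R_A)$, the piece $A_{=}=(\mathrm{id}_A,f)^{-1}(\Delta_A)$ is the equalizer of $\mathrm{id}_A$ and $f$, and $A_{>}$ is defined symmetrically. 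Since $A=A_{=}$ would force $f=\mathrm{id}_A$, it is enough to show $A_{<}=A_{>}=\bzero$.

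Next I would record that, $f$ being monotonic, the operation $\phi\mapsto f\circ\phi$ is order-preserving on $h_A(T)$ for every object $T$ (it is $f\times f$ acting on the subobject $R_A\subset A\times A$, which $f$ preserves). Writing $\iota\colon A_{<}\hookrightarrow A$ for the inclusion, we have $\iota<f\iota$ in $h_A(A_{<})$ by construction of $A_{<}$; applying the observation gives $f\iota<f^{2}\iota$, which is precisely the statement that $f\iota\colon A_{<}\to A$ factors through $A_{<}=(\mathrm{id}_A,f)^{-1}(R_A)$. Thus $f$ restricts to an endomorphism $f_{<}$ of $A_{<}$; it is in fact an automorphism, since $f^{-1}$ restricts to $A_{<}$ by the same computation (here using that $f^{-1}$ is monotonic). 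By construction $\mathrm{id}_{A_{<}}<f_{<}$ in $h_{A_{<}}(A_{<})$.

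The heart of the argument is to contradict $A_{<}\ne\bzero$. Iterating the order-preserving operation yields a strictly increasing chain $\iota<f\iota<f^{2}\iota<\cdots$ in $\Hom_{\cS}(A_{<},A)=h_A(A_{<})$, and these morphisms are pairwise distinct: if $f^{i}\iota=f^{j}\iota$ with $i<j$, cancelling the isomorphism $f^{i}$ gives $\iota=f^{j-i}\iota$, which together with $\iota<f^{j-i}\iota$ violates anti-symmetry (axiom~(b), applicable because $A_{<}\ne\bzero$). But $\Hom_{\cS}(A_{<},A)$ is finite. Indeed, $A$ is an \'etale algebra in the pre-Tannakian category $\fT$, hence has only finitely many algebra endomorphisms — equivalently $\Et(\fT)^{\op}$ is pre-Galois and $A$ is finitary; directly, algebra endomorphisms of $A$ inject into the finite set of idempotents of the finite-dimensional commutative $k$-algebra $\Gamma(A\otimes A)$ — and precomposition with the summand inclusion $A_{<}\hookrightarrow A$ maps $\End_{\cS}(A)$ onto $\Hom_{\cS}(A_{<},A)$. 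This contradiction forces $A_{<}=\bzero$, and the symmetric argument (using monotonicity of $f^{-1}$) gives $A_{>}=\bzero$; hence $f=\mathrm{id}_A$.

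I expect the finiteness input to be the delicate point: the contradiction is not formal — in a general lextensive category an ordered object can have non-trivial automorphisms (e.g.\ the successor map of $(\bZ,<)$ in $\mathbf{Set}$) — so the proof must genuinely use that Hom-sets between objects of $\Et(\fT)^{\op}$ are finite and must leverage invertibility of $f$ to build an impossible infinite ascending chain. A minor but necessary point is checking that $f$ (and $f^{-1}$) really do restrict to the three pieces $A_{<},A_{=},A_{>}$, which is the only place the full strength of "$f$ is an isomorphism in $\Ord(\cS)$" (rather than just a monotonic endomorphism) enters.
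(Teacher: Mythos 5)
Your proof is correct, but it takes a different route from the paper's. The paper's argument is a two-line reduction: by \cite[Theorem~6.1]{discrete}, $\Et(\fT)^{\op}$ is pre-Galois, hence equivalent to $\bS(G)$ for a pro-oligomorphic $G$; automorphism groups there are finite, and a non-trivial finite group cannot preserve a total order on a set (the classical pointwise argument $x<fx<\cdots<f^nx=x$). You instead argue synthetically inside the lextensive category $\Et(\fT)^{\op}$ using the functor-of-points axioms of \S\ref{ss:ord-func}: decompose $A=A_<\sqcup A_=\sqcup A_>$ via axiom~(d) applied to $(\id_A,f)$, then run the same "impossible ascending chain" idea with the generalized point $\iota\in h_A(A_<)$ in place of an actual element, deriving the contradiction from finiteness of $\Hom_{\Et(\fT)^{\op}}(A_<,A)$ rather than finiteness of $\Aut$. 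The two proofs share the essential input (finiteness, coming from the pre-Galois theorem, or your more direct idempotent count in $\Gamma(A\otimes A)$), but yours has the merit of not needing to identify ordered objects of $\bS(G)$ with literal totally ordered $G$-sets, and it visibly isolates exactly what is used: it would apply verbatim in any lextensive category with finite Hom-sets, whereas (as your $(\bZ,<)$ example correctly notes) the statement fails without such a finiteness hypothesis. All the individual steps check out: monotonicity of $f$ gives order-preservation of post-composition on $h$-points, $f$ restricts to $A_<$ because $\iota<f\iota$ implies $f\iota<f^2\iota$, pairwise distinctness of the $f^i\iota$ uses invertibility of $f$ plus anti-symmetry on the non-initial object $A_<$, and surjectivity of restriction $\End(A)\to\Hom(A_<,A)$ holds because $A_<$ has a complement.
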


\begin{proof}
The category $\Et(\fT)^{\op}$ is a pre-Galois category \cite{discrete}, i.e., equivalent to the category $\bS(G)$ for some pro-oligomorphic group $G$. Thus ordered \'etale algebras are anti-equivalent to ordered objects of $\bS(G)$. Since automorphism groups in $\bS(G)$ are finite \cite[Proposition~2.8]{repst} and a non-trivial finite group cannot preserve a total order, the result follows.
\end{proof}

\begin{corollary}
If $A$ and $B$ are isomorphic ordered \'etale algebras then there is a unique isomorphism $A \to B$.
\end{corollary}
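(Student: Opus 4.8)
\begin{proof}
The existence of at least one isomorphism $A \to B$ is the hypothesis, so only uniqueness needs proof. Suppose $f, g \colon A \to B$ are both isomorphisms in the category $\OrdEt(\fT)$. Since $g$ is an isomorphism, it admits an inverse $g^{-1} \colon B \to A$ in $\OrdEt(\fT)$, and then $g^{-1} \circ f \colon A \to A$ is a composite of isomorphisms, hence an isomorphism, i.e., an element of $\Aut(A, <)$. By Proposition~\ref{prop:oea-aut} this group is trivial, so $g^{-1} \circ f = \id_A$, whence $f = g$.
\end{proof}

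\begin{remark}
Equivalently: once nonempty, the set of isomorphisms $A \to B$ in $\OrdEt(\fT)$ is a torsor under $\Aut(A,<)$ acting by precomposition, and this group is trivial by Proposition~\ref{prop:oea-aut}. The only point requiring a little care is that $g^{-1}$ is a morphism of ordered \'etale algebras; this holds because ``isomorphism'' is taken in the category $\OrdEt(\fT)$, so $g^{-1}$ is by definition a two-sided inverse in that category.
\end{remark}
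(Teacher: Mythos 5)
Your proof is correct and is exactly the intended argument: the paper states this corollary without proof as an immediate consequence of Proposition~\ref{prop:oea-aut}, via precisely the torsor observation you make (any two isomorphisms $f,g\colon A\to B$ differ by the automorphism $g^{-1}\circ f$ of $(A,<)$, which must be the identity). No gaps; your remark about $g^{-1}$ being a morphism in $\OrdEt(\fT)$ is a reasonable point of care but not contentious.
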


\begin{remark}
Let $A$ be an ordered \'etale algebra. We can then consider the affine group scheme $G$ in $\operatorname{Ind}(\fT)$ defined by
\begin{displaymath}
G(T) = \Aut_T(T \otimes A, <),
\end{displaymath}
where here $T$ is an ind-algebra in $\fT$ and the right side is the automorphism group of the ordered \'etale algebra $T \otimes A$ in $\Mod_T$. Proposition~\ref{prop:oea-aut} shows that $G$ has no non-trivial points in (finite) \'etale algebras. On the other hand, $G$ is typically far from the trivial group scheme; for instance, if $\fT=\fC_1$ is the Delannoy category and $A=\cC(\bR)$ then $G$ is the fundamental group of $\fC_1$. In particular, we see that ordered \'etale algebras can have many automorphisms in categories like $\Mod_T$.
\end{remark}

\section{Examples and applications} \label{s:app}

We now give some examples of the universal property for $\fC_i$. In these examples, we will be constructing functors between different Delannoy categories. For clarity, we write $\cC_i(\bR^{(n)})$ for the Schwartz space on $\bR^{(n)}$ living in the category $\fC_i$.

\subsection{Simple examples} \label{ss:degone}

We begin with the simplest examples.

\begin{theorem}\label{thm:simplesfunctors}
There are tensor functors
\begin{align*}
\fC_2 &\to \fC_1, \qquad \cC_2(\bR) \mapsto \cC_1(\bR) \oplus \bbone, \\
\fC_3 &\to \fC_1, \qquad \cC_3(\bR) \mapsto \bbone \oplus \cC_1(\bR), \\
\fC_4 &\to \fC_1, \qquad \cC_4(\bR) \mapsto \bbone \oplus \cC_1(\bR) \oplus \bbone,\\
\fC_4 &\to \fC_2, \qquad \cC_4(\bR) \mapsto \bbone \oplus\cC_2(\bR),\\
\fC_4 &\to \fC_3, \qquad \cC_4(\bR) \mapsto  \cC_3(\bR)\oplus\bbone,
\end{align*}
where the algebras on the right are all lexicographic sums.
\end{theorem}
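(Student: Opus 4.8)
The plan is to deduce all five functors directly from the universal property of the Delannoy categories (Theorem~\ref{thm:delmap}) together with the computation of types under lexicographic sums (Proposition~\ref{prop:delannic-sum}). By Theorem~\ref{thm:delmap}, giving a tensor functor $\fC_i \to \fT$ that sends $\cC_i(\bR)$ to an object $A$ is the same as exhibiting on $A$ the structure of a type~$i$ Delannic algebra in $\fT$. So for each line it suffices to equip the displayed lexicographic sum with such a structure, in the relevant target Delannoy category, and the asserted functor (unique up to unique isomorphism) then drops out.

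The two building blocks are: the basic algebra $\cC_j(\bR)$ is a Delannic algebra of type~$j$ in $\fC_j$ (essentially by definition; see \S\ref{ss:delannic}), and the unit algebra $\bbone$ is a Delannic algebra of type~$4$ (also recorded in \S\ref{ss:delannic}). Both are non-zero, so Proposition~\ref{prop:delannic-sum}(b) applies to their lexicographic sums and computes the type via the partial operation $+$ on $\{1,2,3,4\}$: we get $\cC_1(\bR)\oplus\bbone$ of type $1+4=2$, giving $\fC_2\to\fC_1$; $\bbone\oplus\cC_1(\bR)$ of type $4+1=3$, giving $\fC_3\to\fC_1$; $\bbone\oplus\cC_2(\bR)$ of type $4+2=4$, giving $\fC_4\to\fC_2$; and $\cC_3(\bR)\oplus\bbone$ of type $3+4=4$, giving $\fC_4\to\fC_3$. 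For the iterated sum $\bbone\oplus\cC_1(\bR)\oplus\bbone$ one applies Proposition~\ref{prop:delannic-sum}(b) twice: $\bbone\oplus\cC_1(\bR)$ is type~$3$, and then $\bigl(\bbone\oplus\cC_1(\bR)\bigr)\oplus\bbone$ is type $3+4=4$, giving $\fC_4\to\fC_1$.

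I do not expect a genuine obstacle here: once Theorem~\ref{thm:delmap} and Proposition~\ref{prop:delannic-sum} are available, the proof is just careful bookkeeping with the $+$-table (itself verified in the proof of Proposition~\ref{prop:delannic-sum}). The one point worth a remark is that the three-term lexicographic sum in the $\fC_4\to\fC_1$ line is unambiguous --- this is exactly the associativity of $+$ (when defined) noted in \S\ref{ss:delop}, so the other parenthesization $\bbone\oplus\bigl(\cC_1(\bR)\oplus\bbone\bigr)$ yields type $4+2=4$ as well. As a sanity check one may also confirm consistency on dimensions, e.g.\ $\dim\bigl(\cC_1(\bR)\oplus\bbone\bigr)=-1+1=0$, matching type~$2$, and likewise for the remaining cases.
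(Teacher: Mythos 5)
Your proposal is correct and follows exactly the paper's own route: invoke Theorem~\ref{thm:delmap} and compute the types of the lexicographic sums via Proposition~\ref{prop:delannic-sum}(b), using that $\cC_j(\bR)$ has type~$j$ and $\bbone$ has type~$4$. The paper's proof is just a terser version of the same bookkeeping, and your type computations ($1+4=2$, $4+1=3$, $(4+1)+4=4$, $4+2=4$, $3+4=4$) all match the $+$-table.
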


\begin{proof}
Indeed, the rightmost algebras are Delannic of types 2, 3, 4, 4 and 4 by Proposition~\ref{prop:delannic-sum}. Note that $\cC_1(\bR)$ is Delannic of type~1 and $\bbone$ is Delannic of type~4.
\end{proof}

Although these examples are very simple, they are significant since they show that the Delannoy categories $\fC_i$ with $2 \le i \le 4$ all fiber over the first Delannoy category $\fC_1$. This plays an important role in the analysis of $\fC_2$ in \cite{fake}.

Some interesting tensor functors of the form $\fC_i\to \fC_j\boxtimes\fC_l$ can be constructed by taking lexicographic sums of algebras isomorphic to $\cC_j(\bR)\boxtimes\bbone$, $\bbone$ and $\bbone\boxtimes\cC_l(\bR)$. We will view some of them in more detail below in \S \ref{ss:dim0}.

\subsection{A simple algebra of dimension~0} \label{ss:dim0}

We have seen that, assuming $k$ is separably closed, a simple ordered \'etale algebra in a pre-Tannakian category must have dimension $\pm 1$ or~0  (Proposition~\ref{prop:oea-dim}), and we exhibited such algebras of dimension $\pm 1$. We now do the same for dimension~0.

\begin{theorem} \label{thm:dim0}
For any field $k$, there is a pre-Tannakian category $\fF$ that contains a simple Delannic algebra $C$ of type~2.
\end{theorem}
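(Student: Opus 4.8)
The plan is to build $\fF$ as a module category over a suitable Delannoy category and to produce $C$ from a Delannic algebra of dimension $0$ inside it. The starting point is Theorem~\ref{thm:delmap}: a type~$2$ Delannic algebra $A$ in a Karoubian tensor category $\fT$ is the same as a tensor functor $\fC_2 \to \fT$, and by Proposition~\ref{prop:delannic-sum} such algebras are abundant in Delannoy-type targets (take lexicographic sums whose types multiply/add to $2$). The difficulty is not the existence of a dimension-$0$ Delannic algebra in \emph{some} rigid tensor category --- that is cheap --- but arranging for the ambient category to be \emph{pre-Tannakian} and for the algebra to be \emph{simple} as an \'etale algebra. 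So the real content is a descent/completion argument: exhibit a concrete Delannic algebra of type~$2$ living in a pre-Tannakian category, then pass to its module category (or a localization thereof) where it becomes simple, and check that pre-Tannakian-ness is preserved.

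Concretely, first I would work in $\fT = \fC_1$, which is semisimple pre-Tannakian, and consider a type-$2$ Delannic algebra there. By Theorem~\ref{thm:simplesfunctors} the lexicographic sum $\cC_1(\bR)\oplus\bbone$ is type~$2$ in $\fC_1$; more useful is to find a type-$2$ Delannic algebra that is built so as to become simple after base change. The key idea: take a type-$1$ Delannic algebra $B$ in $\fC_1$ (e.g. $\cC_1(\bR)$ itself, of dimension $-1$), form $\Mod_B$, which is again a (rigid, and by the results of \S\ref{s:etale} well-behaved) tensor category, and in $\Mod_B$ consider the image of a suitable type-$2$ algebra. The dimension-$0$ condition ($\dim A = 0$) combined with $\gamma_1(A)=-1$, $\gamma_2(A)=0$ is exactly the type-$2$ numerical data; by Proposition~\ref{prop:delannic-prod} and Proposition~\ref{prop:delannic-sum} one can engineer an $A$ whose constituents, after passing to a well-chosen module category, coalesce into a single simple \'etale algebra of dimension $0$. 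The output of this stage is: a pre-Tannakian $\fF$ together with a tensor functor $\fC_2 \to \fF$ whose image $C = \cC_2(\bR)$ is simple in $\Et(\fF)$ and has $\dim(C)=0$.

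The main obstacle --- and the step that needs genuine care --- is verifying that the ambient category $\fF$ is pre-Tannakian (rigid, abelian, finite length, finite-dimensional Homs, $\End(\bbone)=k$), since $\fC_2$ itself is \emph{not} abelian and $\mu_2$ is not quasi-regular, so the general abelian-envelope machinery of \cite{repst} does not apply. I expect to handle this by never leaving the world of $\fC_1$ and its module categories: $\fC_1$ is pre-Tannakian, $\End_{\fC_1}(\bbone)=k$, and for a suitable \'etale algebra $B$ in $\fC_1$ the category $\Mod_B$ is again pre-Tannakian with $\End(\bbone) = \Gamma(B)$ --- provided $B$ is chosen with $\Gamma(B)=k$ (i.e. $B$ simple, using that $k$ is separably closed, or a Galois-descent argument for general $k$ along the lines of \cite{Del14} as in Proposition~\ref{prop:oea-dim}). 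For general $k$ one should replace ``simple'' by ``$B$ a field object'' and track the residue field, which is why the theorem is stated for arbitrary $k$; the descent here is the subtle point. Once $\fF$ is pre-Tannakian, simplicity of $C$ forces $\Gamma(C)$ to be a field, and Proposition~\ref{prop:oea-dim}(a) guarantees $\dim(C)\in\{0,\pm1\}$; the explicit computation of $\gamma_1,\gamma_2$ via Lemma~\ref{lem:gamma-lex-sum} (or its lexicographic-product analogue) pins the value to $0$ and the type to~$2$. I would end by recording the explicit $\fF$ and $C$ so that later sections can reference them.
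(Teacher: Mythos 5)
Your diagnosis of where the difficulty lies (pre-Tannakian ambient category, simplicity of the algebra) is correct, but the mechanism you propose for achieving simplicity cannot work, and this is the heart of the theorem. You hope that the constituents of a decomposable type-2 Delannic algebra in $\fC_1$ will ``coalesce into a single simple \'etale algebra'' after passing to a module category $\Mod_B$. But base change along an \'etale algebra is a tensor functor, and any tensor functor induces an additive left-exact functor $\Et(\fT)^{\op} \to \Et(\fT')^{\op}$ (Proposition~\ref{prop:etale-cat}): it sends $A' \oplus A''$ to $\Phi(A') \oplus \Phi(A'')$. Factors can only split further or become zero; they can never merge. The type-2 algebras available in $\fC_1$ via the lexicographic operations (e.g.\ $\cC_1(\bR)\oplus\bbone$ or $\cC_1(\bR)\oplus\cC_1(\bR)^{(2)}$) are visibly decomposable with both factors nonzero, and a faithful tensor functor keeps both factors nonzero, so no module category of $\fC_1$ (nor any target of a faithful tensor functor out of it) can contain a simple type-2 algebra in the image of such an algebra. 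Your proposal as written therefore has no path to simplicity.

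The paper's construction goes in the opposite direction: instead of a quotient-like operation (module category), it forms a 2-\emph{fiber product} $\fF$ of $\fC_1$ and $\fC_1\boxtimes\fC_1$ over $\fC_1\boxtimes\fC_1$ along two tensor functors $\Phi_3,\Phi_4$ built from the universal property. An object of $\fF$ is a pair of objects together with a gluing isomorphism, and the algebra $C$ --- arising from a commuting square of functors out of $\fC_2$ whose two projections are the decomposable algebras $A\oplus A^{(2)}$ and $A_1\oplus A_2^{(2)}$ --- is simple precisely because no matching of the simple factors of the two projections is compatible with the gluing data: one checks that neither $\Phi_3(A)$ nor $\Phi_3(A^{(2)})$ is isomorphic to $\Phi_4(A_2^{(2)})$. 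Pre-Tannakian-ness of $\fF$ is then established by a fundamental-group/equivariantization argument (Remark~\ref{rmk:pretan}), not by Galois descent over $k$; the statement holds for every field because the construction is entirely explicit. The fiber-product idea is the missing ingredient in your proposal.
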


In what follows, we let $A=\cC_1(\bR)$ and $B=\cC_2(\bR)$. We also let $A_1=A \boxtimes \bbone$ and $A_2=\bbone \boxtimes A$ in $\fC_1 \boxtimes \fC_1$.

\begin{lemma}
We have a commutative (up to isomorphism) square of tensor functors
\begin{displaymath}
\xymatrix{
\fC_2 \ar[r]^{\Phi_1} \ar[d]_{\Phi_2} & \fC_1 \ar[d]^{\Phi_3} \\
\fC_1 \boxtimes \fC_1 \ar[r]^{\Phi_4} & \fC_1 \boxtimes \fC_1 }
\end{displaymath}
where
\begin{align*}
\Phi_1(B) &= A \oplus A^{(2)} & \Phi_4(A_1) &= E \\
\Phi_2(B) &= A_1 \oplus A_2^{(2)} & \Phi_4(A_2) &= A_2 \\
\Phi_3(A) &= A_1 \oplus \bbone \oplus A_2
\end{align*}
and
\begin{displaymath}
E = A_1 \oplus \bbone \oplus A_2 \oplus A_1^{(2)} \oplus A_1 \oplus A_2 \oplus (A_1 \otimes A_2).
\end{displaymath}
The orders on these algebras are explained in the proof.
\end{lemma}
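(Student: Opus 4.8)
The plan is to produce the four tensor functors by the universal property of the Delannoy categories (Theorem~\ref{thm:delmap}), and then to reduce commutativity of the square to an isomorphism of ordered \'etale algebras, which by Theorem~\ref{thm:delmap} is all that is needed.

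First I would fix lexicographic structures on the target algebras. On $A\oplus A^{(2)}$ in $\fC_1$, and on $A_1\oplus A_2^{(2)}$, $D:=A_1\oplus\bbone\oplus A_2$, and $E$ in $\fC_1\boxtimes\fC_1$, I take lexicographic sums of ordered \'etale algebras, where the squares $A^{(2)},A_1^{(2)},A_2^{(2)}$ carry a fixed permlex order and $A_1\otimes A_2$ the lexicographic product order; the precise internal ordering of the summands of $E$ (which is not literally the left-to-right order displayed in the statement) will be pinned down by the commutativity computation. Using $\dim\cC_1(\bR)=-1$ and $\dim\bbone=1$ together with the calculus of \S\ref{ss:delop}, one checks that $A^{(2)},A_1^{(2)},A_2^{(2)}$ are Delannic of type $\lambda_2(1)=4$ (Proposition~\ref{prop:delannic-subset}), that $A_1\otimes A_2$ is Delannic of type $1\times1=4$ (Proposition~\ref{prop:delannic-prod}), and hence (Proposition~\ref{prop:delannic-sum}) that $A\oplus A^{(2)}$ and $A_1\oplus A_2^{(2)}$ are Delannic of type $1+4=2$, while $D$ and $E$ are Delannic of type~$1$ (the successive partial sums of their type sequences being $1,2,1,2,\dots$). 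Theorem~\ref{thm:delmap} then gives $\Phi_1\colon\fC_2\to\fC_1$, $\Phi_2\colon\fC_2\to\fC_1\boxtimes\fC_1$ and $\Phi_3\colon\fC_1\to\fC_1\boxtimes\fC_1$ directly. For $\Phi_4$ I apply Theorem~\ref{thm:delmap} to the two type~$1$ algebras $E$ and $A_2$ of $\fC_1\boxtimes\fC_1$, obtaining tensor functors $\fC_1\to\fC_1\boxtimes\fC_1$ sending $\cC_1(\bR)$ to $E$ and to $A_2$ respectively; by the universal property of the Deligne tensor product (valid since $\fC_1$ is semisimple) these assemble into $\Phi_4\colon\fC_1\boxtimes\fC_1\to\fC_1\boxtimes\fC_1$ with $\Phi_4(A_1)=E$ and $\Phi_4(A_2)=A_2$.

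For commutativity it suffices to identify the images of $\cC_2(\bR)$ under $\Phi_3\circ\Phi_1$ and $\Phi_4\circ\Phi_2$ as ordered \'etale algebras. A tensor functor commutes with lexicographic sums, lexicographic products and the operations $(-)^{(n)}$: the induced functor $\Phi^{\mathrm{et}}$ on \'etale algebras is additive and left-exact by Proposition~\ref{prop:etale-cat}, so Proposition~\ref{prop:ord-func}, together with the functor-of-points descriptions of these operations, applies. Thus
\begin{displaymath}
(\Phi_3\circ\Phi_1)(\cC_2(\bR))=\Phi_3(A\oplus A^{(2)})=D\oplus D^{(2)},\qquad(\Phi_4\circ\Phi_2)(\cC_2(\bR))=\Phi_4(A_1\oplus A_2^{(2)})=E\oplus A_2^{(2)}.
\end{displaymath}
I would then expand $D^{(2)}$ as a lexicographic sum along the block decomposition of $D=A_1\oplus\bbone\oplus A_2$ (so that $\bbone^{(2)}$ vanishes and the mixed terms $A_1\otimes\bbone$, $\bbone\otimes A_2$ reduce to $A_1$, $A_2$), making $D\oplus D^{(2)}$ an explicit eight-term iterated lexicographic sum, and define the order on $E$ so that $E$ is the lexicographic sum of the first seven of these terms. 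This is the order on $E$ referred to in the statement, and with it $E\oplus A_2^{(2)}=D\oplus D^{(2)}$ as ordered \'etale algebras, whence $\Phi_3\circ\Phi_1\cong\Phi_4\circ\Phi_2$.

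The main obstacle is the coherent bookkeeping of orders. One must choose the permlex orders on $A^{(2)},A_1^{(2)},A_2^{(2)}$ and the lexicographic product order on $A_1\otimes A_2$ once and for all, so that simultaneously each target algebra is Delannic of the stated type — in particular $E$ must be type~$1$, which forces the type-$4$ factor $A_1\otimes A_2$ to be inserted between the two type-$1$ factors $A_1$ and $A_2$, so that the partial operation $+$ of \S\ref{ss:delop} never meets the undefined adjacency $1+1$ — and so that the lexicographic expansion of $D^{(2)}$ reproduces the listed summands of $E$, with their induced orders, term by term. Once these choices are made, the remaining verifications are mechanical applications of Propositions~\ref{prop:delannic-sum}, \ref{prop:delannic-prod}, \ref{prop:delannic-subset} and~\ref{prop:ord-func} and of the $\tilde\gamma$-calculus of \S\ref{ss:gamma}.
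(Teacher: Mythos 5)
Your overall architecture matches the paper's: construct $\Phi_1,\Phi_2,\Phi_3$ from Theorem~\ref{thm:delmap}, assemble $\Phi_4$ via the universal property of the Deligne tensor product, and reduce commutativity to an isomorphism of ordered \'etale algebras $\Phi_3(\Phi_1(B))\cong\Phi_4(\Phi_2(B))$. But there is a genuine gap at the crux of the order bookkeeping: you assert that $D\oplus D^{(2)}$ (with $D=A_1\oplus\bbone\oplus A_2$ and $D^{(2)}$ carrying the lexicographic order) is ``an explicit eight-term iterated lexicographic sum'' of its blocks, and you define the order on $E$ as the iterated lexicographic sum of seven of them. This is false. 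For a lexicographic sum $X\amalg Y$, the lexicographic order on $(X\amalg Y)^{(2)}$ makes only the coarse decomposition $\bigl(X^{(2)}\amalg(X\times Y)\bigr)\amalg Y^{(2)}$ a lexicographic sum; the piece $X^{(2)}\amalg(X\times Y)$ is \emph{not} a lexicographic sum, because an element $(x_1,x_2)\in X^{(2)}$ and an element $(x_1',y)\in X\times Y$ compare according to their first coordinates and hence interleave (already visible for $X=\{1,2,3\}$, $Y=\{4\}$: $(1,3)<(1,4)<(2,3)$). Consequently the order induced on the complement of $A_2^{(2)}$ in $D\oplus D^{(2)}$ is not an iterated lexicographic sum of the blocks $A_1,\bbone,A_2,A_1^{(2)},A_1,A_1\otimes A_2,A_2$ in any arrangement, and an $E$ defined as such a seven-term lexicographic sum will in general not satisfy $E\oplus A_2^{(2)}\cong D\oplus D^{(2)}$ as ordered algebras. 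Your commutativity argument, and also your computation of the type of $E$ via ``successive partial sums of type sequences,'' both rest on this false decomposition.

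The repair is to define the order on $E$ as the induced one: write $\Phi_3(A)^{(2)}=E'\oplus A_2^{(2)}$ (a two-term lexicographic sum, where $E'=(A_1\oplus\bbone)^{(2)}\amalg\bigl((A_1\oplus\bbone)\otimes A_2\bigr)$ carries its induced, interleaved order, which need not be described explicitly), and set $E=\Phi_3(A)\oplus E'$ as a lexicographic sum. That $E'$ is Delannic of type~$3$ follows from Proposition~\ref{prop:delannic-sum}: $E'\oplus A_2^{(2)}=\Phi_3(A^{(2)})$ has type~$4$ and $A_2^{(2)}$ has type~$4$, forcing $E'$ to have type~$3$; then $E$ has type $1+3=1$, so Theorem~\ref{thm:delmap} applies to define $\Phi_4$, and commutativity becomes the tautology $\Phi_4(\Phi_2(B))=E\oplus A_2^{(2)}=\Phi_3(\Phi_1(B))$. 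Note also that the choice of the \emph{lexicographic} (rather than an arbitrary permlex) order on $A^{(2)}$ is forced here: it is what makes $A_2^{(2)}$ split off as the final lexicographic summand of $\Phi_3(A)^{(2)}$, matching the shape of $\Phi_2(B)=A_1\oplus A_2^{(2)}$; with the reverse lexicographic order the summand that splits off is $(A_1\oplus\bbone)^{(2)}$ at the front, and the square would not close.
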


\begin{proof}
Throughout this proof, we use the lexicographic order on $(-)^{(2)}$. Suppose $X$ and $Y$ are totally ordered sets and $X \amalg Y$ is given the lexicographic sum order. We have
\begin{displaymath}
(X \amalg Y)^{(2)} = X^{(2)} \amalg (X \times Y) \amalg Y^{(2)}.
\end{displaymath}
The induced order on each individual summand is lexicographic. Moreover, the entire set is the lexicographic sum of $X^{(2)} \amalg (X \times Y)$ and $Y^{(2)}$, where each is given the induced order. However, $X^{(2)} \amalg (X \times Y)$ is not a lexicographic sum, in general. If we used the reverse lexicographic order the situation would be slightly different; for the present proof, this difference is very significant, and it is crucial that we use lexicographic order. The comments in this paragraph apply to ordered objects in any lextensive category.

Now, $A$ is a type~1 Delannic algebra and so $A^{(2)}$ is a type~4 Delannic algebra (Proposition~\ref{prop:delannic-subset}). Thus the lexicographic sum $A \oplus A^{(2)}$ has type~2 (Proposition~\ref{prop:delannic-sum}), and so the mapping property for $\fC_2$ provides the functor $\Phi_1$. The functors $\Phi_2$ and $\Phi_3$ are similar. 

Before discussing $\Phi_4$, we examine the composition $\Phi_3 \circ \Phi_1$. Since $\Phi_3$ is a tensor functor, it preserves natural operations on ordered algebras. Thus
\begin{displaymath}
\Phi_3(\Phi_1(B)) = \Phi_3(A) \oplus \Phi_3(A)^{(2)}
\end{displaymath}
is a lexicographic sum and $\Phi_3(A)^{(2)}$ carries the lexicographic order. Now, $\Phi_3(A)$ is the lexicographic sum of $(A_1 \oplus \bbone)$ and $A_2$. By the comments in the first paragraph, we thus have a lexicographic sum
\begin{displaymath}
\Phi_3(A)=E' \oplus A_2^{(2)}
\end{displaymath}
where
\begin{displaymath}
E'=(A_1 \oplus \bbone)^{(2)} \oplus (A_1 \oplus \bbone) \otimes A_2.
\end{displaymath}
We do not attempt to explicitly describe the order on $E'$, as it is unimportant. Since $A^{(2)}$ has type~4, so does $\Phi_3(A^{(2)})=E' \oplus A_2^{(2)}$. It follows from Proposition~\ref{prop:delannic-sum} that $E'$ is Delannic. Since $A_2^{(2)}$ and $E' \oplus A_2^{(2)}$ both have type~4, we see that $E'$ has type~3. Going back to the composition $\Phi_3 \circ \Phi_1$, we find
\begin{displaymath}
\Phi_3(\Phi_1(B)) = E \oplus A^{(2)},
\end{displaymath}
where
\begin{displaymath}
E=\Phi_3(A) \oplus E'
\end{displaymath}
is a lexicographic sum. Since $\Phi_3(A)$ has type~1 and $E'$ has type~3, it follows that $E$ has type~1 (Proposition~\ref{prop:delannic-sum}). Note that $E$ does decompose as in the statement of the lemma.

We now give the definition of $\Phi_4$. By the mapping property for $\fC_1$, we have tensor functors
\begin{displaymath}
\Phi_4', \Phi_4'' \colon \fC_1 \to \fC_1 \boxtimes \fC_1, \qquad \Phi'_4(A)=E, \quad \Phi''_4(A)=A_2.
\end{displaymath}
By the mapping property for the Deligne tensor product, there is therefore a tensor functor $\Phi_4$ with the stated properties.

We finally verify that the square commutes. We have
\begin{displaymath}
\Phi_4(\Phi_2(B))=\Phi_4(A_1) \oplus \Phi_4(A_2)^{(2)}=E \oplus A_2^{(2)},
\end{displaymath}
where the sum is lexicographic. We thus see that $\Phi_4(\Phi_2(B))$ and $\Phi_3(\Phi_1(B))$ are isomorphic ordered \'etale algebras. Therefore, by the mapping property for $\fC_2$, the tensor functors $\Phi_4 \circ \Phi_2$ and $\Phi_3 \circ \Phi_1$ are isomorphic.
\end{proof}

Consider the 2-fiber product
\begin{displaymath}
\xymatrix{
\fF \ar[r]^{\Pi_1} \ar[d]_{\Pi_2} & \fC_1 \ar[d]^{\Phi_3} \\
\fC_1 \boxtimes \fC_1 \ar[r]^{\Phi_4} & \fC_1 \boxtimes \fC_1 }
\end{displaymath}
Explicitly, an object of $\fF$ is a triple $(X, Y, i)$ where $X$ is an object of $\fC_1$, $Y$ is an object of $\fC_1 \boxtimes \fC_1$, and $i \colon \Phi_3(X) \to \Phi_4(Y)$ is an isomorphism. The category $\fF$ is pre-Tannakian; see Remark~\ref{rmk:pretan} below. The previous lemma furnishes us with a natural tensor functor
\begin{displaymath}
\Phi \colon \fC_2 \to \fF
\end{displaymath}
such that $\Pi_1 \circ \Phi \cong \Phi_1$ and $\Pi_2 \circ \Phi \cong \Phi_2$ as tensor functors. Let $C=\Phi(B)$. This is a type~2 Delannic algebra in $\fF$. The following lemma completes the proof of the theorem.

\begin{lemma}
The algebra $C$ is simple.
\end{lemma}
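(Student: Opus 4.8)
The strategy is to show that the ring $\Gamma(C)=\Hom_{\fF}(\bbone,C)$, equipped with the multiplication coming from the algebra structure of $C$, has no idempotents other than $0$ and $\eta_C$; since factorizations of an \'etale algebra into a product of two non-zero \'etale algebras correspond bijectively to non-trivial idempotents of its invariant ring, this is exactly the assertion that $C$ is simple. To compute $\Gamma(C)$ I would first unwind the $2$-fiber product: an object of $\fF$ is a triple $(X,Y,j)$ with $X\in\fC_1$, $Y\in\fC_1\boxtimes\fC_1$, $j\colon\Phi_3(X)\xrightarrow{\sim}\Phi_4(Y)$, and $C=(\Phi_1(B),\Phi_2(B),i_B)$ where $i_B$ is the component at $B$ of the isomorphism $\Phi_3\Phi_1\cong\Phi_4\Phi_2$ from the previous lemma. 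A morphism $\bbone_{\fF}\to C$ is then a pair $(f,g)$ with $f\colon\bbone\to\Phi_1(B)$ in $\fC_1$ and $g\colon\bbone\to\Phi_2(B)$ in $\fC_1\boxtimes\fC_1$ such that $i_B\circ\Phi_3(f)=\Phi_4(g)$. Since $\Pi_1,\Pi_2$ are tensor functors, this identifies $\Gamma(C)$ with the fiber product of rings
\begin{displaymath}
\Gamma(\Phi_1(B))\;\xrightarrow{\;\Gamma(i_B)\circ\Gamma(\Phi_3)\;}\;\Gamma(\Phi_3\Phi_1(B))\;\xleftarrow{\;\Gamma(\Phi_4)\;}\;\Gamma(\Phi_2(B)).
\end{displaymath}

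Next I would compute the three rings using the explicit decompositions $\Phi_1(B)=A\oplus A^{(2)}$, $\Phi_2(B)=A_1\oplus A_2^{(2)}$, and $\Phi_3\Phi_1(B)=\Phi_4\Phi_2(B)=E\oplus A_2^{(2)}$ from the previous lemma, together with the facts that $\Gamma(\cC(Z))=k$ for a transitive $\GG$-set $Z$ (so $\Gamma(A)=\Gamma(A^{(2)})=k$ in $\fC_1$) and $\Gamma(M\boxtimes N)=\Gamma(M)\otimes_k\Gamma(N)$ for Deligne products (so $\Gamma(A_1)=\Gamma(A_2^{(2)})=\Gamma(A_1^{(2)})=\Gamma(A_1\otimes A_2)=k$ in $\fC_1\boxtimes\fC_1$). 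Consequently $\Gamma(\Phi_1(B))\cong k^2$, $\Gamma(\Phi_2(B))\cong k^2$, and, since $E$ is a lexicographic sum of seven summands each with one-dimensional invariant ring, $\Gamma(E\oplus A_2^{(2)})\cong k^8$. It then remains to describe the two structure maps on coordinates. The map $\Gamma(\Phi_4)$ sends the idempotent of $\Gamma(\Phi_2(B))$ cutting out $A_1$ to the idempotent cutting out $E=\Phi_4(A_1)$ (the first seven of the eight coordinates) and the one cutting out $A_2^{(2)}$ to the last coordinate; so the images of idempotents of $\Gamma(\Phi_2(B))$ in $k^8$ are exactly $(0^8)$, $(1^7,0)$, $(0^7,1)$, $(1^8)$. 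For the other map, $\Gamma(\Phi_3)$ sends the idempotent cutting out $A\subseteq\Phi_1(B)$ to the idempotent cutting out $\Phi_3(A)\subseteq\Phi_3\Phi_1(B)$, which is three of the eight atomic factors, and the idempotent cutting out $A^{(2)}$ to the complementary five; composing with $\Gamma(i_B)$ carries these to $(1^3,0^5)$ and $(0^3,1^5)$ respectively (see below). Neither $(1^3,0^5)$ nor $(0^3,1^5)$ lies in the list of images from the $\Phi_2$--side, so a pair $(e,e')$ of idempotents in the fiber product must have $e\in\{0,\eta\}$, and then $e'$ is forced to be the matching $\{0,\eta\}$. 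Hence $\Gamma(C)$ has only the trivial idempotents, i.e. $\Gamma(C)\cong k$, and $C$ is simple.

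The one genuinely delicate point is the claim that $\Gamma(i_B)$ carries the idempotent cutting out $\Phi_3(A)\subseteq\Phi_3\Phi_1(B)$ to the idempotent cutting out $\Phi_3(A)$ sitting as the initial segment of $E=\Phi_3(A)\oplus E'=\Phi_4(A_1)$ (rather than to some other triple of atoms, a priori possible since $E$ contains repeated atomic factors). This is where I would invoke that $i_B$ is an isomorphism of \emph{ordered} \'etale algebras: on both sides, $\Phi_3(A)$ is the initial segment of a lexicographic sum ($\Phi_3\Phi_1(B)=\Phi_3(A)\oplus\Phi_3(A^{(2)})$ and $\Phi_4\Phi_2(B)=E\oplus A_2^{(2)}$ with $E=\Phi_3(A)\oplus E'$), a monotonic isomorphism preserves initial segments of a given "length", and by Proposition~\ref{prop:oea-aut} (and its corollary) there is at most one isomorphism between isomorphic ordered \'etale algebras, so $i_B$ must identify the two copies of $\Phi_3(A)$. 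Granting this bookkeeping, the remainder is the short coordinate comparison above.
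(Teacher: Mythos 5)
Your argument is correct, but it takes a genuinely different route from the paper's. The paper argues by contradiction at the level of simple factors: if $C$ were not simple, faithfulness of $\Pi_1$ and $\Pi_2$ would force exactly two simple factors $C_1,C_2$ with $\Pi_2(C_2)=A_2^{(2)}$ and $\Pi_1(C_2)\in\{A,A^{(2)}\}$, and then the defining compatibility $\Phi_3(\Pi_1(C_2))\cong\Phi_4(\Pi_2(C_2))$ fails because $A_2^{(2)}$ is simple while $\Phi_3(A)$ and $\Phi_3(A^{(2)})$ are not. You instead compute $\Gamma(C)$ as the fiber product of $k^2$ and $k^2$ over $k^8$ and observe that the nontrivial idempotents coming from the $\fC_1$ side have support of size $3$ or $5$ while those from the $\fC_1\boxtimes\fC_1$ side have support of size $7$ or $1$, so only the trivial idempotents match. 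Both proofs ultimately exploit the same mismatch between the two induced decompositions of $\Phi_3\Phi_1(B)\cong\Phi_4\Phi_2(B)$; the paper's version avoids counting all eight atoms, while yours is more explicit, yields the stronger conclusion $\Gamma(C)=k$, and does not need the labelling step $\Pi_2(C_1)=A_1$, $\Pi_2(C_2)=A_2^{(2)}$. One simplification: the ``genuinely delicate point'' you isolate is not actually needed. Since $i_B$ is an isomorphism of \'etale algebras, $\Gamma(i_B)$ is a $k$-algebra automorphism of $k^8$ and hence permutes the primitive idempotents; therefore it sends an idempotent of support size $3$ (resp.\ $5$) to one of support size $3$ (resp.\ $5$), and that alone already rules out a match with $(1^7,0)$ or $(0^7,1)$. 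You never need to know that $\Gamma(i_B)$ identifies the two copies of $\Phi_3(A)$, so the appeal to Proposition~\ref{prop:oea-aut} and to preservation of initial segments can be dropped. (Do also record the trivial observation that $C\ne 0$, e.g.\ because $\Pi_1(C)=A\oplus A^{(2)}\ne 0$, so that the absence of nontrivial idempotents indeed gives simplicity.)
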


\begin{proof}
Suppose not. Since $\fF$ is pre-Tannakian, $C$ decomposes into a product of simple \'etale algebras. Since $\Pi_2$ is faithful, each of these simple factors remains a non-trivial factor of $\Pi_2(C)$. Since $\Pi_2(C)=A_1 \oplus A_2^{(2)}$ has two simple factors, it follows that $C$ must have exactly two simple factors, say $C_1$ and $C_2$. We label them so that $\Pi_2(C_1)=A_1$ and $\Pi_2(C_2)=A_2^{(2)}$.

Since $\Pi_1$ is also faithful, the same reasoning shows that $\Pi_1(C_1)$ and $\Pi_1(C_2)$ are non-trivial factors of $\Pi_1(C)$. Since $\Pi_1(C)=A \oplus A^{(2)}$ has only two simple factors, $\Pi_1(C_2)$ must be one of these simple factors. Thus we have $\Pi_1(C_2)=A$ or $\Pi_1(C_2)=A^{(2)}$.

Now, we have
\begin{displaymath}
\Phi_3(A) = A_1 \oplus \bbone \oplus A_2, \qquad
\Phi_3(A^{(2)}) = (A_1 \oplus \bbone \oplus A_2)^{(2)}.
\end{displaymath}
Neither of the above two algebras are isomorphic to $\Phi_4(A_2^{(2)})=A_2^{(2)}$, e.g. because the latter is a simple algebra. This is a contradiction, as we must have $\Phi_3(\Pi_1(C_2)) \cong \Phi_4(\Pi_2(C_2))$, by the definition of the 2-fiber product. We thus see that $C$ must be simple, which completes the proof.
\end{proof}

This is the first known example of a simple ordered \'etale algebra of dimension~0 in a pre-Tannakian category. We can in fact prove that $C^{(2)}$ is simple as well, and it is possible that $C^{(n)}$ is simple for all $n \ge 0$. In \cite{fake}, we construct an ordered \'etale algebra $D$ of dimension~0 in a pre-Tannakian category such that $D^{(n)}$ is simple for all $n \ge 0$. Theorem~\ref{thm:dim0} has an analog for $\fC_3$ requiring minimal changes. There is also an analog for $\fC_4$ that requires more substantive changes.
%This leads to a simple ordered \'etale algebra a pre-Tannakian category of dimension~1 that is not simply the tensor unit; again, this is the first known example of such an algebra.

\begin{remark} \label{rmk:pretan}
One can show directly that $\fF$ is pre-Tannakian, but here is perhaps a more compact argument using Deligne's theory of the fundamental group \cite[\S 8]{Deligne0}. Let $\pi_0$ be the fundamental group of $\fC_1 \boxtimes \fC_1$. Let $\pi$ and $\pi'$ be the automorphism group schemes of $\Phi_3$ and $\Phi_4$; there are natural maps $\epsilon \colon \pi_0 \to \pi$ and $\epsilon' \colon \pi_0 \to \pi'$. The category $\fC_1$ is equivalent to $\Rep(\pi, \epsilon)$, and moreover under this equivalence $\Phi_3$ corresponds to the forgetful functor $\Rep(\pi, \epsilon) \to \fC_1 \boxtimes \fC_1$. A similar comment applies to $\Phi_4$. We thus see that $\fF$ is equivalent to the category of triples $(X, a, a')$ where $X$ is an object in $\fC_1 \boxtimes \fC_1$, $a$ is an action of $\pi$ on $X$ that is compatible with $\pi_0$, and $a'$ is an action of $\pi'$ on $X$ that is compatible with $\pi_0$. This is clearly pre-Tannakian.
\end{remark}

\subsection{Abelian envelopes of Delannoy categories}

Consider the following two tensor functors:
\begin{displaymath}
\Phi_0, \Phi_1 \colon \fC_2 \to \fC_1, \qquad
\Phi_0(B) = A \oplus \bbone, \qquad \Phi_1(B) = A \oplus A^{(2)}.
\end{displaymath}
Since these are functors to a pre-Tannakian category, they factor through a local envelope.

\begin{theorem} \label{thm:two-envelopes}
The local envelopes for $\Phi_0$ and $\Phi_1$ are different.
\end{theorem}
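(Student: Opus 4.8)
The plan is to distinguish the two local envelopes by exhibiting a numerical or structural invariant of the Delannic algebra $B = \cC_2(\bR)$ that is respected by any faithful tensor functor to a pre-Tannakian category, and that takes different values for $\Phi_0$ and $\Phi_1$. Recall that by \cite[\S 3]{HomKer} the local envelope through which a faithful functor $\Phi \colon \fC_2 \to \fU$ factors is determined by the kernel ideal of $\Phi$, i.e.\ the collection of morphisms in $\fC_2$ that $\Phi$ sends to $0$ (equivalently, by which of the ``negligible-like'' quotients of $\fC_2$ the functor factors through). So the essential task is: find a morphism $f$ in $\fC_2$ such that $\Phi_0(f) = 0$ but $\Phi_1(f) \ne 0$ (or vice versa). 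Since $\Phi_0$ and $\Phi_1$ are both faithful tensor functors (by Proposition~\ref{prop:delmap-ff}(a), as $A \oplus \bbone$ and $A \oplus A^{(2)}$ are each neither $0$ nor $\bbone$), the relevant morphisms $f$ will be genuinely non-invertible endomorphisms, e.g.\ idempotents cutting out summands of $\cC_2(\bR^{(n)})$, whose images behave differently under the two functors.

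First I would translate everything into the combinatorial side. By Theorem~\ref{thm:delmap} and Proposition~\ref{prop:oea-univ-prop}, $\Phi_0$ and $\Phi_1$ correspond to the left-exact additive functors $\Psi_0, \Psi_1 \colon \bS(\GG) \to \Et(\fC_1)^{\op}$ attached to the ordered \'etale algebras $A \oplus \bbone$ and $A \oplus A^{(2)}$ respectively; in $\bS(\GG)$-terms these are the ordered $\GG$-sets $\bR \cup \{\infty\}$ and $\bR^{(2)}$ (lexicographically ordered, via the identification $A^{(2)} = \cC_1(\bR^{(2)})$ and Proposition~\ref{prop:delannic-subset}). The idea is then to compute, for small $n$, the decomposition of $\Phi_0(\cC_2(\bR^{(n)}))$ and $\Phi_1(\cC_2(\bR^{(n)}))$ into indecomposable (or simple, since $\fC_1$ is semisimple) objects of $\fC_1$. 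Concretely $\Phi_j(\cC_2(\bR^{(n)})) = \cC_1(X_j^{(n)})$ where $X_0 = \bR \cup \{\infty\}$ and $X_1 = \bR^{(2)}$, and one decomposes the $\GG$-sets $X_j^{(n)}$ into $\GG$-orbits and then uses the known decomposition of the Schwartz objects $\cC_1(\bR^{(m)})$ into the simples $L_\wa, L_\wb, \bbone$ of $\fC_1$ (from \cite{line}). Comparing these decompositions — for instance counting multiplicities of a fixed simple, or comparing $\dim \Gamma(\cC_2(\bR^{(n)})^{\otimes 2})$ type invariants — will show that the two functors annihilate different morphisms. The cleanest such invariant is likely the rank of the image of a specific idempotent, e.g.\ the one projecting $\cC_2(\bR^{(2)})$ onto a chosen isotypic piece, or a trace/dimension computation: since $\dim \Phi_0(\cC_2(\bR)) = \dim(A) + \dim(\bbone) = -1 + 1 = 0$ and $\dim \Phi_1(\cC_2(\bR)) = \dim(A) + \dim(A^{(2)}) = -1 + 1 = 0$ agree, I cannot use dimension of $B$ itself, so the invariant must come from a higher tensor power or a subobject, where the internal combinatorial structure of $\bR \cup\{\infty\}$ (which has a maximal point) genuinely differs from that of $\bR^{(2)}$ (which has no maximal point: $p_{2,2}\colon (\bR^{(2)})^{(2)} \to \bR^{(2)}$ behaves differently than the analogous map for $\bR\cup\{\infty\}$, cf.\ Example~\ref{ex:comb}).

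I would carry out the steps in this order: (1) reduce to showing $\ker(\Phi_0) \ne \ker(\Phi_1)$ as tensor ideals in $\fC_2$, using the defining property of local envelopes from \cite{HomKer}; (2) identify a candidate morphism $f$ — the most promising being built from the projection $\pi^i$ maps or from the $(-)^{(2)}$ operation applied to $\cC_2(\bR)$, so that $f$ is an idempotent in $\End_{\fC_2}(\cC_2(\bR)^{\otimes 2})$ or $\End_{\fC_2}(\cC_2(\bR^{(2)}))$; (3) compute $\Phi_0(f)$ and $\Phi_1(f)$ explicitly on the $\fC_1$ side via the orbit decompositions of $(\bR \cup \{\infty\})^{(n)}$ versus $(\bR^{(2)})^{(n)}$ for the relevant small $n$, and show one is zero while the other is not (or that their ranks, i.e.\ categorical dimensions of their images, differ — ranks are preserved under faithful tensor functors to semisimple categories, so this suffices); (4) conclude that the two local envelopes are distinct. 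The main obstacle I expect is step (3): pinning down a concrete morphism whose images are provably different requires a careful orbit-by-orbit bookkeeping of lexicographically ordered tuples in $\bR \cup \{\infty\}$ versus $\bR^{(2)}$, and keeping track of how the Karoubi-envelope idempotents in $\fC_1$ interact with the lexicographic-sum decompositions — this is where the subtle difference ``$\bR \cup \{\infty\}$ has a top element, $\bR^{(2)}$ does not'' has to be converted into an actual inequality of numerical invariants. A secondary subtlety is making sure the invariant chosen is genuinely an invariant of the kernel ideal (hence of the local envelope) and not merely of the particular functor; phrasing it as a rank/categorical-dimension of the image of a canonical morphism handles this, since such ranks are intrinsic to the tensor-ideal quotient.
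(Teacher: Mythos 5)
Your core idea is the paper's: the distinguishing morphism is exactly $p_{2,2}^*\colon B \to B^{(2)}$, and the relevant structural difference is that $\bR\cup\{\infty\}$ has a top element while $\bR^{(2)}$ does not, so that $p_{2,2}$ is non-surjective for the former and surjective for the latter. However, your opening framing is a step that would fail: you propose to find $f$ with $\Phi_0(f)=0$ but $\Phi_1(f)\ne 0$, i.e.\ to distinguish the functors by their kernel ideals. Since both $\Phi_0$ and $\Phi_1$ are faithful (as you yourself note), both kernel ideals are zero, so no such $f$ exists; the local envelope of a faithful functor is a strictly finer invariant than its tensor ideal of killed morphisms. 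You drift toward the right fix (rank/dimension of the image of a canonical morphism), but you never cleanly state the property that is actually transported along a factorization through a common envelope.

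The paper closes the argument without any orbit bookkeeping. One observes directly that $\Phi_0(p_{2,2}^*)$ is not injective (the fiber of $p_{2,2}$ over $\infty$ is empty, so the summand $\bbone \subset A\oplus\bbone$ is killed), whereas $\Phi_1(p_{2,2}^*)$ is injective (surjectivity of $p_{2,2}$ for $\bR^{(2)}$). If both functors factored through one local envelope $\fU$ as $\Phi_j = F_j\circ\Phi$ with $F_j\colon\fU\to\fC_1$ exact tensor functors, then the $F_j$ are faithful exact (\cite[Prop.~1.19]{DeligneMilne}), hence preserve and reflect monomorphisms; so $\Phi(p_{2,2}^*)$ would have to be simultaneously injective and non-injective. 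This is the precise invariant you were groping for — "is the image of this fixed morphism a monomorphism" — and it requires no decomposition into simples of $\fC_1$. Your step (3), as planned, would eventually land here, but as written the proposal both starts from an invariant that cannot work and leaves the decisive computation undone while overestimating its difficulty.
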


\begin{proof}
Consider the map $p_{2,2}^* \colon B \to B^{(2)}$ in $\fC_2$. The map $\Phi_0(p_{2,2}^*)$ is not injective, while the map $\Phi_1(p_{2,2}^*)$ is injective. Since exact tensor functors between pre-Tannakian categories are also faithful (\cite{DeligneMilne} Proposition 1.19) it follows that $\Phi_0$ and $\Phi_1$ cannot both factor as composites of one tensor functor $\Phi:\fC_2\to \fT$ and exact tensor functors $\fT\rightrightarrows \fC_1$, as this would require $\Phi(p_{2,2}^\ast)$ both to be injective and not injective.
In particular the local envelopes must differ. 
\end{proof}

We therefore see that $\fC_2$ has at least two local envelopes. There are many other tensor functors from $\fC_2$ one can consider. For instance, for any $n \ge 0$ we have a tensor functor
\begin{displaymath}
\Phi_n \colon \fC_2 \to \fC_1, \qquad \Phi_2(B) = A \oplus A^{(2n)}.
\end{displaymath}
We have not been able to show that these lead to new envelopes. At the moment, it seems possible that all $\Phi_n$ with $n \ge 1$ have the same envelope, though we have little evidence for this.

\begin{remark}
In \cite{fake}, we will show that $\Phi_0$ is in fact already a local envelope. The functor $\Phi_1$ factors through the category $\fF$ above. This shows that the image of $B$ in the local envelope of $\Phi_1$ is a simple algebra, since its image in $\fF$ is simple. This gives an alternate proof that $\Phi_0$ and $\Phi_1$ have different local envelopes: in the first envelope, $B$ maps to a non-simple algebra, while in the second it maps to a simple algebra.
\end{remark}

\begin{remark}
Using similar reasoning to the above, one can show that $\fC_4$ has at least four local envelopes. Three of them relate to the local abelian envelopes of $\fC_2,\fC_3$ via the tensor functors $\fC_4\to\fC_2$ and $\fC_4\to\fC_3$ in Theorem~\ref{thm:simplesfunctors}.
\end{remark}

\end{document}